\documentclass[reqno]{article}

\usepackage{mathtools,amscd,amsthm,amssymb}
\usepackage{graphicx,xcolor}
\usepackage[pagebackref]{hyperref}
\usepackage{rotating}
\usepackage{diagbox,booktabs,array}
\usepackage{tikz-cd}
\usepackage{tikz}
\usepackage{pgfplots}
\usepgfplotslibrary{groupplots}
\pgfplotsset{compat=1.18}
\usetikzlibrary{positioning}
  \tikzset{vertex/.style={circle,fill,inner sep=1.pt}}

\usepackage[mathscr]{eucal}
\usepackage{setspace}
\usepackage{enumerate}
\usepackage{subcaption}

\usepackage[T1]{fontenc}
\usepackage[utf8]{inputenc}
\usepackage[a4paper,margin=1in]{geometry}
\hypersetup{colorlinks=true,linkcolor=blue,citecolor=blue,urlcolor=blue}

\newtheorem{theorem}{Theorem}
\newtheorem{prop}[theorem]{Proposition}
\newtheorem{cor}[theorem]{Corollary}
\newtheorem{lem}[theorem]{Lemma}

\theoremstyle{definition}
\newtheorem{defn}[theorem]{Definition}

\theoremstyle{remark}
\newtheorem{remark}[theorem]{Remark}

\theoremstyle{plain}
\newtheorem{THEO}{Theorem}

\newcommand{\C}{\mathbb C}
\newcommand{\Z}{\mathcal Z}

\newcommand{\rad}{\operatorname{rad}}
\newcommand{\ord}{\operatorname{ord}}
\newcommand{\lc}{\operatorname{lc}}
\newcommand{\dd}{\,\mathrm d}

\title{Zero asymptotics for successive derivatives of hyperexponential functions with finite essential singularities}
\author{Christian H\"agg\thanks{Department of Mathematics, Stockholm University, SE-106\,91 Stockholm, Sweden. Email: \texttt{hagg@math.su.se}, \texttt{shapiro@math.su.se}}\and Boris Shapiro\footnotemark[1]}
\date{}

\begin{document}
\maketitle

\begin{abstract}
P\'olya's shire theorem identifies the final set of zeros of successive derivatives of an arbitrary meromorphic function with at least one pole with the Voronoi diagram of its finite poles. We prove a fixed-scale zero-counting law for hyperexponential functions \(f=(P/Q)\exp(S/T)\), allowing ordinary poles and finite essential singularities of arbitrary order and position, thus extending P\'olya's picture beyond the rational, polynomial-exponential, and one-dimensional finite-essential-singularity settings. After the forced singular factors are removed from the numerator of \(f^{(n)}\), the normalized zero-counting measures converge in the original \(z\)-plane to the classical Voronoi edge measure generated by all finite singular sites, augmented by explicitly weighted atoms at the finite essential singularities, which thereby enter P\'olya's picture both as Voronoi sites and as sources of linear-size zero clusters. If \(S/T\) has a nonconstant polynomial part, the complementary mass escapes to infinity. We determine the microscopic laws of these clusters, obtaining the reciprocal Marchenko--Pastur law for simple poles of \(S/T\) and higher-order multiple-Laguerre, equivalently Laguerre Muttalib--Borodin, limits for higher-order poles. Finally, inside essential Voronoi cells we identify the first sublinear zero layer, including its Stokes geometry, densities, and final-set consequences away from transition loci.
\end{abstract}

\section{Introduction}

\subsection{Short historical account}
The zeros of successive derivatives are a classical probe of the analytic character of a function. P\'olya's 1922 paper \cite{Po} established two foundational results of this kind.

\begin{defn}\label{def:finalset}
Let $\mathcal S\subset\C$ be closed and discrete, and let $f$ be holomorphic on $\C\setminus \mathcal S$. The \emph{final set} of $f$ is
$$L(f):=\{z\in\C:\text{ every neighborhood of }z\text{ contains zeros of }f^{(n)}\text{ in }\C\setminus \mathcal S\text{ for infinitely many }n\}.$$
\end{defn}

\begin{THEO}[P\'olya, rational case]\label{th:shire}
Let $f$ be a rational function with at least one finite pole. For each finite pole $A$, let its \emph{shire} be the set of points in $\C$ closer to $A$ than to any other finite pole. Then the final set $L(f)$ is precisely the union of the boundaries of these shires.
\end{THEO}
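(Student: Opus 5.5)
The plan is to use partial fractions and compare the growth of the individual pole contributions. Write
$$f(z)=p(z)+\sum_{A}\sum_{k=1}^{m_A}\frac{c_{A,k}}{(z-A)^k},$$
where $p$ is a polynomial, the sum runs over the finite poles $A$ of order $m_A$, and $c_{A,m_A}\neq 0$. For $n>\deg p$ we have $f^{(n)}(z)=\sum_A\sum_k c_{A,k}(-1)^n\frac{(k+n-1)!}{(k-1)!}(z-A)^{-k-n}$. For each pole $A$ the dominant term as $n\to\infty$, locally uniformly away from $A$, is
$$T_A(z)=c_{A,m_A}(-1)^n\frac{(n+m_A-1)!}{(m_A-1)!}(z-A)^{-n-m_A},$$
and the lower-order terms of the same pole are smaller by a factor $O(1/n^{m_A-k})$. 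The key quantity is $|T_A(z)|^{1/n}\to 1/|z-A|$. It follows that on any compact set $K$ inside the open shire of $A$, with the poles removed, $T_A$ dominates all other terms exponentially: if $|z-A|<|z-B|-\delta$ for every $B\neq A$, the ratio $T_B/T_A$ is $O(n^{C}\rho^n)$ with $\rho<1$. The factorial and $n^{m}$ prefactors only affect polynomial size. So $f^{(n)}=T_A(1+o(1))$ uniformly on $K$, and $f^{(n)}$ has no zeros on $K$ for large $n$. Near $A$ itself, on a small punctured disc, the same domination holds even more strongly, so no zeros accumulate at $A$ either, and $A\notin L(f)$. This shows $L(f)$ is contained in the union of the shire boundaries.

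For the reverse inclusion, take a point $z_0$ on a boundary, with a small disc $D$ around it. There are two cases. If $z_0$ lies on an edge, it is equidistant from exactly two nearest poles $A,B$. If $z_0$ is a Voronoi vertex, it is equidistant from three or more poles; since vertices lie in the closure of the edges, it suffices to treat edge points. On $D$, after discarding exponentially smaller terms from farther poles, $f^{(n)}=T_A(1+\varepsilon_A)+T_B(1+\varepsilon_B)$ with $\varepsilon\to0$ uniformly. Consider $g_n=f^{(n)}/T_A$ on $D$. It has the form
$$1+\varepsilon_A+\lambda_n\Big(\frac{z-A}{z-B}\Big)^{n}\,h(z)(1+\varepsilon_B),$$
where $\lambda_n$ is a polynomially growing or decaying constant ratio, arising when $m_A\neq m_B$, and $h$ is holomorphic and nonvanishing. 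Put $w(z)=(z-A)/(z-B)$. This is a conformal map sending the bisector to the unit circle, with $|w|<1$ on $A$'s side and $|w|>1$ on $B$'s side. I will argue via the argument principle, or Rouché, on a small box in $D$ straddling the bisector. On the side where $|w|\le 1-\delta$, the polynomial factor is killed by the exponential decay, so $g_n\approx 1$ there. On the other side, where $|w|\ge 1+\delta$, the second term dominates. Taking $\delta=\delta_n\to0$ slowly, of order $(\log n)/n$, absorbs $\lambda_n$. Along the box, $\arg w^n$ winds about $n\cdot(\text{angle change})\to\infty$ times, so the winding of $g_n$ around the box is positive for large $n$. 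Hence $g_n$ has zeros in $D$; in fact about $n\,|\Delta\arg w|/2\pi$ of them. This gives $z_0\in L(f)$.

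The main obstacle is making this two-term Rouché argument uniform. The error terms $\varepsilon_A,\varepsilon_B$ must be controlled on a box of width shrinking like $(\log n)/n$, and the polynomial factor $\lambda_n$ must not shift the zero curve out of $D$. Both are handled by noting that the solutions of $w^n=-\lambda_n^{-1}h^{-1}(1+o(1))$ satisfy $|w|=1+O((\log n)/n)$. These solutions can be located explicitly by solving for $w$ near each $n$-th root, via a fixed-point or Rouché comparison against $1+\lambda_n w^n h(z_0)$ on small discs. So they lie within $O((\log n)/n)$ of the bisector arc inside $D$, which completes the proof.
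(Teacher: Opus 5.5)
Your proof is correct. The paper does not prove this theorem itself: it quotes it from P\'olya, and its own results assume $T$ nonconstant, so rational $f$ is formally outside them. The natural comparison is therefore with how the paper proves the analogous statements for its hyperexponential class.

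\textbf{Open cells.} For a point of an open pole cell, the paper analyses the singularities of $\zeta\mapsto\mathcal C(z,(a_i-z)\zeta)$. Its only singularity on $|\zeta|=1$ comes from the nearest site (Theorem~\ref{prop:local-coef-unified}\textup{(1)}). This is the same nearest-pole dominance as your comparison of $T_A$ with the $T_B$, but it comes from the translation generating function rather than from a closed formula for $f^{(n)}$.

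\textbf{Reverse inclusion.} Here the two routes genuinely differ. The paper proves $L^1_{\mathrm{loc}}$ convergence of the normalized potentials to $\Psi=\max_i\Psi_i$ (Theorem~\ref{thm:limit}). It then reads off a strictly positive edge density from $\frac1{2\pi}\Delta\Psi$, and vague convergence forces zeros near every edge point (Corollary~\ref{cor:regular-final-set}). Vertices are handled by closure, as in your argument.

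Your two-term Rouch\'e analysis replaces this potential theory with explicit root location near the bisector. It gives more local information. Near a relative interior point of an edge, the zeros lie within $O((\log n)/n)$ of the edge. An arc of the edge carries about $\frac{n}{2\pi}\int|A-B|\,|z-A|^{-1}|z-B|^{-1}\dd\ell$ of them, which is exactly the edge term of $\mu_{\mathrm{fix}}$. The cost is that it relies on the exact partial-fraction expansion, which reduces $f^{(n)}$ near an edge to two explicit competing terms. Nothing like this exists in the essential cells, where the paper only controls $\log|B_n|$ in $L^1$.

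\textbf{One point of execution.} The box argument in your middle paragraph does not control the two sides of the box that cross the zero curve, and $g_n$ may vanish there. The small-disc Rouch\'e comparison in your last paragraph is what actually proves that zeros exist. It works once $D$ is shrunk so that $|h(z)/h(z_0)-1|$ is small compared with $\min_{|u|=c}|1-e^{u}|$, using discs of radius about $c/n$ in the $w$-variable around the roots of $1+\lambda_n h(z_0)w^n$.
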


Thus, for rational functions, the final set is the Voronoi diagram of the finite poles.

\begin{figure}[!hbt]
\centering
\includegraphics[width=0.55\linewidth]{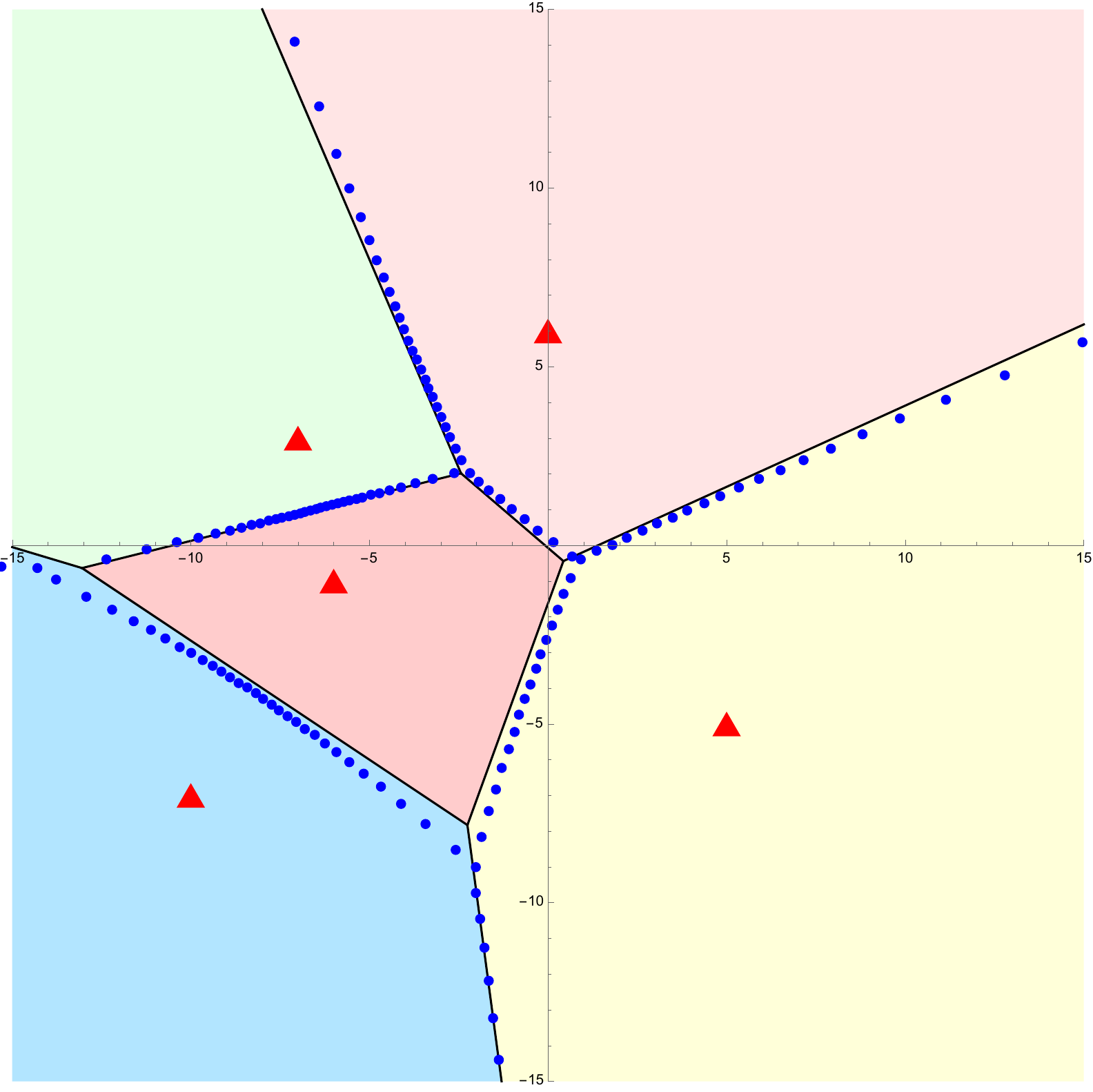}
\caption{Illustrative Voronoi diagram determined by the five finite poles of a rational function~$f$. Triangular markers indicate poles, and small circular markers indicate zeros of $f^{(20)}$.}\label{vv3}
\end{figure}

\medskip
P\'olya also treated polynomial-exponential entire functions. If $f(z)=P(z)e^{R(z)}$, with $P$ and $R$ polynomials and
$R(z)=cz^q+\delta z^{q-1}+\cdots$ nonconstant, then the following description holds.

\begin{THEO}\label{th:rays}
In the above notation, if $q\ge2$, then $L(f)$ consists of the $q$ rays emanating from the point $-\frac{\delta}{qc}$, parallel to the directions determined by the solutions of $cz^q+1=0$; these rays are equally spaced. If $q=1$, then $L(f)=\varnothing$.
\end{THEO}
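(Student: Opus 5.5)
We write $f^{(n)}=P_n e^{R}$, where $P_{n+1}=P_n'+R'P_n$ and $P_0=P$. The zeros of $f^{(n)}$ are exactly the zeros of the polynomial $P_n$, which has degree $\deg P+n(q-1)$. The plan is to locate these zeros via a saddle-point (steepest descent) analysis of the Cauchy integral
$$P_n(z)=n!\,e^{-R(z)}\frac{1}{2\pi i}\oint\frac{P(w)e^{R(w)}}{(w-z)^{n+1}}\dd w,$$
in the regime where $|z|$ is of order $n^{1/q}$. Here the degree $q$ of $R$ sets the natural scale.

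\textbf{Case $q=1$.} Then $R=cz+\delta$, so $P_n(z)=\sum_k\binom nk c^{n-k}P^{(k)}(z)$, and $P_n/c^n\to P$ uniformly on compacta. By Hurwitz's theorem the zeros of $P_n$ converge to those of $P$. Moreover $\deg P_n=\deg P$, so no zeros escape to infinity. Points where $f$ itself vanishes are excluded, because zeros must accumulate from distinct points, or else one observes that the number of zeros is bounded, so any limit point is a zero of $P$ and $P_n(z_0)\ne 0$ in general. More carefully: if $z_0$ were in $L(f)$, there would be zeros $z_n\to z_0$ of $P_n$, hence $P(z_0)=0$. We then argue directly that near a zero of $P$ the only zeros of $P_n$ are the continuation of that zero. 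That alone does not give emptiness, so we adopt the reading of $L(f)$ in which fixed zeros of $f$ are discounted. The convention needs to match P\'olya's, and I would check this against the definition, since literally a zero of $P$ might persist.

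\textbf{Case $q\ge2$.} First translate $z\mapsto z-\delta/(qc)$ to kill the $z^{q-1}$ term, so the claimed center becomes $0$. Then rescale $z=n^{1/q}\zeta$, $w=n^{1/q}\omega$. The integrand becomes $\exp\bigl(n[\,c\omega^q-\log(\omega-\zeta)]+O(n^{(q-2)/q})\bigr)$. The phase $\phi(\omega)=c\omega^q-\log(\omega-\zeta)$ has $q$ saddles, solving $qc\,\omega^{q-1}(\omega-\zeta)=1$. The asymptotics of $P_n$ are governed by the dominant admissible saddle.

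The zero set in the $\zeta$-plane then concentrates, as $n\to\infty$, on the anti-Stokes curves where two dominant saddle contributions have equal real part of $\phi$. By the $q$-fold rotational symmetry $\omega\mapsto e^{2\pi i/q}\omega$, $\zeta\mapsto e^{2\pi i/q}\zeta$, these form a $q$-armed star. Rescaling back, the zeros in the $z$-plane near a fixed point $z_0$ correspond to $\zeta\approx 0$. So we need the local structure of the anti-Stokes set at $\zeta=0$: which directions through the origin are limits of zeros.

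At $\zeta=0$ the saddles are $\omega^q=1/(qc)$, which all have equal real part of $\phi$. For small $\zeta$ we expand $\phi$ at each saddle to first order in $\zeta$. The differences in real part are linear in $\zeta$ with coefficients proportional to the differences of the saddle locations. Consequently the equality loci near $0$ are lines, the directions bisecting adjacent saddles rotated appropriately. Computing this shows they are exactly the directions $z$ with $cz^q+1=0$, that is, $q$ rays. Including the $n^{-1/q}$ correction shows that the unscaled zeros lie within $o(1)$ of the rays from $-\delta/(qc)$ at every fixed distance. This gives containment of $L(f)$ in the union of the rays. Density of zeros along each ray follows from the argument principle: the phase difference between the two competing saddle terms rotates at rate of order $n$ along the ray, which gives many sign changes.

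The main obstacle is the last step: making the competition between saddles uniform down to bounded $z$, where the scaled variable $\zeta=z n^{-1/q}\to0$ approaches the degenerate point. I would handle this with a uniform two-term (in fact $q$-term) asymptotic expansion of $P_n$ valid for $|z|\le C$. Alternatively, one could work directly with $z$ fixed and saddles at $\omega\approx(qc)^{-1/q}e^{2\pi i k/q}n^{1/q}$. In that setting the relative size of the terms is $\exp\bigl(n^{1/q}\cdot\mathrm{Re}(\text{linear in }z)\bigr)$, and equality of the two largest holds precisely on the stated rays.
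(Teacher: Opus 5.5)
Since the paper quotes this theorem from P\'olya without giving a proof, I am judging your argument on its own. The case $q=1$ contains a genuine error. The claim $P_n/c^n\to P$ is false. With $p=\deg P$ one has $c^{-n}P_n=\sum_{k=0}^{p}\binom{n}{k}c^{-k}P^{(k)}$, and the terms with $1\le k\le p$ grow like $n^k$, so this sequence diverges unless $P$ is constant. The correct normalization is by the top term: $\binom{n}{p}^{-1}c^{p-n}P_n\to P^{(p)}=p!\,\lc(P)\ne0$ uniformly on compact sets, because the remaining terms are $O(n^{k-p})$. Hence $P_n$ has no zeros on any fixed compact set for large $n$, and $L(f)=\varnothing$ with the literal definition. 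The zeros escape to infinity; for example, $f=ze^z$ gives $f^{(n)}=(z+n)e^z$. So your conclusion that the zeros of $P_n$ converge to those of $P$ is wrong. Your proposed reinterpretation of $L(f)$, discounting zeros of $f$, is neither needed nor justified, and as written this case is not proved.

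For $q\ge2$ the strategy is sound: steepest descent on the Cauchy integral, carried out at fixed $z$ rather than through the degenerate point $\zeta=0$ of the macroscopic picture. Two steps need repair.

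First, the exponent is wrong for $q\ge3$. After the translation, the saddles of $R(w)-(n+1)\log(w-z)$ are $w_k=A\epsilon_k+z/q+o(1)$, with $A^q=n/(qc)$ and $\epsilon_k^q=1$. The $z$-derivative of the $k$th saddle value is $(n+1)/(w_k-z)$. Two saddle terms therefore differ in size by $\exp\bigl(n^{(q-1)/q}\Re(z(v_k-v_j))+O(n^{(q-2)/q})\bigr)$, uniformly for $|z|\le C$, where $v_k=(qc)^{1/q}\epsilon_k^{-1}$. The growth rate is $n^{(q-1)/q}$, not $n^{1/q}$. This matters: the lower coefficients of $R$ shift the real parts of the saddle values by $k$-dependent amounts of order $n^{(q-2)/q}$. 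Those shifts are not negligible against an $n^{1/q}$ gap once $q\ge3$, whereas against $n^{(q-1)/q}$ they only displace the tie curves by $O(n^{-1/q})$.

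Second, the directions $cz^q+1=0$ come from the full regular $q$-gon $\{v_k\}$. That requires all $q$ saddles to carry nonzero coefficients in the thimble decomposition of the Cauchy circle, uniformly for $|z|\le C$. Your phrase ``the dominant admissible saddle'' leaves this open. If only a chain of saddles were active, as happens at the essential singularities in Proposition~\ref{prop:active-wright-coefficients}, you would get fewer rays. Here it does hold. For the model phase $c\omega^q-\log\omega$, the circle deforms into the petal contour through all $q$ saddles, and each thimble joins two adjacent valleys. Because the critical imaginary parts are separated, this configuration has no saddle connections and persists under the $O(n^{-1/q})$ perturbations.

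With these two repairs and uniform remainder bounds, the argument closes. Off the rays one term dominates exponentially, so $P_n$ is zero-free there. On each ray the two leading terms have equal leading modulus and nonzero amplitudes (comparable to $\lc(P)w_k^p$), and their relative phase turns at rate $\asymp n^{(q-1)/q}$. Rouch\'e then gives zeros in every subarc for all large $n$, which proves the case $q\ge2$.
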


Taken together, these theorems exhibit two mechanisms by which derivatives detect singularities: nearest-site competition among finite poles and saddle directions generated by exponential growth. Later work developed both mechanisms; see \cite{BH, BSTW, ClEd, Ge, Ge2, H, PrSh1, PrSh2, Rob, Wei} and the references therein. The fixed-scale zero-counting refinements most closely related to this paper are the rational case of B\o gvad and H\"agg \cite{BH}, and H\"agg's treatment \cite{H} of \(g(z)e^{p(z)}\), where \(g\) is rational with at least two finite poles, all simple, and \(p\) is a nonconstant polynomial; in this setting the only essential singularity is at infinity. Other extensions of the rational case, in particular to algebraic singularities \cite{PrSh2}, automorphic functions \cite{Wei}, and translation surfaces \cite{BSTW}, do not provide a planar fixed-scale law for several finite essential singularities in arbitrary position.

Finite essential singularities already appear in P\'olya's work. In \S~4 of \cite{Po2} he considered
\[
f(z)=z^{-1}e^{-1/z}
\]
and proved that its final set is the nonnegative real axis. The derivative identity
\[
f^{(n)}(z)=(-1)^n n!\,z^{-n-1}e^{-1/z}L_n(1/z)
\]
shows that final-set information alone does not determine fixed-scale zero counting. The finite zeros are the reciprocals of the zeros of the Laguerre polynomial $L_n=L_n^{(0)}$; as $n$ varies they are dense on $(0,\infty)$, but their normalized zero-counting measures assign asymptotically zero mass to every fixed compact subinterval of this ray. The missing mass instead concentrates at the essential singularity at $0$ on a microscopic scale. Section~\ref{sec:sublinear} proves the corresponding sublinear ray measure, and Section~\ref{sec:microscopic} proves the microscopic cluster law.

The works \cite{Ed,ClEd} treat finite essential singularities under special one-dimensional hypotheses and only for a single such singularity. They therefore do not yield a fixed-scale planar Voronoi law for arbitrary configurations of finite essential singularities and ordinary poles, nor the accompanying atomic and microscopic cluster laws at the essential sites.

\subsection{Our setup}
We study functions of the form
\begin{equation}\label{eq:mixed}
f(z)=\frac{P(z)}{Q(z)}\exp\!\left(\frac{S(z)}{T(z)}\right),
\end{equation}
where $P,Q,S,T\in\C[z]$, $P,Q\ne0$, $\gcd(P,Q)=\gcd(S,T)=1$, and $T$ is nonconstant. Their finite singular set is
\[
\Sigma=\{z:T(z)Q(z)=0\}=\{a_1,\ldots,a_N\}.
\]
A zero of \(T\) is a finite essential singularity of \(f\); if \(Q\) also vanishes at that point, this only changes the local algebraic order. The ordinary poles are precisely the zeros of \(Q\) that are not zeros of \(T\). For a site \(a_i\in\Sigma\), let \(m_i\) be its multiplicity as a zero of \(T\), and set \(m_i=0\) at ordinary-pole sites. Let $H$ be the polynomial part of $S/T$, and put $h=\deg H$ if $H$ is nonconstant and $h=0$ otherwise.

After the monic normalizations specified in Section~\ref{sec:prelim}, let $T_0$ be the product of one factor $z-c$ over each distinct zero $c$ of $T$, and let $Q_\ast$ be the corresponding product over the distinct zeros of $Q$ that are not zeros of $T$. The pole-clearing polynomial used throughout is
\[
W=T\,T_0\,Q_\ast,
\qquad
 d:=\deg W=\sum_{i=1}^N m_i+N,
\qquad
\kappa:=d+h-1.
\]
With $E=S/T$ and $P_T$ denoting the factor of $P$ supported on the zeros of $T$, the reduction to a polynomial problem is exact: there are polynomials $B_n$ such that
\begin{equation}\label{eq:nthder}
f^{(n)}(z)=\frac{P_T(z)\,B_n(z)}{Q(z)\,W(z)^n}e^{E(z)}.
\end{equation}
On $\C\setminus\Sigma$, the factor outside $B_n$ is holomorphic and nonvanishing, so the zeros of $f^{(n)}$ away from the singular set are exactly the zeros of $B_n$. Proposition~\ref{prop:rec} gives the recurrence for $B_n$, and Proposition~\ref{prop:deg} shows that $\deg B_n=\kappa n+O(1)$.

\subsection{Main results}\label{subsec:main-results}
Here, vague convergence on the finite plane means convergence against compactly supported continuous test functions. Fixed scale means that this convergence is taken in the original \(z\)-plane, with normalization by the full degree \(\deg B_n=\kappa n+O(1)\), and without rescaling about a singular point. With the notation of \eqref{eq:nthder}, the main conclusions are the following.
\begin{enumerate}
\item \textbf{Fixed-scale law.} Theorem~\ref{thm:limit} proves that
\[
\mu_n:=\frac{1}{\deg B_n}\sum_{B_n(\zeta)=0}\operatorname{mult}_\zeta(B_n)\,\delta_\zeta
\]
converges vaguely on the finite plane to
\[
\mu_{\mathrm{fix}}
=\frac{1}{\kappa}\sum_{m_i\ge1}m_i\,\delta_{a_i}
+\frac{1}{2\pi\kappa}\sum_{1\le i<j\le N}
\frac{|a_i-a_j|}{|\cdot-a_i|\,|\cdot-a_j|}\,\dd\ell\big|_{E_{ij}},
\]
where $E_{ij}$ is the Voronoi edge between $a_i$ and $a_j$, $\dd\ell$ is arclength, and an empty edge contributes zero. If $h=0$, this is a probability measure on $\C$; if $h>0$, its finite-plane mass is $1-h/\kappa$ and the remaining mass $h/\kappa$ escapes to infinity on $\widehat\C$.
\item \textbf{Derivative zeros.} On $\C\setminus\Sigma$, the factor multiplying $B_n$ in \eqref{eq:nthder} is holomorphic and nonvanishing, and Proposition~\ref{prop:local} shows that $B_n$ has no zeros at the singular sites. Thus the same fixed-scale law is equivalently a law for the zeros of $f^{(n)}$ in $\C\setminus\Sigma$, counted with the normalization $\deg B_n$; Corollary~\ref{cor:fixed-scale-derivative-zeros} records this explicitly.
\item \textbf{Microscopic clusters.} The atom at an essential site $a_i$ represents $m_i n+o(n)$ zeros in every sufficiently small fixed disk around $a_i$. On the scale $z-a_i\asymp n^{-1/m_i}$, Theorem~\ref{thm:full-local-microscopic} gives a limiting law depending only on the leading principal term of $S/T$ at $a_i$: the reciprocal Marchenko--Pastur law for $m_i=1$, and the higher-order multiple-Laguerre, equivalently Laguerre Muttalib--Borodin, law for $m_i\ge2$.
\item \textbf{Sublinear chamber law and final set.} Inside the Voronoi cell of an essential singularity $a_i$ (an \emph{essential Voronoi cell}), away from $a_i$ itself, the first nontrivial normalization is $n^{m_i/(m_i+1)}$. Theorem~\ref{thm:sublinear-essential} identifies the chamberwise zero measure as $(2\pi)^{-1}\Delta$ of the maximum of the active Wright saddle phases, with the active set computed by Proposition~\ref{prop:active-wright-coefficients}. Corollary~\ref{cor:regular-final-set} gives the corresponding final-set statement away from the explicitly excluded finite-plane loci.
\end{enumerate}
The transition regimes not treated by these theorems are the wall and tie scales in the sublinear saddle problem, the local transition limits near Voronoi edges and vertices, interactions between microscopic and sublinear scales near an essential singularity, and the outer scale associated with a nonconstant polynomial part of $S/T$; Section~\ref{sec:open} lists these as open problems.

\begin{figure}[t]
\centering
\includegraphics[width=\linewidth]{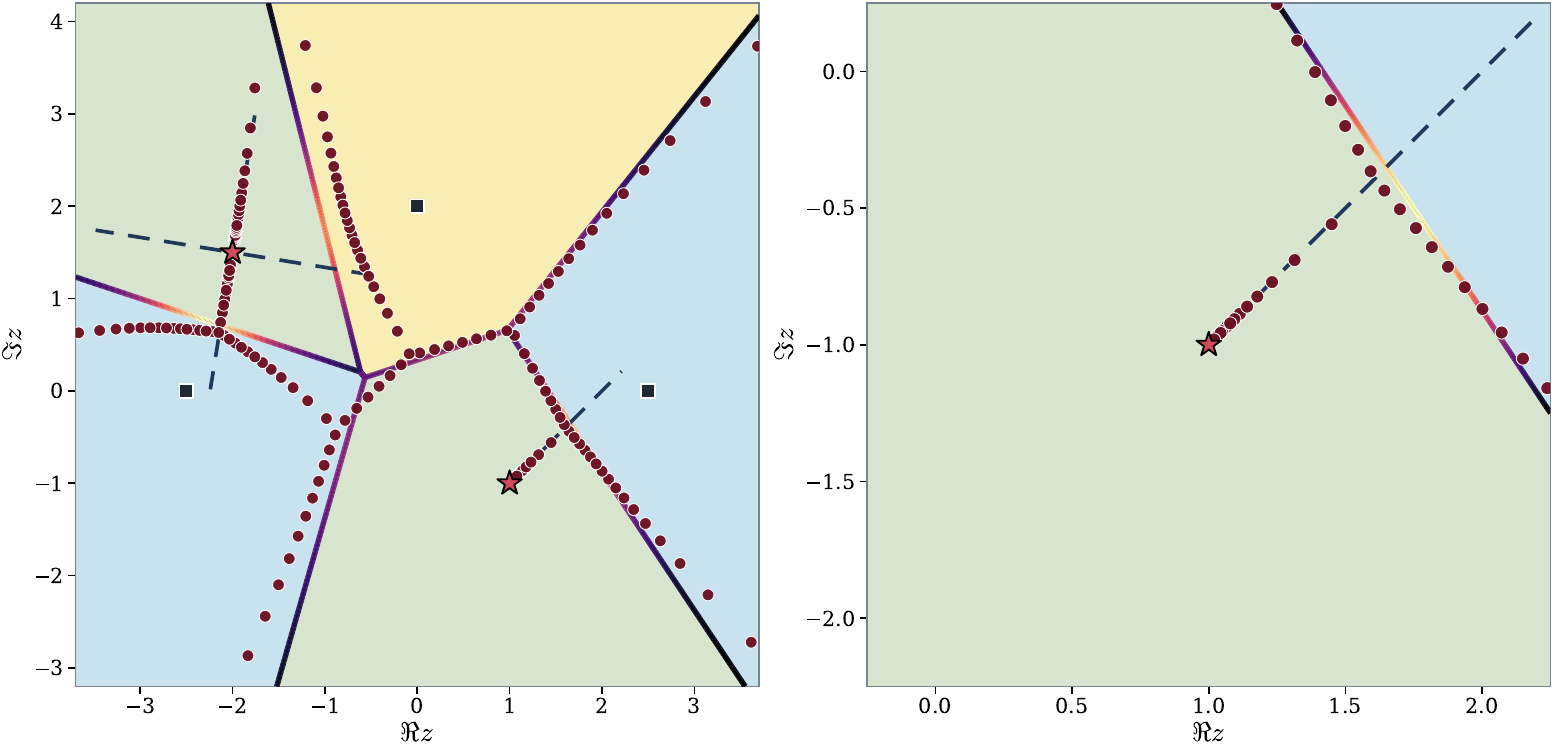}
\caption{Numerical illustration for $f(z)=\frac{1}{(z+5/2)(z-2i)(z-5/2)}\exp\!\left(\frac{(3-i)/2}{(z+2-3i/2)^2}+\frac{-1-i}{z-1+i}\right).$ Here $S/T$ has a double pole at $z=-2+3i/2$ and a simple pole at $z=1-i$. The small dark dots shown are finite zeros of $f^{(30)}$ in the displayed plotting windows; square markers denote ordinary poles and star markers denote finite essential singularities. The left panel shows the fixed-scale picture, and the right panel zooms near $z=1-i$. The background marks the Voronoi cells of the finite singular sites, the black line segments and rays are Voronoi edges, the shaded band schematically indicates the limiting edge density in \eqref{eq:measure}, and the dashed rays mark local saddle-change curves.}
\label{fig:iterated-derivative-voronoi}
\end{figure}

\subsection{Outline of the paper}\label{subsec:outline}
Section~\ref{sec:prelim} fixes notation for the hyperexponential class, Voronoi diagrams, and zero-counting measures. Section~\ref{sec:rec} constructs the polynomial sequence $B_n$, proves its local factorizations at finite singularities, and computes its degree growth. Section~\ref{sec:coef} proves the coefficient asymptotics on Voronoi cells, including the finite-annulus Picard--Lefschetz input for essential singularities. Section~\ref{sec:global} proves the fixed-scale Voronoi law. Section~\ref{sec:microscopic} proves the microscopic laws at essential singularities, and Section~\ref{sec:sublinear} proves the chamberwise sublinear zero-counting and final-set statements inside essential Voronoi cells. Section~\ref{sec:consequences} records the resulting final-set and cluster corollaries, Section~\ref{sec:open} lists open problems, and Appendix~\ref{app:mixed} gives the elementary characterization of the hyperexponential functions used here.

\subsection*{Acknowledgements}
We are grateful to Rikard B\o gvad and Per Alexandersson for valuable discussions.

\section{Notation and preliminaries}\label{sec:prelim}

\subsection{Polynomials, zeros, and orders}

 For a nonzero polynomial $R\in\C[z]$, we write $\deg R$ for its degree, $\Z(R)$ for its set of distinct finite zeros, $\rad(R)$ for its \emph{squarefree part} (the monic polynomial whose zeros are exactly the elements of $\Z(R)$, each with multiplicity one), and $\lc(R)$ for its leading coefficient. For a meromorphic function $F$ and a point $a\in\C$, we write $\ord_a F$ for the order of $F$ at $a$ (positive for a zero, negative for a pole, and zero otherwise).

\subsection{Voronoi diagrams}\label{sec:voronoi}

Let $\Sigma=\{a_1,\dots,a_N\}\subset\C$ be a finite set of points called \emph{sites}. The \emph{Voronoi cell} of $a_i$ is
$$\mathcal V_i:=\bigl\{z\in\C:|z-a_i|\le|z-a_j|\text{ for all } 1\le j\le N\bigr\},$$
and $\mathcal V_i^\circ$ denotes its interior, consisting of all points for which $a_i$ is the unique nearest site. For $i\ne j$, define
$$E_{ij}:=
\begin{cases}
\mathcal V_i\cap\mathcal V_j, & \text{if } \mathcal V_i\cap\mathcal V_j \text{ is one-dimensional},\\
\varnothing, & \text{otherwise}.
\end{cases}$$
Each nonempty $E_{ij}$ lies on the perpendicular bisector of $[a_i,a_j]$. The \emph{Voronoi diagram} of $\Sigma$, denoted $\operatorname{Vor}(\Sigma)$, is the union of the edges $E_{ij}$ and the Voronoi vertices. We write $\dd\ell$ for arclength measure on a curve.

\subsection{Logarithmic potentials and zero-counting measures}
\label{sec:pottheory}

A subharmonic function $u:\C\to[-\infty,\infty)$ has a distributional Laplacian $\Delta u$, and in our normalization $(2\pi)^{-1}\Delta u$ is a nonnegative Radon measure; see Ransford \cite{Ra}. For a nonzero polynomial $R$ of degree $n\ge1$, the function $\frac{1}{n}\log|R|$ is subharmonic and its distributional Laplacian is $2\pi$ times the \emph{normalized zero-counting measure}
$$\mu_R:=\frac{1}{n}\sum_{\zeta:\,R(\zeta)=0}\operatorname{mult}_\zeta(R)\,\delta_\zeta,$$
where $\delta_\zeta$ denotes the unit point mass at $\zeta$. We say that a sequence of measures $\mu_n$ converges \emph{vaguely} on $\C$ to $\mu$ if $\int\varphi\dd\mu_n\to\int\varphi\dd\mu$ for every continuous function $\varphi$ with compact support. We say that $\mu_n$ converges \emph{weakly} on $\C$ to $\mu$ if this holds for every bounded continuous function $\varphi:\C\to\mathbb R$, and \emph{weakly} on $\widehat\C$ if it holds for every continuous function $\varphi:\widehat\C\to\mathbb R$, where $\widehat\C=\C\cup\{\infty\}$ is the Riemann sphere. We use \(\xrightarrow{\;v\;}\) and \(\xrightarrow{\;w\;}\), when they appear, for vague and weak convergence, respectively, on the space specified in the surrounding text.

Here and below, $D\Subset U$ means that $D$ is a relatively compact subset of $U$, and a subscript on an $O$-term indicates the permitted dependence of the implied constant.

We use $\log^+x:=\max\{\log x,0\}$ for $x>0$ and $\log^+0:=0$.

For a finite Borel measure $\mu$ on $\C$ for which the integral below converges absolutely, we write
$$C_\mu(z):=\int_{\C}\frac{\dd\mu(\zeta)}{z-\zeta},\qquad z\in\C\setminus\operatorname{supp}\mu,$$
for its Cauchy transform. If $\phi:X\to Y$ is measurable and $\mu$ is a measure on $X$, then $\phi_\#\mu$ denotes the pushforward measure on $Y$.

\subsection{Notation for hyperexponential functions}

Let $P,Q,S,T\in\C[z]$ with $P,Q\ne0$, $\gcd(P,Q)=\gcd(S,T)=1$, and $T$ nonconstant. Consider
\begin{equation}\label{eq:fdef}
f(z):=\frac{P(z)}{Q(z)}\exp\!\left(\frac{S(z)}{T(z)}\right).
\end{equation}
Then $f$ is holomorphic on $\C\setminus\bigl(\Z(T)\cup \Z(Q)\bigr)$. Each point of $\Z(T)$ is an essential singularity of $f$, while each point of $\Z(Q)\setminus \Z(T)$ is a pole.
To avoid recording an inessential constant rational prefactor, we normalize the case $P/Q\equiv\alpha\ne0$: choose $c\in\C$ with $e^c=\alpha$ and replace $(P,Q,S)$ by $(1,1,S+cT)$, which leaves $f$ unchanged. Thus, after this normalization,
\begin{equation}\label{eq:prefactor}
\frac{P}{Q}\equiv 1\qquad\text{or}\qquad\frac{P}{Q}\not\equiv\text{const}.
\end{equation}
Set
$$p:=\deg P,\qquad q:=\deg Q.$$
After common rescalings of $P,Q$ and of $S,T$, we assume that $T$ is monic and that $Q=1$ if $q=0$, whereas $Q$ is monic if $q>0$. Write
\begin{equation}\label{eq:Edef}
E(z):=\frac{S(z)}{T(z)}=H(z)+\frac{M(z)}{T(z)},\qquad \deg M<\deg T,
\end{equation}
where $H\in\C[z]$ is the polynomial part and $M\in\C[z]$ is the remainder. Since $M=S-HT$ and $\gcd(S,T)=1$, one has $\gcd(M,T)=1$; in particular $M\ne0$. Define
\begin{equation}\label{eq:hdef}
h:=
\begin{cases}
\deg H, & \text{if } H \text{ is nonconstant},\\
0, & \text{if } H \text{ is constant}.
\end{cases}
\end{equation}

Write $\check t:=|\Z(T)|$ for the number of distinct zeros of~$T$, and let $c_1,\dots,c_{\check t}$ be those zeros with multiplicities $m_1,\dots,m_{\check t}$. Since $\gcd(S,T)=1$, $m_j$ is also the pole order of $E=S/T$ at $c_j$. Put
$$T_0:=\rad(T)=\prod_{j=1}^{\check t}(z-c_j).$$
Write $\check q:=|\Z(Q)\setminus \Z(T)|$, and let $b_1,\dots,b_{\check q}$ be the distinct zeros of $Q$ that do not belong to $\Z(T)$, with multiplicities $\ell_1,\dots,\ell_{\check q}$. Set
$$Q_\ast(z):=\prod_{k=1}^{\check q}(z-b_k).$$
Set
$$P_T(z):=\prod_{j=1}^{\check t}(z-c_j)^{p_j},\qquad p_j:=\ord_{c_j}P,\qquad \nu_j:=\ord_{c_j}Q,$$
and put
\begin{equation}\label{eq:Psh}
P_\sharp:=\frac{P}{P_T}.
\end{equation} 
Then $P_\sharp$ is coprime to $T$, so $P_\sharp(c_j)\ne0$ for all $j$. The factor $P_T$ records the zeros of $P$ at $\Z(T)$, which contribute fixed local orders at the essential singularities; zeros of $P$ away from $\Z(T)$ are accounted for separately in the polynomial factor of $f^{(n)}$ (see~\eqref{eq:fnrep} below).

Define
\begin{equation}\label{eq:Wdef}
W(z):=T(z)\,T_0(z)\,Q_\ast(z)=\prod_{j=1}^{\check t}(z-c_j)^{m_j+1}\prod_{k=1}^{\check q}(z-b_k).
\end{equation}
It is monic, of degree
\begin{equation}\label{eq:ddef}
d:=\deg W=\deg T+\check t+\check q.
\end{equation}
The finite singular set of $f$ is
\begin{equation}\label{eq:Sigmadef}
\Sigma:=\Z(T)\cup \Z(Q)=\{a_1,\dots,a_N\},\qquad N:=\check t+\check q.
\end{equation}
We write $\rho(z):=\operatorname{dist}(z,\Sigma)=\min_{1\le i\le N}|z-a_i|$ for the Euclidean distance from $z$ to $\Sigma$.

Set
\begin{equation}\label{eq:Lambdadef}
\Lambda(z):=E'(z)+\frac{P_T'(z)}{P_T(z)}-\frac{Q'(z)}{Q(z)},
\end{equation}
and
\begin{equation}\label{eq:Udef}
U(z):=W(z)\,\Lambda(z).
\end{equation}
By construction, $U$ is a polynomial: at each $c_j\in \Z(T)$ the factor $(z-c_j)^{m_j+1}$ in $W$ cancels the pole of $E'$ of order $m_j+1$, while $P_T'/P_T$ and $Q'/Q$ have at most simple poles there; at each $b_k\in \Z(Q)\setminus \Z(T)$ the factor $z-b_k$ cancels the simple pole of $Q'/Q$.

Finally, set
\begin{equation}\label{eq:kappadef}
\kappa:=d+h-1,
\end{equation}
and
\begin{equation}\label{eq:sigmadef}
\sigma:=
\begin{cases}
0, & \text{if } h=0,\\
\log|h\tau_h|, & \text{if } h>0 \text{ and } H(z)=\tau_h z^h+\cdots.
\end{cases}
\end{equation}

The integer $h$ governs the growth of $e^{E(z)}$ at infinity: when $h=0$, no positive proportion of the zeros escapes to $\infty$; when $h\ge1$, the polynomial part of the exponent sends a fraction $h/\kappa$ of the normalized zero mass to $\infty$ in the spherical limit described later.

Since $d=\deg T+\check t+\check q\ge2$, one has $\kappa\ge1$.

\section{Properties of the polynomial factor \texorpdfstring{$B_n$}{B\_n}}\label{sec:rec}

\subsection{Recurrence for \texorpdfstring{$B_n$}{B\_n}}

\begin{prop}[recurrence for \(B_n\)]\label{prop:rec}
For $f(z)$ given by \eqref{eq:fdef}, there is a unique polynomial sequence $\{B_n\}_{n\ge0}$ such that
\begin{equation}\label{eq:fnrep}
f^{(n)}(z)=\frac{P_T(z)\,B_n(z)}{Q(z)\,W(z)^n}\,e^{E(z)},\qquad B_0=P_\sharp,
\end{equation}
and satisfying the recurrence
\begin{equation}\label{eq:rec}
B_{n+1}=W\,B_n'+(U-nW')\,B_n.
\end{equation}
Moreover, $W$ is the unique monic polynomial of least degree such that $W\Lambda\in\C[z]$.
\end{prop}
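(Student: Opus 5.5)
The plan is to define $B_n$ by induction, starting from $B_0=P_\sharp$, and to check \eqref{eq:fnrep} and \eqref{eq:rec} together. For $n=0$, note that $P_T B_0/Q=P/Q$, so $f=P_T P_\sharp e^{E}/Q$ and \eqref{eq:fnrep} holds with $W^0=1$.

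For the inductive step, assume \eqref{eq:fnrep} holds for some $n$ with $B_n$ a polynomial. Write $f^{(n)}=G\,B_n\,W^{-n}$ with $G:=P_T e^{E}/Q$. By \eqref{eq:Lambdadef}, the logarithmic derivative of $G$ is exactly $\Lambda$, so $G'=\Lambda G$. Differentiating gives
\[
f^{(n+1)}=G\,W^{-n}\Bigl(B_n'+\Lambda B_n-n\frac{W'}{W}B_n\Bigr)=\frac{G}{W^{n+1}}\bigl(WB_n'+(W\Lambda)B_n-nW'B_n\bigr).
\]
Since $U=W\Lambda$ is a polynomial (as verified after \eqref{eq:Udef}), the bracket is a polynomial. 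Setting $B_{n+1}$ equal to it proves both \eqref{eq:fnrep} at $n+1$ and \eqref{eq:rec}.

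Uniqueness is immediate. The prefactor $P_T e^{E}/(QW^n)$ is a nonzero meromorphic function, so \eqref{eq:fnrep} determines $B_n$ as $f^{(n)}$ divided by it.

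The only step needing care is the minimality of $W$. Its justification is a local pole-order computation at each point of $\Sigma$, and I expect this to be the main obstacle, because one must rule out cancellation between the terms of $\Lambda$.
\begin{itemize}
\item At an essential site $c_j$, the function $E'$ has a pole of exact order $m_j+1\ge2$, because $E$ has a pole of order $m_j$ there. The terms $P_T'/P_T$ and $Q'/Q$ have at most simple poles, so they cannot cancel it. Hence $\ord_{c_j}\Lambda=-(m_j+1)$.
\item At an ordinary pole $b_k$, both $E'$ and $P_T'/P_T$ are holomorphic, while $-Q'/Q$ has a simple pole with residue $-\ell_k\neq0$. Hence $\ord_{b_k}\Lambda=-1$.
\item $\Lambda$ is holomorphic everywhere else.
\end{itemize}
Now let $V$ be any polynomial with $V\Lambda\in\C[z]$. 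It must vanish to order at least $m_j+1$ at each $c_j$ and at least $1$ at each $b_k$. Therefore $W\mid V$. If $V$ is monic with $\deg V\le\deg W$, this forces $V=W$, which gives both the minimality and the uniqueness claimed for $W$.
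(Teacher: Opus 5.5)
Your proposal is correct and follows essentially the same route as the paper: you define $B_n$ by the recurrence starting from $B_0=P_\sharp$, verify \eqref{eq:fnrep} inductively by differentiating with $\Lambda$ as the logarithmic derivative of $P_Te^E/Q$, and read off the pole orders of $\Lambda$ at the $c_j$ and $b_k$ to identify $W$. Two points are only slightly more explicit than in the paper. Your uniqueness argument divides out the nonvanishing prefactor instead of citing the recurrence. Your minimality argument checks exactness of the pole orders and uses the divisibility $W\mid V$.
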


\begin{remark}
Since $f=P_T P_\sharp Q^{-1}e^E$, the representation \eqref{eq:fnrep} gives
\begin{equation}\label{eq:Anrep}
A_n(z):=\frac{f^{(n)}(z)}{f(z)}=\frac{B_n(z)}{P_\sharp(z)\,W(z)^n}
\end{equation}
on $\C\setminus(\Sigma\cup \Z(P_\sharp))$. The prefactor in \eqref{eq:fnrep} is holomorphic and nonvanishing on $\C\setminus\Sigma$, so the zeros of $f^{(n)}$ away from $\Sigma$ are exactly the zeros of $B_n$.
\end{remark}

\begin{proof}[Proof of Proposition~\ref{prop:rec}]
Write
$$f(z)=\frac{P_T(z)P_\sharp(z)}{Q(z)}\,e^{E(z)}.$$
The poles of $\Lambda$ are easy to read off locally. At a zero $c_j$ of $T$ of multiplicity $m_j$, the function $E'$ has a pole of order $m_j+1$, while $P_T'/P_T$ and $Q'/Q$ have at most simple poles there. At a zero $b_k$ of $Q$ outside $\Z(T)$, the only pole of $\Lambda$ is the simple pole of $-Q'/Q$. There are no other poles. Therefore the unique monic polynomial of least degree clearing all poles of $\Lambda$ is precisely $W$ from~\eqref{eq:Wdef}, so $U=W\Lambda\in\C[z]$.

Now define $B_0:=P_\sharp$ and $B_{n+1}$ by~\eqref{eq:rec}. Assume \eqref{eq:fnrep} holds for some $n$. Differentiating and using~\eqref{eq:Lambdadef} gives
\begin{align*}
f^{(n+1)}
&=\left(\frac{P_T B_n}{Q W^n}\right)'e^E+E'\frac{P_T B_n}{Q W^n}\,e^E \\
&=\frac{P_T}{Q W^{n+1}}\Bigl(W B_n'+W\Bigl(E'+\tfrac{P_T'}{P_T}-\tfrac{Q'}{Q}\Bigr)B_n-nW'B_n\Bigr)e^E \\
&=\frac{P_T}{Q W^{n+1}}\bigl(WB_n'+(U-nW')B_n\bigr)e^E \\
&=\frac{P_T B_{n+1}}{Q W^{n+1}}\,e^E.
\end{align*}
By induction, \eqref{eq:fnrep} holds for all $n$. Since~\eqref{eq:rec} with the initial value $B_0=P_\sharp$ determines the sequence uniquely, this proves the proposition.\qedhere
\end{proof}

\subsection{Local factorizations of \texorpdfstring{$B_n$}{B\_n} at finite singular points}\label{sec:local}

\begin{prop}[exact local factorizations]\label{prop:local}
\begin{enumerate}
\item For every zero $c_j$ of $T$ and every $n\ge0$,
$$B_n(c_j)=U(c_j)^n P_\sharp(c_j)\ne0.$$
Equivalently, $f^{(n)}e^{-E}$ has a meromorphic continuation to a neighborhood of $c_j$ with $\ord_{c_j}\!\bigl(f^{(n)}e^{-E}\bigr)=p_j-\nu_j-n(m_j+1)$; explicitly, there exists a holomorphic function $\phi_{j,n}$ near $c_j$ with $\phi_{j,n}(c_j)\ne0$ such that
\begin{equation}\label{eq:localessentialorder}
f^{(n)}(z)=(z-c_j)^{p_j-\nu_j-n(m_j+1)}\phi_{j,n}(z)\,e^{E(z)}.
\end{equation}
\item For every pole $b_k$ of $P/Q$ outside $\Z(T)$ and every $n\ge0$,
$$B_{n+1}(b_k)=-(\ell_k+n)\,W'(b_k)\,B_n(b_k),$$
so $B_n(b_k)\ne0$ for all $n$. Equivalently, $f^{(n)}$ is meromorphic near $b_k$ with $\ord_{b_k}f^{(n)}=-\ell_k-n$; explicitly, there exists a holomorphic function $\psi_{k,n}$ near $b_k$ with $\psi_{k,n}(b_k)\ne0$ such that
\begin{equation}\label{eq:localpoleorder}
f^{(n)}(z)=(z-b_k)^{-\ell_k-n}\psi_{k,n}(z).
\end{equation}
\end{enumerate}
\end{prop}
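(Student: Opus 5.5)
The plan is to evaluate the recurrence \eqref{eq:rec} of Proposition~\ref{prop:rec} at the singular points, using only the vanishing orders of $W$ there. Then I would translate the resulting nonvanishing of $B_n$ into the stated local orders via \eqref{eq:fnrep}.

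\emph{Part 1.} At $c_j$ the factor $(z-c_j)^{m_j+1}$ divides $W$ with $m_j+1\ge2$, so $W(c_j)=0$ and also $W'(c_j)=0$. Evaluating \eqref{eq:rec} at $z=c_j$ therefore gives $B_{n+1}(c_j)=U(c_j)B_n(c_j)$. Since $B_0=P_\sharp$, induction yields $B_n(c_j)=U(c_j)^nP_\sharp(c_j)$.

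It remains to show $U(c_j)\ne0$, and I would do this through the leading Laurent term. Write $E=M/T+H$ with $E(z)\sim \alpha(z-c_j)^{-m_j}$, where $\alpha\ne0$ because $\gcd(M,T)=1$. Then $E'(z)\sim -m_j\alpha(z-c_j)^{-m_j-1}$, while $P_T'/P_T-Q'/Q$ has at most a simple pole. Hence
\[
U(c_j)=\lim_{z\to c_j}(z-c_j)^{m_j+1}\Lambda(z)\cdot\prod_{i\ne j}(c_j-c_i)^{m_i+1}\prod_k(c_j-b_k),
\]
and the limit equals $-m_j\alpha\ne0$. Together with $P_\sharp(c_j)\ne0$, this gives $B_n(c_j)\ne0$.

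For the order statement, near $c_j$ we have $f^{(n)}e^{-E}=P_TB_n/(QW^n)$. Here $\ord_{c_j}P_T=p_j$, $\ord_{c_j}Q=\nu_j$, $\ord_{c_j}W^n=n(m_j+1)$, and $\ord_{c_j}B_n=0$. This gives \eqref{eq:localessentialorder} with $\phi_{j,n}=(z-c_j)^{-p_j+\nu_j+n(m_j+1)}P_TB_n/(QW^n)$, which is holomorphic and nonvanishing near $c_j$.

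\emph{Part 2.} At $b_k$ the point is a simple zero of $W$, so $W(b_k)=0$ and $W'(b_k)\ne0$. Evaluating \eqref{eq:rec} gives $B_{n+1}(b_k)=(U(b_k)-nW'(b_k))B_n(b_k)$.

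Next I compute $U(b_k)$. The only singular part of $\Lambda$ at $b_k$ is $-\ell_k/(z-b_k)$: indeed $E$ and $P_T$ are holomorphic there, and $P_T(b_k)\ne0$ because $b_k\notin\Z(T)$. Since $W=(z-b_k)\widetilde W$ with $\widetilde W(b_k)=W'(b_k)$, we get $U(b_k)=-\ell_kW'(b_k)$. This yields the stated recurrence $B_{n+1}(b_k)=-(\ell_k+n)W'(b_k)B_n(b_k)$. As $\ell_k\ge1$, the factors are nonzero.

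For the base case, note that $B_0(b_k)=P_\sharp(b_k)=P(b_k)/P_T(b_k)\ne0$, because $\gcd(P,Q)=1$ and $P_T(b_k)\ne0$. So $B_n(b_k)\ne0$ for all $n$.

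Finally, $f^{(n)}=P_TB_ne^E/(QW^n)$ has order $0-\ell_k-n$ at $b_k$, since $e^E$ and $P_T$ are holomorphic and nonvanishing there. Multiplying by $(z-b_k)^{\ell_k+n}$ gives $\psi_{k,n}$, proving \eqref{eq:localpoleorder}.

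The only real obstacle is the nonvanishing of $U(c_j)$. That step requires careful bookkeeping that the leading pole coefficient of $E'$ is exactly $-m_j\alpha$, uncontaminated by the lower-order poles of $P_T'/P_T$ and $Q'/Q$. It holds because $m_j+1\ge2$, so those simple poles contribute nothing to the limit.
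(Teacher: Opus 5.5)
Your proposal is correct and follows the paper's proof essentially step by step. You evaluate \eqref{eq:rec} at $c_j$ and $b_k$ using the vanishing orders of $W$, compute $U(c_j)=-m_j\lambda_{j,m_j}\widetilde W_j(c_j)\neq0$ and $U(b_k)=-\ell_kW'(b_k)$, induct from $B_0=P_\sharp$, and read off the local orders from \eqref{eq:fnrep}. The only point you leave implicit is that your product $\prod_{i\ne j}(c_j-c_i)^{m_i+1}\prod_k(c_j-b_k)$ is nonzero, which holds because the sites are distinct.
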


\begin{proof}[Proof of Proposition~\ref{prop:local}]
Fix~$j$. Since $W$ has a zero of order $m_j+1\ge2$ at $c_j$, both $W(c_j)$ and $W'(c_j)$ vanish. Evaluating~\eqref{eq:rec} at $c_j$ gives
$$B_{n+1}(c_j)=U(c_j)\,B_n(c_j).$$
Write
$$W(z)=(z-c_j)^{m_j+1}\widetilde W_j(z),\qquad\widetilde W_j(c_j)\ne0.$$
Because $\gcd(S,T)=1$, the Laurent expansion of $E$ at $c_j$ begins with a nonzero principal term
$$E(z)=\lambda_{j,m_j}(z-c_j)^{-m_j}+O\bigl((z-c_j)^{-m_j+1}\bigr),\qquad\lambda_{j,m_j}\ne0,$$
so
$$E'(z)=-m_j\lambda_{j,m_j}(z-c_j)^{-m_j-1}+O\bigl((z-c_j)^{-m_j}\bigr).$$
The other two terms in $\Lambda$ have at most simple poles at $c_j$; multiplying by $W$ annihilates them at $c_j$. Hence
$$U(c_j)=-m_j\lambda_{j,m_j}\widetilde W_j(c_j)\ne0.$$
Since $B_0=P_\sharp$ and $P_\sharp(c_j)\ne0$, the first claim follows by induction. The local form~\eqref{eq:localessentialorder} is just~\eqref{eq:fnrep} rewritten with the exact orders of $P_T$, $Q$, and $W$ at $c_j$.

Now fix~$k$. Since $b_k\notin \Z(T)$, the function $E'+P_T'/P_T$ is holomorphic at $b_k$, whereas
$$-\frac{Q'(z)}{Q(z)}=-\frac{\ell_k}{z-b_k}+O(1).$$
Write
$$W(z)=(z-b_k)\widehat W_k(z),\qquad\widehat W_k(b_k)=W'(b_k)\ne0.$$
Then $U(b_k)=-\ell_k W'(b_k)$, and evaluating~\eqref{eq:rec} at $b_k$ gives
$$B_{n+1}(b_k)=\bigl(U(b_k)-nW'(b_k)\bigr)B_n(b_k)=-(\ell_k+n)W'(b_k)\,B_n(b_k).$$
Since $b_k\notin \Z(P)$ by $\gcd(P,Q)=1$ and $b_k\notin \Z(T)$, one has $B_0(b_k)=P_\sharp(b_k)\ne0$. The nonvanishing follows by induction. Since $E$ is holomorphic at $b_k$, \eqref{eq:fnrep} yields~\eqref{eq:localpoleorder} after absorbing $e^E$ into the holomorphic unit.
\end{proof}

Since $T$ is nonconstant, \(\Z(T)\ne\varnothing\). The first part of Proposition~\ref{prop:local} therefore also shows that \(B_n\not\equiv0\) for every \(n\ge0\), so the degrees and leading coefficients used below are well defined.

\subsection{Degree growth and leading coefficients of \texorpdfstring{$B_n$}{B\_n}}\label{sec:deg}

To state the result in the case $h=0$, we need an auxiliary function. Since $H$ is constant when $h=0$, the function $f$ equals $e^H$ times the function $G$ obtained by removing $H$ from the exponent:
\begin{equation}\label{eq:Gdef}
G(z):=\frac{P(z)}{Q(z)}\exp\!\left(\frac{M(z)}{T(z)}\right),
\end{equation}
where $M$ is the remainder in~\eqref{eq:Edef}. Then $G$ has a Laurent expansion at infinity
$$G(z)=\sum_{\nu=-\infty}^{p-q}G_\nu z^\nu,\qquad G_{p-q}\ne0.$$
When $p\ge q$, let $J\ge1$ be the smallest integer such that $G_{-J}\ne0$. Such $J$ exists: if $G_{-J}=0$ for all $J\ge1$, then $G$ agrees near $\infty$ with a polynomial $R$. Hence
$$e^{M/T}=\frac{RQ}{P}$$
on a nonempty open subset of $\C\setminus\bigl(\Z(P)\cup\Z(Q)\cup \Z(T)\bigr)$. By the identity theorem on this connected domain, $e^{M/T}$ would be rational. This is impossible because $\gcd(M,T)=1$, so $M/T$ has a pole at every zero of $T$, and $e^{M/T}$ has an essential singularity at each such point.

\begin{prop}[degree growth]\label{prop:deg}
Let $\gamma_n:=\lc(B_n)$.
\begin{enumerate}
\item\label{it:deg-h} If $h>0$ and $H(z)=\tau_h z^h+\cdots$, then
$$\deg B_n=\deg B_0+n\kappa,\qquad\gamma_n=(h\tau_h)^n\gamma_0.$$
\item\label{it:deg-plq} If $h=0$ and $p<q$, then for every $n\ge0$,
$$\deg B_n=\deg B_0+n(d-1),\qquad\gamma_n=\gamma_0(-1)^n\frac{\Gamma(n+q-p)}{\Gamma(q-p)}.$$
Equivalently, $\gamma_{n+1}=(p-q-n)\gamma_n$ for all $n\ge0$.
\item\label{it:deg-pgq} If $h=0$ and $p\ge q$, then for $0\le n\le p-q$,
$$\deg B_n=\deg B_0+n(d-1),\qquad\gamma_n=\gamma_0\frac{(p-q)!}{(p-q-n)!},$$
whereas for $n\ge p-q+1$,
$$\deg B_n=q-\deg P_T-J+n(d-1),\qquad\gamma_n=G_{-J}(-1)^n\frac{\Gamma(n+J)}{\Gamma(J)}.$$
\end{enumerate}
In particular,
\[
\deg B_n=\kappa n+O(1),\qquad
\log|\gamma_n|=
\begin{cases}
n\sigma+O(1), & h>0,\\
\log n!+O(\log n), & h=0.
\end{cases}
\]
\end{prop}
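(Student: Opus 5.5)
The plan is to read off the leading behaviour of $B_n$ at $z=\infty$. When $h>0$ this comes straight from the recurrence~\eqref{eq:rec}. When $h=0$ it comes from the closed form~\eqref{eq:Anrep}, combined with the Laurent expansion of $G$ at infinity.

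\emph{Case $h>0$.} At infinity, $E'=H'+(M/T)'$ with $H'(z)=h\tau_h z^{h-1}+\cdots$ and $(M/T)'=O(z^{-2})$. Moreover $P_T'/P_T-Q'/Q=O(z^{-1})$. Hence $\Lambda(z)=h\tau_h z^{h-1}(1+o(1))$, and since $W$ is monic of degree $d$, $U=W\Lambda$ has degree $d+h-1=\kappa$ with $\lc(U)=h\tau_h$. In~\eqref{eq:rec}, the terms $WB_n'$ and $nW'B_n$ have degree at most $\deg B_n+d-1$. This is strictly less than $\deg B_n+\kappa$ because $h\ge1$. So the top coefficient of $B_{n+1}$ is $h\tau_h\gamma_n\ne0$, and induction gives $\deg B_n=\deg B_0+n\kappa$ and $\gamma_n=(h\tau_h)^n\gamma_0$.

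\emph{Case $h=0$.} Here the recurrence is degenerate: $U$ and $-nW'$ both have degree $d-1$, and their top coefficients can cancel. So instead I use $f=e^HG$ with $H$ constant. Then $f^{(n)}/f=G^{(n)}/G$, and~\eqref{eq:Anrep} gives
\[
B_n=P_\sharp\,W^n\,\frac{G^{(n)}}{G}\qquad\text{on }|z|>R.
\]
The Laurent series $G=\sum_{\nu\le p-q}G_\nu z^\nu$ converges on $|z|>R$ for $R$ large, so it may be differentiated termwise. The exponent $z^\nu$ has $n$-th derivative coefficient equal to the falling factorial $\nu(\nu-1)\cdots(\nu-n+1)$. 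This vanishes for $0\le\nu<n$ and is nonzero for $\nu<0$ or $\nu\ge n$.

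\begin{enumerate}
\item If $p<q$, the top term $z^{p-q}$ always survives. Its coefficient is $G_{p-q}(-1)^n\Gamma(n+q-p)/\Gamma(q-p)$, with exponent $p-q-n$.
\item If $p\ge q$ and $n\le p-q$, the top term survives with coefficient $G_{p-q}(p-q)!/(p-q-n)!$.
\item If $n>p-q$, all nonnegative powers are killed. The leading term is then $G_{-J}(-1)^n\frac{\Gamma(n+J)}{\Gamma(J)}z^{-J-n}$, using the existence of $J$ shown just before the proposition.
\end{enumerate}

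Dividing by $G\sim G_{p-q}z^{p-q}$ and multiplying by $P_\sharp W^n$ (with $W$ monic of degree $d$) gives the degree of $B_n$ as the order of growth at $\infty$. Since $p-\deg P_T=\deg P_\sharp=\deg B_0$, the degree is $\deg B_0+n(d-1)$ in the first two cases and $q-\deg P_T-J+n(d-1)$ in the third.

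For the leading coefficients, note that $G_{p-q}=\lc(P)/\lc(Q)\cdot e^{0}=\lc(P)$, because $M/T\to0$ at $\infty$ and $Q$ is monic. Also $\lc(P)=\lc(P_\sharp)=\gamma_0$, because $P_T$ is monic. In the first two cases the factor $G_{p-q}$ cancels against $\lc(P_\sharp)$, which gives the stated $\gamma_n$. In the third case $\lc(P_\sharp)/G_{p-q}=1$, leaving $\gamma_n=G_{-J}(-1)^n\Gamma(n+J)/\Gamma(J)$. The identity $\gamma_{n+1}=(p-q-n)\gamma_n$ in part~\ref{it:deg-plq} is immediate from the Gamma quotient.

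Finally, $\deg B_n=\kappa n+O(1)$ holds in all cases, since $\kappa=d-1$ when $h=0$. For the leading coefficients, $\log|\gamma_n|=n\sigma+O(1)$ when $h>0$. When $h=0$, Stirling's formula gives $\log\Gamma(n+c)=\log n!+O(\log n)$ for fixed $c>0$. The only delicate point is the third case of the $h=0$ analysis, which needs the existence of $J$ already established and the uniform convergence of the termwise-differentiated Laurent series on $|z|>R$. Both are routine, so I expect no serious obstacle.
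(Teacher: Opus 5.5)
Your proposal is correct and follows essentially the same route as the paper: for $h>0$ you compare degrees in the recurrence \eqref{eq:rec} using $\deg U=\kappa$ and $\lc(U)=h\tau_h$, and for $h=0$ you read off the leading Laurent term of $G^{(n)}$ at infinity. Your identity $B_n=P_\sharp W^n G^{(n)}/G$ is the paper's $B_n=Q W^n P_T^{-1}e^{-M/T}G^{(n)}$ in another form (since $P_\sharp/G=Qe^{-M/T}/P_T$), and your bookkeeping of degrees and leading coefficients, with $G_{p-q}=\lc(P)=\gamma_0$, matches the paper's.
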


\begin{proof}
If $h>0$, then $E'(z)=H'(z)+O(z^{-2})$, so the highest-degree term of $U=W\Lambda$ comes from $WH'$. Therefore
$$\deg U=d+h-1=\kappa,\qquad\lc(U)=h\tau_h.$$
In~\eqref{eq:rec} the term $UB_n$ has strictly larger degree than $WB_n'$ and $-nW'B_n$, so
$$\deg B_{n+1}=\deg B_n+\kappa,\qquad\gamma_{n+1}=(h\tau_h)\gamma_n.$$
This proves part~\ref{it:deg-h}.

Assume $h=0$. Since $H$ is constant, $f=e^H G$, and~\eqref{eq:fnrep} gives
$$B_n(z)=\frac{Q(z)W(z)^n}{P_T(z)}\,e^{-M(z)/T(z)}\,G^{(n)}(z).$$
As $z\to\infty$,
$$\frac{Q(z)W(z)^n}{P_T(z)}=z^{q+nd-\deg P_T}\bigl(1+O(z^{-1})\bigr),$$
while
$$e^{-M(z)/T(z)}=1+O(z^{-1}).$$
The leading Laurent exponent of the displayed expression for $B_n$ is obtained by adding $q+nd-\deg P_T$ to the leading Laurent exponent of $G^{(n)}$. Since $B_n$ is a polynomial by Proposition~\ref{prop:rec}, its degree is this exponent. Multiplication by $z^{q+nd-\deg P_T}(1+O(z^{-1}))$ and by $e^{-M/T}=1+O(z^{-1})$ cannot cancel the first nonzero Laurent term of $G^{(n)}$, so the leading coefficient is unchanged. Since $e^{M/T}=1+O(z^{-1})$ and $Q$ is monic, one has $G_{p-q}=\lc(P)=\lc(P_\sharp)=\gamma_0$.

If $p<q$, the leading term of $G$ is $G_{p-q}\, z^{p-q}$, so
$$G^{(n)}(z)=G_{p-q}(-1)^n\frac{\Gamma(n+q-p)}{\Gamma(q-p)}\,z^{p-q-n}+O(z^{p-q-n-1}).$$
This yields part~\ref{it:deg-plq}.

Assume now that $p\ge q$. For $0\le n\le p-q$, the leading term of $G^{(n)}$ comes from the polynomial part:
$$G^{(n)}(z)=G_{p-q}\frac{(p-q)!}{(p-q-n)!}\,z^{p-q-n}+O(z^{p-q-n-1}),$$
which gives the first formula in part~\ref{it:deg-pgq}. For $n\ge p-q+1$, every nonnegative-power term of $G$ is annihilated by $n$ derivatives, so the leading term comes from $G_{-J}z^{-J}$:
$$G^{(n)}(z)=G_{-J}(-1)^n\frac{\Gamma(n+J)}{\Gamma(J)}\,z^{-J-n}+O(z^{-J-n-1}).$$
This completes part~\ref{it:deg-pgq}. The final estimates follow from the displayed formulas and Stirling's formula.
\end{proof}

\section{Coefficient asymptotics on open Voronoi cells}\label{sec:coef}

\subsection{Translation generating function}

We start with the following identity.

\begin{prop}\label{prop:gen}
For $A_n=f^{(n)}/f$, $n=0,1,\dots$, and every $z\in\C\setminus(\Sigma\cup \Z(P_\sharp))$, one has
\begin{equation}\label{eq:gen}
\sum_{n\ge0}A_n(z)\frac{\xi^n}{n!}=\frac{P(z+\xi)Q(z)}{P(z)Q(z+\xi)}\,\exp\!\bigl(E(z+\xi)-E(z)\bigr).
\end{equation}
The series on the left-hand side, called the \emph{translation generating function}, has radius of convergence $\rho(z)=\operatorname{dist}(z,\Sigma)$.

 Consequently,
\begin{equation}\label{eq:limsup}
\limsup_{n\to\infty}\left|\frac{A_n(z)}{n!}\right|^{1/n}=\rho(z)^{-1}.
\end{equation}
\end{prop}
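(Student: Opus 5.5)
The identity is Taylor's formula for $f$ at $z$, divided by $f(z)$. The radius of convergence is then the distance to the nearest singularity of the right-hand side, viewed as a function of $\xi$.

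Fix $z\in\C\setminus(\Sigma\cup\Z(P_\sharp))$. Then $f$ is holomorphic on the disk $|w-z|<\rho(z)$, since $\Sigma$ is exactly the set of non-holomorphic points of $f$. Taylor's theorem gives
\[
f(z+\xi)=\sum_{n\ge0}f^{(n)}(z)\,\xi^n/n!
\]
for $|\xi|<\rho(z)$. We have $f(z)\ne0$: we excluded $\Z(P_\sharp)$, $P_T(z)\ne0$ because $z\notin\Z(T)$, and the exponential factor never vanishes. Dividing by $f(z)$ and using $A_n=f^{(n)}/f$ yields
\[
\sum_{n\ge0}A_n(z)\frac{\xi^n}{n!}=\frac{f(z+\xi)}{f(z)}.
\]
Writing $f=(P/Q)e^{E}$, the right-hand side is exactly \eqref{eq:gen}. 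So the series converges for $|\xi|<\rho(z)$, and its radius is at least $\rho(z)$.

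For the reverse inequality, it suffices to show that $\xi\mapsto f(z+\xi)/f(z)$ has a genuine singularity at some $\xi_0$ with $|\xi_0|=\rho(z)$. Then the power series cannot converge on any larger disk, since its sum would extend the function holomorphically across $\xi_0$. Choose a site $a_i\in\Sigma$ with $|a_i-z|=\rho(z)$ and set $\xi_0=a_i-z$.

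\textbf{Case $a_i=b_k\in\Z(Q)\setminus\Z(T)$.} By Proposition~\ref{prop:local} with $n=0$, $f$ has a pole of order $\ell_k\ge1$ at $b_k$. So $f$ is unbounded near $b_k$, and no holomorphic extension exists.

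\textbf{Case $a_i=c_j\in\Z(T)$.} By \eqref{eq:localessentialorder} with $n=0$, near $c_j$ the function $f$ is a nonzero meromorphic germ times $e^{E}$. Here $E$ has a pole of exact order $m_j\ge1$ with leading coefficient $\lambda_{j,m_j}\ne0$, so $e^E$ is unbounded along a suitable ray into $c_j$. For instance, take directions in which $\operatorname{Re}\bigl(\lambda_{j,m_j}(w-c_j)^{-m_j}\bigr)\to+\infty$; along these the exponential growth beats the polynomial factor. Hence $f$ is unbounded near $c_j$, which again precludes holomorphic extension.

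In both cases the radius is exactly $\rho(z)$. The Cauchy--Hadamard formula then gives \eqref{eq:limsup} directly, because the coefficients of the series are $A_n(z)/n!$.

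The only step needing care is the essential-singularity case. The point is that multiplying $e^{E}$ by the meromorphic germ $(z-c_j)^{p_j-\nu_j}\phi_{j,0}$ cannot cancel the singularity. The directional growth argument handles this cleanly, since $|e^{E}|$ grows like $\exp(c\,r^{-m_j})$ along such rays, while the germ grows at most polynomially in $1/r$.
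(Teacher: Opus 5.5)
Your proof is correct and follows essentially the same route as the paper: you divide Taylor's formula by $f(z)\ne0$ to get the identity, you pin down the radius by exhibiting a genuine singularity at some $z+\xi\in\Sigma$, and you finish with Cauchy--Hadamard. The only differences are minor. You check just the nearest site, and you justify the essential case by an explicit growth estimate along a ray where $\Re\bigl(\lambda_{j,m_j}(w-c_j)^{-m_j}\bigr)\to+\infty$. The paper instead shows that every point of $\Sigma-z$ is an actual singularity, and simply notes that a finite-order rational factor cannot remove an essential singularity.
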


\begin{proof}
Since $z\notin\Sigma$, $E$ is holomorphic at $z$ and $Q(z)\ne0$; since $P=P_TP_\sharp$, $\Z(P_T)\subseteq\Sigma$, and $z\notin\Z(P_\sharp)$, one has $P(z)\ne0$. Thus $f(z)$ is finite and nonzero. For $|\xi|<\rho(z)$ one has $z+\xi\notin\Sigma$, hence Taylor's formula gives
$$f(z+\xi)=\sum_{n\ge0}f^{(n)}(z)\frac{\xi^n}{n!}=f(z)\sum_{n\ge0}A_n(z)\frac{\xi^n}{n!},$$
and therefore
$$\frac{f(z+\xi)}{f(z)}=\frac{P(z+\xi)Q(z)}{P(z)Q(z+\xi)}\,\exp\!\bigl(E(z+\xi)-E(z)\bigr).$$
This proves~\eqref{eq:gen} on $|\xi|<\rho(z)$. As a function of $\xi$, the right-hand side is holomorphic for $z+\xi\notin\Sigma$, and each point with $z+\xi\in\Sigma$ is an actual singularity. Indeed, if $z+\xi=b_k\in \Z(Q)\setminus \Z(T)$, then $Q(z+\xi)^{-1}$ has a pole and $P(z+\xi)\ne0$ by $\gcd(P,Q)=1$. If $z+\xi=c_j\in \Z(T)$, then $E(z+\xi)$ has a pole, so $\exp\!\bigl(E(z+\xi)-E(z)\bigr)$ has an essential singularity; multiplication by the rational factor $P(z+\xi)/Q(z+\xi)$, which has only finite order at $c_j$, cannot remove it. Thus the singularities are exactly the points with $z+\xi\in\Sigma$, the radius of convergence is $\rho(z)$, and \eqref{eq:limsup} follows from the Cauchy--Hadamard formula.
\end{proof}

\subsection{Generating function and local data at singular sites}
Set $C_n(z):=\frac{B_n(z)}{W(z)^n}$ for $n=0,1,\dots$. 
By \eqref{eq:Anrep} and Proposition~\ref{prop:gen}, for $z\in\C\setminus(\Sigma\cup \Z(P_\sharp))$ and $|\xi|<\rho(z)$,
\begin{equation}\label{eq:Cexplicit}
\mathcal C(z,\xi):=\sum_{n\ge0}C_n(z)\frac{\xi^n}{n!}
=\frac{Q(z)}{P_T(z)}\,\frac{P(z+\xi)}{Q(z+\xi)}\,\exp\!\bigl(E(z+\xi)-E(z)\bigr).
\end{equation}
The right-hand side is holomorphic on the domain $\{(z,\xi)\in(\C\setminus\Sigma)\times\C:\ |\xi|<\rho(z)\}$. Since $\C\setminus(\Sigma\cup\Z(P_\sharp))$ is dense in $\C\setminus\Sigma$ and each $C_n=B_n/W^n$ is holomorphic on $\C\setminus\Sigma$, the Taylor coefficients at $\xi=0$ of the right-hand side agree with $C_n(z)/n!$ on a dense set of $z$ and hence on all of $\C\setminus\Sigma$. Thus \eqref{eq:Cexplicit} holds for every $z\in\C\setminus\Sigma$ and $|\xi|<\rho(z)$. For fixed $z$, the same right-hand side is holomorphic on $\C\setminus(\Sigma-z)$ and furnishes the analytic continuation of $\mathcal C(z,\cdot)$ away from the translated singular set.

\medskip
For a fixed site $a_i\in\Sigma$, set
\begin{equation}\label{eq:local-orders-unified}
m_i:=\ord_{a_i}T,\qquad p_i:=\ord_{a_i}P_T,\qquad r_i:=\ord_{a_i}Q,\qquad \beta_i:=p_i-r_i.
\end{equation}
Here $m_i,p_i,r_i,\beta_i$ refer to the fixed site $a_i\in\Sigma$; they are the same local orders as before, now indexed by the unified enumeration $a_1,\dots,a_N$ of $\Sigma$. Thus $m_i=0$ exactly at the ordinary-pole sites $\Z(Q)\setminus\Z(T)$, where $p_i=0$ and $r_i\ge1$, while $m_i\ge1$ exactly at the essential sites $\Z(T)$, even if $Q$ also vanishes there. Also write
\begin{equation}\label{eq:local-factors-unified}
P_T(w)=(w-a_i)^{p_i}\widetilde P_i(w),\qquad
Q(w)=(w-a_i)^{r_i}\widetilde Q_i(w),\qquad
\widetilde P_i(a_i)\widetilde Q_i(a_i)\ne0.
\end{equation}
If $m_i\ge1$, write
\begin{equation}\label{eq:E-local-unified}
E(w)=\sum_{s=1}^{m_i}\lambda_{i,s}(w-a_i)^{-s}+E_i^{\mathrm{reg}}(w),\qquad \lambda_{i,m_i}\ne0,
\end{equation}
whereas if $m_i=0$ we set $E_i^{\mathrm{reg}}:=E$ and interpret sums over $1\le s\le m_i$ as empty.

\subsection{Wright-type expansions via Picard--Lefschetz analysis}
For a domain $D\subset\C$, write $\mathcal O(D)$ for the algebra of holomorphic functions on $D$, and for a function $G(\zeta)=\sum_{n\ge0}g_n\zeta^n$ holomorphic near $0$ write $[\zeta^n]G(\zeta):=g_n$. Relative chains below are singular chains with integer coefficients, represented by piecewise $C^1$ chains; a relative one-cycle in $(X,B)$ has boundary in $B$, and two such cycles are homologous if they differ by the boundary of a two-chain in $X$ plus a one-chain in $B$. For a holomorphic phase $\Phi$, downward and upward flows are the negative and positive gradient flows of $\Re\Phi$; an exit arc is a boundary arc along which the downward field points out of the annulus; a separatrix is a maximal such trajectory issuing from a critical point; and a thimble is the compact relative one-cycle obtained by joining the two separatrices through a simple saddle.

For the relative homology groups used below, fix a triangulation of each bordered surface for which $A$, $B$, and $A\cap B$ are subcomplexes. The intersection pairing
\[
H_1(X,B;\mathbb Z)\times H_1(X,A;\mathbb Z)\longrightarrow\mathbb Z
\]
is computed after a small collar perturbation making the representatives transverse in $X^\circ$. Poincar\'e--Lefschetz duality for the triad $(X;A,B)$ makes this pairing perfect in the present surfaces. For an annulus whose exit part $B$ consists of $m+1$ intervals and whose entrance part $A$ also consists of $m+1$ intervals, both relative groups are free of rank $m+1$: the long exact sequence of the pair gives
\[
0\to H_1(X)\to H_1(X,B)\to \widetilde H_0(B)\to0,
\]
and the same formula holds with $A$ in place of $B$. Thus a transverse intersection matrix with determinant $\pm1$ proves that the corresponding thimbles are integral bases.

The finite-annulus Picard--Lefschetz input used later is contained in Lemmas~\ref{lem:wright-exit-arcs}--\ref{lem:filtered-wright}. These lemmas are not quoted from the literature in the finite-annulus, moving-boundary form needed here; the proofs below construct the annulus, the boundary split, the thimble orientations, the coefficient-contour class, and the filtered representative. The subsequent setup defines the local Wright phase and its leading Stokes and anti-Stokes sets. On compact subsets of one connected component of the complement of the leading Stokes set, the lemmas give a parameter-uniform exit-boundary decomposition, dual integral thimble bases, locally constant coefficient-contour coordinates, and a filtered representative with a uniform height gap below every saddle whose thimble has nonzero coefficient in the coefficient-contour decomposition. Intersections with the coefficient-contour class are homological. When a geometric sign is needed, the core class means the homology class of a positively oriented circle $|\tau|=\text{const}$ in the annulus, represented transversely. The arguments exclude boundary tangencies, critical collisions, and saddle connections only after a compact subchamber has been fixed. The finite perturbation parameter records the lower principal terms and the logarithmic perturbation; on each compact subchamber, and for sufficiently small parameter depending on that subchamber, the perturbed family is isotopic to the limiting phase. No uniform finite-parameter isotopy is asserted on a whole chamber. The expansion is used on compact subsets of a component of the complement of the leading Stokes set, including compact sets that meet the anti-Stokes set. No lower bound for an active saddle sum is asserted. Wright's coefficient method, Olver's saddle estimates, and Picard--Lefschetz terminology are used as background; see \cite{W1,W2}, \cite[Chs.~4 and~9]{Olver}, and \cite{Pham}.

The later dependence on this Picard--Lefschetz material is separated as follows.
\begin{enumerate}
\item The fixed-scale law uses the ordinary-pole asymptotics in Theorem~\ref{prop:local-coef-unified}\textup{(1)} and, on essential cells, Corollary~\ref{cor:essential-l1}. For the latter, the Picard--Lefschetz input is used only to obtain the Wright upper bound, one-saddle lower points on a dense open set, and the resulting local \(L^1\)-control; no zero-freeness of an active saddle sum or explicit visibility formula is used in the fixed-scale proof.
\item The microscopic laws in Section~\ref{sec:microscopic} use the compact deformation and perturbation results for the local microscopic phase, especially Lemmas~\ref{lem:micro-compact-pl}, \ref{lem:sublinear-cycle-perturb}, \ref{lem:micro-coalescing-upper}, \ref{lem:full-local-perturb}, and~\ref{lem:full-local-no-origin}. They are logically independent of the visible-chain criterion in Proposition~\ref{prop:active-wright-coefficients}.
\item The sublinear chamber theorem and the final-set corollary use the full chamber expansion \eqref{eq:Bn-local-unified-essential} and the explicit active set and signs from Proposition~\ref{prop:active-wright-coefficients}.
\end{enumerate}

Throughout this subsection, fix $m\ge1$, $\beta\in\mathbb Z$, a simply connected domain $D\subset\C$, functions $\lambda_1,\dots,\lambda_m\in\mathcal O(D)$ with $\lambda_m$ nonvanishing on $D$, and a function $R$ holomorphic on a neighborhood of $D\times\{1\}$ with $R(z,1)\ne0$ for all $z\in D$. For $z\in D$, set
$$F_z(\zeta):=(1-\zeta)^\beta \exp\!\Bigl(\sum_{s=1}^m \lambda_s(z)(1-\zeta)^{-s}\Bigr)R(z,\zeta),$$
and assume that $F_z$ extends holomorphically to a neighborhood of $\{|\zeta|\le1\}\setminus\{1\}$, with $\zeta=1$ as its unique singularity on $|\zeta|=1$. Assume moreover that for every compact $L\Subset D$ there exists $\varrho_L>1$ such that $(z,\zeta)\mapsto F_z(\zeta)$ is holomorphic on a neighborhood of $L\times\bigl(\{|\zeta|\le\varrho_L\}\setminus\{1\}\bigr)$. Choose a holomorphic branch $\eta$ on $D$ with
$$\eta(z)^{m+1}=m\lambda_m(z),$$
choose a holomorphic square root $\eta^{1/2}$ on $D$, let $\omega_\nu:=e^{2\pi i\nu/(m+1)}$, $0\le\nu\le m$, and fix square roots $\omega_\nu^{1/2}$ once and for all. Put
\begin{equation}\label{eq:wright-stokes-sets}
\mathfrak S:=\bigcup_{0\le\nu<\mu\le m}\{z\in D:\Im((\omega_\nu-\omega_\mu)\eta(z))=0\},\qquad
\mathfrak A:=\bigcup_{0\le\nu<\mu\le m}\{z\in D:\Re((\omega_\nu-\omega_\mu)\eta(z))=0\}.
\end{equation}
We call $\mathfrak S$ the \emph{leading Stokes set}, $\mathfrak A$ the \emph{leading anti-Stokes set}, and the connected components of $D\setminus\mathfrak S$ the (leading) \emph{Stokes chambers}. The set $\mathfrak S$ is the limiting critical-value Stokes locus, i.e. the locus where two limiting saddle values have equal imaginary parts; $\mathfrak A$ is where limiting saddle heights are equal. If $D\setminus\mathfrak S=\varnothing$, the chamber statements below are void. For $z\in D$ and $n\ge1$, define
$$\Phi_{z,n}(t):=\sum_{s=1}^m \lambda_s(z)t^{-s}-(n+1)\log(1-t),$$
where $\log(1-t)$ denotes the principal branch near $t=0$.

\begin{lem}[landing and continuation of level-gradient separatrices]\label{lem:gradient-landing}
Let $X$ be a compact bordered Riemann surface with piecewise $C^1$ boundary, let $\Phi=h+ig$ be holomorphic on a neighborhood of $X$, and assume that all critical points of $\Phi$ in $X^\circ$ are simple and have distinct $g$-values. Let $A\cup B=\partial X$ be a decomposition into finitely many closed boundary arcs, meeting only at endpoints, and let $I\subset\mathbb R$ be a compact interval which contains the $g$-values of the critical points and is disjoint from the $g$-values of the endpoints $A\cap B$. Suppose that, on $\partial X\cap g^{-1}(I)$, the field $-\nabla h$ has normal component bounded away from zero, points outward on the relative interiors of the $B$-arcs, and points inward on the relative interiors of the $A$-arcs. Then every downward separatrix from a critical point hits the relative interior of $B$ in finite time and transversely, and no downward saddle connection occurs. The corresponding upward statement holds with $A$ and $B$ interchanged.

If the data depend $C^1$-continuously on a parameter in a compact set, the boundary decomposition is locally trivial, and the preceding hypotheses hold with uniform margins, then the landed separatrix arcs continue by ambient isotopy under parameter variation. In particular their relative homology classes, and the corresponding intersection numbers with a fixed relative cycle transported by the same boundary trivialization, are locally constant.
\end{lem}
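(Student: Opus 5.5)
Since $\Phi=h+ig$ is holomorphic, the Cauchy--Riemann equations give $\nabla h\perp\nabla g$ and $|\nabla h|=|\nabla g|=|\Phi'|$. Hence along any gradient line of $h$ the imaginary part $g$ is constant, and $h$ is strictly monotone with $\frac{d}{dt}h=\mp|\Phi'|^2$. The plan follows these level-preserving flow lines.

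\emph{Step 1: a separatrix stays in $g^{-1}(I)$ and never stalls.} A downward separatrix from a simple critical point $p$ lies on the level set $\{g=g(p)\}$, and $g(p)\in I$. The flow of $-\nabla h/|\nabla h|^2$ decreases $h$ at unit rate, so the separatrix is parametrised by $h$. The level set $\{g=g(p)\}$ meets the compact set $X$ in a real-analytic one-dimensional set. Its only singular points are critical points of $\Phi$, and every critical point has a distinct $g$-value. So apart from $p$ itself, the separatrix meets no critical point, and it cannot converge to another saddle; this rules out downward saddle connections. Since $h$ is bounded on $X$ and $|\nabla h|$ is bounded below on the separatrix away from a neighbourhood of $p$, the separatrix leaves every compact subset of $X^\circ$ in finite time. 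Because there are no other critical points on it, its $\omega$-limit set cannot lie in $X^\circ$, so it reaches $\partial X$.

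\emph{Step 2: the first boundary point lies in the interior of $B$, transversally.} At the first boundary hit $q$ we have $g(q)=g(p)\in I$. The value $g(p)$ is not the $g$-value of any endpoint in $A\cap B$, so $q$ lies in the relative interior of an $A$-arc or a $B$-arc. On the relative interior of an $A$-arc, $-\nabla h$ points strictly inward, so a downward trajectory cannot exit there; it would have to arrive from outside. Hence $q$ lies in the relative interior of a $B$-arc. There the normal component of $-\nabla h$ is bounded away from zero, which gives transversality. The upward statement follows by applying the same argument to $-\Phi$, with $A$ and $B$ interchanged.

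\emph{Step 3: continuation under parameters.} Under the uniform margins the following quantities remain bounded away from zero along a $C^1$ family:
\begin{itemize}
\item the outward and inward normal components of the field,
\item the gaps between the $g$-values of distinct critical points,
\item the distance from $I$ to the $g$-values of the endpoints $A\cap B$.
\end{itemize}
Simple critical points move $C^1$-continuously by the implicit function theorem, and so do their stable and unstable directions. Each landed separatrix is a compact arc that hits $B$ transversally. Continuous dependence of flow lines on initial data, together with transversality at the endpoint, then shows that the arc and its landing point in $B^\circ$ vary continuously. Combining the local trivialisation of $(\partial X;A,B)$ with a collar-supported isotopy and the isotopy extension theorem yields an ambient isotopy carrying the arcs and the transported boundary decomposition.

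Relative homology classes in $H_1(X,B)$ are invariant under such isotopies. The intersection numbers with a fixed cycle transported by the same trivialisation are computed from transverse representatives, and these stay transverse under the isotopy, so the intersection numbers are locally constant. I expect the main obstacle to be the non-escape argument in Step 1: showing that the trajectory cannot accumulate on a non-critical point of the level curve, or slide along $\partial X$ tangentially. The uniform lower bound on the normal component exactly rules out this tangential sliding.
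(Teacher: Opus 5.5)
Your proposal is correct and is essentially the paper's argument: both rest on constancy of $g$ along gradient lines, strict monotonicity and boundedness of $h$, separation of the critical $g$-values (which also excludes saddle connections), the normal-sign hypotheses on $\partial X\cap g^{-1}(I)$, and isotopy extension for the parameter statement. The one real difference is that you replace the paper's $\omega$-limit/flow-box step by a uniform lower bound for $|\nabla h|$ on the compact set $\{g=g(p),\ h\le h(p)-\varepsilon\}\cap X$, and its collar argument by a first-exit argument. For that lower bound you should state explicitly that this set contains no critical point, since the distinct-$g$-value hypothesis only covers $X^\circ$: boundary critical points are excluded because the normal component of $-\nabla h$ is nonzero on $\partial X\cap g^{-1}(I)$, and the branch cannot return near $p$ because $h$ has already dropped below $h(p)$.
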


\begin{proof}
The Cauchy--Riemann equations give $(\pm\nabla h)g=0$, so a gradient trajectory remains on one $g$-level. Consider a downward separatrix. Its $g$-level belongs to $I$. In a collar of $\partial X\cap g^{-1}(I)$ the assumed normal sign prevents asymptotic approach to an $A$-arc, while entry into the collar of a $B$-arc forces finite-time transverse exit because the outward normal component is bounded below. If the separatrix does not hit $B$, its omega-limit is a nonempty compact connected subset of $X^\circ$. The function $h$ decreases strictly away from critical points and has a finite limit along the trajectory. A noncritical point in the omega-limit would have a flow box in which each sufficiently close visit decreases $h$ by a fixed positive amount, impossible for infinitely many visits. Thus the omega-limit is contained in the finite critical set, hence is a single critical point. Constancy of $g$ would then give equality of the $g$-values of two critical points; convergence back to the initial saddle is also impossible after $h$ has decreased along the outgoing branch. Therefore the separatrix lands on $B$, and the same $g$-separation excludes downward saddle connections. The proof for upward separatrices is identical with the signs reversed.

For the parameter statement, the implicit function theorem gives the moving endpoints of $A\cap B$ and hence a local trivialization of the boundary triads. Critical points and local separatrix germs vary smoothly. The preceding argument is uniform, so a landed branch can fail to continue only by hitting a corner, encountering a boundary tangency, colliding with another critical point, or forming a saddle connection. These alternatives are excluded by the uniform hypotheses and distinct critical $g$-values. The isotopy extension theorem then gives ambient isotopies of the arcs, and homological intersection numbers are invariant under these isotopies.
\end{proof}

\begin{lem}[uniform annular exit and flow data]\label{lem:wright-exit-arcs}
Let $V$ be a connected component of $D\setminus\mathfrak S$, and let $U_0$ be connected with $\overline{U_0}\Subset V$. Put
$$\tau_\nu(z):=\omega_\nu\eta(z),\qquad \psi_z(\tau):=\lambda_m(z)\tau^{-m}+\tau.$$
Choose $\Lambda>0$ such that $|\tau_\nu(z)|<\Lambda/4$ on $\overline{U_0}$ for every $\nu$. Then there are constants $C_0>0$, $0<\delta<\Lambda<\Lambda_1$, $q_0>0$, and $b>0$, with $q_0\Lambda_1<1$, for which the following assertions hold. Set
$$X:=\{\delta\le |\tau|\le\Lambda_1\},\qquad \mathcal P:=\overline{U_0}\times[0,q_0].$$
For $0<q\le q_0$ define
$$\widehat\Phi_{z,q}(\tau):=\sum_{s=1}^m q^{m-s}\lambda_s(z)\tau^{-s}-(q^{-1}+q^m)\log(1-q\tau),$$
using the Taylor branch of the logarithm on $X$, and set $\widehat\Phi_{z,0}:=\psi_z$. Write
$$h_{z,q}:=\Re\widehat\Phi_{z,q},\qquad g_{z,q}:=\Im\widehat\Phi_{z,q},$$
and let $\mathbf n$ be the outward unit normal of the annulus $X$. For every $(z,q)\in\mathcal P$ the set
$$
B_{z,q}:=\overline{\{\tau\in\partial X:\ \partial_{\mathbf n}h_{z,q}(\tau)<0,\ |g_{z,q}(\tau)|\le C_0+1\}}
$$
has exactly $m+1$ connected components: $m$ compact intervals on $|\tau|=\delta$ and one compact interval on $|\tau|=\Lambda_1$. Put
$$A_{z,q}:=\overline{\partial X\setminus B_{z,q}}.$$
Then $A_{z,q}\cup B_{z,q}=\partial X$, $A_{z,q}\cap B_{z,q}$ is the finite set of endpoints of these intervals, and $A_{z,q}$ also has $m+1$ interval components. The endpoints are transverse intersections with the levels $g_{z,q}=\pm(C_0+1)$, the pairs $(A_{z,q},B_{z,q})$ vary locally trivially in $(z,q)$ as boundary subcomplexes, and no boundary tangency of $\pm\nabla h_{z,q}$ occurs on $\partial X\cap\{|g_{z,q}|\le C_0+1\}$. On $\partial X\cap\{|g_{z,q}|\le C_0\}$, the downward field $-\nabla h_{z,q}$ points strictly outward exactly on the relative interiors of $B_{z,q}$ and strictly inward on the relative interiors of $A_{z,q}$; the upward field has the opposite signs.

For every $(z,q)\in\mathcal P$, the phase $\widehat\Phi_{z,q}$ has exactly $m+1$ simple critical points in $X$, labelled by
$$\tau_\nu(z,q)=\tau_\nu(z)+O_{U_0}(q),\qquad 0\le\nu\le m,$$
with $\tau_\nu(z,0)=\tau_\nu(z)$. Their critical imaginary parts are separated by a positive constant independent of $(z,q)$, and $|g_{z,q}(\tau_\nu(z,q))|\le C_0$. Every maximal downward separatrix from a critical point hits the relative interior of $B_{z,q}$ in finite time and transversely, and no downward trajectory joins two critical points. Every maximal upward separatrix from a critical point hits the relative interior of $A_{z,q}$ in finite time and transversely, and no upward trajectory joins two critical points. Moreover,
$$\sup_{\tau\in B_{z,q}}\Re\widehat\Phi_{z,q}(\tau)
\le \min_{0\le\nu\le m}\Re\widehat\Phi_{z,q}(\tau_\nu(z,q))-b$$
uniformly on $\mathcal P$.
\end{lem}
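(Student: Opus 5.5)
The plan is to first prove everything for the limiting phase $\psi_z(\tau)=\lambda_m(z)\tau^{-m}+\tau$ at $q=0$, and then pass to small $q$ by a uniform perturbation argument on the compact parameter set $\overline{U_0}\times[0,q_0]$.

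\emph{Critical points of the limiting phase.} Since $\psi_z'(\tau)=1-m\lambda_m\tau^{-m-1}$, the critical points are exactly the $m+1$ roots $\tau^{m+1}=m\lambda_m=\eta^{m+1}$, namely $\tau_\nu=\omega_\nu\eta$. They are simple, because $\psi_z''(\tau_\nu)=m(m+1)\lambda_m\tau_\nu^{-m-2}\ne0$. The critical values are $\psi_z(\tau_\nu)=\tau_\nu(1+1/m)$. Their imaginary parts are pairwise distinct exactly when $z\notin\mathfrak S$. Since $\overline{U_0}\Subset V$, compactness gives a uniform separation $\ge 4b_0>0$ between these imaginary parts. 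I choose $C_0$ larger than $2\max|\Im\psi_z(\tau_\nu)|+1$.

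\emph{Perturbation in $q$.} Expanding $-\log(1-q\tau)$ gives
\[
\widehat\Phi_{z,q}=\psi_z+\sum_{s<m}q^{m-s}\lambda_s\tau^{-s}+O(q\tau^2)+O(q^m).
\]
So $\widehat\Phi_{z,q}\to\psi_z$ in $C^2$ uniformly on $X\times\overline{U_0}$ as $q\to0$, provided $q_0\Lambda_1<1$ is small. Rouché's theorem and the implicit function theorem then give exactly $m+1$ simple critical points $\tau_\nu(z,q)=\tau_\nu(z)+O(q)$ in $X$. For $q_0$ small, the $g$-values stay separated by $2b_0$ and stay within $C_0$.

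\emph{Boundary geometry.} I choose $\delta$ small and $\Lambda_1$ large, with $\Lambda<\Lambda_1$, and examine the two boundary circles.

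On $|\tau|=\delta$, the phase is dominated by $\lambda_m\tau^{-m}$. Writing $\tau=\delta e^{i\theta}$, one finds $\partial_{\mathbf n}h\approx-m\delta^{-1}\Re(\lambda_m\tau^{-m})$ (outward normal $=-\partial_r$ up to sign conventions; checking signs is routine). The function $g\approx\delta^{-m}|\lambda_m|\sin(\arg\lambda_m-m\theta)$ is huge except near the $2m$ zeros of this sine. Near each such zero, $\Re(\lambda_m\tau^{-m})\approx\pm\delta^{-m}|\lambda_m|$, so $h$ has a sharp sign there. Hence the set where $|g|\le C_0+1$ consists of $2m$ tiny arcs, each of width about $\delta^m$. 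On $m$ of them $\Re(\lambda_m\tau^{-m})<0$, which makes $-\nabla h$ point outward, i.e.\ toward $0$, where $h\to-\infty$; these form $m$ components of $B$. On the other $m$ arcs the field points inward. The remaining arcs of the circle are assigned to $A$. On the arcs with $|g|\le C_0+1$, the normal derivative is of size $\delta^{-m-1}$, which gives transversality and the absence of tangency. The levels $g=\pm(C_0+1)$ are crossed transversally since $\partial_\theta g\sim\delta^{-m}$.

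On $|\tau|=\Lambda_1$, the term $\tau$ dominates for small $q$, since $q\tau^2\ll\tau$ when $q_0\Lambda_1$ is small. So $g\approx\Im\tau$, and $|g|\le C_0+1$ gives two short arcs, near $\arg\tau=0$ and $\arg\tau=\pi$. The one near $\arg\tau=\pi$ is where $h\approx\Re\tau=-\Lambda_1$ decreases outward; this is the one $B$-interval. The other arc goes to $A$.

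The resulting $B$ (and $A$) thus has $m+1$ components. Everything is uniform on $\mathcal P$ by compactness and $C^1$-closeness, so the decomposition is locally trivial.

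\emph{Separatrices and height gap.} The landing of separatrices and the absence of saddle connections now follow from Lemma~\ref{lem:gradient-landing}, applied with $I=[-C_0,C_0]$. For the height gap: on the $B$ arcs of the inner circle, $h\approx-\delta^{-m}|\lambda_m|$, which is very negative. On the outer $B$ arc, $h\approx-\Lambda_1$. Meanwhile the critical heights are $O(\Lambda/4)$. Choosing $\Lambda_1>\Lambda$ large enough and $\delta$ small then gives the gap $b$.

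\emph{Main obstacle.} I expect the hardest part to be keeping the constants consistent. The constant $C_0$ is fixed first from the critical data. Then $\delta$ and $\Lambda_1$ are chosen, and finally $q_0$, small relative to $\delta^{m}$ and to $\Lambda_1^{-2}$. With this order, the $q$-corrections, of size $O(q\delta^{-m+1})$ and $O(q\Lambda_1^2)$, do not destroy either the boundary sign structure or the counting of arcs on $|g|\le C_0+1$.
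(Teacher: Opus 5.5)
Your proposal is correct and takes essentially the same route as the paper. The paper's proof has the same steps:
\begin{itemize}
\item asymptotics of $\psi_z$ on the inner circle, where the $m$ exit arcs are centered where $\lambda_m e^{-im\theta}$ is negative real, and on the outer circle, where the single exit arc is centered at $\theta=\pi$;
\item uniform $C^2$ perturbation in $q$ once $\delta$ and $\Lambda_1$ are fixed;
\item Rouch\'e's theorem for the $m+1$ critical points;
\item Lemma~\ref{lem:gradient-landing} with $I=[-C_0,C_0]$ for landing of separatrices and absence of saddle connections;
\item $h\to-\infty$ on the exit arcs for the height gap $b$.
\end{itemize}

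One small slip: on $|\tau|=\delta$ the outward normal is $-\partial_r$, so $\partial_{\mathbf n}h\approx+m\delta^{-1}\Re(\lambda_m\tau^{-m})$, not $-m\delta^{-1}\Re(\lambda_m\tau^{-m})$. The conclusion you draw from it is nevertheless correct: exit occurs exactly where $\Re(\lambda_m\tau^{-m})<0$.
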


\begin{proof}
Since $\overline{U_0}\Subset V$, there is $a_0>0$ such that
$$\left|\Im\left(\frac{m+1}{m}(\omega_\nu-\omega_\mu)\eta(z)\right)\right|\ge a_0,
\qquad z\in\overline{U_0},\quad \nu\ne\mu.$$
For $q=0$, the critical equation is $\tau^{m+1}=m\lambda_m(z)$, so the critical points are $\tau_\nu(z)$ and their critical values are $((m+1)/m)\omega_\nu\eta(z)$. Choose $C_0$ larger than the absolute values of all these limiting critical imaginary parts on $\overline{U_0}$ plus $4$.

Because $\eta$ is nonvanishing on $\overline{U_0}$, choose $\delta>0$ so small that all $\tau_\nu(z)$ lie in $\{4\delta<|\tau|<\Lambda/4\}$. On the inner circle $\tau=\delta e^{i\theta}$, with outward normal $-\partial/\partial r$,
\begin{align*}
g_{z,0}(\delta e^{i\theta})
&=\delta^{-m}\Im(\lambda_m(z)e^{-im\theta})+O_{U_0}(\delta^{-m+1}),\\
\partial_{\mathbf n}h_{z,0}(\delta e^{i\theta})
&=m\delta^{-m-1}\Re(\lambda_m(z)e^{-im\theta})+O_{U_0}(\delta^{-m}).
\end{align*}
Uniformly in $z$, the conditions $|g_{z,0}|\le C_0+2$ and $\partial_{\mathbf n}h_{z,0}<0$ therefore hold on exactly $m$ intervals, centered at the directions where $\lambda_m(z)e^{-im\theta}$ is negative real. Their endpoints are the transverse intersections $g_{z,0}=\pm(C_0+2)$, and after replacing $C_0+2$ by $C_0+1$ the same transversality persists. At every inner-boundary tangency $\partial_{\mathbf n}h_{z,0}=0$, one has $|g_{z,0}|\ge c\delta^{-m}$ for a constant $c>0$ independent of $z$; decreasing $\delta$ puts all such tangencies outside $|g_{z,0}|\le C_0+2$.

On the outer circle $\tau=\Lambda_1e^{i\theta}$,
\begin{align*}
g_{z,0}(\Lambda_1 e^{i\theta})&=\Lambda_1\sin\theta+O_{U_0}(\Lambda_1^{-m}),\\
\partial_{\mathbf n}h_{z,0}(\Lambda_1 e^{i\theta})&=\cos\theta+O_{U_0}(\Lambda_1^{-m-1}),\\
h_{z,0}(\Lambda_1 e^{i\theta})&=\Lambda_1\cos\theta+O_{U_0}(\Lambda_1^{-m}).
\end{align*}
For $\Lambda_1$ large, the only outer exit component with $|g_{z,0}|\le C_0+2$ is the interval centered at the negative real direction. All outer tangencies satisfy $|g_{z,0}|\ge \Lambda_1/2$, hence lie outside the same strip. Increasing $\Lambda_1$ and, if necessary, decreasing $\delta$, the $m$ inner exit intervals and the one outer exit interval satisfy
$$h_{z,0}\le \min_\nu h_{z,0}(\tau_\nu(z))-4b_0$$
for some $b_0>0$, uniformly for $z\in\overline{U_0}$.

After decreasing $q_0$, the point $1/q$ lies outside $X$ and the functions $\widehat\Phi_{z,q}$ converge to $\psi_z$ in $C^2(X)$ uniformly for $z\in\overline{U_0}$. Hence the boundary component counts, endpoint transversality, absence of tangencies in $|g|\le C_0+1$, inward/outward sign assertions, and exit sublevel estimates persist, with a fixed margin, for all $(z,q)\in\mathcal P$. The endpoint equations $g_{z,q}=\pm(C_0+1)$ and the sign condition on $\partial_{\mathbf n}h_{z,q}$ are transverse; the implicit function theorem gives smooth endpoint motion, and the isotopy extension theorem gives local trivializations of the triples $(X,A_{z,q},B_{z,q})$.

For the critical points,
$$\partial_\tau\widehat\Phi_{z,q}(\tau)=1-m\lambda_m(z)\tau^{-m-1}+O_{U_0}(q)$$
in $C^1(X)$. Since the graphs $z\mapsto\tau_\nu(z)$ are pairwise separated over $\overline{U_0}$, choose pairwise disjoint neighborhoods $N_\nu\subset\overline{U_0}\times X$ of these graphs and write $N_{\nu,z}:=\{\tau:(z,\tau)\in N_\nu\}$. On the compact fiberwise complement $X\setminus\bigcup_\nu N_{\nu,z}$, the quantity $|\partial_\tau\psi_z|$ has a uniform positive lower bound. Rouch\'e's theorem, applied fiberwise on the boundaries of the sets $N_{\nu,z}$, gives one simple critical point of $\widehat\Phi_{z,q}$ in each $N_{\nu,z}$ and none on the complement. The labels satisfy $\tau_\nu(z,q)=\tau_\nu(z)+O_{U_0}(q)$, and
$$\widehat\Phi_{z,q}(\tau_\nu(z,q))=\frac{m+1}{m}\omega_\nu\eta(z)+O_{U_0}(q),$$
so, after decreasing $q_0$ again, the critical imaginary parts are separated by at least $a_0/2$ and satisfy $|g_{z,q}(\tau_\nu(z,q))|\le C_0$. Thus two distinct saddles never lie on one $g_{z,q}$-level; since $g_{z,q}$ is constant along gradient trajectories, this directly excludes saddle connections in the landing argument below. Since the endpoints have $|g|=C_0+1$, any trajectory with $|g|\le C_0$ remains a positive distance, uniform over compact parameter sets, from the corner set $A_{z,q}\cap B_{z,q}$. The closeness of the perturbed critical heights to the limiting heights and the persisted exit sublevel estimate give the displayed boundary estimate after decreasing the margin and calling it $b>0$.

It remains to prove the landing assertions. Apply Lemma~\ref{lem:gradient-landing} to $X$, $\Phi=\widehat\Phi_{z,q}$, $I=[-C_0,C_0]$, and the boundary split $A_{z,q}\cup B_{z,q}=\partial X$. The hypotheses have been verified above uniformly on $\mathcal P$: the critical $g_{z,q}$-values lie in $I$ and are separated, the endpoints of $A_{z,q}\cap B_{z,q}$ have $|g_{z,q}|=C_0+1$, and the normal component of $-\nabla h_{z,q}$ on $\partial X\cap\{|g_{z,q}|\le C_0\}$ has the asserted signs with a uniform margin. Hence all downward separatrices land on the relative interior of $B_{z,q}$ in finite time and no downward saddle connection occurs. The upward assertions follow from the same lemma with the signs reversed.
\end{proof}

\begin{lem}[relative thimbles and contour coefficients]\label{lem:wright-thimble-basis}
Assume the notation and conclusions of Lemma~\ref{lem:wright-exit-arcs}. For each $(z,q)\in\mathcal P$, let $\Delta_\nu(z,q)$ be the downward thimble through $\tau_\nu(z,q)$, viewed as a relative one-cycle in $H_1(X,B_{z,q};\mathbb Z)$, and let $\nabla_\nu(z,q)$ be the upward thimble through the same saddle, viewed in $H_1(X,A_{z,q};\mathbb Z)$. The relative groups are those of the boundary triad
$$A_{z,q}\cup B_{z,q}=\partial X,\qquad A_{z,q}\cap B_{z,q}\text{ finite}.$$
Choose an orientation of each downward thimble and orient the corresponding upward thimble so that the local intersection at the common saddle is $+1$; for the thimble-thimble pairing, perturb boundary endpoints only inside boundary collars when needed. Then
$$\Delta_\nu(z,q)\cdot\nabla_\mu(z,q)=\delta_{\nu\mu}.$$
The classes $[\Delta_\nu(z,q)]$ form an integral basis of $H_1(X,B_{z,q};\mathbb Z)$, and $[\nabla_\nu(z,q)]$ form the dual integral basis of $H_1(X,A_{z,q};\mathbb Z)$. Hence, for the positively oriented circle $\Gamma_\Lambda:=\{|\tau|=\Lambda\}$,
$$[\Gamma_\Lambda]=\sum_{\nu=0}^m\sigma_\nu[\Delta_\nu(z,q)]
\quad\text{in }H_1(X,B_{z,q};\mathbb Z),$$
where
$$\sigma_\nu=[\Gamma_\Lambda]\cdot[\nabla_\nu(z,q)]\in\mathbb Z$$
is the homological intersection pairing. If this number is evaluated by crossing counts, use any sufficiently small transverse perturbation of the core representative inside $X$.
After these thimble orientations are transported through the locally trivial family of boundary triads, the integers $\sigma_\nu$ are independent of $(z,q)\in\mathcal P$, and they are not all zero.
\end{lem}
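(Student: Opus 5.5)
The plan is to establish the diagonal intersection property first and deduce everything else by linear algebra and continuity.

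\emph{Diagonal intersections.} Fix $(z,q)\in\mathcal P$. By Lemma~\ref{lem:wright-exit-arcs}, each downward thimble $\Delta_\nu$ is the union of the two downward separatrices from $\tau_\nu(z,q)$. It is a compact arc with both ends in the relative interior of $B_{z,q}$, and it lies on the level $g_{z,q}=g_{z,q}(\tau_\nu)$. Likewise $\nabla_\mu$ is an arc on the level $g_{z,q}(\tau_\mu)$ with ends in the interior of $A_{z,q}$. For $\nu\ne\mu$ these two levels differ, by the uniform separation of critical imaginary parts, so the representatives are disjoint in $X^\circ$. Their ends lie on complementary boundary parts away from the corners, because every relevant point has $|g|\le C_0<C_0+1$. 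Hence a small collar perturbation keeps them disjoint, and $\Delta_\nu\cdot\nabla_\mu=0$. For $\nu=\mu$ the arcs meet only at the saddle, since $h$ is strictly larger on $\nabla_\nu\setminus\{\tau_\nu\}$ than on $\Delta_\nu\setminus\{\tau_\nu\}$. In Morse coordinates near a nondegenerate saddle the two arcs cross transversely, so with the chosen orientations the pairing is $+1$.

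\emph{Bases.} By the rank computation preceding Lemma~\ref{lem:gradient-landing}, both $H_1(X,B_{z,q};\mathbb Z)$ and $H_1(X,A_{z,q};\mathbb Z)$ are free of rank $m+1$. This uses that $B_{z,q}$ and $A_{z,q}$ each have $m+1$ components, as proved in Lemma~\ref{lem:wright-exit-arcs}. The intersection matrix of the $m+1$ downward and $m+1$ upward thimbles is the identity, which has determinant $1$. Since the pairing is perfect by Poincar\'e--Lefschetz duality, both families are integral bases and are dual to each other. Expanding $[\Gamma_\Lambda]=\sum_\nu\sigma_\nu[\Delta_\nu]$ and pairing with $\nabla_\mu$ then gives $\sigma_\mu=[\Gamma_\Lambda]\cdot[\nabla_\mu]$. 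The pairing is homological, so any small transverse perturbation of the core circle may be used to count crossings.

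\emph{Constancy and nonvanishing.} By the parameter part of Lemma~\ref{lem:gradient-landing}, whose uniform hypotheses were checked in Lemma~\ref{lem:wright-exit-arcs}, the upward thimbles move by ambient isotopy compatible with the local trivialization of $(X,A_{z,q},B_{z,q})$. The circle $\Gamma_\Lambda$ is a fixed absolute cycle in $X$, so its transport is itself. The integer $\sigma_\nu(z,q)$ is therefore locally constant, and it is constant on $\mathcal P$ because $\mathcal P=\overline{U_0}\times[0,q_0]$ is connected. For nonvanishing, I would show that $[\Gamma_\Lambda]\neq0$ in $H_1(X,B_{z,q})$. The sequence $0\to H_1(X)\to H_1(X,B)$ is injective, and $\Gamma_\Lambda$ generates $H_1(X)\cong\mathbb Z$ of the annulus, so the class is nonzero. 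Hence some $\sigma_\nu$ is nonzero.

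I expect the main obstacle to be the diagonal step, in particular justifying that collar perturbations near the boundary create no spurious crossings and that the self-intersection sign is $+1$. If the local-model argument is not convincing, I would instead make the sign explicit at the limiting phase $q=0$, where $\psi_z$ is a rescaling of $\tau+\lambda_m\tau^{-m}$, and then transport it by constancy.
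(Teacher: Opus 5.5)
Your proposal is correct and follows essentially the same route as the paper. The identity intersection matrix comes from separation of the critical imaginary parts together with monotonicity of $\Re\widehat\Phi_{z,q}$ along the thimbles. The rank-$(m+1)$ count plus Poincar\'e--Lefschetz unimodularity gives dual integral bases; the paper writes this step as $M_\Delta^{T}JM_\nabla=I$ with $\det J=\pm1$. Isotopic continuation from Lemma~\ref{lem:gradient-landing}, together with injectivity of $H_1(X)\to H_1(X,B_{z,q})$, gives constancy and nonvanishing of the $\sigma_\nu$.

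The sign concern you flag at the end is moot. The $+1$ at the common saddle is imposed by the orientation convention in the statement, so there is nothing to compute.
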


\begin{proof}
Fix $(z,q)$. The boundary decomposition in Lemma~\ref{lem:wright-exit-arcs} is a compact one-dimensional CW decomposition of $\partial X$: $A_{z,q}$ and $B_{z,q}$ are unions of closed intervals and meet only in their endpoints. Relative chains are taken with endpoints in the prescribed boundary part. Boundary endpoints are moved by an arbitrarily small collar perturbation away from $A_{z,q}\cap B_{z,q}$. Interior intersections used to evaluate a homological pairing are taken after an arbitrarily small transverse perturbation of one representative, supported away from $\partial X$ when the endpoints are fixed. These perturbations preserve relative homology classes and homological intersection numbers.

The downward thimble $\Delta_\nu$ is the union of the two downward separatrices from $\tau_\nu(z,q)$ to $B_{z,q}$, and the upward thimble $\nabla_\nu$ is the union of the two upward separatrices from the same saddle to $A_{z,q}$. Lemma~\ref{lem:wright-exit-arcs} gives finite-time transverse landing, so these are compact relative one-cycles.

Let $p_\nu=\tau_\nu(z,q)$. In a holomorphic Morse coordinate $u$ centered at $p_\nu$,
$$\widehat\Phi_{z,q}=\widehat\Phi_{z,q}(p_\nu)-u^2/2.$$
The local downward branches are the real $u$-axis and the local upward branches are the imaginary $u$-axis. The chosen orientation of $\Delta_\nu$ fixes the sign of the local real branch; orient $\nabla_\nu$ so that the local intersection number at $p_\nu$ is $+1$. If $\Delta_\nu$ and $\nabla_\mu$ meet away from the boundary, then $g_{z,q}$ is constant on both thimbles and
$$g_{z,q}(p_\nu)=g_{z,q}(p_\mu).$$
The uniform separation of critical imaginary parts gives $\nu=\mu$. For $\nu=\mu$, away from $p_\nu$ the downward thimble has $h_{z,q}<h_{z,q}(p_\nu)$ and the upward thimble has $h_{z,q}>h_{z,q}(p_\nu)$, so there is no second interior intersection. Boundary intersections have been removed by the collar perturbation. This proves the displayed intersection matrix.

The long exact sequence for $(X,B_{z,q})$ gives
$$0\to H_1(X;\mathbb Z)\to H_1(X,B_{z,q};\mathbb Z)
\to\widetilde H_0(B_{z,q};\mathbb Z)\to0,$$
because $H_1(B_{z,q};\mathbb Z)=0$ and $X$ is connected. Since $X$ is an annulus and $B_{z,q}$ has $m+1$ interval components, $H_1(X,B_{z,q};\mathbb Z)$ is free of rank $m+1$. The same argument gives $\operatorname{rank}H_1(X,A_{z,q};\mathbb Z)=m+1$.

Choose a triangulation of $X$ for which $A_{z,q}$, $B_{z,q}$, and $A_{z,q}\cap B_{z,q}$ are subcomplexes. Poincar\'e--Lefschetz duality for the oriented surface with boundary split into the complementary subcomplexes $A_{z,q}$ and $B_{z,q}$ gives a perfect integral pairing
$$H_1(X,B_{z,q};\mathbb Z)\times H_1(X,A_{z,q};\mathbb Z)\longrightarrow\mathbb Z.$$
In arbitrary integral bases of these free groups, let $J$ be the unimodular matrix of this pairing, and let $M_\Delta$ and $M_\nabla$ be the matrices whose columns are the downward and upward thimble classes. The intersection calculation gives
$$M_\Delta^{T}JM_\nabla=I_{m+1}.$$
All matrices have integral entries and $\det J=\pm1$, hence $\det M_\Delta=\det M_\nabla=\pm1$. Thus the downward thimbles form an integral basis and the upward thimbles form the dual basis.

It remains to justify parameter constancy. The transverse endpoint equations give, locally in $\mathcal P$, a $C^1$ trivialization of the boundary triads $(X,A_{z,q},B_{z,q})$, and the isotopy extension theorem realizes it by an ambient isotopy of $X$. Lemma~\ref{lem:gradient-landing}, applied with the uniform margins established in Lemma~\ref{lem:wright-exit-arcs}, gives isotopic continuation of the landed upward and downward separatrix arcs throughout each small parameter ball: a possible corner hit, boundary tangency in the strip, critical collision, or saddle connection is excluded respectively by $|g|=C_0+1$ at the corners, the boundary sign margin, the critical-point separation, and the distinct critical imaginary values. Thus the separatrix representatives, their orientations, and the upward-thimble classes vary by isotopy. The relative class of $\Gamma_\Lambda$ is fixed under the same trivialization, so the homological intersection $[\Gamma_\Lambda]\cdot[\nabla_\nu(z,q)]$ is a locally constant integer-valued function of $(z,q)$; since $\mathcal P$ is connected, it is constant on $\mathcal P$.

Finally, $H_1(B_{z,q};\mathbb Z)=0$, so $H_1(X;\mathbb Z)\to H_1(X,B_{z,q};\mathbb Z)$ is injective. The circle $\Gamma_\Lambda$ generates $H_1(X;\mathbb Z)$, hence its relative class is nonzero. Since the downward thimbles are a basis, the coordinates $\sigma_\nu$ of this class cannot all vanish.
\end{proof}

\begin{lem}[filtered deformation for the scaled Wright phase]\label{lem:filtered-wright}
Let $V$ be a connected component of $D\setminus\mathfrak S$, let $U_0$ be connected with $\overline{U_0}\Subset V$, and let $K_0\Subset U_0$. Put
$$\tau_\nu(z):=\omega_\nu\eta(z),\qquad \psi_z(\tau):=\lambda_m(z)\tau^{-m}+\tau.$$
Choose $\Lambda>0$ such that $|\tau_\nu(z)|<\Lambda/4$ on $\overline{U_0}$ for every $\nu$. Then there are $0<\delta<\Lambda<\Lambda_1$ and $q_0>0$, with $q_0\Lambda_1<1$, such that Lemmas~\ref{lem:wright-exit-arcs} and~\ref{lem:wright-thimble-basis} apply to $\widehat\Phi_{z,q}$ on $X=\{\delta\le |\tau|\le\Lambda_1\}$. Let
$$I:=\{\nu:\sigma_\nu\ne0\},\qquad
\mathcal H_{z,q}:=\max_{\nu\in I}\Re\widehat\Phi_{z,q}(\tau_\nu(z,q)).$$
There are pairwise disjoint neighborhoods $\mathcal N_\mu$ of the critical graphs $\{(z,q,\tau_\mu(z,q)):(z,q)\in K_0\times[0,q_0]\}$, each carrying a parameter-dependent holomorphic Morse coordinate for $\widehat\Phi_{z,q}$, and a number $c>0$ such that
$$C_{z,q}:=\sum_{\nu\in I}\sigma_\nu\Delta_\nu(z,q)$$
represents $[\Gamma_\Lambda]$ in $H_1(X,B_{z,q};\mathbb Z)$, contains no zero-coefficient thimble at chain level, has uniformly bounded piecewise $C^1$ length and multiplicity, and satisfies
$$\operatorname{supp}C_{z,q}\setminus\bigcup_{\nu\in I}\mathcal N_\nu(z,q)
\subset\{\tau\in X:\Re\widehat\Phi_{z,q}(\tau)\le\mathcal H_{z,q}-c\}.$$
Moreover one may choose singular two-chains $\mathcal S_{z,q}$ in $X$ and one-chains $E_{z,q}$ in $B_{z,q}$ such that
$$\Gamma_\Lambda-C_{z,q}=\partial\mathcal S_{z,q}+E_{z,q},\qquad
\partial E_{z,q}=-\partial C_{z,q},$$
$$\operatorname{supp}E_{z,q}\subset\{\tau\in B_{z,q}:\Re\widehat\Phi_{z,q}(\tau)\le\mathcal H_{z,q}-c\},$$
and $E_{z,q}$ has uniformly bounded piecewise $C^1$ length and multiplicity. Consequently, for every holomorphic one-form $\Omega$ on a neighborhood of $X$,
$$\int_{\Gamma_\Lambda}\Omega=\int_{C_{z,q}}\Omega+\int_{E_{z,q}}\Omega.$$
\end{lem}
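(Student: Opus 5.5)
The plan is to build everything from Lemmas~\ref{lem:wright-exit-arcs} and~\ref{lem:wright-thimble-basis}, applied on $U_0$, and then to extract uniform quantitative bounds by compactness of $K_0\times[0,q_0]$. First, fix $\delta,\Lambda_1,q_0,C_0,b$ as in Lemma~\ref{lem:wright-exit-arcs}, with $\overline{U_0}$ in place of the compact parameter set. Then the triads $(X,A_{z,q},B_{z,q})$, the simple saddles $\tau_\nu(z,q)$, and the thimbles $\Delta_\nu,\nabla_\nu$ are defined on all of $\mathcal P$. Lemma~\ref{lem:wright-thimble-basis} gives the expansion $[\Gamma_\Lambda]=\sum_\nu\sigma_\nu[\Delta_\nu]$ with constant integers $\sigma_\nu$, not all zero. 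Dropping the terms with $\sigma_\nu=0$ leaves the class unchanged, so $C_{z,q}:=\sum_{\nu\in I}\sigma_\nu\Delta_\nu(z,q)$ represents $[\Gamma_\Lambda]$ and contains no zero-coefficient thimble at chain level. Its multiplicity is bounded by $\sum_\nu|\sigma_\nu|$.

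Second, I construct the Morse neighborhoods $\mathcal N_\mu$. The critical points are simple and depend holomorphically on $z$ and continuously (indeed smoothly) on $q$, since $\widehat\Phi_{z,q}\to\psi_z$ in $C^2$. The holomorphic Morse lemma with parameters then gives coordinates $u$ with $\widehat\Phi_{z,q}=\widehat\Phi_{z,q}(\tau_\mu(z,q))-u^2/2$ on a fixed-radius polydisc around each critical graph over the compact set $K_0\times[0,q_0]$. Shrinking the radius makes these neighborhoods pairwise disjoint, using the uniform separation of the $\tau_\mu(z)$.

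Third, and this is the main step, I prove the uniform height gap and length bounds. Along a downward separatrix, $h_{z,q}$ is strictly decreasing. On exiting $\mathcal N_\nu$ it has dropped below $h_{z,q}(\tau_\nu)$ by at least $r^2/2$, where $r$ is the Morse radius. Hence every point of $\Delta_\nu\setminus\mathcal N_\nu$ satisfies $h\le h(\tau_\nu)-r^2/2\le\mathcal H_{z,q}-r^2/2$ whenever $\nu\in I$. Bounded length is the more delicate part. By Lemma~\ref{lem:gradient-landing}, the landed separatrices continue by ambient isotopy, and they exit transversally in finite time with a uniform boundary margin. Their arcs therefore vary continuously in $C^1$ over the compact set $K_0\times[0,q_0]$. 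On the part outside $\mathcal N_\nu$, $|\nabla h|$ is bounded below, because the only critical points lie inside the neighborhoods. The traversal time is then bounded by the total height drop, which is at most $\max h-\min h$ on $X$, divided by $\inf|\nabla h|^2$, giving a uniform length bound. I expect the main obstacle here to be uniformity down to $q=0$. My first approach is to rely on the $C^2$ convergence $\widehat\Phi_{z,q}\to\psi_z$ together with the fact that every margin in Lemma~\ref{lem:wright-exit-arcs} was already chosen uniformly on $\mathcal P$. I then take $c:=\min(r^2/2,b)$.

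Finally, I construct the boundary correction. Equality in $H_1(X,B_{z,q};\mathbb Z)$ gives $\Gamma_\Lambda-C_{z,q}=\partial\mathcal S'+E'$, with $E'$ a chain in $B_{z,q}$; since $\Gamma_\Lambda$ is closed, $\partial E'=-\partial C_{z,q}$. I replace $E'$ by a canonical chain $E_{z,q}$ with the same boundary. This chain is a sum of subintervals of the interval components of $B_{z,q}$, joining the landing points of $C_{z,q}$ within each component with signs. Such a chain exists because the total boundary inside each component has degree zero: the augmentation of $\partial E'$ restricted to any component vanishes. The difference $E'-E_{z,q}$ is a cycle in $B_{z,q}$, and since $H_1(B_{z,q})=0$ it is the boundary of a $2$-chain in $B_{z,q}$. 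Absorbing this into $\mathcal S_{z,q}$ yields the stated relation. The chain $E_{z,q}$ has length at most $2\pi\Lambda_1+2\pi\delta$ per unit of multiplicity and multiplicity at most $2\sum|\sigma_\nu|$. Its support lies in $B_{z,q}$, where Lemma~\ref{lem:wright-exit-arcs} gives $\Re\widehat\Phi\le\min_\nu\Re\widehat\Phi(\tau_\nu)-b\le\mathcal H_{z,q}-c$. The integral identity then follows from Stokes' theorem, since a holomorphic one-form is closed, so $\int_{\partial\mathcal S}\Omega=0$.
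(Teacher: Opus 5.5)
Your proposal is correct and follows essentially the same route as the paper. The common steps are: constant $\sigma_\nu$ from Lemmas~\ref{lem:wright-exit-arcs} and~\ref{lem:wright-thimble-basis}; parametric Morse charts that give a fixed height drop where each active thimble leaves its own neighborhood, which together with monotonicity of $h_{z,q}$ and the exit-boundary estimate yields $c=\min(r^2/2,b)$; a length bound from a lower bound on $|\nabla h_{z,q}|$ away from the Morse charts; and the cumulative-sum boundary chain on each interval of $B_{z,q}$, with the discrepancy absorbed via $H_1(B_{z,q};\mathbb Z)=0$ and Stokes.

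The one point to tighten is the length bound. Your stated reason (``the only critical points lie inside the neighborhoods'') does not cover a piece of $\Delta_\nu$ that passes through a foreign neighborhood $\mathcal N_\mu$, where $|\nabla h_{z,q}|$ need not be bounded below. The paper closes this by shrinking the $\mathcal N_\mu$ until their $g_{z,q}$-ranges are pairwise disjoint, which the uniform separation of critical imaginary parts allows. Since $g_{z,q}$ is constant on an active thimble, that thimble then never enters another saddle's neighborhood.
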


\begin{proof}
Lemmas~\ref{lem:wright-exit-arcs} and~\ref{lem:wright-thimble-basis} give the boundary partition, absence of saddle connections, thimble bases, and parameter-constant integers $\sigma_\nu$; in particular $I\ne\varnothing$.

All estimates below are over the compact parameter set $K_0\times[0,q_0]$. The critical graphs are disjoint and their critical imaginary values are uniformly separated. The holomorphic Morse lemma with parameters gives pairwise disjoint neighborhoods $\mathcal N_\mu$ and coordinates $u$ in which
$$\widehat\Phi_{z,q}(\tau)=\widehat\Phi_{z,q}(\tau_\mu(z,q))-u^2/2.$$
Shrinking once, uniformly over the compact parameter set, we arrange that the $g_{z,q}$-ranges of the neighborhoods are pairwise disjoint, their boundaries meet the four local separatrix germs transversely, and every local downward branch exits its own $\mathcal N_\nu(z,q)$ through points satisfying
$$h_{z,q}\le h_{z,q}(\tau_\nu(z,q))-2\varepsilon$$
with one $\varepsilon>0$ independent of $(z,q)$ and $\nu$.

A thimble has constant $g_{z,q}$. Since the critical neighborhoods have disjoint $g_{z,q}$-ranges, an active thimble cannot enter a critical neighborhood other than its own. In its own Morse coordinate, the level of $g_{z,q}$ through the saddle is the union of the coordinate axes, and the downward branches are the real-axis branches pointing out of the neighborhood. Each such branch exits through points with
$$h_{z,q}\le h_{z,q}(\tau_\nu(z,q))-2\varepsilon,$$
and $h_{z,q}$ then decreases strictly along the branch until it reaches $B_{z,q}$. Hence every exterior point of an active thimble satisfies
$$\operatorname{supp}\Delta_\nu(z,q)\setminus\mathcal N_\nu(z,q)
\subset\{h_{z,q}\le h_{z,q}(\tau_\nu(z,q))-\varepsilon\}
\subset\{h_{z,q}\le\mathcal H_{z,q}-\varepsilon\},\qquad \nu\in I.$$
Lemma~\ref{lem:wright-exit-arcs} gives on $B_{z,q}$
$$h_{z,q}\le\min_\mu h_{z,q}(\tau_\mu(z,q))-b\le\mathcal H_{z,q}-b.$$
The support estimates follow with $c\le\min(\varepsilon,b)$.

By Lemma~\ref{lem:wright-thimble-basis},
$$[\Gamma_\Lambda]=\sum_{\nu=0}^m\sigma_\nu[\Delta_\nu(z,q)]
=\sum_{\nu\in I}\sigma_\nu[\Delta_\nu(z,q)]
=[C_{z,q}]$$
in $H_1(X,B_{z,q};\mathbb Z)$. The omitted thimbles have coefficient zero and are not present in the chain. The relative equality means that there are a singular two-chain $\mathcal S^0_{z,q}$ in $X$ and a one-chain $E^0_{z,q}$ in $B_{z,q}$ with
$$\Gamma_\Lambda-C_{z,q}=\partial\mathcal S^0_{z,q}+E^0_{z,q}.$$
No support control is asserted for this preliminary $E^0_{z,q}$.

The length and multiplicity of $C_{z,q}$ are uniformly bounded. Inside the Morse neighborhoods this follows from the uniform coordinate radii. Near the boundary it follows from the transverse landing flow boxes in Lemma~\ref{lem:wright-exit-arcs}. On the remaining compact part of $X$, the gradient has a positive lower bound. Parametrizing a branch by arclength gives $|dh_{z,q}/ds|=|\nabla h_{z,q}|$, so the length there is bounded by the uniform oscillation of $h_{z,q}$ divided by this lower bound. There are finitely many branches and the integers $\sigma_\nu$ are fixed.

We now construct the controlled boundary chain. Since $[C_{z,q}]=[\Gamma_\Lambda]$ lies in the image of $H_1(X;\mathbb Z)$, the connecting homomorphism sends $[C_{z,q}]$ to zero in $H_0(B_{z,q};\mathbb Z)$. Equivalently, the total coefficient of $\partial C_{z,q}$ on each interval component $J$ of $B_{z,q}$ is zero. Orient $J$, list the support of $\partial C_{z,q}|_J$ as $p_1,\dots,p_s$ in order, and write
$$\partial C_{z,q}|_J=\sum_{r=1}^s c_r p_r,\qquad \sum_{r=1}^s c_r=0.$$
If $s=0$, set $E_J=0$. If $s>0$, put
$$E_J:=\sum_{r=1}^{s-1}\left(\sum_{u=1}^r c_u\right)[p_r,p_{r+1}],$$
where $[p_r,p_{r+1}]$ is the oriented subinterval of $J$. Then $\partial E_J=-\partial C_{z,q}|_J$. Summing over the interval components of $B_{z,q}$ gives a one-chain $E_{z,q}\subset B_{z,q}$ with
$$\partial E_{z,q}=-\partial C_{z,q}.$$
The cumulative coefficients are bounded by $\sum_{\nu\in I}|\sigma_\nu|$, and the boundary intervals have uniformly bounded length by local triviality of the boundary partition. Thus $E_{z,q}$ has uniformly bounded piecewise $C^1$ length and multiplicity. Its support is contained in $B_{z,q}$, so Lemma~\ref{lem:wright-exit-arcs} gives
$$\operatorname{supp}E_{z,q}\subset\{h_{z,q}\le\mathcal H_{z,q}-c\}$$
after decreasing $c$.

The chains $E_{z,q}$ and $E^0_{z,q}$ have the same boundary. Since $H_1(B_{z,q};\mathbb Z)=0$, their difference bounds a singular two-chain in $B_{z,q}$. Absorbing that two-chain into $\mathcal S^0_{z,q}$ gives
$$\Gamma_\Lambda-C_{z,q}=\partial\mathcal S_{z,q}+E_{z,q}.$$
If $\Omega$ is holomorphic on a neighborhood of $X$, then $d\Omega=0$ and Stokes' theorem gives $\int_{\partial\mathcal S_{z,q}}\Omega=0$. Integrating the chain identity proves the formula.
\end{proof}

\begin{lem}[parameter-uniform Wright upper bound]\label{lem:wright-upper}
Let $K\Subset D$ be compact and set $\alpha:=m/(m+1)$ and
$$\theta:=-\frac{2\beta+m+2}{2(m+1)}.$$
There is a constant $C_K$ such that, for all $n\ge2$,
\begin{equation}\label{eq:wright-upper}
\sup_{z\in K}\log^+\left(n^{-\theta}\left|[\zeta^n]F_z(\zeta)\right|\right)\le C_K n^\alpha.
\end{equation}
\end{lem}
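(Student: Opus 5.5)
We estimate the coefficient by Cauchy's formula on a single $n$-dependent circle. We use no Picard--Lefschetz structure: an upper bound needs only a crude saddle-point radius, and the claim is just $\log|[\zeta^n]F_z|\le \theta\log n+C_Kn^\alpha$. Write
\[
[\zeta^n]F_z=\frac{1}{2\pi i}\oint\frac{F_z(\zeta)}{\zeta^{n+1}}\,d\zeta .
\]
Substitute $1-\zeta=t$, so that the integrand is, up to the smooth factors $R(z,1-t)$ and $t^\beta$, equal to $\exp(\Phi_{z,n}(t))$ with $\Phi_{z,n}$ as defined before Lemma~\ref{lem:gradient-landing}. Since $\theta\log n=o(n^\alpha)$, it suffices to prove $\sup_K|[\zeta^n]F_z|\le e^{C_Kn^\alpha}$. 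We deform the contour as follows, using holomorphy on $|\zeta|\le\varrho_K$ minus $\{1\}$:
\begin{enumerate}
\item Take the outer circle $|\zeta|=r$ with $1<r<\varrho_K$ fixed, where $F_z$ is bounded uniformly on $K$. This part of the contour contributes $O(r^{-n})$.
\item Near $\zeta=1$, replace the portion of the circle lying inside a small fixed disk $|1-\zeta|<\epsilon$ by a keyhole-type detour, namely a small circle $|1-\zeta|=\rho_n$ with $\rho_n:=c\,n^{-1/(m+1)}$. The connecting segments lie where $F_z$ is bounded.
\end{enumerate}
The choice of $\rho_n$ comes from the scaling in Lemma~\ref{lem:wright-exit-arcs}: $t=q\tau$ with $q=n^{-1/(m+1)}$, so $|\tau|=c$.

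To estimate each piece, first take the small circle $|t|=\rho_n$. There $|\exp(\sum_s\lambda_s t^{-s})|\le\exp(C\rho_n^{-m})=\exp(C'n^{m/(m+1)})$. Also $|\zeta|^{-n-1}\le(1-\rho_n)^{-n-1}\le\exp(2n\rho_n)=\exp(2cn^{1/(m+1)})\le\exp(Cn^\alpha)$, since $1/(m+1)\le m/(m+1)$. The factor $|t|^\beta$ is polynomial in $n$, and the length of the circle is at most $1$. For the connecting arcs, take the parts of the circle $|\zeta|=r$, or radial segments, joining $|1-\zeta|=\epsilon$ to $|1-\zeta|=\rho_n$. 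These are chosen inside $|\zeta|\ge 1-\rho_n$, so $|\zeta|^{-n-1}\le e^{2cn^{1/(m+1)}}$, and $|t|\ge\rho_n$ on them gives the same exponential bound. All constants are uniform on $K$ because $\lambda_s$, $R$ and $\varrho_K$ are continuous on a neighborhood of $K$. Summing the pieces gives $\sup_K|[\zeta^n]F_z|\le e^{C_Kn^\alpha}$, and hence \eqref{eq:wright-upper} after absorbing $\theta\log n$.

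The only delicate point is the geometry of the detour. The circle $|1-\zeta|=\rho_n$ must be joined to $|\zeta|=r$ so that the whole contour still encloses $0$ and stays in $\{|\zeta|\ge1-\rho_n\}\cup\{|\zeta|\ge r\}$. Concretely, use the boundary of the region $\{|\zeta|<r\}\setminus\{|1-\zeta|\le\rho_n\}$; its inner circle portion has $|\zeta|\ge1-\rho_n$ automatically. On the outer circle, near $\zeta=1$, the quantity $|t|$ may be as small as $r-1$, which is a fixed positive constant and so causes no problem. The remaining work is the routine check that the exponential factor $\exp(\Re\sum_s\lambda_st^{-s})$ is bounded by $\exp(C|t|^{-m})$ for $|t|\ge\rho_n$, which is immediate.
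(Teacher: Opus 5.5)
Your argument is correct and is essentially the paper's proof. Both use Cauchy's formula, deform to a fixed outer circle of radius in $(1,\varrho_K)$ (contributing $O(r^{-n})$) plus the circle $|1-\zeta|\asymp n^{-1/(m+1)}$, and apply crude bounds on that small circle. One slip does not affect the conclusion: on the small circle $(n+1)\rho_n\asymp c\,n^{m/(m+1)}$, not $c\,n^{1/(m+1)}$. So the factor $|\zeta|^{-n-1}$ is of the same order $e^{O(n^\alpha)}$ as the exponential of the principal part, and this balance is exactly why the radius $n^{-1/(m+1)}$ is the right choice.
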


\begin{proof}
Choose $L$ with $K\Subset L\Subset D$ and use the annulus hypothesis to fix $\varrho>1$ for $L$. Since $R$ is holomorphic near $K\times\{1\}$, it is uniformly bounded for $z\in K$ and $|1-\zeta|$ sufficiently small. Put $\varepsilon_n:=n^{-1/(m+1)}$ and, for large $n$, $r_n:=1-2\varepsilon_n$; then $|1-\zeta|>\varepsilon_n$ on $|\zeta|=r_n$ and the Taylor branch of $\log(1-t)$ is valid on $|t|=\varepsilon_n$.

Cauchy's formula on $|\zeta|=r_n$, followed by Cauchy's theorem in
$$\{r_n\le |\zeta|\le\varrho\}\setminus\{|1-\zeta|<\varepsilon_n\},$$
gives an outer-circle integral $O_K(\varrho^{-n})$ and an integral over $|t|=\varepsilon_n$, $t=1-\zeta$, of
$$t^\beta R(z,1-t)\exp\!\left(\sum_{s=1}^m\lambda_s(z)t^{-s}-(n+1)\log(1-t)\right).$$
On this small circle,
$$|t^\beta|\le n^{|\beta|/(m+1)},\qquad
\Re\left(\sum_{s=1}^m\lambda_s(z)t^{-s}-(n+1)\log(1-t)\right)
\le C_K n^{m/(m+1)},$$
because $|t|^{-s}\le n^{m/(m+1)}$ for $s\le m$ and $(n+1)|\log(1-t)|=O(n^{m/(m+1)})$. The small circle has length $O(n^{-1/(m+1)})$, and the factor $\zeta^{-n-1}=(1-t)^{-n-1}$ is precisely the logarithmic part of the displayed phase. Hence
$$|[\zeta^n]F_z(\zeta)|\le C_K n^{A}\exp(C_K n^\alpha)$$
for some fixed $A$, uniformly on $K$ and all large $n$. Multiplication by $n^{-\theta}$ changes only the polynomial factor; taking $\log^+$ and increasing $C_K$ to absorb the finitely many remaining $n\ge2$ proves \eqref{eq:wright-upper}.
\end{proof}

\begin{lem}[sectorial parameter-uniform multi-saddle asymptotics]\label{lem:wright-multi}
Let $V$ be a connected component of $D\setminus\mathfrak S$, and let $K\Subset V$ be compact. Then there exists a connected open set $U$ with $K\Subset U\Subset V$ such that, for each $0\le\nu\le m$ and all sufficiently large $n$, there is a holomorphic function $t_{n,\nu}:U\to\C$ satisfying
$$\Phi_{z,n}'(t_{n,\nu}(z))=0,\qquad
t_{n,\nu}(z)=\omega_\nu\eta(z)\,n^{-1/(m+1)}+O_U\!\left(n^{-2/(m+1)}\right).$$
Set
\begin{equation}\label{eq:wright-multi-data}
\theta:=-\frac{2\beta+m+2}{2(m+1)},\qquad
A_\nu(z):=\frac{(\omega_\nu\eta(z))^{\beta}\,(\omega_\nu^{1/2}\eta^{1/2}(z))}{\sqrt{2\pi(m+1)}}\,R(z,1),
\qquad
\Xi_\nu(z;n):=\Phi_{z,n}(t_{n,\nu}(z)).
\end{equation}
Then each $A_\nu$ is holomorphic and nonvanishing on $U$, and each $\Xi_\nu(z;n)$ admits, by iterating the saddle equation, an asymptotic expansion to arbitrary finite order in descending powers of $n^{1/(m+1)}$, uniformly on $K$, with
\begin{equation}\label{eq:wright-multi-phase}
\Xi_\nu(z;n)=\frac{m+1}{m}\,\omega_\nu\eta(z)\,n^{m/(m+1)}
+O_K\!\left(n^{(m-1)/(m+1)}\right)
\end{equation}
uniformly for $z\in K$.

Write $\tau_\nu(z):=\omega_\nu\eta(z)$ and use the square roots in \eqref{eq:wright-multi-data} to set $\tau_\nu(z)^{1/2}=\omega_\nu^{1/2}\eta^{1/2}(z)$. Orient each local downward thimble at $t_{n,\nu}(z)$ so that, in the Morse coordinate $u$ normalized by
$$\Phi_{z,n}(t)=\Xi_\nu(z;n)-u^2/2,
\qquad
\left.\frac{\partial t}{\partial u}\right|_{u=0}=\frac{i}{\sqrt{\Phi_{z,n}''(t_{n,\nu}(z))}},$$
the real $u$-axis has its standard orientation; the square root of $\Phi_{z,n}''$ is the continuation whose leading term is
$$\sqrt{\frac{m+1}{\tau_\nu(z)}}\,n^{(m+2)/(2(m+1))}.$$
With these local thimble orientations, Lemma~\ref{lem:filtered-wright}, applied on $U\Subset V$ to the scaled coefficient circle, gives integers $\sigma_{\nu,U}$, not all zero. Because the branch of $\eta$ has been fixed on $D$, the limiting saddle labels $\tau_\nu(z)=\omega_\nu\eta(z)$ are single-valued on all of $V$; for large $n$ the finite-$q$ saddles are labelled by the perturbation of these branches. If $U_1,U_2\Subset V$ are two admissible choices, choose a compact connected set $L\Subset V$ containing $U_1\cup U_2$ and cover $L$ by finitely many subdomains to which Lemmas~\ref{lem:wright-exit-arcs}--\ref{lem:filtered-wright} apply with overlapping closures. On each overlap the locally trivial boundary triads identify the labelled thimbles, and Lemma~\ref{lem:gradient-landing} excludes corner hits, boundary tangencies, critical collisions and saddle connections during the continuation. The intersection numbers with the transported coefficient-core class therefore agree from one member of the finite chain to the next. Thus the integers are independent of the auxiliary compact chart and are denoted by $\sigma_{\nu,V}$; this notation records homological continuation on compact subsets of $V$ and does not assert a uniform finite-$q$ isotopy on all of $V$ at once. Put
$$I_V:=\{\nu:\sigma_{\nu,V}\ne0\}.$$
Then $I_V$ is nonempty. Proposition~\ref{prop:active-wright-coefficients} below computes the vanishing pattern of these integers explicitly; the present lemma uses only their homological definition and nontriviality. If
$$M_V(z;n):=\max_{\nu\in I_V}\Re\Xi_\nu(z;n),$$
then there are functions $\varepsilon_{n,\nu}:K\to\C$, $\nu\in I_V$, satisfying
$$\max_{\nu\in I_V}\sup_{z\in K}|\varepsilon_{n,\nu}(z)|=O_K\!\left(n^{-1/(m+1)}\right),$$
and, uniformly for $z\in K$,
\begin{equation}\label{eq:wright-multi}
[\zeta^n]F_z(\zeta)
=
n^\theta\sum_{\nu\in I_V}\sigma_{\nu,V}A_\nu(z)\exp\!\bigl(\Xi_\nu(z;n)\bigr)\bigl(1+\varepsilon_{n,\nu}(z)\bigr)
+\mathcal R_{n,V}(z),
\end{equation}
with
\begin{equation}\label{eq:wright-multi-rem}
\mathcal R_{n,V}(z)=
O_K\!\left(
n^\theta
\exp\!\bigl(M_V(z;n)-c_K n^{m/(m+1)}\bigr)
\right)
\end{equation}
for some $c_K>0$. The estimate is uniform on compact sets crossing leading anti-Stokes arcs. Changing one square-root branch in \eqref{eq:wright-multi-data} changes the displayed $A_\nu$ by a sign; changing the corresponding thimble orientation changes $\sigma_{\nu,V}$ by the same sign, so the products $\sigma_{\nu,V}A_\nu$ are intrinsic.
\end{lem}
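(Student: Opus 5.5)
Start from Cauchy's formula for $[\zeta^n]F_z(\zeta)$, move the contour as in the proof of Lemma~\ref{lem:wright-upper}, and rescale to the Wright variables. Write $\zeta=1-t$ and pass to the scaled variable $t=q\tau$ with $q:=n^{-1/(m+1)}$, so that $n+1=q^{-(m+1)}(1+q^{m+1})$. Then
\[
\Phi_{z,n}(q\tau)=q^{-m}\widehat\Phi_{z,q}(\tau),
\]
using the lower-order coefficients $\lambda_s$ scaled exactly as in Lemma~\ref{lem:wright-exit-arcs}. Hence
\[
[\zeta^n]F_z(\zeta)=\frac{q^{\beta+1}}{2\pi i}\int_{\Gamma_\Lambda}\tau^\beta R(z,1-q\tau)\exp\!\bigl(q^{-m}\widehat\Phi_{z,q}(\tau)\bigr)\dd\tau+O_K(\varrho^{-n}),
\]
with an orientation sign absorbed into the convention for $\Gamma_\Lambda$. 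Here $\Gamma_\Lambda=\{|\tau|=\Lambda\}$ is the core circle of the annulus $X$. For this deformation to be legitimate, the scaled circle $|t|=q\Lambda$ must separate $t=0$ from $1-\zeta$ with $|\zeta|\ge\varrho$, which holds because $q_0\Lambda_1<1$.

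Next, cover $K$ by finitely many connected sets $U_0$ with $\overline{U_0}\Subset V$. On each $U_0$, apply Lemma~\ref{lem:filtered-wright} to get
\[
\int_{\Gamma_\Lambda}=\int_{C_{z,q}}+\int_{E_{z,q}}.
\]
The integrand $\Omega$ is holomorphic on a neighborhood of $X$, since $\tau^\beta$ is single-valued for integer $\beta$. On $\operatorname{supp}E_{z,q}$, and on the parts of $C_{z,q}$ outside the Morse neighborhoods, we have $\Re\widehat\Phi\le\mathcal H_{z,q}-c$. Because the lengths are uniformly bounded, these pieces contribute $O(q^{\beta+1}\exp(q^{-m}(\mathcal H-c)))$. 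Since $q^{-m}\mathcal H_{z,q}=M_V(z;n)$ and $q^{\beta+1}$ differs from $n^\theta$ only by a polynomial factor absorbed after shrinking $c$, this gives the remainder \eqref{eq:wright-multi-rem}.

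Inside each $\mathcal N_\nu$ with $\nu\in I$, use the parameter-dependent Morse coordinate $u$. The local integral is a Laplace integral with large parameter $q^{-m}=n^{m/(m+1)}$, truncated at $|u|\le r$. The Laplace method with uniform error gives
\[
\sqrt{2\pi}\,q^{m/2}\,\frac{dt}{du}\Big|_{0}\,\tau_\nu^\beta R(z,1)\,e^{\Xi_\nu}\bigl(1+O(q)\bigr),
\]
and the tails are again absorbed into the remainder. The saddles $t_{n,\nu}=q\tau_\nu(z,q)$ exist and are holomorphic in $z$ by Lemma~\ref{lem:wright-exit-arcs} together with the implicit function theorem; this gives the expansion $t_{n,\nu}=\omega_\nu\eta n^{-1/(m+1)}+O(n^{-2/(m+1)})$. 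The critical value expansion \eqref{eq:wright-multi-phase} follows from $\widehat\Phi_{z,q}(\tau_\nu(z,q))=\tfrac{m+1}{m}\tau_\nu+O(q)$ multiplied by $q^{-m}$. Iterating the saddle equation gives the full descending-power expansion, because $\widehat\Phi$ is analytic in $(\tau,q)$.

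The main bookkeeping step is to check the constant and exponent. We have $\psi_z''(\tau_\nu)=(m+1)/\tau_\nu$, so $\Phi''_{z,n}(t_{n,\nu})\approx q^{-m-2}(m+1)/\tau_\nu$, which matches the stated branch. The factor $i$ in $\partial t/\partial u$ cancels the $1/(2\pi i)$ to give a real-orientation Gaussian. Combining the powers gives
\[
q^{\beta}\cdot q^{m/2}\cdot q^{1}\cdot q^{\,?}=n^\theta,
\]
and indeed $-(\beta+1+m/2)/(m+1)=\theta$ once the $dt=q\,d\tau$ factor is counted exactly once. The amplitude is $\tau_\nu^{\beta}\tau_\nu^{1/2}R(z,1)/\sqrt{2\pi(m+1)}=A_\nu$.

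Since $A_\nu$ is built from nonvanishing holomorphic pieces, it is nonvanishing. The coefficients are the $\sigma_\nu$ of Lemma~\ref{lem:wright-thimble-basis}, with orientations matched to the real $u$-axis. Independence of $\sigma_{\nu,V}$ from the chart follows by the chaining argument described in the statement: the overlapping triads are identified via Lemma~\ref{lem:gradient-landing}, so the integer intersections agree from one member of the chain to the next. Then $I_V\neq\varnothing$ follows from Lemma~\ref{lem:wright-thimble-basis}.

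Uniformity across anti-Stokes arcs is automatic, since nothing in the argument used separation of the real parts, only separation of the imaginary parts. The claim about sign changes under switching a square-root branch is immediate, because the branch flip reverses the $u$-orientation.

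I expect the main obstacle to be the exact normalization of the thimble orientation against $\partial t/\partial u$ and the $2\pi i$. That is where errors in sign or in the power of $n$ would occur; the rest is assembling the lemmas already established.
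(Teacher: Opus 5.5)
Your proposal is correct and follows essentially the same route as the paper. It uses the Cauchy split with a small circle of radius $\Lambda n^{-1/(m+1)}$, the identification $\Phi_{z,n}(q\tau)=q^{-m}\widehat\Phi_{z,q}(\tau)$ at $q=n^{-1/(m+1)}$, and the filtered representative of Lemma~\ref{lem:filtered-wright} for the remainder. It then applies Laplace's method at the active saddles, with the power count $q^{\beta}\cdot q\cdot q^{m/2}=n^\theta$ and amplitude $A_\nu$.

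A few slips need tidying.
\begin{itemize}
\item The outer term $O(\varrho^{-n})$ is never absorbed into $\mathcal R_{n,V}$. It can be, because $M_V(z;n)\ge -C_Kn^{m/(m+1)}$.
\item The small circle lies inside the annulus because $q\Lambda<\varrho-1$ for large $n$. The condition $q_0\Lambda_1<1$ only keeps $t=1$ outside and fixes the branch of $\log(1-t)$.
\item In your displayed Laplace formula, the factor $q^{m/2}$ belongs with the scaled Morse coordinate and $\partial\tau/\partial v$. It does not belong with the unscaled $\partial t/\partial u$, since $\partial t/\partial u|_{0}=q^{1+m/2}\,\partial\tau/\partial v|_{0}$ already carries that power.
\end{itemize}
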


\begin{proof}
Fix $K\Subset V$ and choose a connected open set $U$ with $K\Subset U\Subset V$. Put $\tau_\nu(z):=\omega_\nu\eta(z)$. The scaled saddle equation is
\begin{align*}
0
&=n^{-1}\Phi_{z,n}'\!\bigl(n^{-1/(m+1)}\tau\bigr)\\
&=1-\tau_\nu(z)^{m+1}\tau^{-m-1}+O_U\!\left(n^{-1/(m+1)}\right),
\end{align*}
uniformly for $\tau$ on compact subsets of $\C^\times$. Since
$$\partial_\tau\!\left(1-\tau_\nu(z)^{m+1}\tau^{-m-1}\right)\Big|_{\tau=\tau_\nu(z)}=\frac{m+1}{\tau_\nu(z)}\ne0,$$
the holomorphic implicit function theorem gives holomorphic saddles $t_{n,\nu}=n^{-1/(m+1)}\tau_{n,\nu}$ on $U$, with
$$\tau_{n,\nu}(z)=\tau_\nu(z)+O_U\!\left(n^{-1/(m+1)}\right),
\qquad
 t_{n,\nu}(z)=\tau_\nu(z)n^{-1/(m+1)}+O_U\!\left(n^{-2/(m+1)}\right).$$
Repeated substitution in the implicit equation gives the full expansion of $\tau_{n,\nu}$ and then of $\Xi_\nu$. Its leading term is
$$\lambda_m(z)\tau_\nu(z)^{-m}n^{m/(m+1)}+\tau_\nu(z)n^{m/(m+1)}
=\frac{m+1}{m}\,\tau_\nu(z)n^{m/(m+1)},$$
which is \eqref{eq:wright-multi-phase}.

By the annulus hypothesis applied to $\overline U$, choose $\varrho>1$ such that $(z,\zeta)\mapsto F_z(\zeta)$ is holomorphic on a neighborhood of $\overline U\times(\{|\zeta|\le\varrho\}\setminus\{1\})$. Choose $\Lambda>0$ so that every $\tau_\nu(z)$ lies in $|\tau|<\Lambda/4$ for $z\in\overline U$, and put $\varepsilon_n:=\Lambda n^{-1/(m+1)}$. For $0<r<1$ with $|1-\zeta|>\varepsilon_n$ on $|\zeta|=r$, Cauchy's formula gives
$$[\zeta^n]F_z=(2\pi i)^{-1}\int_{|\zeta|=r}F_z(\zeta)\zeta^{-n-1}\,\dd\zeta.$$
Cauchy's theorem in
$$\{r\le |\zeta|\le\varrho\}\setminus\{|1-\zeta|<\varepsilon_n\}$$
gives, with the boundary orientations induced by this punctured annulus,
\begin{equation}\label{eq:wright-contour-split}
[\zeta^n]F_z(\zeta)=\frac{1}{2\pi i}\int_{|\zeta|=\varrho}F_z(\zeta)\zeta^{-n-1}\,\dd\zeta
+\frac{1}{2\pi i}\int_{\Gamma_{\varepsilon_n}} t^\beta R(z,1-t)e^{\Phi_{z,n}(t)}\,\dd t,
\end{equation}
where $t=1-\zeta$ and $\Gamma_{\varepsilon_n}$ is the positively oriented circle $|t|=\varepsilon_n$. The sign is positive because the small boundary is clockwise in the $\zeta$-plane and $\dd\zeta=-\dd t$. The outer integral is $O_U(\varrho^{-n})$.

Set $q_n:=n^{-1/(m+1)}$ and
$$\widehat\Phi_{z,n}(\tau):=n^{-m/(m+1)}\Phi_{z,n}\!\bigl(q_n\tau\bigr).$$
Apply Lemma~\ref{lem:filtered-wright} to $U_0=U$, $K_0=K$, and this $\Lambda$. For all large $n$, $q_n\le q_0$ and
$$\widehat\Phi_{z,q_n}(\tau)=\widehat\Phi_{z,n}(\tau),$$
so the labelled critical points in the scaled annulus are $\tau_{n,\nu}(z)=q_n^{-1}t_{n,\nu}(z)$. The scaled image of $\Gamma_{\varepsilon_n}$ is the positively oriented circle $\Gamma_\Lambda:=\{|\tau|=\Lambda\}$. Lemma~\ref{lem:wright-thimble-basis} gives
$$[\Gamma_\Lambda]=\sum_{\nu=0}^m\sigma_{\nu,V}[\Delta_\nu(z,q_n)]
\quad\text{in }H_1(X,B_{z,q_n};\mathbb Z),$$
where the integers are locally constant on the compact parameter family $\overline U\times[0,q_0]$. More explicitly, for fixed $U\Subset V$ the path $q\in[0,q_0]$ stays inside the family of Lemma~\ref{lem:wright-exit-arcs}: the boundary triad is locally trivial, critical points neither collide nor acquire equal imaginary values, and Lemma~\ref{lem:gradient-landing} excludes creation of saddle connections along the path. Therefore the relative homology local system over $\overline U\times[0,q_0]$ is trivialized by the transported thimbles, and the intersection number $[\Gamma_\Lambda]\cdot[\nabla_\nu(z,q)]$ is independent of $q$. Thus the coefficients used at $q_n=n^{-1/(m+1)}$ for all sufficiently large $n$ are the same intersection numbers as for the limiting Wright phase $\lambda_m(z)\tau^{-m}+\tau$ on $U$. The notation $\sigma_{\nu,V}$ records the continuation of these limiting coefficients in the chamber; no uniform assertion is used for $q$-dependent points approaching $\partial V$. Hence $I_V:=\{\nu:\sigma_{\nu,V}\ne0\}$ is nonempty.

Let
$$H_{z,n}:=\max_{\nu\in I_V}\Re\widehat\Phi_{z,n}(\tau_{n,\nu}(z))=n^{-m/(m+1)}M_V(z;n).$$
Lemma~\ref{lem:filtered-wright} gives a filtered chain $C_{z,q_n}$ and a boundary chain $E_{z,n}$ such that
$$\int_{\Gamma_\Lambda}\Omega_{z,n}=\int_{C_{z,q_n}}\Omega_{z,n}+\int_{E_{z,n}}\Omega_{z,n},$$
where
$$\Omega_{z,n}:=q_n^{\beta+1}\tau^\beta R(z,1-q_n\tau)
\exp\!\bigl(n^{m/(m+1)}\widehat\Phi_{z,n}(\tau)\bigr)\,\dd\tau.$$
This one-form is holomorphic on a neighborhood of $X$ for all large $n$: $\beta\in\mathbb Z$, $0\notin X$, $q_n\Lambda_1<1$ keeps the logarithmic singularity outside the annulus and fixes the Taylor branch, and $R(z,1-q_n\tau)$ is holomorphic there. Since $q_n\Lambda_1\to0$, the factor $R(z,1-q_n\tau)$ and all derivatives needed below are bounded uniformly for $z\in K$ and $\tau\in X$. Moreover,
$$\operatorname{supp}C_{z,q_n}\setminus\bigcup_{\nu\in I_V}\mathcal N_\nu(z,q_n)
\subset\{\Re\widehat\Phi_{z,n}\le H_{z,n}-c\},
\qquad
\operatorname{supp}E_{z,n}\subset\{\Re\widehat\Phi_{z,n}\le H_{z,n}-c\}$$
uniformly for $z\in K$ and all large $n$. Thus inactive thimbles are absent from the chain representative, and the boundary chain is included in the integral identity.

On the nonlocal pieces of $C_{z,q_n}$ and on all of $E_{z,n}$,
$$\Re\Phi_{z,n}\le M_V(z;n)-c n^{m/(m+1)}.$$
The length and multiplicity of these chains are uniformly bounded. The amplitude $q_n^{\beta+1}\tau^\beta R(z,1-q_n\tau)$ is bounded by a fixed power of $n$ on $X$. Decreasing $c$ absorbs this polynomial factor and gives the remainder estimate \eqref{eq:wright-multi-rem} for all nonlocal and boundary pieces. Since $n^{-m/(m+1)}\Xi_\nu(z;n)$ is uniformly bounded on $K$, $M_V(z;n)\ge -C_Kn^{m/(m+1)}$; the outer integral $O_U(\varrho^{-n})$ in \eqref{eq:wright-contour-split} is then also absorbed by \eqref{eq:wright-multi-rem}, after decreasing $c$ once more.

It remains to evaluate the active saddle neighborhoods. Fix $\nu\in I_V$, put $\alpha=m/(m+1)$, and write $\tau_s=\tau_{n,\nu}(z)$ and $t_s=q_n\tau_s$. The parametric holomorphic Morse lemma, applied on the compact critical graph over $K$, gives one radius $r>0$ and coordinates $v$ with uniformly bounded inverse maps and derivatives such that
$$\widehat\Phi_{z,n}(\tau)=\widehat\Phi_{z,n}(\tau_s)-v^2/2,
\qquad
\left.\frac{\partial\tau}{\partial v}\right|_{v=0}=\frac{i}{\sqrt{\widehat\Phi_{z,n}''(\tau_s)}},$$
and the oriented local downward thimble is the real $v$-axis. The square root is chosen so that
$$\widehat\Phi_{z,n}''(\tau_s)=\frac{m+1}{\tau_\nu(z)}\left(1+O_U\!\left(n^{-1/(m+1)}\right)\right),$$
which is equivalent to the convention
$$\sqrt{\Phi_{z,n}''(t_s)}=
\sqrt{\frac{m+1}{\tau_\nu(z)}}\,n^{(m+2)/(2(m+1))}
\left(1+O_U\!\left(n^{-1/(m+1)}\right)\right).$$
Define
$$a_{z,n}(v):=\tau(v)^\beta R(z,1-q_n\tau(v))\frac{\partial\tau}{\partial v}(v).$$
The functions $a_{z,n}$ have uniformly bounded first two derivatives for $|v|\le r$. The local contribution of the unit oriented thimble is
\begin{align*}
\frac{1}{2\pi i}\int q_n^{\beta+1}\tau^\beta R(z,1-q_n\tau)
 e^{n^\alpha\widehat\Phi_{z,n}(\tau)}\,\dd\tau
&=\frac{e^{\Xi_\nu(z;n)}q_n^{\beta+1}}{2\pi i}
\int_{-r}^r a_{z,n}(v)e^{-n^\alpha v^2/2}\,\dd v\\
&\quad +O_K\!\left(n^A e^{\Re\Xi_\nu(z;n)-c n^\alpha}\right)
\end{align*}
for a fixed $A$. The error includes the part of the thimble outside $|v|<r$ and the Gaussian tail, both of which lie a fixed amount below the saddle height in the scaled phase. Taylor expansion at $v=0$ gives
$$\int_{-r}^r a_{z,n}(v)e^{-n^\alpha v^2/2}\,\dd v
=a_{z,n}(0)\sqrt{2\pi}\,n^{-\alpha/2}
\left(1+O_K\!\left(n^{-1/(m+1)}\right)\right),$$
because the odd term integrates to zero and the next even term is $O(n^{-\alpha})$, which is $O(n^{-1/(m+1)})$ for $m\ge1$.

Since $a_{z,n}(0)=\tau_s^\beta R(z,1-t_s)i/\sqrt{\widehat\Phi_{z,n}''(\tau_s)}$ and
$$\Phi_{z,n}''(t_s)=n^\alpha q_n^{-2}\widehat\Phi_{z,n}''(\tau_s),$$
the preceding display becomes
$$\exp\!\bigl(\Xi_\nu(z;n)\bigr)
\frac{t_s^\beta R(z,1-t_s)}{\sqrt{2\pi\Phi_{z,n}''(t_s)}}
\left(1+O_K\!\left(n^{-1/(m+1)}\right)\right).$$
Finally,
$$t_s^\beta R(z,1-t_s)=q_n^\beta\tau_\nu(z)^\beta R(z,1)
\left(1+O_K\!\left(n^{-1/(m+1)}\right)\right),$$
and the square-root convention gives
$$\frac{1}{\sqrt{\Phi_{z,n}''(t_s)}}=
\frac{\tau_\nu(z)^{1/2}}{\sqrt{m+1}}\,n^{-(m+2)/(2(m+1))}
\left(1+O_K\!\left(n^{-1/(m+1)}\right)\right).$$
Thus
$$\frac{t_s^\beta R(z,1-t_s)}{\sqrt{2\pi\Phi_{z,n}''(t_s)}}
=n^\theta A_\nu(z)\left(1+O_K\!\left(n^{-1/(m+1)}\right)\right),$$
with the $A_\nu$ and $\theta$ of \eqref{eq:wright-multi-data}. Multiplying these unit-thimble contributions by the fixed integers $\sigma_{\nu,V}$, summing over $\nu\in I_V$, and adding the estimated remainder proves \eqref{eq:wright-multi}.
\end{proof}

\begin{prop}[active Wright coefficients]\label{prop:active-wright-coefficients}
Let $V$ be a leading Stokes chamber in the notation of this subsection, and fix $z_\ast\in V$. Put
\[
\psi_\ast(\tau):=\lambda_m(z_\ast)\tau^{-m}+\tau,
\qquad
\tau_{\nu,\ast}:=\omega_\nu\eta(z_\ast),\quad 0\le\nu\le m.
\]
Set $v_\nu:=\omega_\nu\eta(z_\ast)$ and $P_\ast:=\operatorname{conv}\{v_0,\ldots,v_m\}$. The limiting critical values of $\psi_\ast$ at the saddles $\tau_{\nu,\ast}$ are $\frac{m+1}{m}v_\nu$, so, up to this common factor, $P_\ast$ is the regular $(m+1)$-gon of limiting critical values for the Wright phase $\psi_\ast$. Since $z_\ast\notin\mathfrak S$, no two $v_\nu$ have the same imaginary part. Let $\mathcal R_V$ be the set of labels for the vertices on the boundary chain of $P_\ast$ visible from $+\infty$ in the real direction. Equivalently, with $\operatorname{Arg}\in(-\pi,\pi]$,
\begin{equation}\label{eq:active-right-boundary}
\mathcal R_V=
\left\{0\le\nu\le m:
\left|\operatorname{Arg} v_\nu\right|<
\frac{\pi}{2}+\frac{\pi}{m+1}\right\}.
\end{equation}
Equality in \eqref{eq:active-right-boundary} is equivalent to equality of the imaginary parts of two adjacent limiting critical values. These adjacent equalities are among the components of \(\mathfrak S\), so equality cannot occur for \(z_\ast\in V\). Hence \(\mathcal R_V\) is independent of the base point in \(V\). Non-adjacent components of \(\mathfrak S\) may further subdivide the parameter domain, but they do not change the visible-chain criterion. The coefficient-contour coordinates in Lemmas~\ref{lem:wright-thimble-basis} and~\ref{lem:wright-multi} satisfy
\[
|\sigma_{\nu,V}|=
\begin{cases}
1, & \nu\in\mathcal R_V,\\
0, & \nu\notin\mathcal R_V.
\end{cases}
\]
Thus
\[
I_V=\{\nu:\sigma_{\nu,V}\ne0\}=\mathcal R_V.
\]
The signed value for $\nu\in\mathcal R_V$ is the oriented crossing sign of the corresponding upward thimble with the positively oriented core circle, with the Morse orientation convention of Lemma~\ref{lem:wright-multi}. These signs are independent of the admissible annulus, core representative, and base point in $V$, after continuation of the saddle labels and Morse orientations.

For an essential site $a_i$, the same rule is applied with
\[
v_\nu=\omega_\nu\eta_D(z_\ast),
\qquad
\eta_D(z_\ast)^{m_i+1}=m_i\lambda_{i,m_i}(z_\ast-a_i)^{-m_i}.
\]
Changing the branch of $\eta_D$ only relabels the output.
\end{prop}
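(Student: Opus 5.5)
The plan is to reduce everything to one normalized phase, then read the coefficients $\sigma_{\nu,V}$ off the endpoint behaviour of the upward thimbles, and finally do a sheet count for the branched covering $\psi_\ast:\C^\times\to\C$. Constancy in $V$ and independence of the admissible annulus are already available from Lemmas~\ref{lem:wright-thimble-basis} and~\ref{lem:wright-multi}, so it suffices to work at the single base point $z_\ast$ and at $q=0$. Substituting $\tau=\eta(z_\ast)s$ gives
\[
\psi_\ast(\tau)=\eta(z_\ast)\,\varphi(s),\qquad \varphi(s):=s+\tfrac1m s^{-m}.
\]
The saddles are $s=\omega_\nu$ and the critical values are $\frac{m+1}{m}v_\nu$. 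This settles the geometric reformulation~\eqref{eq:active-right-boundary}. The $v_\nu$ are the vertices of a regular $(m+1)$-gon centred at $0$. The vertices on the chain visible from $+\infty$ are those lying strictly between the two vertices of extreme imaginary part, on the right. Elementary polygon geometry shows that a vertex $v_\nu$ leaves this chain exactly when $|\operatorname{Arg} v_\nu|$ crosses $\frac{\pi}{2}+\frac{\pi}{m+1}$. At that moment the edge $[v_\nu,v_{\nu\pm1}]$ is horizontal, which is an adjacent component of $\mathfrak S$. So $\mathcal R_V$ is locally constant on $V$, and non-adjacent components of $\mathfrak S$ do not affect it.

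Next I would identify $\sigma_{\nu,V}=[\Gamma_\Lambda]\cdot[\nabla_\nu]$ topologically. Along $\nabla_\nu$ the function $g=\Im\psi_\ast$ is constant and $h$ increases away from the saddle. Hence each branch ends on an entrance arc in $A$. The outer circle carries exactly one $A$-arc in the strip $|g|\le C_0$, centred at the positive real direction. The $m$ inner $A$-arcs are centred where $\lambda_m\tau^{-m}$ is positive real. The core circle separates the two boundary circles of $X$. Therefore the algebraic intersection of an arc with $\Gamma_\Lambda$ is $0$ if both endpoints lie on the same boundary circle, and $\pm1$ if one endpoint is inner and one is outer. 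Both branches cannot reach the outer $A$-arc: near $\infty$ the map $\psi_\ast$ is injective, $\psi_\ast\approx\tau$, so a single $g$-level meets that arc once. Hence $|\sigma_{\nu,V}|\in\{0,1\}$. Moreover $|\sigma_{\nu,V}|=1$ exactly when one upward branch from $\tau_{\nu,\ast}$ escapes to the outer boundary, i.e.\ to $\tau\to+\infty$ inside the strip. The sign is the oriented crossing of that branch with the core, measured against the Morse orientation of Lemma~\ref{lem:wright-multi}.

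The main step, and the one I expect to be the obstacle, is deciding which saddles have an escaping upward branch. Each upward branch maps under $\psi_\ast$ onto the horizontal ray $\{\frac{m+1}{m}v_\nu+x:x\ge0\}$. The question is therefore on which sheet the lift of this ray ends. $\psi_\ast$ has degree $m+1$ and no other critical values. Over large $\Re w$ inside the strip there are exactly $m+1$ preimages: one on the outer sheet $\tau\approx w$, and $m$ near $0$ with $\tau^{m}\approx\lambda_m/w$.

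I would first treat the extreme configuration: the critical values $\frac{m+1}{m}v_\nu$ ordered by imaginary part, using that no ray meets another critical value since $z_\ast\notin\mathfrak S$. Then I would track the monodromy of the outer sheet as a generic horizontal level moves from $\Im w=+\infty$ to $-\infty$. Above all critical values the outer sheet at the right end is connected to the outer sheet at the left end. Crossing the level of $v_\nu$ swaps the right-end outer sheet with the sheet containing the saddle $\tau_{\nu,\ast}$ exactly when the saddle sits on the right boundary of the level-set picture. Concretely, the outer sheet's left continuation is a single arc tracking the convex hull's right chain. The cleanest route I would try is a direct check at the symmetric point $\eta=1$, where $v_\nu=\omega_\nu$ and the right-chain vertices are $|\operatorname{Arg}\omega_\nu|<\frac{\pi}{2}+\frac{\pi}{m+1}$. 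There I would verify escape via the real-axis symmetry of $\varphi$ and explicit level curves, including that the $m=1$ Bessel case gives $\{0\}$. Then I would transport to every chamber by rotating $\eta$ and using the chamber-wise constancy of Lemma~\ref{lem:wright-thimble-basis}.

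As a consistency check I would use the unimodularity from Lemma~\ref{lem:wright-thimble-basis}: $[\Gamma_\Lambda]$ is primitive, so at least one $\sigma_\nu\neq0$. The branch-of-$\eta_D$ statement for an essential site $a_i$ follows by applying the same argument with $\lambda_m(z)=\lambda_{i,m_i}(z-a_i)^{-m_i}$.
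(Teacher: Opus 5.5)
Your reduction to the limiting phase at $q=0$ is correct and coincides with the paper's framework. So is the normalization $\varphi(s)=s+\tfrac1m s^{-m}$, and so is the identification of $|\sigma_{\nu,V}|\in\{0,1\}$ with the existence of an upward branch from $\tau_{\nu,\ast}$ ending on the outer entrance arc.

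The gap is the decisive step, which you leave as a plan, and the concrete route you commit to fails.

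\textbf{The base point.} $\eta=1$ is not admissible when $m$ is odd. There $v_0=1$ and $v_{(m+1)/2}=-1$ have equal imaginary parts, so $\eta=1\in\mathfrak S$, where the separation hypotheses of Lemmas~\ref{lem:wright-exit-arcs} and~\ref{lem:wright-thimble-basis} fail. For $m=1$ it is an adjacent wall carrying a genuine saddle connection: the unit semicircle from $\tau=-1$ to $\tau=1$ for $\psi_\ast(\tau)=\tau+\tau^{-1}$. That is why you expect $\{0\}$. In both actual chambers $\Im\eta\gtrless0$ one has $|\operatorname{Arg}(\pm\eta)|<\pi$, so $I_V=\{0,1\}$, as in P\'olya's example in Corollary~\ref{cor:sublinear-rays}.

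\textbf{The transport.} Lemma~\ref{lem:wright-thimble-basis} gives constancy only inside one chamber, and the $\sigma_{\nu,V}$ do jump across adjacent walls. So nothing can be transported by continuously rotating $\eta$. The discrete symmetry $\eta\mapsto\omega_1\eta$, $\nu\mapsto\nu+1$, does map chambers to chambers. But for $m\ge2$ the $2(m+1)$ chambers (sectors of opening $\pi/(m+1)$ in $\arg\eta$) form two orbits with different active sets. For $m=2$, all three saddles are active near $\eta=1$ and only two near $\eta=-1$.

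\textbf{The monodromy sketch.} This does not fill the gap either. The claim that crossing the level of $v_\nu$ swaps the outer sheet with the saddle's sheet exactly when the saddle is on the right boundary is the proposition itself. (Also, the two vertices of extreme imaginary part always belong to $\mathcal R_V$; they are not excluded.)

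\textbf{The missing idea.} Decide escape saddle by saddle, through a criterion depending only on one direction, not on the chamber. Since $\varphi(\omega_\nu w)=\omega_\nu\varphi(w)$, the upward branches from $s=\omega_\nu$ are the lifts through $w=1$ of the ray $L_\alpha=c+[0,\infty)e^{i\alpha}$, with $c=\frac{m+1}{m}$ and $\alpha=-\operatorname{Arg}v_\nu$.

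The paper shows that $\varphi$ is univalent on $\{|w|>1\}$: if $\varphi(u)=\varphi(v)$ with $u\ne v$ there, then $1=\frac1m\sum_{r=0}^{m-1}u^{-r-1}v^{-(m-r)}$, whose right-hand side has modulus $<1$. Hence $\varphi$ maps $\{|w|>1\}$ onto the exterior of the cusped hypocycloid $\varphi(\{|w|=1\})$, which lies in $P=\operatorname{conv}\{c\,\omega_\mu\}$.

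For $|\alpha|<\frac\pi2+\frac\pi{m+1}$, the ray $L_\alpha\setminus\{c\}$ avoids $P$, whose edges at $c$ point in the directions $\pm(\frac\pi2+\frac\pi{m+1})$. So the exterior inverse lifts it to an arc in $\{|w|>1\}$ from the saddle to $\infty$. The landing point can change only when $L_\alpha$ meets another critical value, which first happens at these adjacent directions.

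For a fully explicit converse, argue as follows.
\begin{itemize}
\item $\varphi^{-1}(\operatorname{int}P)$ consists of $m+1$ univalent copies of $\operatorname{int}P$, permuted cyclically by $s\mapsto\omega_1 s$; each contains one root of $s^{m+1}=-1/m$.
\item Hence the univalent sheet over $\C\setminus P$ borders a different copy along each edge.
\item Beyond the threshold, both upward germs at $w=1$ lie in the two copies meeting there, which border that sheet along the two edges at $c$.
\item Meanwhile $L_\alpha$ leaves $P$ through a non-incident edge, so both branches continue on the $m$-sheeted part and end at $0$.
\end{itemize}
Since everything depends only on the angle $\alpha$, it holds in every chamber at once, and no Stokes wall ever has to be crossed.
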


Figure~\ref{fig:wright-active-visibility} illustrates the visible-chain criterion and its angular form for $m=5$.

\begin{figure}[t]
\centering
\begin{tikzpicture}[
    x=1cm,
    y=1cm,
    font=\small,
    >=stealth,
    line cap=round,
    line join=round
]

% =========================================================
% Left panel: visible boundary chain
% =========================================================
\begin{scope}[shift={(0,0)}]

\node[font=\normalsize] at (0,3.35)
    {Visible boundary chain in $P_\ast$};

% axes
\draw[->,thin,gray!65] (-3.15,0) -- (3.48,0) node[below right] {$\Re$};
\draw[->,thin,gray!65] (0,-2.42) -- (0,2.85) node[above left] {$\Im$};

% hexagon vertices for m=5
\coordinate (v0) at ( 2.55, 0.00);
\coordinate (v1) at ( 1.275, 2.208);
\coordinate (v2) at (-1.275, 2.208);
\coordinate (v3) at (-2.55, 0.00);
\coordinate (v4) at (-1.275,-2.208);
\coordinate (v5) at ( 1.275,-2.208);

% polygon
\fill[gray!8] (v0)--(v1)--(v2)--(v3)--(v4)--(v5)--cycle;
\draw[thick,gray!60] (v0)--(v1)--(v2)--(v3)--(v4)--(v5)--cycle;

% active visible chain
\draw[very thick,blue!70!black] (v5)--(v0)--(v1);

% vertices
\fill[blue!70!black] (v5) circle (2.3pt);
\fill[blue!70!black] (v0) circle (2.3pt);
\fill[blue!70!black] (v1) circle (2.3pt);

\filldraw[fill=white,draw=gray!70] (v2) circle (2.1pt);
\filldraw[fill=white,draw=gray!70] (v3) circle (2.1pt);
\filldraw[fill=white,draw=gray!70] (v4) circle (2.1pt);

% labels
\node[anchor=north west] at ( 2.66,-0.08) {$v_0$};
\node[anchor=south west] at ( 1.39, 2.28) {$v_1$};
\node[anchor=south east] at (-1.39, 2.28) {$v_2$};
\node[anchor=east]       at (-2.70, 0.16) {$v_3$};
\node[anchor=north east] at (-1.39,-2.28) {$v_4$};
\node[anchor=north west] at ( 1.39,-2.28) {$v_5$};

\node at (-0.15,0.50) {$P_\ast$};

% viewing arrow from +infty: points toward the visible boundary chain, not a vertex
\draw[->,thick,orange!85!black] (3.92,0.86) -- (2.56,0.52);
\node[orange!85!black,anchor=west] at (4.00,0.96) {$+\infty$};

% note below panel, with more separation from the axis
\node[align=center,text width=5.8cm] at (0,-3.35)
    {Active vertices are precisely the boundary chain visible\\
     from the positive real direction.};

\end{scope}

% =========================================================
% Right panel: angular criterion
% =========================================================
\begin{scope}[shift={(9.05,0)}]

\node[font=\normalsize] at (0,3.35)
    {Equivalent angular criterion};

% axes
\draw[->,thin,gray!65] (-2.90,0) -- (3.00,0) node[below right] {$\Re$};
\draw[->,thin,gray!65] (0,-2.42) -- (0,2.80) node[above left] {$\Im$};

% active sector
\fill[blue!10]
    (0,0) -- (120:2.12)
    arc[start angle=120,end angle=-120,radius=2.12]
    -- cycle;

% guide circle
\draw[thick,gray!60] (0,0) circle (2.12);

% boundary rays
\draw[dashed,gray!80] (0,0) -- (120:2.48);
\draw[dashed,gray!80] (0,0) -- (-120:2.48);

% points
\coordinate (u0) at (   0:2.12);
\coordinate (u1) at (  60:2.12);
\coordinate (u2) at ( 120:2.12);
\coordinate (u3) at ( 180:2.12);
\coordinate (u4) at (-120:2.12);
\coordinate (u5) at ( -60:2.12);

\fill[blue!70!black] (u5) circle (2.3pt);
\fill[blue!70!black] (u0) circle (2.3pt);
\fill[blue!70!black] (u1) circle (2.3pt);

\filldraw[fill=white,draw=gray!70] (u2) circle (2.1pt);
\filldraw[fill=white,draw=gray!70] (u3) circle (2.1pt);
\filldraw[fill=white,draw=gray!70] (u4) circle (2.1pt);

% point labels
\node[anchor=north west] at ( 2.22,-0.08) {$v_0$};
\node[anchor=west]       at ( 1.10, 2.18) {$v_1$};
\node[anchor=east]       at (-1.12, 2.18) {$v_2$};
\node[anchor=east]       at (-2.30, 0.16) {$v_3$};
\node[anchor=east]       at (-1.12,-2.18) {$v_4$};
\node[anchor=west]       at ( 1.10,-2.18) {$v_5$};

% boundary-ray labels: slightly closer to v_2 and v_4
\node[anchor=south east] at (-1.28,2.38)
    {$\operatorname{Arg} v=\frac{2\pi}{3}$};

\node[anchor=north east] at (-1.28,-2.38)
    {$\operatorname{Arg} v=-\frac{2\pi}{3}$};

% criterion text: shifted farther right and slightly lower
\node[align=center,text width=5.3cm] at (0.82,-3.44)
    {Active iff
    $\bigl|\operatorname{Arg} v\bigr|
      < \dfrac{\pi}{2}+\dfrac{\pi}{m+1}
      = \dfrac{2\pi}{3}$
    \quad $(m=5)$.};

\end{scope}

\end{tikzpicture}

\caption{The active-saddle visibility rule of Proposition~\ref{prop:active-wright-coefficients}, shown for $m=5$ in a schematic boundary orientation. Up to the common factor $(m+1)/m$, the limiting Wright critical values form the regular polygon $P_\ast=\operatorname{conv}\{v_0,\ldots,v_5\}$. The active vertices are the boundary chain visible from $+\infty$ in the real direction, equivalently those satisfying $\bigl|\operatorname{Arg} v_\nu\bigr|<\pi/2+\pi/(m+1)$. A generic chamber picture is obtained by a small rotation; in the displayed cutoff orientation the boundary vertices $v_2$ and $v_4$ are not active because the inequality is strict. Filled vertices are active and open vertices are inactive. The dashed rays in the angular picture mark the boundary Stokes directions.}
\label{fig:wright-active-visibility}
\end{figure}

\begin{proof}
The homological definition of the signed coefficients is the duality statement in Lemma~\ref{lem:wright-thimble-basis}. The same lemma, together with Lemma~\ref{lem:wright-exit-arcs}, gives local constancy on compact subchambers and identifies, for sufficiently small $q$, the finite-$q$ thimbles with the limiting $q=0$ thimbles by continuation in the locally trivial boundary triads. Hence it remains to identify which limiting upward thimbles meet the exterior side of the coefficient core.

Scale $\tau=\eta(z_\ast)u$. Since $\eta(z_\ast)^{m+1}=m\lambda_m(z_\ast)$,
\[
\psi_\ast(\eta(z_\ast)u)=\eta(z_\ast)F(u),
\qquad
F(u):=u+\frac1m u^{-m}.
\]
The critical points of $F$ are $\omega_0,\ldots,\omega_m$, and their critical values are $((m+1)/m)\omega_\nu$. On the exterior domain $E:=\{|u|>1\}$ the map $F$ is univalent: if $F(u)=F(v)$ with $u,v\in E$ and $u\ne v$, then
\[
1=\frac1m\sum_{r=0}^{m-1}u^{-r-1}v^{-(m-r)},
\]
whose right-hand side has modulus strictly smaller than $1$. Thus $F$ is the conformal exterior branch normalized by $F(u)=u+O(u^{-m})$ at infinity; the boundary curve $F(|u|=1)$ is the scaled $(m+1)$-cusped hypocycloid, with cusps at the critical values.

Let $c=(m+1)/m$ and, for a direction $\alpha$, put
\[
L_\alpha:=c+[0,\infty)e^{i\alpha}.
\]
Assume first that $L_\alpha$ contains no critical value other than its initial point $c$. Along any component of the complement of the finitely many directions from $c$ to the other critical values, the exterior inverse branch of $F$ along $L_\alpha$ varies by analytic continuation and cannot be created or destroyed in $E$: away from the cusps the lifted ray is a smooth one-manifold, and near infinity the normalization $F(u)=u+O(u^{-m})$ gives the unique unbounded exterior end. For $\alpha=0$ the lift is the real branch $u>1$, and it crosses every sufficiently large positively oriented core circle once. The component containing $0$ is bounded by the directions from $c$ to the two adjacent critical values, namely
\[
\arg\bigl(e^{\pm2\pi i/(m+1)}-1\bigr)
=\pm\left(\frac\pi2+\frac\pi{m+1}\right)\pmod{2\pi};
\]
the directions to non-adjacent critical values lie outside this open interval in $(-\pi,\pi]$. Hence the exterior branch lands at the cusp $u=1$ exactly when
\[
|\alpha|<\frac\pi2+\frac\pi{m+1}.
\]
Equality is the adjacent Stokes event where the ray from one critical value passes through an adjacent one.

For the saddle labelled by $\nu$, write $u=\omega_\nu w$. Since $F(\omega_\nu w)=\omega_\nu F(w)$, the phase is $v_\nu F(w)$ with $v_\nu=\omega_\nu\eta(z_\ast)$. The upward flow has constant imaginary part and increasing real part in the phase plane; after division by $v_\nu$, its two local branches are the lifts of $L_{-\operatorname{Arg}v_\nu}$. Therefore an exterior branch occurs exactly under the inequality \eqref{eq:active-right-boundary}. This is precisely the condition that the vertex $v_\nu$ of the regular polygon $P_\ast$ lie on the boundary chain visible from $+\infty$ in the real direction.

If one branch is exterior, the upward thimble has one outer and one inner end in any admissible annulus. A transverse positive core representative meets the exterior branch once and misses the inner branch after a collar perturbation near $A$, so the absolute homological intersection is one. If no branch is exterior, both ends are inner; the upward thimble is then homologous in $H_1(X,A;\mathbb Z)$ to a chain supported on the inner side of the annulus and has zero intersection with the core class. The sign in the active case is the oriented transverse intersection sign fixed by the Morse orientation convention. Non-adjacent components of $\mathfrak S$ only subdivide the parameter domain on which the signed thimble basis is continued; they do not change the visibility criterion. The essential-site and branch-relabeling assertions follow by substituting the displayed local branch of $\eta_D$.
\end{proof}

\subsection{Main result on open Voronoi cells}

\begin{theorem}[uniform coefficient extraction on Voronoi cells]\label{prop:local-coef-unified}
Fix $i$ and let $D$ be an open set with \(\overline D\Subset\mathcal V_i^\circ\setminus\Sigma\). Set
$$d_i(z):=a_i-z,\qquad w:=z+d_i(z)\zeta,\qquad
G_{i,z}(\zeta):=\mathcal C\bigl(z,d_i(z)\zeta\bigr)=\sum_{n\ge0}\frac{C_n(z)}{n!}\,d_i(z)^n\zeta^n.$$
Then $\zeta=1$ is the unique singularity of $G_{i,z}$ on $|\zeta|=1$. Consequently the coefficient asymptotics split into exactly two cases: an ordinary pole site $m_i=0$, where the pole at $\zeta=1$ gives elementary coefficient asymptotics, and an essential singularity site $m_i\ge1$, where a sectorial Wright expansion with contour-determined saddle multipliers applies. In both cases,
\begin{equation}\label{eq:Gi-local-unified}
G_{i,z}(\zeta)
=
(1-\zeta)^{\beta_i}
\exp\!\Bigl(\sum_{s=1}^{m_i}\widetilde\lambda_{i,s}(z)(1-\zeta)^{-s}\Bigr)
R_i(z,\zeta),
\end{equation}
where
$$\widetilde\lambda_{i,s}(z):=\lambda_{i,s}(z-a_i)^{-s}\qquad(1\le s\le m_i),$$
and
\begin{equation}\label{eq:Ri-local-unified}
R_i(z,\zeta):=
(-d_i(z))^{\beta_i}\frac{Q(z)}{P_T(z)}
\frac{\widetilde P_i(w)\,P_\sharp(w)}{\widetilde Q_i(w)}
\exp\!\bigl(E_i^{\mathrm{reg}}(w)-E(z)\bigr).
\end{equation}
The function $R_i$ is holomorphic on a neighborhood of $D\times\{1\}$ and satisfies
\begin{equation}\label{eq:Ri-local-unified-at1}
R_i(z,1)=
\frac{\widetilde Q_i(z)\,\widetilde P_i(a_i)\,P_\sharp(a_i)}{\widetilde P_i(z)\,\widetilde Q_i(a_i)}
e^{E_i^{\mathrm{reg}}(a_i)-E(z)}\ne0.
\end{equation}
Let $K\Subset D$ be compact.
\begin{enumerate}
\item If $m_i=0$, then $\beta_i=-r_i\le-1$, and uniformly for $z\in K$,
\begin{align}
\frac{C_n(z)}{n!}
&=d_i(z)^{-n}n^{-\beta_i-1}\frac{R_i(z,1)}{\Gamma(-\beta_i)}\bigl(1+O_K(n^{-1})\bigr),\label{eq:Cn-local-unified-pole}\\
\frac{B_n(z)}{n!}
&=W(z)^n d_i(z)^{-n}n^{-\beta_i-1}\frac{R_i(z,1)}{\Gamma(-\beta_i)}\bigl(1+O_K(n^{-1})\bigr).\label{eq:Bn-local-unified-pole}
\end{align}
\item Suppose $m_i\ge1$ and $D$ is simply connected. Choose a holomorphic branch $\eta_D$ on $D$ with
$$\eta_D(z)^{m_i+1}=m_i\lambda_{i,m_i}(z-a_i)^{-m_i},$$
choose a holomorphic square root $\eta_D^{1/2}$ on $D$, let $\omega_\nu:=e^{2\pi i\nu/(m_i+1)}$, $0\le\nu\le m_i$, and fix square roots $\omega_\nu^{1/2}$ once and for all. Set
$$\Phi_{i,z,n}(t):=\sum_{s=1}^{m_i}\widetilde\lambda_{i,s}(z)t^{-s}-(n+1)\log(1-t),\qquad
\theta_i:=-\frac{2\beta_i+m_i+2}{2(m_i+1)},$$
\begin{equation}\label{eq:local-stokes-sets}
\mathfrak S_{i,D}:=\bigcup_{0\le\nu<\mu\le m_i}
\{z\in D:\Im((\omega_\nu-\omega_\mu)\eta_D(z))=0\},
\end{equation}
and
\begin{equation}\label{eq:local-antistokes-sets}
\mathfrak A_{i,D}:=\bigcup_{0\le\nu<\mu\le m_i}
\{z\in D:\Re((\omega_\nu-\omega_\mu)\eta_D(z))=0\}.
\end{equation}
For every compact $K\Subset D$ there is $C_K$ such that
\begin{equation}\label{eq:essential-normalized-upper}
\sup_{z\in K}\log^+\left|
\frac{B_n(z)}{n!}\left(\frac{d_i(z)}{W(z)}\right)^n n^{-\theta_i}
\right|\le C_K n^{m_i/(m_i+1)}
\end{equation}
for all $n\ge2$.

Let $V$ be a connected component of $D\setminus\mathfrak S_{i,D}$, and let $K\Subset V$ be compact. Then there exists an open set $U$ with $K\Subset U\Subset V$ such that, for each $0\le\nu\le m_i$ and all sufficiently large $n$, there is a holomorphic function $t_{i,n,\nu}:U\to\C$ satisfying
$$\Phi_{i,z,n}'(t_{i,n,\nu}(z))=0,\qquad
t_{i,n,\nu}(z)=\omega_\nu\eta_D(z)\,n^{-1/(m_i+1)}+O_U\!\left(n^{-2/(m_i+1)}\right).$$
Define
\begin{equation}\label{eq:Ai-local-unified}
\mathcal A_{i,\nu}(z):=
\frac{(\omega_\nu\eta_D(z))^{\beta_i}\,(\omega_\nu^{1/2}\eta_D^{1/2}(z))}{\sqrt{2\pi(m_i+1)}}\,R_i(z,1),
\end{equation}
and
\begin{equation}\label{eq:Xi-local-unified}
\Xi_{i,\nu}(z;n):=\Phi_{i,z,n}(t_{i,n,\nu}(z)).
\end{equation}
Then each $\mathcal A_{i,\nu}$ is holomorphic and nonvanishing on $U$, each $\Xi_{i,\nu}(z;n)$ admits an asymptotic expansion to arbitrary finite order in descending powers of $n^{1/(m_i+1)}$, uniformly on $K$, and
\begin{equation}\label{eq:Xi-local-unified-leading}
\Xi_{i,\nu}(z;n)=\frac{m_i+1}{m_i}\,\omega_\nu\eta_D(z)\,n^{m_i/(m_i+1)}
+O_K\!\left(n^{(m_i-1)/(m_i+1)}\right)
\end{equation}
uniformly for $z\in K$.

Orient the local downward thimbles compatibly with the square-root choices in \eqref{eq:Ai-local-unified}. Lemma~\ref{lem:filtered-wright}, applied as in Lemma~\ref{lem:wright-multi}, gives integers $\sigma_{i,\nu,V}$, equal to the intersection numbers of the scaled coefficient-contour class with the dual upward thimbles. Proposition~\ref{prop:active-wright-coefficients} gives their vanishing pattern: for any, equivalently every, $z_\ast\in V$, set $v_\nu=\omega_\nu\eta_D(z_\ast)$. Then
\[
I_{i,V}:=\{\nu:\sigma_{i,\nu,V}\ne0\}
=\left\{\nu:
\left|\operatorname{Arg} v_\nu\right|<
\frac\pi2+\frac\pi{m_i+1}\right\},
\]
equivalently the labels on the boundary chain of $\operatorname{conv}\{v_0,
\ldots,v_{m_i}\}$ visible from $+\infty$ in the real direction. Moreover $|\sigma_{i,\nu,V}|=1$ on $I_{i,V}$ and $\sigma_{i,\nu,V}=0$ off it; the signs are the homological intersection signs with the transverse core representative specified in Proposition~\ref{prop:active-wright-coefficients}. If
$$M_{i,V}(z;n):=\max_{\nu\in I_{i,V}}\Re\Xi_{i,\nu}(z;n),$$
then there are functions $\varepsilon_{i,n,\nu}:K\to\C$, $\nu\in I_{i,V}$, satisfying
$$\max_{\nu\in I_{i,V}}\sup_{z\in K}|\varepsilon_{i,n,\nu}(z)|=O_K\!\left(n^{-1/(m_i+1)}\right),$$
and, uniformly for $z\in K$,
\begin{align}
\frac{C_n(z)}{n!}
&=d_i(z)^{-n}n^{\theta_i}
\left(
\sum_{\nu\in I_{i,V}}\sigma_{i,\nu,V}\mathcal A_{i,\nu}(z)\exp\!\bigl(\Xi_{i,\nu}(z;n)\bigr)\bigl(1+\varepsilon_{i,n,\nu}(z)\bigr)+\mathcal E_{i,n,V}(z)
\right),\label{eq:Cn-local-unified-essential}\\
\frac{B_n(z)}{n!}
&=W(z)^n d_i(z)^{-n}n^{\theta_i}
\left(
\sum_{\nu\in I_{i,V}}\sigma_{i,\nu,V}\mathcal A_{i,\nu}(z)\exp\!\bigl(\Xi_{i,\nu}(z;n)\bigr)\bigl(1+\varepsilon_{i,n,\nu}(z)\bigr)+\mathcal E_{i,n,V}(z)
\right),\label{eq:Bn-local-unified-essential}
\end{align}
where
\begin{equation}\label{eq:R-local-unified-essential}
\mathcal E_{i,n,V}(z)=
O_K\!\left(
\exp\!\bigl(M_{i,V}(z;n)-c_K n^{m_i/(m_i+1)}\bigr)
\right)
\end{equation}
for some $c_K>0$. The expansion is uniform on compact subsets of a Stokes chamber $V$, including compact subsets that meet the leading anti-Stokes set $\mathfrak A_{i,D}\cap V$. Changing one square-root branch changes the displayed $\mathcal A_{i,\nu}$ by a sign; changing the corresponding thimble orientation changes $\sigma_{i,\nu,V}$ by the same sign, so the products $\sigma_{i,\nu,V}\mathcal A_{i,\nu}$ are intrinsic.
\end{enumerate}
\end{theorem}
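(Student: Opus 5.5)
The plan is to reduce everything to the abstract setting of the Wright-type lemmas by a change of variable, then read off the two cases.

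\textbf{Singularity of $G_{i,z}$.} Start from \eqref{eq:Cexplicit}. Substitute $\xi=d_i(z)\zeta$, so $w=z+\xi$ traces the segment toward $a_i$, with $w=a_i$ at $\zeta=1$. By Proposition~\ref{prop:gen} the singularities of $\zeta\mapsto G_{i,z}(\zeta)$ are the points with $z+d_i(z)\zeta\in\Sigma$. Because $z\in\mathcal V_i^\circ$, we have $|a_j-z|>|a_i-z|=|d_i(z)|$ for $j\ne i$. Hence $\zeta=1$ is the only singularity on $|\zeta|\le1$.

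Compactness of $\overline D\Subset\mathcal V_i^\circ\setminus\Sigma$ gives a uniform ratio $\min_{j\ne i}|a_j-z|/|a_i-z|\ge\varrho>1$. This yields the annulus hypothesis of the Wright setup, uniformly on compacta.

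\textbf{Factorization.} Factor the right side of \eqref{eq:Cexplicit} using \eqref{eq:local-factors-unified} and \eqref{eq:E-local-unified}. Write $P(w)=P_T(w)P_\sharp(w)$, $w-a_i=-d_i(z)(1-\zeta)$, and $(w-a_i)^{-s}=(-d_i)^{-s}(1-\zeta)^{-s}=(z-a_i)^{-s}(1-\zeta)^{-s}$. This gives exactly \eqref{eq:Gi-local-unified} with $\widetilde\lambda_{i,s}$ and $R_i$ as in \eqref{eq:Ri-local-unified}.

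$R_i$ is holomorphic near $\zeta=1$, since $\widetilde Q_i(a_i)\ne0$, $E_i^{\mathrm{reg}}$ is regular at $a_i$, and $P_\sharp(a_i)\ne0$. For $m_i=0$ this uses $\gcd(P,Q)=1$; for $m_i\ge1$ it uses coprimality with $T$. Evaluating at $\zeta=1$, i.e.\ $w=a_i$, and using $Q(z)/P_T(z)=(z-a_i)^{r_i-p_i}\widetilde Q_i(z)/\widetilde P_i(z)$ against $(-d_i)^{\beta_i}=(z-a_i)^{\beta_i}$ gives \eqref{eq:Ri-local-unified-at1}.

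\textbf{Case $m_i=0$.} Here $p_i=0$ and $\beta_i=-r_i\le-1$, so $G_{i,z}$ is meromorphic on $|\zeta|<\varrho$ with a single pole of order $r_i$ at $1$. Subtract the principal part $R_i(z,1)(1-\zeta)^{\beta_i}$ plus lower terms. The coefficients of $(1-\zeta)^{\beta_i}$ are $\binom{n-\beta_i-1}{n}=n^{-\beta_i-1}/\Gamma(-\beta_i)\,(1+O(n^{-1}))$. The lower polar terms contribute $O(n^{-\beta_i-2})$, and the remainder is $O(\varrho_1^{-n})$ by Cauchy on $|\zeta|=\varrho_1\in(1,\varrho)$. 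All bounds are uniform in $z\in K$ by continuity of the Laurent data in $z$.

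Then divide by $d_i^n$, since $[\zeta^n]G_{i,z}=C_n d_i^n/n!$. Multiplying by $W^n$ gives \eqref{eq:Bn-local-unified-pole}.

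\textbf{Case $m_i\ge1$.} Apply Lemmas~\ref{lem:wright-upper} and~\ref{lem:wright-multi} with $m=m_i$, $\beta=\beta_i$, $\lambda_s=\widetilde\lambda_{i,s}$ and $R=R_i$. The leading coefficient $\widetilde\lambda_{i,m_i}=\lambda_{i,m_i}(z-a_i)^{-m_i}$ is nonvanishing on $D$, and $\eta=\eta_D$. Under these substitutions:
\begin{itemize}
\item Lemma~\ref{lem:wright-upper} gives \eqref{eq:essential-normalized-upper} after converting the coefficient to $B_n$ via $[\zeta^n]G_{i,z}=B_nd_i^n/(n!\,W^n)$.
\item Lemma~\ref{lem:wright-multi} gives the saddles, $\mathcal A_{i,\nu}$, $\Xi_{i,\nu}$, \eqref{eq:Xi-local-unified-leading}, and the expansion \eqref{eq:Cn-local-unified-essential} together with its remainder.
\item Proposition~\ref{prop:active-wright-coefficients} identifies $I_{i,V}$ and gives $|\sigma|=1$.
\end{itemize}
Nonvanishing of $\mathcal A_{i,\nu}$ follows from \eqref{eq:Ri-local-unified-at1}.

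\textbf{Main obstacle.} The main point needing care is uniformity. The lemmas require a single $\varrho_L>1$ and joint holomorphy of $(z,\zeta)\mapsto G_{i,z}$ on $L\times(\{|\zeta|\le\varrho_L\}\setminus\{1\})$. I would verify this directly from the explicit formula, since $z+d_i(z)\zeta$ avoids $\Sigma$ there. I would also check that the sign and branch conventions for $(-d_i)^{\beta_i}$ match the lemma's $(1-\zeta)^\beta$. This is harmless because $\beta_i\in\mathbb Z$.
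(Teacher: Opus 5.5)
Your proposal is correct and follows the paper's proof essentially step for step. The shared steps are the singularity count via $\zeta=(a_j-z)/(a_i-z)$ and the Voronoi inequality, the factorization through $w-a_i=-d_i(z)(1-\zeta)$, and a uniform ratio $\varrho>1$ to verify the annulus hypothesis. The pole case is handled by principal-part subtraction plus a Cauchy estimate, and the essential case by direct application of Lemmas~\ref{lem:wright-upper} and~\ref{lem:wright-multi}, with Proposition~\ref{prop:active-wright-coefficients} supplying the active set. One small point: Proposition~\ref{prop:gen} is stated only for $z\notin\Z(P_\sharp)$, so for the singularity count you should run its argument directly on the explicit right-hand side of \eqref{eq:Cexplicit}, as the paper does.
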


\begin{proof}
For each $z\in D$, the possible singularities of $G_{i,z}$ occur at
$$\zeta=\frac{a_j-z}{a_i-z},\qquad 1\le j\le N.$$
They are actual singularities: at a site in $\Z(Q)\setminus\Z(T)$ the pole of $Q^{-1}$ is not cancelled by $P$, and at a site in $\Z(T)$ the exponential factor has an essential singularity which a rational prefactor cannot remove. Thus these are precisely the singularities of $G_{i,z}$. Since $z\in\mathcal V_i^\circ$, one has
$$\left|\frac{a_j-z}{a_i-z}\right|>1\qquad(j\ne i),$$
so $\zeta=1$ is the unique singularity on $|\zeta|=1$.

Using $P=P_TP_\sharp$, the local factorizations \eqref{eq:local-factors-unified}, and
$$w-a_i=-d_i(z)(1-\zeta),$$
we obtain from \eqref{eq:Cexplicit}
\begin{align*}
G_{i,z}(\zeta)
&=(-d_i(z))^{\beta_i}(1-\zeta)^{\beta_i}
\frac{Q(z)}{P_T(z)}
\frac{\widetilde P_i(w)\,P_\sharp(w)}{\widetilde Q_i(w)}
\exp\!\bigl(E(w)-E(z)\bigr)\\
&=(1-\zeta)^{\beta_i}
\exp\!\Bigl(\sum_{s=1}^{m_i}\widetilde\lambda_{i,s}(z)(1-\zeta)^{-s}\Bigr)
R_i(z,\zeta),
\end{align*}
which is \eqref{eq:Gi-local-unified}--\eqref{eq:Ri-local-unified}. Evaluating at $\zeta=1$ and using
$$(-d_i(z))^{\beta_i}\frac{Q(z)}{P_T(z)}=\frac{\widetilde Q_i(z)}{\widetilde P_i(z)}$$
gives \eqref{eq:Ri-local-unified-at1}.

Let
$$\varrho_K:=\inf_{z\in K}\min_{j\ne i}\left|\frac{a_j-z}{a_i-z}\right|\in(1,\infty],$$
where the empty minimum is interpreted as $+\infty$. Fix $\varrho$ with $1<\varrho<\varrho_K$ (or any $\varrho>1$ if $\varrho_K=\infty$). Then, for every $z\in K$, all singularities of $G_{i,z}$ other than $\zeta=1$ lie outside $|\zeta|\le\varrho$. Since \eqref{eq:Cexplicit} is jointly holomorphic in $(z,\zeta)$ away from the moving singular set, it follows that $(z,\zeta)\mapsto G_{i,z}(\zeta)$ is holomorphic on a neighborhood of $K\times\bigl(\{|\zeta|\le\varrho\}\setminus\{1\}\bigr)$. The same argument applies to every compact $L\Subset D$, so the annulus hypothesis required in Lemmas~\ref{lem:wright-upper} and~\ref{lem:wright-multi} holds on $D$.

Assume first that $m_i=0$. Then $r_i=-\beta_i\in\mathbb N$ and
$$G_{i,z}(\zeta)=(1-\zeta)^{-r_i}R_i(z,\zeta).$$
Expand $R_i(z,\zeta)$ at $\zeta=1$ to order $r_i-1$:
$$R_i(z,\zeta)=\sum_{u=0}^{r_i-1}c_{i,u}(z)(1-\zeta)^u+(1-\zeta)^{r_i}\widehat H_i(z,\zeta),$$
where the coefficients $c_{i,u}$ are holomorphic on a neighborhood of $K$, $c_{i,0}(z)=R_i(z,1)$, and $\widehat H_i$ is holomorphic near $K\times\{1\}$. Therefore
$$G_{i,z}(\zeta)=\sum_{u=0}^{r_i-1}c_{i,u}(z)(1-\zeta)^{-r_i+u}+\widehat H_i(z,\zeta),$$
and the right-hand side has no singularity at $\zeta=1$ except in the displayed principal part. Hence $\widehat H_i$ extends holomorphically to a neighborhood of $K\times\{|\zeta|\le1+\delta\}$ for every fixed $0<\delta<\varrho_K-1$. For integers $s\ge1$,
$$[\zeta^n](1-\zeta)^{-s}=\binom{n+s-1}{s-1}
=\frac{n^{s-1}}{\Gamma(s)}\bigl(1+O(n^{-1})\bigr),$$
while Cauchy's estimate gives
$$[\zeta^n]\widehat H_i(z,\zeta)=O_K\!\bigl((1+\delta/2)^{-n}\bigr).$$
Since $R_i(\cdot,1)$ is holomorphic and nonvanishing, it is bounded away from zero on $K$; hence the $u=0$ term dominates uniformly on $K$. This yields \eqref{eq:Cn-local-unified-pole}, and \eqref{eq:Bn-local-unified-pole} follows by multiplying by $W(z)^n$.

Assume now that $m_i\ge1$ and $D$ is simply connected. Then $\widetilde\lambda_{i,m_i}(z)=\lambda_{i,m_i}(z-a_i)^{-m_i}$ is holomorphic and nonvanishing on $D$, so the required branches $\eta_D$ and $\eta_D^{1/2}$ exist. Lemma~\ref{lem:wright-upper}, applied to $F_z=G_{i,z}$ with $m=m_i$, $\beta=\beta_i$, $\lambda_s=\widetilde\lambda_{i,s}$, and $R=R_i$, gives \eqref{eq:essential-normalized-upper}. If $V$ is a component of $D\setminus\mathfrak S_{i,D}$ and $K\Subset V$, Lemma~\ref{lem:wright-multi} applied with the same data gives \eqref{eq:Ai-local-unified}--\eqref{eq:R-local-unified-essential}; the identities for $C_n/n!$ and $B_n/n!$ follow from
$$[\zeta^n]G_{i,z}(\zeta)=\frac{C_n(z)}{n!}d_i(z)^n,
\qquad B_n(z)=W(z)^nC_n(z).$$
\end{proof}

\begin{cor}[one-saddle consequence in an essential cell]\label{cor:essential-one-saddle}
Assume the essential case of Theorem~\ref{prop:local-coef-unified}\textup{(2)}, let $V$ be a connected component of $D\setminus\mathfrak S_{i,D}$, and let $I_{i,V}$ be the corresponding active set. If $K\Subset V$ is compact and there are an index $\nu_\ast\in I_{i,V}$ and a number $\delta>0$ such that
$$\Re\bigl((\omega_{\nu_\ast}-\omega_\nu)\eta_D(z)\bigr)\ge\delta
\qquad(z\in K,\ \nu\in I_{i,V}\setminus\{\nu_\ast\}),$$
then, uniformly for $z\in K$,
\begin{equation}\label{eq:Bn-essential-one-saddle}
\frac{B_n(z)}{n!}=W(z)^n d_i(z)^{-n}n^{\theta_i}\sigma_{i,\nu_\ast,V}\mathcal A_{i,\nu_\ast}(z)
\exp\!\bigl(\Xi_{i,\nu_\ast}(z;n)\bigr)
\left(1+O_K\!\left(n^{-1/(m_i+1)}\right)\right).
\end{equation}
In particular $B_n$ has no zeros on $K$ for all sufficiently large $n$.
\end{cor}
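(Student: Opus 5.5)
The plan is to start from the full chamber expansion \eqref{eq:Bn-local-unified-essential} of Theorem~\ref{prop:local-coef-unified}\textup{(2)} on $K\Subset V$. We show that, under the separation hypothesis, every term other than the $\nu_\ast$ term is exponentially small relative to it on the scale $n^{m_i/(m_i+1)}$. The prefactor $W(z)^n d_i(z)^{-n}n^{\theta_i}$ is common to all terms and is nonvanishing on $K$, since $\overline D\Subset\mathcal V_i^\circ\setminus\Sigma$. It therefore suffices to work with the bracket.

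First, we use \eqref{eq:Xi-local-unified-leading}: for $\nu\in I_{i,V}\setminus\{\nu_\ast\}$,
\[
\Re\Xi_{i,\nu_\ast}(z;n)-\Re\Xi_{i,\nu}(z;n)=\frac{m_i+1}{m_i}\Re\bigl((\omega_{\nu_\ast}-\omega_\nu)\eta_D(z)\bigr)n^{m_i/(m_i+1)}+O_K\bigl(n^{(m_i-1)/(m_i+1)}\bigr),
\]
which is $\ge \delta' n^{m_i/(m_i+1)}$ for large $n$, uniformly on $K$, with $\delta'=\delta(m_i+1)/(2m_i)$. Hence $M_{i,V}(z;n)=\Re\Xi_{i,\nu_\ast}(z;n)$ for large $n$. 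The remainder \eqref{eq:R-local-unified-essential} is then $O_K(|e^{\Xi_{i,\nu_\ast}}|e^{-c_Kn^{m_i/(m_i+1)}})$. Each other active term is bounded by $\sup_K|\mathcal A_{i,\nu}|\,(1+o(1))\,|e^{\Xi_{i,\nu_\ast}}|e^{-\delta' n^{m_i/(m_i+1)}}$. The amplitudes $\mathcal A_{i,\nu}$ are holomorphic on $U\Supset K$, hence bounded on $K$, and $|\sigma_{i,\nu,V}|=1$.

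Second, we factor out $\sigma_{i,\nu_\ast,V}\mathcal A_{i,\nu_\ast}(z)e^{\Xi_{i,\nu_\ast}(z;n)}$. Since $\nu_\ast\in I_{i,V}$, Proposition~\ref{prop:active-wright-coefficients} gives $|\sigma_{i,\nu_\ast,V}|=1$. Moreover $\mathcal A_{i,\nu_\ast}$ is nonvanishing and continuous on $K$, hence bounded below by a positive constant there. Dividing through, the bracket becomes $1+\varepsilon_{i,n,\nu_\ast}(z)+O_K(e^{-c n^{m_i/(m_i+1)}})$. The first error is $O_K(n^{-1/(m_i+1)})$ by Theorem~\ref{prop:local-coef-unified}, and the exponentially small error is absorbed into it. This gives \eqref{eq:Bn-essential-one-saddle}.

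Finally, for large $n$ the factor $1+O_K(n^{-1/(m_i+1)})$ has modulus at least $1/2$. Every other factor on the right of \eqref{eq:Bn-essential-one-saddle} is nonzero on $K$, so $B_n$ has no zeros on $K$. The only point requiring care is uniformity: the $O_K$ term in the phase expansion and the exponential gap must be uniform in $z\in K$. Both are supplied by the uniform statements of Theorem~\ref{prop:local-coef-unified}, so there is no substantive obstacle.
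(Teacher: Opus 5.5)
Your proposal is correct and follows the paper's proof essentially step for step. The phase expansion \eqref{eq:Xi-local-unified-leading} turns the hypothesis into a uniform gap $\frac{m_i+1}{2m_i}\delta n^{m_i/(m_i+1)}$, which identifies $M_{i,V}$ with $\Re\Xi_{i,\nu_\ast}$ and makes both the remainder and the other active terms exponentially negligible; dividing by the nonvanishing factor $\sigma_{i,\nu_\ast,V}\mathcal A_{i,\nu_\ast}$ then leaves only the local saddle error $O_K(n^{-1/(m_i+1)})$. Your additional remarks, the nonvanishing of $W^n d_i^{-n}$ on $K$ and the positive lower bound for $|\mathcal A_{i,\nu_\ast}|$ on $K$, spell out what the paper uses implicitly.
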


\begin{proof}
By Theorem~\ref{prop:local-coef-unified}\textup{(2)}, the expansion \eqref{eq:Bn-local-unified-essential} holds uniformly on $K$. The hypothesis and \eqref{eq:Xi-local-unified-leading} give, for every $\nu\in I_{i,V}\setminus\{\nu_\ast\}$ and all sufficiently large $n$,
$$
\Re\bigl(\Xi_{i,\nu_\ast}(z;n)-\Xi_{i,\nu}(z;n)\bigr)
\ge \frac{m_i+1}{2m_i}\delta n^{m_i/(m_i+1)}
$$
uniformly on $K$. Hence $M_{i,V}(z;n)=\Re\Xi_{i,\nu_\ast}(z;n)$ on $K$ for all large $n$, and the remainder \eqref{eq:R-local-unified-essential} is exponentially smaller than the $\nu_\ast$ term. All other active saddle terms have the same exponential smallness. Since $\sigma_{i,\nu_\ast,V}\mathcal A_{i,\nu_\ast}$ is nonvanishing on $K$, the only remaining relative error is the local saddle error $O_K(n^{-1/(m_i+1)})$, proving \eqref{eq:Bn-essential-one-saddle}. The same formula implies zero-freeness on $K$ for all sufficiently large $n$.
\end{proof}

For the next corollary, with $\gamma_n=\lc(B_n)$ as in Proposition~\ref{prop:deg}, set
$$s_n:=
\begin{cases}
0, & \text{if } h=0,\\
\log n!, & \text{if } h>0,
\end{cases}$$
and, whenever $\deg B_n>0$,
$$\widetilde L_n(z):=\frac{1}{\deg B_n}\Bigl(\log|B_n(z)|-\log|\gamma_n|-s_n\Bigr),\qquad
\Psi_i(z):=\frac{1}{\kappa}\bigl(\log|W(z)|-\log|z-a_i|-\sigma\bigr).$$

\begin{lem}[Jensen--Poisson $L^1$ criterion]\label{lem:jensen-poisson-l1}
Let $D\subset\C$ be a domain, let $\alpha>0$, and let $F_n$ be holomorphic and not identically zero on $D$ for all sufficiently large $n$. Assume that for every compact $L\Subset D$ there is $C_L$ such that
$$\sup_L\log|F_n|\le C_L n^\alpha$$
for all sufficiently large $n$, and that there is a dense set $\mathcal G\subset D$ with the following property: for each $a\in\mathcal G$ there are $C_a$ and $n_a$ such that $F_n(a)\ne0$ and
$$|\log|F_n(a)||\le C_a n^\alpha\qquad(n\ge n_a).$$
Then, for every compact $K\Subset D$,
$$\|\log|F_n|\|_{L^1(K)}=O_K(n^\alpha).$$
\end{lem}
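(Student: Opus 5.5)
The plan is the classical Jensen--Poisson argument. Since $\log|F_n|$ is subharmonic, the $L^1$ norm on a compact set is controlled by an upper bound plus a lower bound at one point. First reduce to a disk: cover $K$ by finitely many closed disks $\overline{\mathbb D}(a,r)$ with $a\in\mathcal G$ and $\overline{\mathbb D}(a,3r)\subset D$. This is possible because $\mathcal G$ is dense. Take any $x\in K$, pick $r$ with $\overline{\mathbb D}(x,4r)\subset D$, and choose $a\in\mathcal G$ with $|a-x|<r$. Compactness of $K$ then leaves finitely many disks, so it suffices to prove $\|\log|F_n|\|_{L^1(\mathbb D(a,r))}=O(n^\alpha)$ for one such disk. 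Set $L:=\overline{\mathbb D}(a,3r)$ and use the upper bound $\sup_L\log|F_n|\le C_Ln^\alpha$.

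Write $u_n:=\log|F_n|-C_Ln^\alpha\le0$ on $L$. This function is subharmonic and not identically $-\infty$, and $u_n(a)\ge -(C_a+C_L)n^\alpha$ for $n\ge n_a$. The sub-mean value inequality on circles $|\zeta-a|=\rho$, $\rho\in[0,2r]$, gives $u_n(a)\le\frac1{2\pi}\int_0^{2\pi}u_n(a+\rho e^{it})\,\dd t$. Integrate this against $\rho\,\dd\rho$ over $[0,2r]$. Since $u_n\le0$, this yields
\[
\int_{\mathbb D(a,2r)}|u_n|\,\dd A\le \pi(2r)^2\,|u_n(a)|\le 4\pi r^2(C_a+C_L)n^\alpha .
\]
Thus $\int_{\mathbb D(a,r)}\bigl|\log|F_n|\bigr|\le \int|u_n|+\pi r^2C_Ln^\alpha=O(n^\alpha)$. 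Summing over the finitely many disks covering $K$ finishes the proof.

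Only one point needs care: the base point $a$ must lie in $\mathcal G$ while the disk still covers a neighborhood of the given point of $K$. The covering choice above, with $|a-x|<r$ so that $x\in\mathbb D(a,r)$ and with $\mathbb D(a,3r)\subset\mathbb D(x,4r)\subset D$, handles this. The hypothesis that $F_n\not\equiv0$ ensures $\log|F_n|$ is a genuine subharmonic function, locally integrable, so the sub-mean value property applies. It is also automatic from $F_n(a)\neq0$. Jensen's formula could replace the sub-mean value inequality, but the area-mean version above is enough, and it gives constants depending only on $K$ through the finite cover.
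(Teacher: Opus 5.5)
Your argument is correct, but it takes a different and more elementary route than the paper.

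\textbf{The paper's route.} It starts from the same kind of finite cover by disks centered at points of $\mathcal G$, with radii chosen so that no zeros of any $F_n$ lie on the circles $|z-a_\ell|=2r_\ell$ and $4r_\ell$. It then proceeds in three steps:
\begin{enumerate}
\item Jensen's formula on $D(a_\ell,4r_\ell)$ shows that $F_n$ has $O(n^\alpha)$ zeros in $D(a_\ell,2r_\ell)$.
\item Poisson--Jensen splits $\log|F_n|=h_{\ell,n}+G_{\ell,n}$ on that disk. The harmonic part is controlled in $L^\infty(D(a_\ell,r_\ell))$ by Harnack's inequality and the maximum principle.
\item The Green potential is controlled in $L^1$ using the uniform $L^1$ bound on the Green kernel together with the $O(n^\alpha)$ zero count.
\end{enumerate}

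\textbf{Your route.} You bypass all of this. You apply the area sub-mean value inequality to the nonpositive subharmonic function $u_n=\log|F_n|-C_Ln^\alpha$, which gives $\int_{D(a,2r)}|u_n|\,\dd A\le 4\pi r^2|u_n(a)|$ in one line.

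\textbf{Comparison.} Your argument uses only subharmonicity of $\log|F_n|$. It needs no selection of radii avoiding zeros, and it applies verbatim to any sequence of subharmonic functions satisfying these upper bounds and pointwise lower bounds. The paper's argument costs more, but it yields two by-products: a local zero count $O(n^\alpha)$ and a sup-norm bound on the harmonic part. Neither is needed for the stated conclusion.

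\textbf{One cosmetic point.} In the final step $\int_{D(a,r)}|\log|F_n||\le\int|u_n|+\pi r^2C_Ln^\alpha$, you should take $C_L\ge0$ (or write $|C_L|$). This is harmless, since enlarging $C_L$ preserves the hypothesis.
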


\begin{proof}
Discard finitely many initial indices, so that every $F_n$ under consideration is not identically zero. For each $x\in K$, choose $a_x\in\mathcal G$ so close to $x$ that the interval
$$\left(|x-a_x|,\frac14\operatorname{dist}(a_x,\partial D)\right)$$
is nonempty. The set of radii $r$ in this interval for which either $\partial D(a_x,2r)$ or $\partial D(a_x,4r)$ contains a zero of some $F_n$ is countable. Choose
$$r_x\in\left(|x-a_x|,\frac14\operatorname{dist}(a_x,\partial D)\right)$$
outside this set. Compactness gives finitely many disks $D(a_\ell,r_\ell)$ with
$$K\subset\bigcup_{\ell=1}^L D(a_\ell,r_\ell),\qquad
\overline{D(a_\ell,4r_\ell)}\Subset D,$$
and with neither $\partial D(a_\ell,2r_\ell)$ nor $\partial D(a_\ell,4r_\ell)$ containing a zero of any $F_n$.
After this finite subcover is fixed, discard further finitely many indices and choose constants $C_\ell^0$ so that, for each $\ell$,
$$
F_n(a_\ell)\ne0,
\qquad
|\log|F_n(a_\ell)||\le C_\ell^0 n^\alpha
$$
for all remaining $n$. This is allowed because $a_\ell\in\mathcal G$ for every $\ell$.

Fix $\ell$. Jensen's formula on $D(a_\ell,4r_\ell)$ gives
$$\sum_{|\zeta-a_\ell|<4r_\ell}\operatorname{mult}_\zeta(F_n)
\log\frac{4r_\ell}{|\zeta-a_\ell|}
=\frac1{2\pi}\int_0^{2\pi}\log|F_n(a_\ell+4r_\ell e^{it})|\,dt-\log|F_n(a_\ell)|.$$
The hypotheses bound the right-hand side by $O(n^\alpha)$. Every zero in $D(a_\ell,2r_\ell)$ contributes at least $\log2$, so the number of zeros in $D(a_\ell,2r_\ell)$, counted with multiplicity, is $O(n^\alpha)$.

Apply Poisson--Jensen in $D(a_\ell,2r_\ell)$:
$$\log|F_n|=h_{\ell,n}+G_{\ell,n},$$
where $h_{\ell,n}$ is harmonic and $G_{\ell,n}\le0$ is the Green potential of the zeros in the disk. Jensen's formula on $D(a_\ell,2r_\ell)$ and the pointwise bound at $a_\ell$ give $-G_{\ell,n}(a_\ell)=O(n^\alpha)$, hence $|h_{\ell,n}(a_\ell)|=O(n^\alpha)$. On the boundary, $h_{\ell,n}=\log|F_n|\le C_\ell n^\alpha$. Choose $C'_\ell>C_\ell$. The harmonic function $u_{\ell,n}:=C'_\ell n^\alpha-h_{\ell,n}$ is positive on $D(a_\ell,2r_\ell)$ and satisfies $u_{\ell,n}(a_\ell)=O(n^\alpha)$. Harnack's inequality gives $u_{\ell,n}=O(n^\alpha)$ on $D(a_\ell,r_\ell)$, while the maximum principle gives $h_{\ell,n}\le C'_\ell n^\alpha$. Hence
$$\|h_{\ell,n}\|_{L^\infty(D(a_\ell,r_\ell))}=O(n^\alpha).$$
The Green kernel of $D(a_\ell,2r_\ell)$ has $L^1(D(a_\ell,r_\ell))$-norm bounded uniformly in its pole, and there are $O(n^\alpha)$ poles. Hence
$$\|\log|F_n|\|_{L^1(D(a_\ell,r_\ell))}=O(n^\alpha).$$
Summing over $\ell$ proves the lemma.
\end{proof}

\begin{lem}[upper-envelope \(L^1\) criterion]\label{lem:upper-envelope-l1}
Let $D\subset\C$ be a domain, let $M$ be a continuous subharmonic function on $D$, and let $F_n\in\mathcal O(D)$ be not identically zero for all sufficiently large $n$. Put $u_n:=n^{-1}\log|F_n|$. Assume that
\[
\limsup_{n\to\infty}\sup_K(u_n-M)\le0
\]
for every $K\Subset D$, and that there is an open dense subset $G\subset D$ of full planar measure such that $u_n\to M$ uniformly on every compact subset of $G$. Then
\[
u_n\longrightarrow M\qquad\text{in }L^1_{\mathrm{loc}}(D).
\]
\end{lem}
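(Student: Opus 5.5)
We will prove the lemma with the standard Hartogs-type compactness argument for sequences of normalized log-moduli of holomorphic functions, combined with the identification of limits on the dense set \(G\). Fix a compact \(K\Subset D\); it suffices to show \(\|u_n-M\|_{L^1(K)}\to0\). Each \(u_n\) is subharmonic, and on every compact subset of \(D\) the \(u_n\) are uniformly bounded above, by the first hypothesis and the continuity of \(M\). We argue by subsequences: it is enough to show that every subsequence of \((u_n)\) has a further subsequence converging to \(M\) in \(L^1_{\mathrm{loc}}(D)\).

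\emph{Step 1: no collapse to \(-\infty\).} Fix a point \(a\in G\). Uniform convergence near \(a\) gives \(u_n(a)\to M(a)>-\infty\). Hence no subsequence of \(u_n\) can tend to \(-\infty\) locally uniformly on the connected domain \(D\). By the classical compactness theorem for subharmonic functions that are locally uniformly bounded above (H\"ormander, Thm.~3.2.12; see also \cite{Ra}), every subsequence therefore has a further subsequence \(u_{n_k}\) converging in \(L^1_{\mathrm{loc}}(D)\) to a subharmonic function \(u\not\equiv-\infty\).

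\emph{Step 2: identification of the limit.} On each compact subset of \(G\), \(u_{n_k}\to M\) uniformly, so \(u=M\) almost everywhere on \(G\). Since \(G\) has full planar measure, \(u=M\) almost everywhere on \(D\). Two subharmonic functions that agree almost everywhere agree everywhere, because each equals the limit of its own area means over shrinking disks. Hence \(u=M\), and the full sequence converges to \(M\) in \(L^1_{\mathrm{loc}}(D)\).

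The upper-bound hypothesis \(\limsup\sup_K(u_n-M)\le0\) is used only to obtain the local uniform upper bound that the compactness theorem requires. The main point needing care is Step~1, namely quoting the compactness theorem in the correct form, which demands both the upper bound and a single point of nondegeneracy; the point \(a\in G\) supplies the latter. If one prefers to avoid the general theorem, an alternative is to use Jensen's formula at points of \(G\) together with a disk cover, as in Lemma~\ref{lem:jensen-poisson-l1}. This would give \(\frac1n\log|F_n|\) bounded in \(L^1_{\mathrm{loc}}\), and weak compactness of \(\Delta u_n\) would then supply the subsequence.
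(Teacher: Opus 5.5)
Your proposal is correct and follows the paper's proof: local upper bounds from the hypothesis, the compactness theorem for subharmonic functions, exclusion of the $-\infty$ alternative using $G$, identification of any $L^1_{\mathrm{loc}}$ subsequential limit with $M$ almost everywhere because $G$ has full measure, and then equality everywhere by subharmonicity. The paper additionally notes that $u\le M$, but, as your argument shows, that remark is not needed once $G$ has full measure.
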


\begin{proof}
The upper bound gives local upper bounds for the subharmonic functions $u_n$. By the compactness theorem for subharmonic functions, every subsequence has a further subsequence which either tends to $-\infty$ locally uniformly or converges in $L^1_{\mathrm{loc}}(D)$ to a subharmonic function $u$. The first alternative is impossible because $G$ is nonempty on every component and $u_n\to M$ uniformly on compact subsets of $G$. For an $L^1_{\mathrm{loc}}$ limit, the upper bound gives $u\le M$. A further subsequence converges pointwise almost everywhere; on $G$ the pointwise limit is $M$, and $G$ has full measure. Hence $u=M$ almost everywhere, and therefore as a subharmonic function. Thus every subsequence has a further subsequence converging to $M$, which proves the claim.
\end{proof}

\begin{cor}[local $L^1$-rate on essential Voronoi interiors]\label{cor:essential-l1}
Assume the notation of Theorem~\ref{prop:local-coef-unified}\textup{(2)}, and let $K\Subset D$ be compact. Then
\begin{equation}\label{eq:essential-L1-rate}
\bigl\|\widetilde L_n-\Psi_i\bigr\|_{L^1(K)}
=O_K\!\left(n^{-1/(m_i+1)}\right).
\end{equation}
Here \(L^1(K)\) is taken with respect to planar Lebesgue area. The same estimate holds on any compact subset of $\mathcal V_i^\circ\setminus\Sigma$ after a finite simply connected cover.
\end{cor}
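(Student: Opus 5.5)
We reduce the statement to Lemma~\ref{lem:jensen-poisson-l1} applied to the normalized function
\[
F_n(z):=\frac{B_n(z)}{n!}\left(\frac{d_i(z)}{W(z)}\right)^n n^{-\theta_i},
\qquad \alpha:=\frac{m_i}{m_i+1},
\]
on a simply connected domain $D$ with $\overline D\Subset\mathcal V_i^\circ\setminus\Sigma$. The reduction is a bookkeeping identity. On $K$ the functions $W$ and $d_i(z)=a_i-z$ are holomorphic and bounded away from $0$ and $\infty$, and Proposition~\ref{prop:deg} gives $\deg B_n=\kappa n+O(1)$. Expanding $\deg B_n\,(\widetilde L_n-\Psi_i)$ and writing $\log|B_n|=\log|F_n|+\log n!+n\log|W|-n\log|d_i|+\theta_i\log n$, the $n\log|W|$ and $n\log|z-a_i|$ terms cancel against $(\deg B_n/\kappa)(\log|W|-\log|z-a_i|)$, up to $O(1)$ uniformly on $K$. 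The constants also cancel, by case:
\begin{itemize}
\item If $h>0$, then $s_n=\log n!$ and $\log|\gamma_n|=n\sigma+O(1)$.
\item If $h=0$, then $s_n=0$, $\sigma=0$ and $\log|\gamma_n|=\log n!+O(\log n)$.
\end{itemize}
Hence, uniformly on $K$,
\[
\deg B_n\,(\widetilde L_n-\Psi_i)=\log|F_n|+O(\log n),
\]
and it suffices to prove $\|\log|F_n|\|_{L^1(K)}=O_K(n^{\alpha})$. Dividing by $\deg B_n\asymp n$ then yields $O(n^{\alpha-1})+O(n^{-1}\log n)=O(n^{-1/(m_i+1)})$.

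Next we verify the hypotheses of Lemma~\ref{lem:jensen-poisson-l1}. The function $F_n$ is holomorphic on $D$ and not identically zero, since $B_n\not\equiv0$ (Proposition~\ref{prop:local}). The uniform upper bound $\sup_L\log|F_n|\le C_Ln^\alpha$ is exactly \eqref{eq:essential-normalized-upper}.

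For the dense set $\mathcal G$, take the points $a\in D\setminus\mathfrak S_{i,D}$, lying in some chamber $V$, at which the maximum of $\Re(\omega_\nu\eta_D(a))$ over $\nu\in I_{i,V}$ is attained by a single index $\nu_\ast$.
\begin{itemize}
\item \emph{Density.} $\eta_D$ is a nonconstant branch of $c\,(z-a_i)^{-m_i/(m_i+1)}$, so each set $\{\Im((\omega_\nu-\omega_\mu)\eta_D)=0\}$ and $\{\Re((\omega_\nu-\omega_\mu)\eta_D)=0\}$ is the zero set of a nonconstant harmonic function. These sets are therefore closed and nowhere dense. The complement of $\mathcal G$ is contained in their finite union, so $\mathcal G$ is open and dense. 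Here $I_{i,V}\ne\varnothing$ by Lemma~\ref{lem:wright-multi}.
\item \emph{Pointwise bound.} For $a\in\mathcal G$, continuity gives a closed disk $K_a\Subset V$ around $a$ and a $\delta>0$ satisfying the hypothesis of Corollary~\ref{cor:essential-one-saddle}. Then \eqref{eq:Bn-essential-one-saddle} yields, for large $n$, $F_n(a)\ne0$ and
\[
\log|F_n(a)|=\Re\Xi_{i,\nu_\ast}(a;n)+\log|\sigma_{i,\nu_\ast,V}\mathcal A_{i,\nu_\ast}(a)|+o(1).
\]
By \eqref{eq:Xi-local-unified-leading} we have $|\Re\Xi_{i,\nu_\ast}(a;n)|=O(n^\alpha)$. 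Moreover $\mathcal A_{i,\nu_\ast}(a)\ne0$ and $|\sigma_{i,\nu_\ast,V}|=1$. Hence $|\log|F_n(a)||\le C_a n^\alpha$.
\end{itemize}
Lemma~\ref{lem:jensen-poisson-l1} then gives the required $L^1(K)$ bound.

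The only delicate step is the density and pointwise lower bound, i.e.\ producing points where a single active saddle strictly dominates. This rests on the Stokes and anti-Stokes loci being nowhere dense, which follows from harmonicity as above. Finally, a compact subset of $\mathcal V_i^\circ\setminus\Sigma$ is covered by finitely many small disks whose closures lie in $\mathcal V_i^\circ\setminus\Sigma$. Each disk is simply connected, so the estimate on each disk sums to the asserted one.
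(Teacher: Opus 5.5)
Your proposal is correct and follows the paper's proof essentially step for step: the same normalized $F_n$, the upper bound \eqref{eq:essential-normalized-upper}, the dense set of points where a single active saddle strictly dominates (treated via Corollary~\ref{cor:essential-one-saddle}), Lemma~\ref{lem:jensen-poisson-l1}, and the same bookkeeping with Proposition~\ref{prop:deg} to pass from $\|\log|F_n|\|_{L^1(K)}=O(n^{m_i/(m_i+1)})$ to the stated rate. The only differences are cosmetic: you apply the one-saddle corollary on a small disk rather than on the one-point set $\{z_0\}$, and you write out the constant cancellations separately for $h>0$ and $h=0$.
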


\begin{proof}
Set
$$\alpha:=\frac{m_i}{m_i+1},\qquad
F_n(z):=\frac{B_n(z)}{n!}\left(\frac{a_i-z}{W(z)}\right)^n n^{-\theta_i}.$$
The function $F_n$ is holomorphic on $D$ and has the same zeros there as $B_n$. By Proposition~\ref{prop:deg}, $B_n$ is not identically zero for all sufficiently large $n$. By \eqref{eq:essential-normalized-upper}, for every compact $L\Subset D$ there is $C_L$ such that
$$\sup_L\log|F_n|\le C_L n^\alpha$$
for all sufficiently large $n$.

Let $\mathcal G_D$ be the set of points $z\in D\setminus\mathfrak S_{i,D}$ for which, in the chamber $V$ containing $z$, one active saddle is strictly dominant:
$$\Re\bigl((\omega_{\nu_\ast}-\omega_\nu)\eta_D(z)\bigr)>0
\qquad(\nu\in I_{i,V}\setminus\{\nu_\ast\})$$
for some $\nu_\ast\in I_{i,V}$. This set is open and dense in $D$. Indeed,
$$\eta_D'(z)=-\frac{m_i}{m_i+1}\frac{\eta_D(z)}{z-a_i}\ne0,$$
so, for every $\nu\ne\mu$, the harmonic functions
$$\Re((\omega_\nu-\omega_\mu)\eta_D(z)),\qquad
\Im((\omega_\nu-\omega_\mu)\eta_D(z))$$
are nonconstant and their zero sets have empty interior. Thus $D\setminus(\mathfrak S_{i,D}\cup\mathfrak A_{i,D})$ is dense; at each point of this dense set the real parts of all limiting saddle phases are distinct, so the nonempty active set in the relevant chamber has a unique largest real part. The strict inequalities persist locally, hence $\mathcal G_D$ is open.

If $z_0\in\mathcal G_D$, Corollary~\ref{cor:essential-one-saddle} applied to the one-point compact set $\{z_0\}$ gives, for all large $n$,
$$F_n(z_0)=\sigma_{i,\nu_\ast,V}\mathcal A_{i,\nu_\ast}(z_0)
\exp\!\bigl(\Xi_{i,\nu_\ast}(z_0;n)\bigr)
\left(1+O_{z_0}\!\left(n^{-1/(m_i+1)}\right)\right).$$
The prefactor is nonzero and \eqref{eq:Xi-local-unified-leading} gives $|\Re\Xi_{i,\nu_\ast}(z_0;n)|=O_{z_0}(n^\alpha)$; hence $F_n(z_0)\ne0$ and
$$|\log|F_n(z_0)||\le C_{z_0}n^\alpha$$
for all sufficiently large $n$. Lemma~\ref{lem:jensen-poisson-l1} therefore gives
$$\|\log|F_n|\|_{L^1(K)}=O_K(n^\alpha).$$

On $K$,
$$\log|B_n|=\log|F_n|+\theta_i\log n+\log n!+n\bigl(\log|W|-\log|a_i-z|\bigr).$$
Using Proposition~\ref{prop:deg}, $\deg B_n=\kappa n+O(1)$ and
$$\frac{\theta_i\log n+\log n!-\log|\gamma_n|-s_n}{\deg B_n}+\frac{\sigma}{\kappa}
=O\!\left(\frac{\log n}{n}\right).$$
Since $\log|W|-\log|a_i-z|$ is bounded on $K$, replacing $n/\deg B_n$ by $1/\kappa$ contributes $O_K(n^{-1})$. Hence
$$
\bigl\|\widetilde L_n-\Psi_i\bigr\|_{L^1(K)}
\le\frac{1}{\deg B_n}\|\log|F_n|\|_{L^1(K)}+O_K\!\left(\frac{\log n}{n}\right).
$$
Since $\alpha-1=-1/(m_i+1)$, \eqref{eq:essential-L1-rate} follows. A compact subset of $\mathcal V_i^\circ\setminus\Sigma$ is covered by finitely many simply connected open sets of the above kind, so the final statement follows by summing the estimates.
\end{proof}

\section{Fixed-scale global Voronoi law}\label{sec:global}

In this section we prove the global zero asymptotics for the sequence of derivatives of a hyperexponential function $f$. The edge part of the limit measure comes from the competition of nearest singularities on open Voronoi cells, while the atomic part at $\Z(T)$ is the fixed-scale shadow of clusters collapsing toward the finite essential singularities. The microscopic distribution of those clusters is treated later in Section~\ref{sec:microscopic}, and its connection with the fixed-scale atoms is recorded in Corollary~\ref{cor:atomic-local-bridge}. Recall from \eqref{eq:hdef} and \eqref{eq:kappadef} that $h$ is the degree of the polynomial part $H$ of $E=S/T$ when $H$ is nonconstant, and $h=0$ otherwise, while $\kappa=d+h-1$. To isolate only the zero distribution, write $\gamma_n:=\lc(B_n)$ and set
\begin{equation}\label{eq:sndef}
s_n:=
\begin{cases}
0, & \text{if } h=0,\\
\log n!, & \text{if } h>0.
\end{cases}
\end{equation}

For every $n$ with $\deg B_n>0$, define
\begin{equation}\label{eq:Ltildedef}
\widetilde L_n(z):=\frac{1}{\deg B_n}\Bigl(\log|B_n(z)|-\log|\gamma_n|-s_n\Bigr).
\end{equation}
The functions $\widetilde L_n$ are renormalized logarithmic potentials. The subtracted constants do not affect the Laplacian, and Poincar\'e--Lelong gives
\[
(2\pi)^{-1}\Delta\widetilde L_n
=\frac{1}{\deg B_n}\sum_{B_n(\zeta)=0}\operatorname{mult}_\zeta(B_n)\,\delta_\zeta
\]
whenever $\deg B_n>0$.

Define
\begin{equation}\label{eq:Psidef}
\Psi(z):=\frac{1}{\kappa}\bigl(\log|W(z)|-\log\rho(z)-\sigma\bigr),\qquad z\in\C\setminus\Sigma.
\end{equation}
Equivalently,
$$\Psi(z)=\max_{1\le i\le N}\Psi_i(z),\qquad
\Psi_i(z):=\frac{1}{\kappa}\bigl(\log|W(z)|-\log|z-a_i|-\sigma\bigr).$$
Since $\ord_{a_j}W\ge1$ for every $j$, each $\Psi_i$ has nonnegative logarithmic coefficient at every site: the coefficient is $\ord_{a_i}W-1$ at $a_i$ and $\ord_{a_j}W$ at $a_j\ne a_i$. Hence each $\Psi_i$, and therefore $\Psi=\max_i\Psi_i$, extends subharmonically across $\Sigma$; in what follows $\Psi$ denotes this subharmonic extension.
\begin{lem}[local upper bounds]\label{lem:locub}
The family $\{\widetilde L_n:\deg B_n>0\}$ is locally uniformly bounded from above on~$\C$.
\end{lem}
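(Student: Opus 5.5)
Our plan is to obtain the upper bound from a Cauchy estimate applied to the translation generating function, with a separate argument near the singular sites. The key point is that on a disk a fixed distance from $\Sigma$ the coefficients $C_n(z)/n!$ are controlled directly by \eqref{eq:Cexplicit}. For $z$ with $\rho(z)\ge 2r$, we apply Cauchy's estimate on the circle $|\xi|=r$. Since $\mathcal C(z,\cdot)$ is bounded there uniformly for $z$ in a compact set $L$ with $\operatorname{dist}(L,\Sigma)\ge 2r$, we get $|C_n(z)|\le n!\,r^{-n}M_L$. Hence
$$\log|B_n(z)|\le \log n!+n\log|W(z)|-n\log r+O_L(1).$$
In both cases we then subtract $\log|\gamma_n|+s_n$ and use Proposition~\ref{prop:deg}. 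If $h=0$, then $\log|\gamma_n|=\log n!+O(\log n)$ and $s_n=0$. If $h>0$, then $\log|\gamma_n|=n\sigma+O(1)$ and $s_n=\log n!$. Either way the $\log n!$ cancels, and we are left with $\widetilde L_n\le (n/\deg B_n)(\log|W|-\log r-\sigma')+o(1)$, which is bounded on $L$ because $\deg B_n=\kappa n+O(1)$. This settles compact sets away from $\Sigma$.

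Near a site $a_i$ the Cauchy radius degenerates, so we do not estimate directly there. Instead we use subharmonicity. Each $\widetilde L_n$ is subharmonic on $\C$, since it is $\frac1{\deg B_n}\log|B_n|$ plus a constant. Fix a small closed disk $\overline{D(a_i,2\epsilon)}$ that contains no other site. The bound from the first step holds uniformly on the circle $|z-a_i|=2\epsilon$, because that circle is compact and disjoint from $\Sigma$. The maximum principle for subharmonic functions then carries the same upper bound to the whole disk. Any compact set in $\C$ is covered by finitely many disks of this kind together with a compact set away from $\Sigma$, which gives the local uniform upper bound.

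The only delicate point is uniformity of $M_L=\sup\{|\mathcal C(z,\xi)|: z\in L,\ |\xi|=r\}$. This follows because $(z,\xi)\mapsto$ the right-hand side of \eqref{eq:Cexplicit} is continuous on the compact set $\{(z,\xi): z\in L,\ |\xi|\le r\}$. That set avoids $z+\xi\in\Sigma$ and $z\in\Sigma$, and the factor $Q(z)/P_T(z)$ is harmless there since $P_T$ vanishes only on $\Sigma$. We also need finitely many indices with $\deg B_n>0$ to be excluded or bounded, which is trivial. The main obstacle is this passage across $\Sigma$, and the maximum principle on small circles handles it.
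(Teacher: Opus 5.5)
Your proposal is correct and follows essentially the same route as the paper. Both arguments use a Cauchy estimate on the translation generating function over compacta a fixed distance from $\Sigma$, normalize with Proposition~\ref{prop:deg}, and apply the maximum principle for the subharmonic functions $\widetilde L_n$ on small discs to pass across the sites. The only difference is cosmetic: you work with $C_n=B_n/W^n$ and \eqref{eq:Cexplicit}, which holds on all of $\C\setminus\Sigma$, while the paper works with $A_n=f^{(n)}/f$ and so must also choose its boundary circles to avoid $\Z(P_\sharp)$.
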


\begin{proof}
Fix a compact set $K\subset\C$ and put $F:=\Sigma\cup \Z(P_\sharp)$. Choose closed discs $D_1,\dots,D_L$ covering $K$ with $\partial D_\nu\cap F=\varnothing$ for every $\nu$, and set $\Gamma:=\bigcup_\nu\partial D_\nu$. Since $\Gamma\cap\Sigma=\varnothing$, fix $r:=\tfrac12\operatorname{dist}(\Gamma,\Sigma)>0$. Since $\Gamma\cap F=\varnothing$, for $z\in\Gamma$ and $|\xi|=r$ Proposition~\ref{prop:gen} applies and the function
$$\frac{P(z+\xi)Q(z)}{P(z)Q(z+\xi)}\,\exp\!\bigl(E(z+\xi)-E(z)\bigr)$$
is continuous on the compact set $\Gamma\times\{|\xi|=r\}$. Hence there is $\mathcal M>0$ such that
$$\left|\frac{P(z+\xi)Q(z)}{P(z)Q(z+\xi)}\,\exp\!\bigl(E(z+\xi)-E(z)\bigr)\right|\le\mathcal M\qquad(z\in\Gamma,\ |\xi|=r).$$
By Cauchy's estimate applied to~\eqref{eq:gen},
$$\left|\frac{A_n(z)}{n!}\right|\le\mathcal M\,r^{-n}\qquad(z\in\Gamma,\ n\ge0).$$
Using $B_n=P_\sharp W^n A_n$, one gets
\begin{align*}
\widetilde L_n(z)
&\le\frac{n}{\deg B_n}\bigl(\log\sup_\Gamma|W|-\log r\bigr)+\frac{\sup_\Gamma\log|P_\sharp|+\log\mathcal M}{\deg B_n}\\
&\qquad+\frac{\log n!-\log|\gamma_n|-s_n}{\deg B_n}.
\end{align*}
By Proposition~\ref{prop:deg}, the right-hand side is bounded above uniformly in~$n$. Hence there is a constant $C_\Gamma$ with $\widetilde L_n(z)\le C_\Gamma$ for all $z\in\Gamma$ and all $n$ for which $\widetilde L_n$ is defined. Since each $\widetilde L_n$ is subharmonic, the maximum principle on every $D_\nu$ gives $\sup_K\widetilde L_n\le C_\Gamma$.
\end{proof}

\begin{theorem}[fixed-scale Voronoi law]\label{thm:limit}
\mbox{}
\begin{enumerate}
\item \textbf{(Convergence of potentials.)}
For all sufficiently large $n$ one has $\deg B_n>0$, and the resulting sequence $\{\widetilde L_n\}$ satisfies
$$\widetilde L_n\longrightarrow\Psi\qquad\text{in }L^1_{\mathrm{loc}}(\C).$$
\item \textbf{(Limit measure on $\C$.)}
For those $n$, the normalized zero-counting measures
\[
\mu_n:=(2\pi)^{-1}\Delta\widetilde L_n
=\frac{1}{\deg B_n}\sum_{\zeta:\,B_n(\zeta)=0}\operatorname{mult}_\zeta(B_n)\,\delta_\zeta
\]
converge vaguely on $\C$ to
\begin{equation}\label{eq:measure}
\mu_{\mathrm{fix}}:=\frac{1}{2\pi}\Delta\Psi=\frac{1}{\kappa}\sum_{j=1}^{\check t} m_j\,\delta_{c_j}+\frac{1}{2\pi\kappa}\sum_{1\le i<j\le N}\frac{|a_i-a_j|}{|\cdot-a_i|\,|\cdot-a_j|}\,\dd\ell\big|_{E_{ij}},
\end{equation}
where an empty edge contributes zero; on a nonempty edge $E_{ij}$ the displayed summand has density $\zeta\mapsto |a_i-a_j|/(|\zeta-a_i|\,|\zeta-a_j|)$ with respect to arclength.
\item \textbf{(Mass.)}
The total finite-plane mass is
\begin{equation}\label{eq:mass}
\mu_{\mathrm{fix}}(\C)=\frac{d-1}{\kappa}=1-\frac{h}{\kappa}.
\end{equation}
In particular, if $h=0$ then $\mu_n$ converges weakly to $\mu_{\mathrm{fix}}$ on $\C$.
\item \textbf{(Convergence on $\widehat\C$.)}
Identify each $\mu_n$ from part~\textup{(2)} with the probability measure on $\widehat\C$ that assigns mass $0$ to $\{\infty\}$. Then
$$\mu_n\longrightarrow\mu_{\mathrm{fix}}+\frac{h}{\kappa}\,\delta_\infty$$
weakly on $\widehat\C$, where $\delta_\infty$ denotes the unit point mass at $\infty$.
\item \textbf{(Cauchy transform on Voronoi interiors.)}
Write $W(z)=\prod_{j=1}^N(z-a_j)^{\varpi_j}$, where $\varpi_j:=\ord_{a_j}W$. On each Voronoi interior $\mathcal V_i^\circ\setminus\Sigma$,
\begin{equation}\label{eq:cauchybranch}
C_{\mu_{\mathrm{fix}}}(z)=2\partial\Psi(z)=\frac{1}{\kappa}\left(\frac{W'(z)}{W(z)}-\frac{1}{z-a_i}\right)=\frac{1}{\kappa}\sum_{j=1}^N\frac{\varpi_j-\delta_{ij}}{z-a_j},
\end{equation}
where $C_{\mu_{\mathrm{fix}}}$ is the Cauchy transform defined in \S~\ref{sec:pottheory}, $\delta_{ij}$ is the Kronecker delta, and $2\partial=\partial_x-i\partial_y$.
\end{enumerate}
\end{theorem}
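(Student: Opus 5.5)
The proof of part~(1) is a potential-theoretic compactness argument driven by the cell-wise estimates already obtained. First, Proposition~\ref{prop:deg} gives $\deg B_n=\kappa n+O(1)$ with $\kappa\ge1$, so $\deg B_n>0$ for large $n$ and $\widetilde L_n$ is defined. Lemma~\ref{lem:locub} gives local uniform upper bounds. By the compactness theorem for subharmonic functions, every subsequence of $\{\widetilde L_n\}$ has a further subsequence that either tends to $-\infty$ locally uniformly or converges in $L^1_{\mathrm{loc}}(\C)$ to a subharmonic $u$. It will therefore suffice to identify $\Psi$ as the only possible limit, which I would do cell by cell.

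On an ordinary-pole cell, take $\overline D\Subset\mathcal V_i^\circ\setminus\Sigma$ with $m_i=0$. There \eqref{eq:Bn-local-unified-pole} gives $\log|B_n|=\log n!+n(\log|W|-\log|z-a_i|)+O_K(\log n)$ uniformly on compacts. Combined with the leading-coefficient estimates of Proposition~\ref{prop:deg} and the choice of $s_n$, this yields $\widetilde L_n\to\Psi_i=\Psi$ uniformly on compacts of $\mathcal V_i^\circ\setminus\Sigma$. The bookkeeping is that $\log|\gamma_n|+s_n=\log n!+n\sigma+O(\log n)$ in both cases $h=0$ and $h>0$. On an essential cell, Corollary~\ref{cor:essential-l1} gives $\widetilde L_n\to\Psi_i$ in $L^1$ on compacts of $\mathcal V_i^\circ\setminus\Sigma$.

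With these in hand, the $-\infty$ alternative is excluded, since it contradicts $L^1$ convergence to a finite function on a set of positive measure. Any $L^1_{\mathrm{loc}}$ limit $u$ then equals $\Psi$ almost everywhere on $\bigcup_i\mathcal V_i^\circ\setminus\Sigma$. Because the Voronoi diagram together with $\Sigma$ has planar measure zero, $u=\Psi$ a.e.\ on $\C$, and both are subharmonic, so $u=\Psi$ identically. Hence the whole sequence converges, which is part~(1).

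For part~(2), $L^1_{\mathrm{loc}}$ convergence of the potentials gives distributional convergence of Laplacians, hence vague convergence $\mu_n\to(2\pi)^{-1}\Delta\Psi$. It then remains to compute $\Delta\Psi$ explicitly.
\begin{itemize}
\item On each open cell, $\Psi=\Psi_i$ is harmonic away from $\Sigma$.
\item At $a_j$, the logarithmic coefficient of $\Psi_j$ is $(\ord_{a_j}W-1)/\kappa$. This equals $m_j/\kappa$ at essential sites and $0$ at ordinary poles, which produces the atoms.
\item Across an edge $E_{ij}$, $\Psi=\max(\Psi_i,\Psi_j)$ with $\Psi_i-\Psi_j=\kappa^{-1}\log(|z-a_j|/|z-a_i|)$. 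The standard jump formula gives density $(2\pi)^{-1}|\nabla(\Psi_i-\Psi_j)|$ times arclength. Computing the gradient of $\log|z-a_j|-\log|z-a_i|$ on the bisector gives $|a_i-a_j|/(|z-a_i||z-a_j|)$.
\item Voronoi vertices are finitely many points, and $\Psi$ is continuous there, so they carry no mass.
\end{itemize}

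For part~(3), the mass $\mu_{\mathrm{fix}}(\C)$ comes from the growth of $\Psi$ at infinity: $\Psi(z)=\kappa^{-1}(d-1)\log|z|+O(1)$. For a measure with compact support whose potential is $\Psi$ up to a harmonic function, this gives total mass $(d-1)/\kappa$. I would verify this via Green's formula on large discs, or via the Cauchy transform $C_{\mu_{\mathrm{fix}}}(z)\sim(d-1)/(\kappa z)$. Since $d-1=\kappa-h$, the mass is $1-h/\kappa$; when $h=0$ it equals $1$, so no mass escapes and vague convergence upgrades to weak convergence. Part~(4) follows because the $\mu_n$ are probability measures on the compact space $\widehat\C$: every weak limit restricts to $\mu_{\mathrm{fix}}$ on $\C$ and so has the remaining mass $h/\kappa$ at $\infty$.

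Part~(5) is direct. Since $\mu_{\mathrm{fix}}=(2\pi)^{-1}\Delta\Psi$ and $\Psi$ has the prescribed logarithmic growth, $\Psi$ is the logarithmic potential of $\mu_{\mathrm{fix}}$ up to an additive constant; the same mass comparison at infinity shows the harmonic remainder is constant. Then $C_{\mu_{\mathrm{fix}}}=2\partial\Psi$ off the support, and differentiating $\Psi_i$ gives \eqref{eq:cauchybranch}.

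The main obstacle is the identification step on the essential cells. There I only have $L^1$ convergence, not uniform convergence, so I must check that Corollary~\ref{cor:essential-l1} is applied on a finite simply connected cover that exhausts each cell up to measure zero. I must also confirm that the a.e.\ identification suffices to pin down the subharmonic limit.
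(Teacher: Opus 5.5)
Your proposal is correct and follows essentially the same route as the paper: uniform asymptotics on pole cells plus the $L^1$ control of Corollary~\ref{cor:essential-l1} on essential cells, subharmonic compactness with Lemma~\ref{lem:locub} to identify the limit, the branchwise jump computation for $\Delta\Psi$, Green's formula on large circles for the mass, and uniqueness of subsequential limits on $\widehat\C$. One small correction: $\mu_{\mathrm{fix}}$ is not compactly supported when $N\ge2$, because Voronoi edges are unbounded. The mass must therefore come from the Green's-formula route on circles avoiding vertices and sites, as in the paper, not from a compact-support argument. Likewise, in part~(5) you need the $O(|\zeta|^{-2})$ edge density to make the logarithmic potential and Cauchy transform well defined.
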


\begin{proof}[Proof of Theorem~\ref{thm:limit}, parts~\textup{(1)}--\textup{(4)}]
Only two local inputs enter the argument. On pole cells Theorem~\ref{prop:local-coef-unified}\textup{(1)} gives uniform asymptotics, and on essential-singularity cells Corollary~\ref{cor:essential-l1} gives the required local $L^1$ control.

Choose $n_0$ so that $\deg B_n>0$ for all $n\ge n_0$, and set
$$\Omega:=\bigcup_{i=1}^N\bigl(\mathcal V_i^\circ\setminus\Sigma\bigr).$$
Let $K\Subset\Omega$. Then $K$ is covered by finitely many compact sets $K_1,\dots,K_M$, each contained in a single Voronoi interior. If $K_\alpha\Subset\mathcal V_i^\circ\setminus\Sigma$ and $a_i\in \Z(Q)\setminus \Z(T)$, Theorem~\ref{prop:local-coef-unified}\textup{(1)} gives
\[
\log|B_n(z)|
=\log n!+n\bigl(\log|W(z)|-\log|a_i-z|\bigr)+O_{K_\alpha}(\log n)
\]
uniformly on $K_\alpha$. Proposition~\ref{prop:deg} gives
\[
\frac{\log n!-\log|\gamma_n|-s_n}{\deg B_n}
=-\frac{\sigma}{\kappa}+O\!\left(\frac{\log n}{n}\right),
\qquad
\frac{n}{\deg B_n}=\frac1\kappa+O\!\left(\frac1n\right).
\]
Hence
$$\widetilde L_n=\Psi_i+O_{K_\alpha}\!\left(\frac{\log n}{n}\right)$$
uniformly on $K_\alpha$. If $K_\alpha\Subset\mathcal V_i^\circ\setminus\Sigma$ and $m_i\ge1$, Corollary~\ref{cor:essential-l1} gives
$$\|\widetilde L_n-\Psi_i\|_{L^1(K_\alpha)}=O_{K_\alpha}\!\left(n^{-1/(m_i+1)}\right).$$
Summing over $\alpha$ yields
$$\widetilde L_n\to\Psi\qquad\text{in }L^1_{\mathrm{loc}}(\Omega).$$
More precisely, on compact subsets of pole cells the convergence is uniform with rate $O(\log n/n)$, while on compact subsets of an essential-singularity cell attached to $a_i$ it holds in $L^1$ with rate $O\!\left(n^{-1/(m_i+1)}\right)$.

Since $\C\setminus\Omega$ is the union of the Voronoi diagram and the finite set $\Sigma$, it has planar Lebesgue measure zero. Let $\{\widetilde L_{n_j}\}$ be any subsequence with $n_j\ge n_0$. By Lemma~\ref{lem:locub} and the compactness theorem for subharmonic functions \cite{Ra}, either $\widetilde L_{n_j}\to-\infty$ uniformly on compact subsets or a further subsequence converges in $L^1_{\mathrm{loc}}(\C)$ to a subharmonic function $u$. The first alternative is impossible, since some disk compactly contained in $\Omega$ has $L^1$-convergence to the finite harmonic branch of $\Psi$. The same subsequence converges to $u$ in $L^1_{\mathrm{loc}}(\Omega)$, while the whole sequence converges there to $\Psi$; hence $u=\Psi$ almost everywhere on $\Omega$, and therefore almost everywhere on $\C$. Both $u$ and $\Psi$ are subharmonic, hence the upper-semicontinuous regularizations of the same $L^1_{\mathrm{loc}}(\C)$-class. Therefore $u=\Psi$ everywhere. Thus every subsequence has a further subsequence converging to $\Psi$ in $L^1_{\mathrm{loc}}(\C)$, and the whole sequence converges.

Since $\widetilde L_n\to\Psi$ in $L^1_{\mathrm{loc}}(\C)$, one has $\mu_n=(2\pi)^{-1}\Delta\widetilde L_n\to(2\pi)^{-1}\Delta\Psi=\mu_{\mathrm{fix}}$ distributionally; because these are positive Radon measures, this is vague convergence on $\C$. Since
$$W(z)=z^d+O(z^{d-1}),\qquad \rho(z)=|z|+O(1)\qquad(|z|\to\infty),$$
one has
$$\Psi(z)=\frac{d-1}{\kappa}\log|z|+O(1).$$
Let \(R\) be outside the finite set of radii for which \(\partial B_R\) meets a Voronoi vertex or a point of \(\Sigma\). Since, locally away from \(\Sigma\), \(\Psi\) is the maximum of finitely many harmonic functions with bounded gradients, and the only logarithmic point masses occur at \(\Sigma\), the Riesz measure of \(\Psi\) gives no mass to \(\partial B_R\). Approximating \(\mathbf 1_{B_R}\) by smooth radial cutoffs in the distributional identity \(\Delta\Psi=2\pi\mu_{\mathrm{fix}}\) gives Green's formula
\[
\mu_{\mathrm{fix}}(B_R)=\frac1{2\pi}\int_{|z|=R}\partial_{\mathbf n}\Psi\,\dd\ell .
\]
On \(|z|=R\), one has \(\partial_{\mathbf n}\log|W|=d/R+O(R^{-2})\), and on each Voronoi arc of the circle,
\(\partial_{\mathbf n}\log\rho=1/R+O(R^{-2})\). Letting \(R\to\infty\) through such values yields
\[
\mu_{\mathrm{fix}}(\C)=\frac{d-1}{\kappa},
\]
which is~\eqref{eq:mass}. If $h=0$, then $\mu_{\mathrm{fix}}(\C)=1$. Vague convergence of probability measures to a probability measure on the locally compact space $\C$ implies tightness, hence weak convergence on $\C$.

For the spherical statement, let $\nu$ be a subsequential weak limit of $\mu_n$ on the compact space $\widehat\C$, say $\mu_{n_j}\to\nu$ weakly on $\widehat\C$. For every continuous compactly supported function $\varphi:\C\to\mathbb R$, extend $\varphi$ to $\widehat\C$ by setting $\varphi(\infty)=0$. Then weak convergence of $\mu_{n_j}$ on $\widehat\C$ and vague convergence of $\mu_n$ on $\C$ give
$$\int_{\widehat\C}\varphi\,\dd\nu=\lim_{j\to\infty}\int_{\C}\varphi\,\dd\mu_{n_j}=\int_{\C}\varphi\,\dd\mu_{\mathrm{fix}}.$$
Hence $\nu|_{\C}=\mu_{\mathrm{fix}}$. Since $\nu(\widehat\C)=1$ and $\mu_{\mathrm{fix}}(\C)=(d-1)/\kappa$, one gets $\nu(\{\infty\})=1-\mu_{\mathrm{fix}}(\C)=h/\kappa$. Therefore $\nu=\mu_{\mathrm{fix}}+(h/\kappa)\delta_\infty$. Since every subsequential limit is the same, the whole sequence converges.
\end{proof}

\begin{proof}[Proof of Theorem~\ref{thm:limit}, explicit form of $\mu_{\mathrm{fix}}$, and the Cauchy-transform formula]
Write $W(z)=\prod_{j=1}^N(z-a_j)^{\varpi_j}$. On $\mathcal V_i^\circ\setminus\Sigma$ one has
$$\Psi(z)=\frac{1}{\kappa}\left((\varpi_i-1)\log|z-a_i|+\sum_{j\ne i}\varpi_j\log|z-a_j|-\sigma\right).$$
Near $a_i$, the branch $\Psi_i$ dominates because $\log|z-a_i|\to-\infty$ and the coefficient of $\log|z-a_i|$ in $\Psi_i$ is $\varpi_i-1<\varpi_i$, whereas in $\Psi_j$ it is $\varpi_i$ for $j\ne i$. Hence $\Psi=\Psi_i$ in a punctured neighborhood of $a_i$, and the atomic coefficient there is $(\varpi_i-1)/\kappa$, namely $m_j/\kappa$ when $a_i=c_j\in \Z(T)$ and $0$ when $a_i\in \Z(Q)\setminus \Z(T)$. Away from $\Sigma$ and the Voronoi edges the function is harmonic. On the relative interior of each nonempty edge $E_{ij}$, $\Psi=\max(\Psi_i,\Psi_j)$ and
$$\Psi_i-\Psi_j=\frac{1}{\kappa}\bigl(\log|z-a_j|-\log|z-a_i|\bigr).$$
Let $n_{ij}$ be a unit normal to $E_{ij}$, chosen so that $\Psi_i>\Psi_j$ on one side of the edge. Near a point of the relative interior of $E_{ij}$ that meets no other Voronoi edge, the functions $\Psi_i$ and $\Psi_j$ are harmonic, all other branches are strictly smaller, and $\Psi=\max(\Psi_i,\Psi_j)$. The jump formula for the normal derivative therefore gives
$$\Delta\Psi=\bigl(\partial_{n_{ij}}\Psi_i-\partial_{n_{ij}}\Psi_j\bigr)\,\dd\ell\big|_{E_{ij}}=\bigl|\partial_{n_{ij}}(\Psi_i-\Psi_j)\bigr|\,\dd\ell\big|_{E_{ij}}.$$
On $E_{ij}$ one has $|z-a_i|=|z-a_j|=:r$, and
$$|\nabla(\Psi_i-\Psi_j)(z)|=\frac{1}{\kappa}\left|\frac{z-a_j}{|z-a_j|^2}-\frac{z-a_i}{|z-a_i|^2}\right|=\frac{|a_i-a_j|}{\kappa r^2}=\frac{|a_i-a_j|}{\kappa |(z-a_i)(z-a_j)|}.$$
Since $\Psi_i-\Psi_j$ is constant on $E_{ij}$, this gradient is normal to $E_{ij}$. Hence the density of $\Delta\Psi$ along $E_{ij}$ is
$$\frac{|a_i-a_j|}{\kappa |(z-a_i)(z-a_j)|}\,\dd\ell\big|_{E_{ij}}.$$
Dividing by $2\pi$ gives the edge term in~\eqref{eq:measure}. At a Voronoi vertex not belonging to $\Sigma$, the function $\Psi$ is the maximum of finitely many harmonic functions with bounded gradients; its Riesz measure therefore has no atom there. The edge density is $O(|\zeta|^{-2})$ on every unbounded edge. Hence
\[
\int_{\C}\log(2+|\zeta|)\,\dd\mu_{\mathrm{fix}}(\zeta)<\infty
\]
and, for $z\notin\operatorname{supp}\mu_{\mathrm{fix}}$, the integral defining $C_{\mu_{\mathrm{fix}}}(z)$ converges absolutely. The logarithmic potential
$$U_{\mu_{\mathrm{fix}}}(z):=\int\log|z-\zeta|\,\dd\mu_{\mathrm{fix}}(\zeta)$$
is a subharmonic function, finite off the atoms, and satisfies $U_{\mu_{\mathrm{fix}}}(z)=\mu_{\mathrm{fix}}(\C)\log|z|+O(1)$ at infinity. Distributionally, $\Delta(\Psi-U_{\mu_{\mathrm{fix}}})=0$ on $\C$; hence the difference has an entire harmonic representative. Since
$$\Psi(z)=\mu_{\mathrm{fix}}(\C)\log|z|+O(1),$$
that representative is bounded at infinity and therefore constant. Differentiating off the support gives $C_{\mu_{\mathrm{fix}}}(z)=2\partial\Psi(z)$ on $\C\setminus\operatorname{supp}\mu_{\mathrm{fix}}$. On $\mathcal V_i^\circ\setminus\Sigma$ one has $\Psi=\Psi_i$, which gives~\eqref{eq:cauchybranch}.
\end{proof}

\begin{cor}[fixed-scale law for zeros of \(f^{(n)}\)]\label{cor:fixed-scale-derivative-zeros}
For all sufficiently large $n$, define
\[
\nu_n:=\frac{1}{\deg B_n}
\sum_{\substack{\zeta\in\C\setminus\Sigma\\ f^{(n)}(\zeta)=0}}
\operatorname{mult}_\zeta(f^{(n)})\,\delta_\zeta .
\]
Then $\nu_n=\mu_n$. Consequently the zeros of $f^{(n)}$ away from the singular set satisfy the same finite-plane vague convergence, the same weak convergence on $\C$ when $h=0$, and the same spherical convergence to $\mu_{\mathrm{fix}}+(h/\kappa)\delta_\infty$ when $h>0$ as in Theorem~\ref{thm:limit}.
\end{cor}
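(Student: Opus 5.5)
The plan is to compare the zero divisors of $f^{(n)}$ and $B_n$ on $\C\setminus\Sigma$ directly through the exact representation \eqref{eq:fnrep}. After that, the corollary follows formally from Theorem~\ref{thm:limit}.

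First, fix $n$ large enough that $\deg B_n>0$ (Theorem~\ref{thm:limit}(1)). On the open set $\C\setminus\Sigma$, \eqref{eq:fnrep} writes
\[
f^{(n)}=\frac{P_T}{Q\,W^n}\,e^{E}\cdot B_n .
\]
Here $P_T$ vanishes only on $\Z(T)\subset\Sigma$, and $Q$ and $W$ vanish only on $\Sigma$. Moreover $E$ is holomorphic off $\Z(T)$, so $e^E$ is holomorphic and zero-free there. Hence the prefactor is holomorphic and nonvanishing on $\C\setminus\Sigma$, and for every $\zeta\in\C\setminus\Sigma$,
\[
\operatorname{mult}_\zeta(f^{(n)})=\ord_\zeta f^{(n)}=\ord_\zeta B_n=\operatorname{mult}_\zeta(B_n).
\]
Thus the two divisors agree on $\C\setminus\Sigma$.

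Second, I account for zeros of $B_n$ lying in $\Sigma$. Proposition~\ref{prop:local} gives $B_n(c_j)=U(c_j)^nP_\sharp(c_j)\ne0$ at every essential site. At every ordinary pole $b_k$ it gives $B_n(b_k)\ne0$, by induction from $B_0(b_k)=P_\sharp(b_k)\ne0$. So every zero of $B_n$ lies in $\C\setminus\Sigma$. The sum defining $\mu_n$ therefore runs over exactly the same points, with the same multiplicities and the same normalization $\deg B_n$, as the sum defining $\nu_n$. This gives $\nu_n=\mu_n$ as measures for all sufficiently large $n$.

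Third, the convergence statements transfer verbatim from Theorem~\ref{thm:limit}: parts (2), (3) and (4) give the vague, weak (when $h=0$) and spherical statements. For the spherical statement, $\nu_n$ is identified with a probability measure on $\widehat\C$ exactly as $\mu_n$ is.

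There is no real obstacle. The only point needing care is the second step: zeros of $B_n$ on $\Sigma$ would otherwise be counted in $\mu_n$ but excluded from $\nu_n$, and Proposition~\ref{prop:local} rules this out.
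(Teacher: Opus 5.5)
Your proposal is correct and follows the paper's own proof essentially step for step. The prefactor $P_Te^E/(QW^n)$ in \eqref{eq:fnrep} is holomorphic and zero-free on $\C\setminus\Sigma$, so the divisors of $f^{(n)}$ and $B_n$ agree there, and Proposition~\ref{prop:local} rules out zeros of $B_n$ on $\Sigma$. This gives $\nu_n=\mu_n$, and the convergence statements then transfer directly from Theorem~\ref{thm:limit}.
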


\begin{proof}
On $\C\setminus\Sigma$, the factor $P_Te^E/(QW^n)$ in \eqref{eq:fnrep} is holomorphic and nonvanishing, so the zero divisors of $f^{(n)}$ and $B_n$ coincide there, including multiplicities. Proposition~\ref{prop:local} gives $B_n(a)\ne0$ for every site $a\in\Sigma$, so no zero of $B_n$ is lost by restricting the derivative count to $\C\setminus\Sigma$. Hence $\nu_n=\mu_n$, and the convergence statements follow from Theorem~\ref{thm:limit}.
\end{proof}

\section{Microscopic local laws at essential singularities}\label{sec:microscopic}

\subsection{Reduced local models at essential singularities}\label{sec:local-models}

We first identify the exactly solvable highest-term reduction at a finite essential singularity, obtained by retaining the algebraic factor and the highest-order singular term in the exponent. This reduced model supplies the explicit microscopic measure used in the full local theorem proved in Subsection~\ref{sec:full-local-factor}.

Let $c_j$ be a zero of $T$ of multiplicity $m_j$, set $\alpha_j:=p_j-\nu_j$, and write
$$E(z)=\sum_{s=1}^{m_j}\lambda_{j,s}(z-c_j)^{-s}+E_j^{\mathrm{reg}}(z),\qquad \lambda_{j,m_j}\ne0,$$
where $E_j^{\mathrm{reg}}$ is holomorphic near $c_j$. Then
$$f(z)=(z-c_j)^{\alpha_j}\exp\!\left(\sum_{s=1}^{m_j}\lambda_{j,s}(z-c_j)^{-s}\right)\phi(z),$$
with $\phi$ holomorphic and nonvanishing near $c_j$. The reduced model is
$$g_{\alpha,m}(z):=(z-a)^\alpha \exp\!\left(\frac{\lambda}{(z-a)^m}\right),\qquad \lambda\ne0,$$
studied for all $m\ge1$ and $\alpha\in\mathbb Z$; the choice of $a\in\C$ is irrelevant.

Recall that a polynomial sequence $\{P_n\}_{n\ge0}$ is a \emph{Sheffer sequence} if its exponential generating function has the form $A(t)e^{xB(t)}$ with $A(0)\ne0$, $B(0)=0$, and $B'(0)\ne0$. If $\mathbf u=(u_0,\dots,u_{m-1})$ is an $m$-tuple of linear functionals on $\C[x]$, we say that $\{P_n\}_{n\ge0}$ is \emph{$m$-orthogonal} with respect to $\mathbf u$ if
$$\langle u_j,x^\nu P_n\rangle=0\quad\text{for }n\ge m\nu+j+1,$$
and
$$\langle u_j,x^\nu P_{m\nu+j}\rangle\ne0\quad\text{for every }\nu\ge0\text{ and }0\le j\le m-1.$$
We use the standard type-II multiple-orthogonality terminology in the following sense: for weights $w_0,\dots,w_{m-1}$ on $[0,\infty)$ and a multi-index $\boldsymbol n=(n_0,\dots,n_{m-1})$, a type-II multiple orthogonal polynomial is a nonzero polynomial $P$ of degree at most $|\boldsymbol n|$ satisfying
$$\int_0^\infty x^\nu P(x)w_j(x)\,\dd x=0\qquad(0\le\nu<n_j,\ 0\le j\le m-1),$$
and the multi-index is normal if such a polynomial is unique up to a nonzero scalar and has degree $|\boldsymbol n|$.
The polynomial sequence $\Pi_n^{(\alpha,m)}$ defined in Proposition~\ref{prop:micro-higher} is the Varma--Ta\c{s}delen Laguerre-type Sheffer sequence; see \cite{VT}. For $\alpha<0$ its $m$-orthogonality is equivalent, after a triangular change of Laguerre moment functionals, to type-II multiple orthogonality for a staircase multi-index. The corresponding biorthogonal ensemble is the Laguerre Muttalib--Borodin ensemble with parameter $\theta=1/m$ \cite{Borodin98,FW}. For $\alpha\ge0$, writing $\alpha=qm+s$ with $q\ge0$ and $0\le s<m$, the polynomial $\Pi_n^{(\alpha,m)}$ has a zero at the origin of multiplicity $q+1$ for every $n\ge\alpha+1$; this fixed loss of roots is negligible in normalized zero counting, and the limiting zero law follows from the negative-exponent case by a finite connection formula. The case $m=1$ reduces to generalized Laguerre polynomials, and after the reciprocal change of variables the microscopic zero distribution is the Marchenko--Pastur law.

\begin{prop}[reduced local model and Sheffer sequence]\label{prop:micro-higher}
Let $m\ge1$, $\alpha\in\mathbb Z$, $a\in\C$, and $\lambda\in\C^\times$, and define
$$g_{\alpha,m}(z):=(z-a)^\alpha \exp\!\left(\frac{\lambda}{(z-a)^m}\right),\qquad x:=-\frac{\lambda}{(z-a)^m}.$$
Then the following hold.

\begin{enumerate}
\item For $z\ne a$, the derivatives satisfy
$$g_{\alpha,m}^{(n)}(z)=(-1)^n(z-a)^{\alpha-n}\Pi_n^{(\alpha,m)}(x)\exp\!\left(\frac{\lambda}{(z-a)^m}\right),\qquad n\ge0,$$
where $\{\Pi_n^{(\alpha,m)}\}_{n\ge0}$ is the unique polynomial Sheffer sequence satisfying 
$$\sum_{n\ge0}\Pi_n^{(\alpha,m)}(x)\frac{t^n}{n!}
=(1-t)^\alpha \exp\!\bigl(x(1-(1-t)^{-m})\bigr).$$

\item The sequence is characterized equivalently by $\Pi_0^{(\alpha,m)}=1$ and
$$\Pi_{n+1}^{(\alpha,m)}(x)=mx\bigl(\Pi_n^{(\alpha,m)}\bigr)'(x)+(n-\alpha-mx)\Pi_n^{(\alpha,m)}(x).$$
Explicitly,
$$\Pi_n^{(\alpha,m)}(x)=\sum_{k=0}^n\frac{x^k}{k!}\sum_{j=0}^k(-1)^j\binom{k}{j}(mj-\alpha)_n,$$
where $(a)_0:=1$ and $(a)_n:=a(a+1)\cdots(a+n-1)$ for $n\ge1$. With $\beta:=-1-\alpha/m$, one has
$$\Pi_n^{(\alpha,m)}(x)=P_n^{(\beta)}(x;m),$$
where $P_n^{(\beta)}(x;m)$ is the Varma--Ta\c{s}delen Laguerre-type family defined by
$$\sum_{n\ge0}P_n^{(\beta)}(x;m)\frac{t^n}{n!}
=(1-t)^{-(\beta+1)m}\exp\!\bigl(-x((1-t)^{-m}-1)\bigr),$$
see \cite[Eq.~(3)]{VT}.

For $m=1$ the same generating relation reduces to the classical Laguerre generating function, i.e., 
$$\Pi_n^{(\alpha,1)}(x)=n!L_n^{(-\alpha-1)}(x).$$ 
In particular, $\deg\Pi_n^{(\alpha,m)}=n$ and $\lc\!\bigl(\Pi_n^{(\alpha,m)}\bigr)=(-m)^n$ for every $n$.
\end{enumerate}
\end{prop}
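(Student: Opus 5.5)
The plan is to start from the Taylor (translation) generating function, as in Proposition~\ref{prop:gen}, and read off the polynomials. Set $u:=z-a$. For $|\xi|<|u|$,
\[
\frac{g_{\alpha,m}(z+\xi)}{g_{\alpha,m}(z)}=\Bigl(1+\frac{\xi}{u}\Bigr)^{\alpha}\exp\!\Bigl(\lambda u^{-m}\bigl((1+\xi/u)^{-m}-1\bigr)\Bigr).
\]
Substitute $t:=-\xi/u$ and $x=-\lambda u^{-m}$. The right-hand side becomes $(1-t)^\alpha\exp\bigl(x(1-(1-t)^{-m})\bigr)$. Expand this in powers of $t$ with coefficients $\Pi_n^{(\alpha,m)}(x)/n!$. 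Comparing with $\sum_n g^{(n)}(z)\xi^n/(n!\,g(z))$ and $\xi^n=(-1)^nu^nt^n$ gives
\[
g^{(n)}(z)=(-1)^nu^{-n}\Pi_n(x)\,g(z),
\]
which is part~(1). The series $(1-(1-t)^{-m})$ vanishes at $t=0$ and has derivative $-m\ne0$, and $(1-t)^\alpha$ equals $1$ at $0$. So the generating function has Sheffer form $A(t)e^{xB(t)}$, and $\Pi_n$ is a polynomial in $x$ of degree at most $n$. Uniqueness is automatic, since the coefficients of a power series are unique.

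For the recurrence in part~(2), I would differentiate the generating function $\mathcal G(x,t)$ in $t$:
\[
\partial_t\mathcal G=\Bigl(-\frac{\alpha}{1-t}-\frac{mx}{(1-t)^{m+1}}\Bigr)\mathcal G.
\]
Also $x\partial_x\mathcal G=x(1-(1-t)^{-m})\mathcal G$. Multiplying by $(1-t)$ gives
\[
(1-t)\partial_t\mathcal G=\bigl(-\alpha-mx(1-t)^{-m}\bigr)\mathcal G=\bigl(-\alpha-mx+m\,x\partial_x\bigr)\mathcal G .
\]
Extracting the coefficient of $t^n/n!$ yields $\Pi_{n+1}-n\Pi_n=-\alpha\Pi_n-mx\Pi_n+mx\Pi_n'$, which is the stated recurrence. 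As a cross-check, one can instead differentiate the identity of part~(1) once more in $z$, using $dx/dz=mx/u$. With $\Pi_0=1$, the recurrence determines the sequence, and this gives the claimed equivalence. The recurrence also shows $\lc(\Pi_{n+1})=-m\,\lc(\Pi_n)$ (the $mx\Pi_n'$ term contributes $mn$ times the leading coefficient and the $-mx\Pi_n$ term raises the degree), so $\deg\Pi_n=n$ and $\lc=(-m)^n$ by induction.

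For the explicit formula, I would expand $e^{x(1-(1-t)^{-m})}=\sum_k\frac{x^k}{k!}(1-(1-t)^{-m})^k$ and use the binomial theorem:
\[
(1-t)^\alpha(1-(1-t)^{-m})^k=\sum_j(-1)^j\binom kj(1-t)^{\alpha-mj}.
\]
Then use $[t^n/n!](1-t)^{-c}=(c)_n$ with $c=mj-\alpha$. Terms with $k>n$ vanish because $(1-(1-t)^{-m})^k=O(t^k)$, which justifies truncating the sum at $k=n$. The Varma--Ta\c{s}delen identification follows by matching generating functions: $-(\beta+1)m=\alpha$ exactly when $\beta=-1-\alpha/m$, and the exponential factors are identical. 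For $m=1$, the generating function is $(1-t)^{-(-\alpha-1)-1}\exp(-xt/(1-t))$, which is the classical $\sum_n L_n^{(\gamma)}(x)t^n$ with $\gamma=-\alpha-1$; hence $\Pi_n=n!L_n^{(-\alpha-1)}$.

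The only delicate point I expect is bookkeeping of branches and signs in part~(1), namely the $(-1)^n$ and the validity of $(1+\xi/u)^\alpha$ for integer $\alpha$. Since $\alpha\in\mathbb Z$, no branch choice is needed, and the identity holds on $|\xi|<|u|$. Everything else is formal power-series manipulation.
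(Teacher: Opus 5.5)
Your proposal is correct and follows essentially the paper's route. You use Taylor translation with $t=-\xi/(z-a)$ for part~(1), coefficient extraction for the explicit formula, and direct comparison of generating functions for the Varma--Ta\c{s}delen and Laguerre identifications. The one variation is that you derive the recurrence from $(1-t)\partial_t\mathcal G=(-\alpha-mx+mx\partial_x)\mathcal G$, whereas the paper differentiates the derivative formula in $z$; in your cross-check of that route, note that $dx/dz=-mx/(z-a)$, not $+mx/(z-a)$, since with your sign the direct differentiation gives the wrong recurrence.
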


\begin{proof}
Write $w:=z-a$ and $x=-\lambda w^{-m}$. Since $x'=-mx/w$, differentiating
$$(-1)^n w^{\alpha-n}\Pi_n(x)e^{\lambda w^{-m}}$$
gives
$$\Pi_{n+1}^{(\alpha,m)}(x)=mx\bigl(\Pi_n^{(\alpha,m)}\bigr)'(x)+(n-\alpha-mx)\Pi_n^{(\alpha,m)}(x),\qquad \Pi_0^{(\alpha,m)}=1.$$
Taylor's formula for $g_{\alpha,m}(z+\xi)$ with $t=-\xi/w$ gives
$$\sum_{n\ge0}\Pi_n^{(\alpha,m)}(x)\frac{t^n}{n!}
=(1-t)^\alpha \exp\!\bigl(x(1-(1-t)^{-m})\bigr).$$
Expanding that generating function gives
$$\Pi_n^{(\alpha,m)}(x)=n![t^n]\!\left((1-t)^\alpha \exp\!\bigl(x(1-(1-t)^{-m})\bigr)\right)
=\sum_{k=0}^n\frac{x^k}{k!}\sum_{j=0}^k(-1)^j\binom{k}{j}(mj-\alpha)_n.$$
Comparing with the defining generating function of $P_n^{(\beta)}(x;m)$ and using $\beta=-1-\alpha/m$ gives $\Pi_n^{(\alpha,m)}(x)=P_n^{(\beta)}(x;m)$. For $m=1$ this reduces to the classical Laguerre generating function, hence $\Pi_n^{(\alpha,1)}(x)=n!L_n^{(-\alpha-1)}(x)$. The highest-degree term in the recurrence is $-mx\,\Pi_n^{(\alpha,m)}(x)$, so $\deg\Pi_n^{(\alpha,m)}=n$ and $\lc\!\bigl(\Pi_n^{(\alpha,m)}\bigr)=(-m)^n$.
\end{proof}

\begin{prop}[Laguerre-type $m$-orthogonality for $\alpha<0$]\label{prop:micro-dorth}
Assume $m\ge1$ and $\alpha\in\mathbb Z$ with $\alpha<0$, and let $\{\Pi_n^{(\alpha,m)}\}_{n\ge0}$ be as in Proposition~\ref{prop:micro-higher}. For $0\le r\le m-1$ set
$$\lambda_r:=\frac{r-\alpha}{m}>0.$$
For $0\le j\le m-1$, define a linear functional on $\C[x]$ by
$$\langle u_j,\varphi\rangle:=\frac1{j!}\sum_{r=0}^j(-1)^r\binom{j}{r}\frac1{\Gamma(\lambda_r)}
\int_0^\infty \varphi(x)x^{\lambda_r-1}e^{-x}\,\dd x.$$
Then
$$\langle u_j,x^\nu\Pi_n^{(\alpha,m)}\rangle=0\quad\text{for }n\ge m\nu+j+1,$$
and
$$\langle u_j,x^\nu\Pi_{m\nu+j}^{(\alpha,m)}\rangle\ne0\quad\text{for every }\nu\ge0.$$
Hence $\{\Pi_n^{(\alpha,m)}\}_{n\ge0}$ is $m$-orthogonal with respect to $\mathbf{u}=(u_0,\dots,u_{m-1})$. Equivalently, after the parameter change $\beta=-1-\alpha/m$, this is the Varma--Ta\c{s}delen Laguerre-type family from~\cite{VT}.
\end{prop}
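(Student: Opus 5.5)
The plan is to evaluate all the relevant moments at once through the generating function of Proposition~\ref{prop:micro-higher}\textup{(1)}. For $0\le r\le m-1$ let $w_r$ denote the normalized Laguerre functional $\langle w_r,\varphi\rangle:=\Gamma(\lambda_r)^{-1}\int_0^\infty\varphi(x)x^{\lambda_r-1}e^{-x}\,\dd x$. Then $u_j=\frac1{j!}\sum_{r\le j}(-1)^r\binom jr w_r$, so it suffices to compute $\langle w_r,x^\nu\Pi_n^{(\alpha,m)}\rangle$ for all $n$ and $\nu$. I would integrate the generating relation against $x^\nu x^{\lambda_r-1}e^{-x}$ term by term.

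\textbf{Step 1 (the key identity).} The exponent combines as $x(1-(1-t)^{-m})-x=-x(1-t)^{-m}$. For $|t|$ small, $\Re(1-t)^{-m}>0$, and the Gamma integral gives
\[
\sum_{n\ge0}\langle w_r,x^\nu\Pi_n^{(\alpha,m)}\rangle\frac{t^n}{n!}
=(1-t)^\alpha\frac{\Gamma(\lambda_r+\nu)}{\Gamma(\lambda_r)}(1-t)^{m(\lambda_r+\nu)}
=(\lambda_r)_\nu\,(1-t)^{m\nu+r}.
\]
Here I use $m\lambda_r+\alpha=r$, and $\lambda_r>0$ because $\alpha<0$. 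The right-hand side is a polynomial in $t$ of exact degree $m\nu+r$. Hence $\langle w_r,x^\nu\Pi_n\rangle=0$ for $n>m\nu+r$, and
\[
\langle w_r,x^\nu\Pi_{m\nu+r}\rangle=(-1)^{m\nu+r}(m\nu+r)!\,(\lambda_r)_\nu\ne0.
\]

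\textbf{Step 2 (passing to $u_j$).} For $n\ge m\nu+j+1$, every $r\le j$ satisfies $n>m\nu+r$, so each summand of $\langle u_j,x^\nu\Pi_n\rangle$ vanishes. For $n=m\nu+j$, the terms with $r<j$ vanish because $n>m\nu+r$, and only $r=j$ survives. This gives
\[
\langle u_j,x^\nu\Pi_{m\nu+j}\rangle=\frac{(-1)^j}{j!}(-1)^{m\nu+j}(m\nu+j)!\,(\lambda_j)_\nu,
\]
which is nonzero since $\lambda_j>0$. This is exactly the definition of $m$-orthogonality. The identification with the Varma--Ta\c{s}delen family is already contained in Proposition~\ref{prop:micro-higher}.

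\textbf{Main obstacle: justifying the term-by-term integration in Step 1.} Fix $r_0>0$ so small that $|x(1-(1-t)^{-m})|\le \frac14 x$ for $|t|\le r_0$ and $x\ge0$. The generating function is entire in $x$ and holomorphic in $t$ there, so Cauchy's estimate on $|t|=r_0$ gives
\[
|\Pi_n^{(\alpha,m)}(x)|/n!\le C r_0^{-n}e^{x/4},\qquad x\ge0.
\]
For $|t|\le r_0/2$ the series $\sum_n|\Pi_n(x)||t|^n/n!$ is then dominated by $2Ce^{x/4}$. This bound is integrable against $x^{\nu+\lambda_r-1}e^{-x}$, so Fubini/dominated convergence applies. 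Both sides are holomorphic in $t$, and comparing Taylor coefficients yields the identity of Step 1.
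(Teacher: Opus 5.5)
Your proposal is correct and follows essentially the same route as the paper: integrate the generating function of Proposition~\ref{prop:micro-higher} termwise against the Laguerre weights, use $m\lambda_r+\alpha=r$ to get polynomials in $t$ of degree $m\nu+r$, and read off both the vanishing and the nonzero value $\frac{(-1)^{m\nu}}{j!}(m\nu+j)!\,(\lambda_j)_\nu$. The differences are cosmetic. You treat each functional $w_r$ separately before combining, which exhibits the slightly stronger per-$r$ vanishing for $n>m\nu+r$. Your Cauchy-estimate bound $\sum_n|\Pi_n^{(\alpha,m)}(x)|\,|t|^n/n!\le 2Ce^{x/4}$ also justifies the termwise integration more cleanly than the paper's one-line dominated-convergence remark.
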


\begin{proof}
Write $\Pi_n:=\Pi_n^{(\alpha,m)}$. Fix $\nu\ge0$. Choose $\varepsilon>0$ so small that $\Re((1-t)^{-m})\ge c>0$ for $|t|\le\varepsilon$. On this disc the generating function is dominated, after multiplication by $y^{\nu+\lambda_r-1}e^{-y}$, by an integrable multiple of $y^{\nu+\lambda_r-1}e^{-cy}$; termwise integration below is therefore justified by dominated convergence. Proposition~\ref{prop:micro-higher} gives
\begin{align*}
\sum_{n\ge0}\langle u_j,x^\nu\Pi_n\rangle\frac{t^n}{n!}
&=\frac1{j!}\sum_{r=0}^j(-1)^r\binom{j}{r}\frac1{\Gamma(\lambda_r)}
\int_0^\infty y^\nu\left(\sum_{n\ge0}\Pi_n(y)\frac{t^n}{n!}\right)y^{\lambda_r-1}e^{-y}\,\dd y\\
&=\frac1{j!}\sum_{r=0}^j(-1)^r\binom{j}{r}\frac1{\Gamma(\lambda_r)}
\int_0^\infty y^{\nu+\lambda_r-1}(1-t)^\alpha e^{-y(1-t)^{-m}}\,\dd y\\
&=\frac1{j!}\sum_{r=0}^j(-1)^r\binom{j}{r}(\lambda_r)_\nu
(1-t)^{\alpha+m(\nu+\lambda_r)}\\
&=\frac{(1-t)^{m\nu}}{j!}\sum_{r=0}^j(-1)^r\binom{j}{r}(\lambda_r)_\nu(1-t)^r,
\end{align*}
because $m\lambda_r+\alpha=r$. The right-hand side is a polynomial in $t$ of degree $m\nu+j$. Its leading coefficient is
$$\frac{(-1)^{m\nu}}{j!}(\lambda_j)_\nu\ne0,$$
because $\lambda_j>0$. Therefore the coefficient of $t^n$ vanishes for $n>m\nu+j$ and is nonzero for $n=m\nu+j$, which is exactly the stated $m$-orthogonality. The final identification with the Varma--Ta\c{s}delen family is the parameter change already recorded in Proposition~\ref{prop:micro-higher}.
\end{proof}

For the Laguerre weights used in Lemma~\ref{lem:nva-staircase}, write $\alpha_{\rm loc}<0$ for the local exponent and set
\[
\chi_r:=\frac{r-\alpha_{\rm loc}}{m}-1,\qquad 0\le r\le m-1.
\]
Then $\chi_r>-1$ and $\chi_r-\chi_s\notin\mathbb Z$ for $r\ne s$. We use the standard algebraic-Chebyshev (AT) property for these Laguerre weights: for every multi-index $\boldsymbol n$, the functions $x^\nu x^{\chi_r}e^{-x}$, $0\le \nu<n_r$, $0\le r\le m-1$, form a Chebyshev system on $(0,\infty)$. Consequently, the relevant multi-indices are normal, their zeros are positive and simple, and nearest-neighbor type-II polynomials interlace.

\begin{prop}[staircase multiple Laguerre and Laguerre Muttalib--Borodin ensemble]\label{prop:micro-mop-ensemble}
Assume $m\ge1$ and $\alpha\in\mathbb Z$ with $\alpha<0$. For $0\le r\le m-1$ set
\[
w_r(x):=\frac{x^{(r-\alpha)/m-1}e^{-x}}{\Gamma((r-\alpha)/m)},\qquad x>0.
\]
If $n=am+s$, $0\le s<m$, put
\[
\boldsymbol n(n):=(\underbrace{a+1,\dots,a+1}_{s\text{ times}},\underbrace{a,\dots,a}_{m-s\text{ times}}).
\]
Then
\[
P_n(x):=(-m)^{-n}\Pi_n^{(\alpha,m)}(x)
\]
is the monic type-II multiple Laguerre polynomial of the first kind for the weights $w_0,\dots,w_{m-1}$ and multi-index $\boldsymbol n(n)$.

Moreover, with
\[
g_{k+1}(x):=\frac{x^{(k-\alpha)/m-1}e^{-x}}{\Gamma((k-\alpha)/m)},\qquad 0\le k\le n-1,
\]
the biorthogonal density
\[
\frac1{Z_n}\det[x_k^{\ell-1}]_{\ell,k=1}^n\det[g_\ell(x_k)]_{\ell,k=1}^n\prod_{k=1}^n\dd x_k
\]
is equivalently the Laguerre Muttalib--Borodin ensemble with parameter $\theta=1/m$ and Laguerre exponent $-\alpha/m-1$:
\[
\frac1{\widetilde Z_n}
\prod_{1\le p<q\le n}(x_q-x_p)(x_q^{1/m}-x_p^{1/m})
\prod_{p=1}^n x_p^{-\alpha/m-1}e^{-x_p}\,\dd x_p .
\]
Its average characteristic polynomial is
\[
\mathbb E\prod_{p=1}^n(z-x_p)=P_n(z).
\]
\end{prop}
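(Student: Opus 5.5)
The plan is to derive all three assertions from the $m$-orthogonality of Proposition~\ref{prop:micro-dorth}. We replace the Laguerre moment functionals $u_j$ by the plain weights $w_r$ through a triangular change of basis, and then read off the biorthogonal structure.

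\textbf{Step 1 (type-II conditions).} By definition, $u_j=\frac1{j!}\sum_{r\le j}(-1)^r\binom jr w_r$ as functionals, and the matrix is lower triangular with nonzero diagonal $(-1)^j/j!$. Inverting it, each $w_r$ is a combination of $u_0,\dots,u_r$. Now let $n=am+s$. Proposition~\ref{prop:micro-dorth} gives $\langle u_j,x^\nu\Pi_n\rangle=0$ whenever $m\nu+j+1\le n$. For $j<s$ this holds for $\nu\le a$, and for $j\ge s$ it holds for $\nu\le a-1$. So $u_j$ annihilates $x^\nu\Pi_n$ for $\nu<n_j$, where $n_j=a+1$ if $j<s$ and $n_j=a$ otherwise. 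The staircase multi-index is nonincreasing in $j$. Since $w_r$ involves only $u_0,\dots,u_r$, each of which annihilates $x^\nu\Pi_n$ for $\nu<n_r\le n_{j}$ ($j\le r$), we get $\int x^\nu\Pi_n w_r=0$ for $\nu<n_r$. The degree is $n=|\boldsymbol n(n)|$ and the leading coefficient is $(-m)^n$ by Proposition~\ref{prop:micro-higher}, so $P_n$ is monic. Normality of staircase indices follows from the AT property stated just before the proposition, so $P_n$ is \emph{the} type-II multiple Laguerre polynomial.

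\textbf{Step 2 (biorthogonal ensemble and average characteristic polynomial).} Order the functions $x^\nu w_r$ ($\nu<n_r$) along the staircase, so that the $(k+1)$-st function is $x^{a}w_r$ with $k=am+r$. Then $x^a w_r\propto x^{(r-\alpha)/m+a-1}e^{-x}=x^{(k-\alpha)/m-1}e^{-x}$, which is $g_{k+1}$ up to a constant. Hence $\operatorname{span}\{g_1,\dots,g_n\}$ equals the span of the type-II orthogonality functions. The standard Heine-type identity for biorthogonal ensembles $\det[x_k^{\ell-1}]\det[g_\ell(x_k)]$ then gives $\mathbb E\prod(z-x_p)$ equal to the unique monic degree-$n$ polynomial orthogonal to $g_1,\dots,g_n$, which is $P_n$. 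To prove this identity, expand $\det[z^{\ell-1}\ldots]$ as an $(n+1)\times(n+1)$ Vandermonde, integrate using Andréief, and obtain a bordered moment determinant whose value is orthogonal to each $g_\ell$.

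\textbf{Step 3 (Muttalib--Borodin form).} We compute
\[
\det[g_\ell(x_k)]=\prod_k x_k^{-\alpha/m-1}e^{-x_k}\det[x_k^{(\ell-1)/m}]\cdot\text{const},
\]
and $\det[y_k^{\ell-1}]$ with $y=x^{1/m}$ is the Vandermonde $\prod_{p<q}(x_q^{1/m}-x_p^{1/m})$. Combined with the first Vandermonde, this gives the stated density with $\theta=1/m$. Positivity of the density, after fixing the sign by ordering, follows since both Vandermondes share sign on ordered tuples.

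The main obstacle is Step~1's bookkeeping: checking that the triangular inversion respects the staircase. This needs $n_r$ nonincreasing in $r$, which holds because the $+1$ entries come first. Everything else is classical.
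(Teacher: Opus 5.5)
Your proposal is correct and follows the paper's proof essentially step for step: the lower-triangular change of basis between the functionals $u_j$ and the Laguerre functionals $w_r$, which is compatible with the nonincreasing staircase multi-index; normality from the AT property; the identification of $x^\nu w_r$ with $g_{m\nu+r+1}$; the factorization of $\det[g_\ell(x_k)]$ through the $x^{1/m}$-Vandermonde; and Heine's identity via Andr\'eief for the average characteristic polynomial. You prove only the implication from the $u_j$-conditions to the $w_r$-conditions, whereas the paper records both directions, but that one direction is all the statement requires.
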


\begin{proof}
Let $\mu_r$ be the Laguerre moment functional with density $w_r$. The functionals in Proposition~\ref{prop:micro-dorth} satisfy
\[
u_j=\frac1{j!}\sum_{r=0}^j(-1)^r\binom jr\mu_r .
\]
The change-of-basis matrix from $(\mu_0,\dots,\mu_{m-1})$ to $(u_0,\dots,u_{m-1})$ is lower triangular with nonzero diagonal, and its inverse is lower triangular. The multi-index $\boldsymbol n(n)$ is nonincreasing. If the type-II conditions for the $\mu_r$ hold, then for each $j$ and $0\le\nu<n_j(n)$ the $u_j$-condition is a linear combination of $\mu_r$-conditions with $r\le j$, all available because $n_r(n)\ge n_j(n)$. Conversely, if the $u_j$-conditions hold, then for each $r$ and $0\le\nu<n_r(n)$ the $\mu_r$-condition is a linear combination of $u_j$-conditions with $j\le r$, all available because $n_j(n)\ge n_r(n)$. Thus the two systems of orthogonality conditions are equivalent. Proposition~\ref{prop:micro-dorth} gives the corresponding $u_j$-conditions for $\Pi_n^{(\alpha,m)}$. The leading coefficient from Proposition~\ref{prop:micro-higher} makes $P_n=(-m)^{-n}\Pi_n^{(\alpha,m)}$ monic; normality follows from the AT-system property.

The functions $x^\nu w_r(x)$ with $0\le\nu<n_r(n)$ are, up to positive constants and in the order $k=m\nu+r$, the functions $g_{k+1}$, $0\le k<n$. Hence
\[
\det[g_\ell(x_k)]_{\ell,k=1}^n
=\text{const}\cdot\prod_{p=1}^n x_p^{-\alpha/m-1}e^{-x_p}
\det[x_p^{(\ell-1)/m}]_{\ell,p=1}^n,
\]
which gives the displayed Muttalib--Borodin density. Heine's identity for biorthogonal ensembles, obtained by inserting one extra Vandermonde row and applying Andreief's formula, shows that the average characteristic polynomial is the monic degree-$n$ polynomial orthogonal to $g_1,\dots,g_n$; by normality this polynomial is $P_n$.
\end{proof}

Thus, for $\alpha<0$, the reduced microscopic polynomial is a known finite-$n$ integrable-probability object: the average characteristic polynomial of a Laguerre Muttalib--Borodin, equivalently Borodin biorthogonal, ensemble \cite{Borodin98,FW}. Product-Ginibre ensembles give related multiple-orthogonal-polynomial and Meijer-$G$ kernels \cite{AIK,KZ}, but their finite-$n$ weights are not the powers $g_k$ above. The zero limit used here is the Neuschel--Van Assche multiple-Laguerre limit \cite{NVA}.

We use exactly two external asymptotic inputs. First, Neuschel--Van Assche \cite[Theorem~1.2 and \S\S~5--6]{NVA} give the zero distribution for the diagonal type-II multiple Laguerre polynomials of the first kind in total-degree scaling. In their notation the diagonal multi-index is $a\mathbf 1=(a,\ldots,a)$, the total degree is $N=ma$, and the zeros of $P_{a\mathbf 1}(N\zeta)$ have the limiting Cauchy transform displayed in Lemma~\ref{lem:nva-staircase}. The bounded-staircase extension used below is not quoted as a separate theorem from \cite{NVA}; it is obtained in the proof of Lemma~\ref{lem:nva-staircase} from the diagonal theorem, interlacing, and finitely many nearest-neighbor ratios. In this normalization the endpoint becomes
\[
\frac1m\frac{(m+1)^{m+1}}{m^m}=\left(\frac{m+1}{m}\right)^{m+1}=c_m
\]
after division by the total degree. Second, for the nearest-neighbor ratios in the diagonal first-kind multiple-Laguerre regime we use the ratio asymptotics obtained in Neuschel--Van Assche \cite[\S\S~5--6]{NVA} from Van Assche's theorem \cite[Theorem~1.2]{VAratio} by first working in distinct directions and then taking the diagonal limit. In the form needed here: for every fixed word in the steps $-\mathbf e_j$ and every compact $K\Subset\widehat\C\setminus[0,c_m]$, the ratios
\[
N\frac{P_{\boldsymbol k-\mathbf e_j}(N\zeta)}{P_{\boldsymbol k}(N\zeta)}
\]
which occur along the word are defined on $K$ for all large $N$ and converge locally uniformly along the asymptotic ray. The passage from diagonal indices to the bounded staircase indices is carried out in the proof below by interlacing and by this finite product of nearest-neighbor ratios; no further multiple-Laguerre asymptotic input is used.

\begin{lem}[staircase multiple-Laguerre input]\label{lem:nva-staircase}
Fix $m\ge1$ and $\alpha_0\in\mathbb Z$ with $\alpha_0<0$, and let $\Pi_n^{(\alpha_0,m)}$ be as in Proposition~\ref{prop:micro-higher}. Put
$$c_m:=\left(\frac{m+1}{m}\right)^{m+1},\qquad
\Omega:=\widehat\C\setminus[0,c_m].$$
Then the normalized zero-counting measures of $\Pi_n^{(\alpha_0,m)}(n\zeta)$ converge to a probability measure $\mu_m$ supported on $[0,c_m]$ whose Cauchy transform $C_m=C_{\mu_m}$ is the branch
$$C_m(\zeta)=1-v(\zeta)^{-m},\qquad v^{m+1}=m\zeta(v-1),\qquad
v(\zeta)=1+\frac{1}{m\zeta}+O(\zeta^{-2}).$$
Moreover, for every fixed $L$ and every subsequence of $n$ there is a further subsequence on which, for $0\le\ell\le L$,
$$
S_{n,\ell}(\zeta):=
\frac{n!}{(n-\ell)!}\,
\frac{\Pi_{n-\ell}^{(\alpha_0,m)}(n\zeta)}
{\Pi_n^{(\alpha_0,m)}(n\zeta)},\qquad S_{n,0}\equiv1,
$$
converges locally uniformly on $\Omega$ to a holomorphic function $H_\ell$; on each compact $K\Subset\Omega$ the denominators are zero-free for all sufficiently large $n$ in that subsequence. The limits satisfy $H_0\equiv1$ and $H_\ell(\infty)=0$ for $\ell\ge1$.
\end{lem}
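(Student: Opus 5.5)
The plan is to reduce to the diagonal Neuschel--Van Assche theorem through the identification in Proposition~\ref{prop:micro-mop-ensemble}. Write $n=am+s$ with $0\le s<m$. Then $(-m)^{-n}\Pi_n^{(\alpha_0,m)}=P_{\boldsymbol n(n)}$ is the type-II multiple Laguerre polynomial for the staircase multi-index $\boldsymbol n(n)$, and $\boldsymbol n(n)$ differs from the diagonal index $a\mathbf 1$ by adding $\mathbf e_0+\dots+\mathbf e_{s-1}$. The first step is to obtain the zero law at scale $N=ma$ from \cite[Theorem~1.2]{NVA}. Its Cauchy transform is the displayed algebraic branch after the normalization that places the endpoint at $c_m$. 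I would check this normalization on the algebraic equation: substitute the NVA equation, rescale by the total degree, and verify the behaviour $v=1+1/(m\zeta)+O(\zeta^{-2})$ at infinity. Since $n/N\to1$ and $|n-N|<m$, rescaling by $n$ instead of $N$ changes the measure by a dilation tending to the identity. For the staircase index itself I would use the AT/interlacing property: the zeros of nearest-neighbour type-II polynomials interlace, and $\boldsymbol n(n)$ is reached from $a\mathbf 1$ by at most $m$ steps $+\mathbf e_j$. The counting functions of $P_{\boldsymbol n(n)}$ and $P_{a\mathbf 1}$ on any interval therefore differ by at most $m$, so after normalization by $n$ the two zero-counting measures have the same limit $\mu_m$. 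All zeros are positive. I would identify the support as $[0,c_m]$ from the NVA endpoint.

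For the ratios, I would factor
\[
S_{n,\ell}=\prod_{k=0}^{\ell-1}(n-k)\frac{\Pi_{n-k-1}}{\Pi_{n-k}}(n\zeta).
\]
Each factor equals $-m\,(n-k)\,P_{\boldsymbol n(n-k)-\mathbf e_{j_k}}(n\zeta)/P_{\boldsymbol n(n-k)}(n\zeta)$, because moving from $n-k$ to $n-k-1$ removes exactly one unit $\mathbf e_j$ from the staircase. The whole word from $\boldsymbol n(n)$ down to $\boldsymbol n(n-\ell)$ is a fixed word of $-\mathbf e_j$ steps. It depends only on $s$ and $\ell$, and the starting point is within bounded distance of the diagonal. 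Along a subsequence I would fix the residue $s$, which is where the subsequence in the statement enters. The NVA/Van Assche ratio asymptotics quoted before the lemma then give locally uniform convergence on compacts of $\widehat\C\setminus[0,c_m]$ of each $N\,P_{\boldsymbol k-\mathbf e_j}(N\zeta)/P_{\boldsymbol k}(N\zeta)$. This requires relating indices near the diagonal to the diagonal asymptotic ray, and again the change of scale from $N$ to $n$ is harmless because the limits are continuous in $\zeta$. Denominators are zero-free on $K$ for large $n$ because all zeros of $\Pi_n(n\cdot)$ lie in $(0,\infty)$ and, by the interlacing/limit argument, eventually in any neighbourhood of $[0,c_m]$. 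I expect this last point, absence of outlying zeros beyond $c_m$, to need care. I would take it from the NVA convergence of the largest zero, or from the ratio convergence statement itself, which asserts the ratios are defined on $K$.

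A finite product of locally uniformly convergent holomorphic functions converges, which gives $H_\ell$. $H_0\equiv1$ holds trivially. For $H_\ell(\infty)=0$: since $\deg\Pi_{n-k-1}=\deg\Pi_{n-k}-1$, each factor behaves like $(n-k)\cdot\lc$-ratio$\cdot(n\zeta)^{-1}=O(1/\zeta)$ uniformly in $n$ near $\infty$, so each factor's limit vanishes at $\infty$ when $\ell\ge1$. Here $\lc(\Pi_n)=(-m)^n$ is used. The main obstacle is the bookkeeping matching NVA's diagonal normalization and ratio statement to our staircase indices and the scale $n$. The analytic content is imported.
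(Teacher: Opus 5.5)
Your proposal is correct and follows essentially the same route as the paper: the identification $(-m)^{-n}\Pi_n^{(\alpha_0,m)}=P_{\boldsymbol n(n)}$, the diagonal Neuschel--Van Assche law transferred to the bounded staircase by interlacing and a vanishing dilation, and the quoted nearest-neighbour ratio asymptotics along a fixed word of $-\mathbf e_j$ steps within one residue class mod $m$ (which also supplies the zero-free denominators), with $H_\ell(\infty)=0$ coming from the degree drop. One harmless slip: since $\Pi_{n-k-1}/\Pi_{n-k}=(-m)^{-1}P_{\boldsymbol n(n-k-1)}/P_{\boldsymbol n(n-k)}$, each factor in your product carries $-(n-k)/m$, not $-m(n-k)$.
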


\begin{proof}
Let $P_{\boldsymbol k}$ denote the monic type-II multiple Laguerre polynomial of the first kind for the weights $x^{(r-\alpha_0)/m-1}e^{-x}$, $0\le r\le m-1$. If $n=am+s$, $0\le s<m$, put
$$\boldsymbol n(n):=(\underbrace{a+1,\dots,a+1}_{s\text{ times}},\underbrace{a,\dots,a}_{m-s\text{ times}}).$$
By Proposition~\ref{prop:micro-mop-ensemble}, $(-m)^{-n}\Pi_n^{(\alpha_0,m)}=P_{\boldsymbol n(n)}$.

Neuschel--Van Assche prove the zero distribution for the diagonal indices $a\mathbf 1$, $\mathbf 1=(1,\ldots,1)$, with the argument scaled by the total degree $ma$. Their endpoint, after division by $m$ in the total-degree scaling, is
\[
\frac1m\,\frac{(m+1)^{m+1}}{m^m}
=\left(\frac{m+1}{m}\right)^{m+1}=c_m.
\]
The staircase $\boldsymbol n(n)$ is obtained from $a\mathbf 1$ by at most $m-1$ nearest-neighbor increments. At one increment the AT-system interlacing theorem gives interlacing of the two real simple zero sets. With a common spatial scale $N\asymp n$ and normalization by total degree, the corresponding distribution functions differ by $O(1/n)$; hence their integrals against any compactly supported $C^1$ test function $\chi$ differ by $O(\|\chi'\|_{L^1}/n)$. Iterating over the bounded number of increments gives the same estimate between the diagonal measure at degree $ma$ and the staircase measure when both are read at the common scale $ma$. Replacing $ma$ by $n=ma+s$ changes the integral of $\chi$ by $O_\chi(1/n)$, since $s$ is bounded. The diagonal theorem gives tightness in the one-point compactification of $[0,\infty)$; approximating continuous tests first by compactly supported $C^1$ tests and then using the preceding estimates gives the same zero distribution for $\boldsymbol n(n)$ with the scale $x=n\zeta$. This proves the first assertion once the Cauchy transform is identified.

For the ratio assertion, fix $L$ and first restrict to a residue class of $n$ modulo $m$. For each $0\le\ell\le L$, the multi-index $\boldsymbol n(n-\ell)$ is obtained from $\boldsymbol n(n)$ by a fixed word of $\ell$ nearest-neighbor decrements, depending only on the residue class and on $\ell$. Let $K\Subset\Omega$. Since $L$ is fixed, for all large $n$ the sets $(n/(n-r))K$, $0\le r\le L$, are contained in one compact $K'\Subset\Omega$. The nearest-neighbor ratio asymptotics quoted above, applied at total degree $N=n-r$, gives locally uniform limits on $K'$ for
\[
N\frac{P_{\boldsymbol k-\mathbf e_j}(N\xi)}{P_{\boldsymbol k}(N\xi)}.
\]
Putting $\xi=(n/N)\zeta$ gives
\[
n\frac{P_{\boldsymbol k-\mathbf e_j}(n\zeta)}{P_{\boldsymbol k}(n\zeta)}
=\frac{n}{N}\left[
N\frac{P_{\boldsymbol k-\mathbf e_j}(N\xi)}{P_{\boldsymbol k}(N\xi)}
\right]_{\xi=(n/N)\zeta},
\]
so replacing the scale $n-r$ by $n$ changes each step by $o(1)$ locally uniformly on $K$. The same zero-free denominator statement holds on $K$ for all sufficiently large indices. Multiplying the finitely many ratio limits and restoring the leading coefficients $(-m)^n$ and the factorial factor in $S_{n,\ell}$ gives a locally bounded normal family on $\Omega$. Every subsequence therefore has a further subsequence on which all $S_{n,\ell}$, $0\le\ell\le L$, converge locally uniformly. The limit for $\ell=0$ is $1$, and for $\ell\ge1$ the expansion at infinity
\[
S_{n,\ell}(\zeta)=(-m)^{-\ell}\zeta^{-\ell}+O(\zeta^{-\ell-1})
\]
uniformly for large $|\zeta|$ gives $H_\ell(\infty)=0$.

The limiting Cauchy transform is the total-degree-scaled Stieltjes transform obtained in \cite[\S6]{NVA}. In the present normalization it is
$$C_m(\zeta)=1-v(\zeta)^{-m},\qquad v^{m+1}=m\zeta(v-1),$$
with the branch determined by
$$v(\zeta)=1+\frac1{m\zeta}+O(\zeta^{-2})$$
at infinity. The branch points of $\zeta(v)=v^{m+1}/(m(v-1))$ occur at $v=0$ and $v=(m+1)/m$; the latter gives
$$c_m=\zeta\!\left(\frac{m+1}{m}\right)=\left(\frac{m+1}{m}\right)^{m+1}.$$
Thus the limiting cut is $[0,c_m]$.
\end{proof}

\begin{prop}[microscopic limit for the reduced local sequence]\label{prop:micro-higher-limit}
Let $m\ge1$ and $\alpha\in\mathbb Z$, and let $\Pi_n^{(\alpha,m)}$ be as in Proposition~\ref{prop:micro-higher}. For $n\ge1$, set
$$R_n(\zeta):=\Pi_n^{(\alpha,m)}(n\zeta),$$
and let $\nu_n$ be the normalized zero-counting measure of $R_n$. Then $\nu_n$ converges weakly on $\C$ to $\mu_m$, where $c_m:=\left(\frac{m+1}{m}\right)^{m+1}$ and $\mu_m$ is the probability measure supported on
$$[0,c_m]$$
whose Cauchy transform is
$$C_m(\zeta)=1-v(\zeta)^{-m},$$
with $v$ the branch, holomorphic on $\C\setminus[0,c_m]$, of
$$v^{m+1}=m\zeta(v-1)$$
satisfying $v(\zeta)=1+\frac{1}{m\zeta}+O(\zeta^{-2})$ as $\zeta\to\infty$. Equivalently, $\mu_m$ is absolutely continuous on $(0,c_m)$, and for
$$\zeta(\phi):=\frac{1}{m}\frac{\sin^{m+1}((m+1)\phi)}{\sin\phi\,\sin^m(m\phi)},\qquad 0<\phi<\frac{\pi}{m+1},$$
its density with respect to Lebesgue measure on $(0,c_m)$ satisfies
$$\frac{d\mu_m}{dx}\bigl(\zeta(\phi)\bigr)=\frac{\sin^{m+1}(m\phi)}{\pi\,\sin^m((m+1)\phi)}.$$
Its moments are
$$\int_0^{c_m} x^k\,\dd\mu_m(x)=\frac{1}{m^k(k+1)}\binom{(m+1)k}{k},\qquad k\ge0,$$
and $\mu_m(\{0\})=0$.
\end{prop}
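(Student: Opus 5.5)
The plan splits on the sign of $\alpha$. For $\alpha<0$, the weak convergence $\nu_n\to\mu_m$ and the Cauchy-transform characterization are exactly Lemma~\ref{lem:nva-staircase} with $\alpha_0=\alpha$. Since all zeros of $\Pi_n^{(\alpha,m)}$ are positive (AT property) and the limit is a probability measure on $[0,c_m]$, vague convergence upgrades to weak convergence on $\C$.

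For $\alpha\ge0$ we reduce to the negative case through a finite connection formula. Choose $k$ with $\alpha-k<0$, and use the generating function of Proposition~\ref{prop:micro-higher}:
\[
(1-t)^{\alpha}=(1-t)^{k}(1-t)^{\alpha-k}.
\]
This gives
\[
\Pi_n^{(\alpha,m)}(x)=\sum_{\ell=0}^{k}(-1)^\ell\binom{k}{\ell}\frac{n!}{(n-\ell)!}\,\Pi_{n-\ell}^{(\alpha-k,m)}(x).
\]
Divide by $\Pi_n^{(\alpha-k,m)}(n\zeta)$. By the ratio part of Lemma~\ref{lem:nva-staircase}, along subsequences the quotient converges locally uniformly on $\Omega=\widehat\C\setminus[0,c_m]$ to
\[
\mathcal H(\zeta)=\sum_{\ell}(-1)^\ell\binom{k}{\ell}H_\ell(\zeta).
\]
This limit is holomorphic and satisfies $\mathcal H(\infty)=1$, so it is not identically zero. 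Therefore
\[
\tfrac1n\log\bigl|R_n\bigr| - \tfrac1n\log\bigl|\Pi_n^{(\alpha-k,m)}(n\zeta)\bigr|\to 0
\]
locally uniformly off the zeros of $\mathcal H$, which form a discrete set in $\Omega$.

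To transfer the zero law, work with the potentials $\frac1n\log|R_n|$, normalized by the common leading coefficient $(-m)^n$. They are locally uniformly bounded above, via a Cauchy estimate on the generating function or via comparison on large circles. They also converge to the log-potential of $\mu_m$ on an open dense set of full measure, namely $\Omega$ minus the zeros of $\mathcal H$. A Lemma~\ref{lem:upper-envelope-l1}-type argument then gives $L^1_{\rm loc}$ convergence, and hence $\nu_n\to\mu_m$ vaguely. Because the masses of $\nu_n$ are $1$ and the zeros stay in a bounded set (the limit on $\Omega$ near $\infty$ has no zeros), the convergence is weak. The subsequence dependence is harmless, since the limit $\mu_m$ is the same for every subsequence. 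The fixed zero at the origin of multiplicity $q+1$ is absorbed automatically, as it carries mass $O(1/n)$.

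The explicit description of $\mu_m$ comes from the algebraic equation $v^{m+1}=m\zeta(v-1)$. For the density, parametrize the boundary values on the cut by $v=re^{i\phi}$. Requiring $\zeta$ to be real forces
\[
r=\frac{\sin((m+1)\phi)}{\sin(m\phi)},
\qquad
\zeta(\phi)=\frac{1}{m}\,\frac{\sin^{m+1}((m+1)\phi)}{\sin\phi\,\sin^m(m\phi)}.
\]
Stieltjes inversion $\frac{d\mu}{dx}=\mp\frac1\pi\Im C_m$ with $\Im(1-v^{-m})=r^{-m}\sin(m\phi)$ then yields $\sin^{m+1}(m\phi)/(\pi\sin^m((m+1)\phi))$. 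The endpoints are $\phi\to0$, giving $\zeta\to c_m$, and $\phi\to\pi/(m+1)$, giving $\zeta\to0$.

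For the moments, expand $C_m(\zeta)=\sum_k M_k\zeta^{-k-1}$. Set $v=1+w$; the equation becomes $w=\frac{1}{m\zeta}(1+w)^{m+1}$, and Lagrange inversion applied to $1-(1+w)^{-m}$ gives $M_k=\frac{1}{m^k(k+1)}\binom{(m+1)k}{k}$, the Fuss--Catalan numbers rescaled.

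Finally, $\mu_m(\{0\})=\lim_{\zeta\to0^-}\zeta C_m(\zeta)$. As $\zeta\to0$ on the branch, $v\to0$ with $v^{m+1}\sim-m\zeta$, so $\zeta v^{-m}\sim -v/m\to0$ and there is no atom.

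The main obstacle is the $\alpha\ge0$ transfer, specifically ruling out $\mathcal H\equiv0$ and controlling zeros escaping to infinity. The normalization $H_\ell(\infty)=0$ for $\ell\ge1$ is what settles this.
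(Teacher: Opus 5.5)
Your proposal is correct and follows the paper's proof. It uses the same split on the sign of $\alpha$ via Lemma~\ref{lem:nva-staircase}, the same connection formula to a negative exponent with ratio limit $\sum_\ell(-1)^\ell\binom{k}{\ell}H_\ell$ equal to $1$ at $\infty$, Lagrange inversion in $w=v-1$ for the moments, Stieltjes inversion along $v=\frac{\sin((m+1)\phi)}{\sin(m\phi)}e^{\pm i\phi}$ for the density, and $-yC_m(-y)\to\mu_m(\{0\})$ with $v^{m+1}\sim-m\zeta$ for the atom.

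The only variation is the transfer step for $\alpha\ge0$. You pass through logarithmic potentials and $L^1_{\mathrm{loc}}$ compactness. For the local upper bound you should normalize by the leading coefficient $(-mn)^n$ of $\Pi_n^{(\alpha,m)}(n\zeta)$, not by $(-m)^n$. The paper instead uses Hurwitz's theorem to localize the zeros near $[0,c_m]$ and then identifies the limit through its Cauchy transform on $\Omega$; both routes are equally short.
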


\begin{proof}
First assume $\alpha<0$. By Proposition~\ref{prop:micro-mop-ensemble}, $(-m)^{-n}\Pi_n^{(\alpha,m)}$ is the monic staircase type-II multiple Laguerre polynomial, equivalently the average characteristic polynomial of the Laguerre Muttalib--Borodin ensemble with parameter $1/m$ and Laguerre exponent $-\alpha/m-1$. Lemma~\ref{lem:nva-staircase}, applied with $\alpha_0=\alpha$, gives the desired convergence when $\alpha<0$. In the present normalization the endpoint and parametrization are
$$c_m=\left(\frac{m+1}{m}\right)^{m+1},\qquad
\zeta(\phi)=\frac{1}{m}\frac{\sin^{m+1}((m+1)\phi)}{\sin\phi\,\sin^m(m\phi)},$$
and the Cauchy transform is the branch $C_m(\zeta)=1-v(\zeta)^{-m}$ determined by $v^{m+1}=m\zeta(v-1)$ and $C_m(\zeta)=\zeta^{-1}+O(\zeta^{-2})$ at infinity. The shifted-ratio compactness and zero-free denominator property used below are the second part of Lemma~\ref{lem:nva-staircase}.

It remains to remove the restriction $\alpha<0$. Choose $L\in\mathbb N$ with $\alpha_0:=\alpha-L<0$. Comparing generating functions gives, for fixed $L$,
$$
\Pi_n^{(\alpha,m)}(x)=
\sum_{\ell=0}^{L}(-1)^\ell\binom{L}{\ell}
\frac{n!}{(n-\ell)!}\,\Pi_{n-\ell}^{(\alpha_0,m)}(x)
\qquad(n\ge L).
$$
Set $R_n(\zeta):=\Pi_n^{(\alpha_0,m)}(n\zeta)$, $Q_n(\zeta):=\Pi_n^{(\alpha,m)}(n\zeta)$, and $\Omega:=\widehat\C\setminus[0,c_m]$. Let $n_j$ be an arbitrary subsequence. By Lemma~\ref{lem:nva-staircase} applied with this $\alpha_0$, after passing to a further subsequence and relabelling, $S_{n_j,\ell}\to H_\ell$ locally uniformly on $\Omega$ for $0\le\ell\le L$, and $R_{n_j}$ has no zeros on compact subsets of $\Omega$ for all large $j$. Hence
$$
\frac{Q_{n_j}}{R_{n_j}}\longrightarrow
F:=\sum_{\ell=0}^L(-1)^\ell\binom{L}{\ell}H_\ell
$$
locally uniformly on $\Omega$. Since $F(\infty)=1$, $F$ is not identically zero.

Let $K\Subset\Omega$. Cover the finitely many zeros of $F$ in $K$ by disjoint discs whose boundaries contain no zero of $F$. Since $R_{n_j}$ is zero-free on $K$ for all large $j$ and $Q_{n_j}/R_{n_j}\to F$ locally uniformly, Hurwitz's theorem gives no zeros of $Q_{n_j}$ on the complement of these discs and only $O_K(1)$ zeros inside them. Applying this with $K=\widehat\C\setminus U$, where $U$ is any open neighborhood of $[0,c_m]$, shows that the number of zeros of $Q_{n_j}$ in $\widehat\C\setminus U$ is $O_U(1)$. Thus every subsequential normalized zero-counting limit of $Q_{n_j}$ is supported on $[0,c_m]$.

On $\Omega\setminus F^{-1}(0)$,
$$
\frac1{n_j}\frac{Q_{n_j}'}{Q_{n_j}}-\frac1{n_j}\frac{R_{n_j}'}{R_{n_j}}
=\frac1{n_j}\frac{(Q_{n_j}/R_{n_j})'}{Q_{n_j}/R_{n_j}}\longrightarrow0
$$
locally uniformly. The same convergence holds distributionally on all of $\Omega$: on a compact set, the zeros of $Q_{n_j}/R_{n_j}$ lying over the zeros of $F$ have total multiplicity $O_K(1)$, so their normalized divisor contributes $O_K(n_j^{-1})$ to every compactly supported test. Thus every subsequential zero-measure limit of $Q_{n_j}$ has the same Cauchy transform on $\Omega$ as the limit for $R_{n_j}$, namely $C_m$. Since such a limit is a probability measure supported on $[0,c_m]$, it is determined by its Cauchy transform on $\Omega$. Every subsequence has a further subsequence with this limit, so the normalized zero-counting measures for $Q_n$ converge to the same limit as in the case $\alpha_0<0$.

The branch $v$ is fixed by $v(\zeta)=1+\frac1{m\zeta}+O(\zeta^{-2})$, which gives
$$C_m(\zeta)=1-v(\zeta)^{-m}=\frac1\zeta+O(\zeta^{-2})$$
at infinity and hence total mass one. To compute the moments, put $y=\zeta^{-1}$ and $w=v-1$. Then
$$w=\frac{y}{m}(1+w)^{m+1},\qquad C_m(1/y)=1-(1+w)^{-m}.$$
Lagrange inversion gives, for $k\ge0$,
$$[y^{k+1}]C_m(1/y)=\frac{1}{k+1}[u^k]m(1+u)^{-m-1}\left(\frac{(1+u)^{m+1}}{m}\right)^{k+1}
=\frac{1}{(k+1)m^k}\binom{(m+1)k}{k},$$
which proves the moment formula. The boundary parametrization
$$v_+(\phi)=\frac{\sin((m+1)\phi)}{\sin(m\phi)}e^{-i\phi},\qquad 0<\phi<\frac{\pi}{m+1},$$
gives $\zeta=\zeta(\phi)$ on the two-sided cut and
$$
\Im C_{m,+}(\zeta(\phi))
=\Im\bigl(1-v_+(\phi)^{-m}\bigr)
=-\frac{\sin^{m+1}(m\phi)}{\sin^m((m+1)\phi)}.
$$
Since $C_m(z)=\int(z-x)^{-1}\,\dd\mu_m(x)$, Stieltjes inversion gives the displayed density on $(0,c_m)$. As $\phi\uparrow\pi/(m+1)$ one has $\zeta(\phi)\asymp(\pi/(m+1)-\phi)^{m+1}$, so the density is $O(\zeta^{-m/(m+1)})$ and is integrable at the origin. To exclude an atom at $0$, the algebraic relation gives $|v(-y)|\asymp y^{1/(m+1)}$ as $y\downarrow0$, so $C_m(-y)=O(y^{-m/(m+1)})$. For any finite measure $\nu$ on $[0,c_m]$,
$$
-yC_\nu(-y)=\int_{[0,c_m]}\frac{y}{y+x}\,\dd\nu(x)\longrightarrow\nu(\{0\})
$$
by dominated convergence. Applying this to $\mu_m$ gives $\mu_m(\{0\})=0$.
\end{proof}

\begin{remark}[Marchenko--Pastur law at $m=1$]\label{rem:mp-from-sheffer}
At $m=1$, the algebraic relation $v^2=\zeta(v-1)$ has critical value $c_1=4$, and $\mu_1$ has Lebesgue density
$$\frac{d\mu_1}{dx}(x)=\frac{1}{2\pi x}\sqrt{x(4-x)},\qquad 0<x<4,$$
the standard Marchenko--Pastur law \cite{MP}. Since $\Pi_n^{(\alpha,1)}(x)=n!\,L_n^{(-\alpha-1)}(x)$, the $m=1$ case of Proposition~\ref{prop:micro-higher-limit} recovers the classical Laguerre zero distribution.
\end{remark}

\begin{cor}[microscopic zero law for the higher-order local model]\label{cor:micro-higher-zero}
Assume $m\ge2$ and keep the notation of Propositions~\ref{prop:micro-higher} and~\ref{prop:micro-higher-limit}. Let $N_n$ be the number of nonzero zeros of $\Pi_n^{(\alpha,m)}$, counted with multiplicity. If $\alpha<0$, then $N_n=n$ for every $n\ge0$. If $\alpha\ge0$ and $\alpha=qm+s$ with $q\in\mathbb Z_{\ge0}$ and $0\le s<m$, then
$$\ord_0\Pi_n^{(\alpha,m)}=q+1,\qquad N_n=n-q-1,\qquad n\ge\alpha+1.$$
In particular $N_n/n\to1$, and $N_n>0$ for all sufficiently large $n$. Fix $\eta$ with $\eta^m=-\lambda$, and for $0\le\nu\le m-1$ define
$$\phi_\nu:[0,c_m]\to\widehat\C,\qquad
\phi_\nu(0)=\infty,\qquad \phi_\nu(x)=\varepsilon_\nu\eta\,x^{-1/m}\ (x>0),\qquad \varepsilon_\nu:=e^{2\pi i\nu/m},$$
where $x^{1/m}$ denotes the positive real root for $x>0$. Set
$$\widehat\mu_{m,\lambda}:=\frac1m\sum_{\nu=0}^{m-1}(\phi_\nu)_\#\mu_m.$$
This measure is independent of the chosen root $\eta$. Each nonzero zero of $\Pi_n^{(\alpha,m)}$ gives exactly $m$ finite zeros of $g_{\alpha,m}^{(n)}$, counted with the same multiplicity. Thus, for all sufficiently large $n$,
$$\widehat\nu_n:=\frac{1}{mN_n}\sum_{\substack{g_{\alpha,m}^{(n)}(z)=0\\ z\ne a}}
\operatorname{mult}_z\!\bigl(g_{\alpha,m}^{(n)}\bigr)\,
\delta_{\,n^{1/m}(z-a)}$$
is a probability measure. Then
$$\widehat\nu_n\xrightarrow{\;w\;}\widehat\mu_{m,\lambda}$$
weakly on $\widehat\C$. Since $\widehat\mu_{m,\lambda}(\{\infty\})=0$, the sequence is tight in $\C$; hence the same convergence holds weakly on $\C$.
\end{cor}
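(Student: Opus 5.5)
The corollary has three ingredients: counting the nonzero zeros of \(\Pi_n^{(\alpha,m)}\), transporting them to \(z\)-zeros of \(g^{(n)}_{\alpha,m}\), and pushing forward the limit law of Proposition~\ref{prop:micro-higher-limit}. I will treat them in that order.

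\emph{Zero counts.} For \(\alpha<0\), Proposition~\ref{prop:micro-mop-ensemble} identifies \((-m)^{-n}\Pi_n^{(\alpha,m)}\) with a type-II multiple Laguerre polynomial. By the AT property, all its zeros are positive and simple, so \(N_n=n\).

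For \(\alpha=qm+s\ge0\), I would read off the low-order coefficients from the explicit formula
\[
\Pi_n^{(\alpha,m)}(x)=\sum_{k=0}^n\frac{x^k}{k!}\sum_{j=0}^k(-1)^j\binom kj (mj-\alpha)_n .
\]
For \(n\ge\alpha+1\), the Pochhammer symbol \((mj-\alpha)_n\) vanishes exactly when \(mj-\alpha\le0\), that is, when \(j\le q\) (note \(mq-\alpha=-s\le 0\)). Hence the coefficient of \(x^k\) vanishes for \(k\le q\), since every term then has \(j\le k\le q\). The coefficient of \(x^{q+1}\) has the single surviving term \(j=q+1\), namely
\[
(-1)^{q+1}\frac{(m(q+1)-\alpha)_n}{(q+1)!},
\]
which is nonzero because \(m(q+1)-\alpha=m-s>0\). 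So \(\ord_0=q+1\). Since \(\deg\Pi_n^{(\alpha,m)}=n\) by Proposition~\ref{prop:micro-higher}, this gives \(N_n=n-q-1\), hence \(N_n/n\to1\) and \(N_n>0\) for large \(n\).

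\emph{Transport to \(z\).} By Proposition~\ref{prop:micro-higher}(1), for \(z\ne a\) the zeros of \(g^{(n)}_{\alpha,m}\) are exactly the solutions of \(\Pi_n(x)=0\) with \(x=-\lambda(z-a)^{-m}\ne0\). The map \(z\mapsto x\) is an unbranched \(m\)-to-\(1\) cover \(\C\setminus\{a\}\to\C^\times\), so it is a local biholomorphism. Therefore each nonzero zero \(x_0\) has exactly \(m\) preimages \(z-a=\varepsilon_\nu\eta x_0^{-1/m}\), each carrying the same multiplicity as \(x_0\). This shows \(\widehat\nu_n\) is a probability measure.

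Now write \(x_0=n\zeta_0\). Then
\[
n^{1/m}(z-a)=\varepsilon_\nu\eta\,\zeta_0^{-1/m},
\]
where \(\zeta_0^{-1/m}\) denotes a fixed \(m\)th root. When \(\zeta_0\in(0,c_m]\) this is exactly \(\phi_\nu(\zeta_0)\). In general the set of \(m\) points \(\{\varepsilon_\nu\eta\zeta_0^{-1/m}\}_\nu\) depends only on \(\zeta_0\), so I would define
\[
\Phi(\zeta)=\frac1m\sum_{\nu}\delta_{\varepsilon_\nu\eta\zeta^{-1/m}}
\]
as a measure-valued function of \(\zeta\in\C^\times\). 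Then
\[
\widehat\nu_n=\frac{n}{N_n}\int\Phi\,d\nu_n^{\ast},
\]
where \(\nu_n^\ast\) is the restriction of the normalized zero measure \(\nu_n\) of \(R_n\) to \(\C^\times\). Because \(\ord_0\Pi_n=O(1)\), \(\nu_n^\ast\) differs from \(\nu_n\) by mass \(O(1/n)\), and \(n/N_n\to1\).

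\emph{Limit.} For a test function \(\varphi\in C(\widehat\C)\), the function
\[
\zeta\mapsto\frac1m\sum_\nu\varphi\bigl(\varepsilon_\nu\eta\zeta^{-1/m}\bigr)
\]
is continuous on \(\C^\times\), because it is symmetric in the \(m\) roots and so independent of the branch. It also extends continuously to \(\zeta=0\) with value \(\varphi(\infty)\), and is bounded. Proposition~\ref{prop:micro-higher-limit} gives \(\nu_n\to\mu_m\) weakly on \(\C\), so \(\int\varphi\,d\widehat\nu_n\to\int\varphi\,d\widehat\mu_{m,\lambda}\).

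Independence of \(\eta\) follows because the roots of \(\eta^m=-\lambda\) differ by factors \(\varepsilon_\nu\), which only permute the summands. Finally, \(\widehat\mu_{m,\lambda}(\{\infty\})=\mu_m(\{0\})=0\) by Proposition~\ref{prop:micro-higher-limit}. Weak convergence on \(\widehat\C\) to a limit with no mass at \(\infty\) implies tightness in \(\C\), and hence weak convergence on \(\C\).

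The main obstacle is the first step: getting the exact order of vanishing at \(0\) from the Pochhammer coefficients, together with the sign and non-vanishing check for the \(x^{q+1}\) coefficient. The remaining steps are soft pushforward arguments.
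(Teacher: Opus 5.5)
Your proposal is correct and follows the paper's proof essentially step for step. You read the order of vanishing at $0$ off the explicit Pochhammer formula, and you obtain the limit by testing $\nu_n\to\mu_m$ against the symmetric root-sum $\frac1m\sum_{w^m=-\lambda/\zeta}\varphi(w)$, extended by $\varphi(\infty)$ at $\zeta=0$, with the $O(1/n)$ mass at the origin and the factor $n/N_n$ handled exactly as in the paper.

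The only deviation is the case $\alpha<0$. There you invoke positivity of the zeros via the AT property and Proposition~\ref{prop:micro-mop-ensemble}. The paper simply notes that the constant term of $\Pi_n^{(\alpha,m)}$ is $(-\alpha)_n\ne0$, which is more elementary and avoids the multiple-orthogonality input entirely.
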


\begin{proof}
If $\alpha<0$, then the constant term in the explicit formula of Proposition~\ref{prop:micro-higher} is $(-\alpha)_n\ne0$. Hence $N_n=n$ for all $n$.

Assume $\alpha\ge0$ and write $\alpha=mq+s$ with $q\ge0$ and $0\le s<m$. Fix $n\ge\alpha+1$. In the explicit formula
$$\Pi_n^{(\alpha,m)}(x)=\sum_{k=0}^n\frac{x^k}{k!}\sum_{j=0}^k(-1)^j\binom{k}{j}(mj-\alpha)_n,$$
the factor $(mj-\alpha)_n$ vanishes for every $j=0,\dots,q$, because $\alpha-mj\in\{0,\dots,\alpha\}\subset\{0,\dots,n-1\}$. Therefore the coefficients of $x^k$ vanish for $0\le k\le q$. For $k=q+1$, all terms with $j\le q$ vanish and the remaining term is
$$\frac{(-1)^{q+1}}{(q+1)!}(m-s)_n\ne0.$$
Hence $\ord_0\Pi_n^{(\alpha,m)}=q+1$ and $N_n=n-q-1$ for every $n\ge\alpha+1$. In particular $N_n/n\to1$. Changing $\eta$ to $\varepsilon_j\eta$ merely cyclically permutes the maps $\phi_\nu$, so $\widehat\mu_{m,\lambda}$ is independent of the chosen root.

Let \(\nu_n\) denote the normalized zero-counting measure of \(\Pi_n^{(\alpha,m)}(n\cdot)\). For every continuous function $\varphi:\widehat\C\to\mathbb C$, define a continuous function $F_\varphi:\widehat\C\to\mathbb C$ by
$$F_\varphi(0):=\varphi(\infty),\qquad
F_\varphi(\infty):=\varphi(0),\qquad
F_\varphi(x):=\frac1m\sum_{w^m=-\lambda/x}\varphi(w)\quad(x\in\C^\times).$$
The sum over the $m$ roots of $w^m=-\lambda/x$ is symmetric, hence independent of their ordering; continuity at $0$ and $\infty$ is immediate because those roots tend respectively to $\infty$ and $0$. By the definition of $\widehat\mu_{m,\lambda}$,
$$\int F_\varphi\,\dd\mu_m=\int\varphi\,\dd\widehat\mu_{m,\lambda}.$$
Moreover,
$$\int F_\varphi\,\dd\nu_n
=\frac{N_n}{n}\int\varphi\,\dd\widehat\nu_n+\frac{n-N_n}{n}\varphi(\infty).$$
Since $\nu_n\xrightarrow{\;w\;}\mu_m$, $N_n/n\to1$, and $(n-N_n)/n\to0$, it follows that $\widehat\nu_n\xrightarrow{\;w\;}\widehat\mu_{m,\lambda}$ on $\widehat\C$. Because $\mu_m(\{0\})=0$, the limit assigns zero mass to $\{\infty\}$. The weak convergence on $\widehat\C$ therefore gives tightness in $\C$; combined with convergence against compactly supported continuous test functions, this is weak convergence on $\C$.
\end{proof}

Figure~\ref{fig:microscopic-root-pushforward} shows the reciprocal-root map from the scaled polynomial zero variable to the microscopic $u$-plane.

\begin{center}
\begin{minipage}{\linewidth}
\centering
\begin{tikzpicture}[
    x=1cm,
    y=1cm,
    line cap=round,
    line join=round,
    every node/.style={font=\small}
]
% Left panel: scaled zero variable.
\begin{scope}[shift={(-3.80,0)}]
    \draw[->,black!55] (-0.25,0) -- (4.25,0) node[right] {$\zeta$};
    \draw[line width=2.6pt,black!22] (0,0) -- (3.35,0);
    \draw[line width=1.15pt,black!75] (0,0) -- (3.35,0);
    \foreach \x/\lab in {0/0,3.35/c_m} {
        \draw[black!70] (\x,0.08) -- (\x,-0.08);
        \node[below] at (\x,-0.08) {$\lab$};
    }
    \fill[black!75] (0,0) circle (1.15pt);
    \fill[black!75] (3.35,0) circle (1.15pt);
    \node[align=center] at (1.68,0.68) {$\operatorname{supp}\mu_m=[0,c_m]$};
    \node[align=center,black!65] at (1.68,-0.72)
        {scaled zero variable $\zeta=X/n$};
\end{scope}

% The reciprocal-root map.
\draw[->,line width=0.85pt,black!55] (0.50,0.38)
    .. controls (1.24,1.08) and (2.24,1.08) .. (2.98,0.38);
\node[align=center] at (1.74,1.50)
    {$\zeta=-\lambda/u^m$\\[-1pt]
     $u=\varepsilon_\nu\eta\,\zeta^{-1/m},\quad \eta^m=-\lambda$};
\node[align=center,black!65] at (1.74,-0.52)
    {$\zeta\downarrow0\Rightarrow |u|\to\infty$};

% Right panel: image in the microscopic u-plane.
\begin{scope}[shift={(4.70,0)}]
    \draw[->,black!38] (-2.10,0) -- (2.25,0) node[below right] {$\Re u$};
    \draw[->,black!38] (0,-1.83) -- (0,1.93) node[above left] {$\Im u$};

    % Guide circle: endpoint \zeta=c_m maps to this radius.
    \draw[black!25,densely dashed] (0,0) circle (0.68);
    \node[anchor=west,black!60] at (0.78,-0.78)
        {$r_0=|\eta|c_m^{-1/m}$};

    % Three rays are drawn for the illustrative case m=3, after rotation so eta>0.
    \foreach \ang in {0,120,240} {
        \draw[line width=4.2pt,blue!35,opacity=0.65] (\ang:0.68) -- (\ang:1.72);
        \draw[->,line width=1.0pt,blue!70!black] (\ang:0.68) -- (\ang:1.98);
        \fill[blue!70!black] (\ang:0.68) circle (1.3pt);
    }

    \node[blue!70!black,anchor=west] at (2.02,0.24) {$\nu=0$};
    \node[blue!70!black,anchor=south east] at (-1.10,1.58) {$\nu=1$};
    \node[blue!70!black,anchor=north east] at (-1.10,-1.58) {$\nu=2$};
    \fill[black] (0,0) circle (1.15pt) node[below left] {$0$};
    \node[anchor=west] at (0.58,1.82) {$u=n^{1/m}(z-a)$};
    \node[align=center,black!65] at (0,-2.12)
        {shown for $m=3$, after rotation};
\end{scope}
\end{tikzpicture}
\captionsetup{type=figure,hypcap=false}
\caption{The reciprocal-root map in Corollary~\ref{cor:micro-higher-zero}. Write $\zeta=X/n$ for the scaled zero variable of $\Pi_n^{(\alpha,m)}(n\cdot)$. In the microscopic coordinate $u=n^{1/m}(z-a)$ one has $\zeta=-\lambda/u^m$, equivalently $u=\varepsilon_\nu\eta\,\zeta^{-1/m}$ with $\eta^m=-\lambda$ and $\varepsilon_\nu=e^{2\pi i\nu/m}$. Thus the support interval $[0,c_m]$ is sent to $m$ exterior radial rays in the finite $u$-plane: the endpoint $c_m$ gives the inner radius $r_0=|\eta|c_m^{-1/m}=|\lambda|^{1/m}c_m^{-1/m}$, while $\zeta=0$ corresponds to infinity, so increasing $\zeta$ moves inward along each ray. The right panel is drawn for $m=3$, after rotating so that $\eta$ is positive real; the dashed circle marks this inner boundary, not a density contour.}
\label{fig:microscopic-root-pushforward}
\end{minipage}
\end{center}

For $m=1$, Proposition~\ref{prop:micro-higher} gives
$$\Pi_n^{(\alpha,1)}(x)=n!L_n^{(-\alpha-1)}(x),$$
so the induced microscopic zero law in the original $z$-variable is as follows.

\begin{cor}[Marchenko--Pastur scaling]\label{cor:mp}
Let $a\in\C$, $\alpha\in\mathbb Z$, and $\lambda\in\C^\times$, and set
$$g_\alpha(z):=(z-a)^\alpha \exp\!\left(\frac{\lambda}{z-a}\right).$$
Let $N_n$ be the number of finite zeros of $g_\alpha^{(n)}$, counted with multiplicity. Then $N_n=n$ if $\alpha<0$, while $N_n=n-\alpha-1$ for $\alpha\ge0$ and $n\ge\alpha+1$. In particular, $N_n/n\to1$. For every sufficiently large $n$, let $x_{1,n},\dots,x_{N_n,n}$ be the nonzero zeros of $\Pi_n^{(\alpha,1)}(x)=n!L_n^{(-\alpha-1)}(x)$, counted with multiplicity. Then
$$z_{k,n}=a-\frac{\lambda}{x_{k,n}},\qquad k=1,\dots,N_n,$$
are the finite zeros of $g_\alpha^{(n)}$, and
$$\frac{1}{N_n}\sum_{k=1}^{N_n}\delta_{x_{k,n}/N_n}\xrightarrow{\;w\;}\mu_{\mathrm{MP}},$$
where $\mu_{\mathrm{MP}}$ is the Marchenko--Pastur law on $[0,4]$. Set \(\widehat\mu_{1,\lambda}:=(-\lambda/x)_\#\mu_{\mathrm{MP}}\), where \(x\mapsto-\lambda/x\) is extended by \(0\mapsto\infty\). Consequently,
$$\frac{1}{N_n}\sum_{k=1}^{N_n}\delta_{\,N_n(z_{k,n}-a)}\xrightarrow{\;w\;}\widehat\mu_{1,\lambda}$$
weakly on $\widehat\C$. Since $\widehat\mu_{1,\lambda}$ has no mass at $\infty$, the sequence is tight in $\C$, so the convergence is also weak on $\C$. Replacing $N_n$ by $n$ in the scaling variables, and also in the normalizing prefactor if desired, does not change the limit because $N_n/n\to1$.
\end{cor}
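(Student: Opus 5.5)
The plan is to specialize the $m=1$ case of Proposition~\ref{prop:micro-higher} and Proposition~\ref{prop:micro-higher-limit} and then push the Laguerre zero law forward by the reciprocal map $x\mapsto-\lambda/x$.

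\emph{Step 1: the zeros.} Proposition~\ref{prop:micro-higher}(1) with $m=1$ gives
\[
g_\alpha^{(n)}(z)=(-1)^n(z-a)^{\alpha-n}\,\Pi_n^{(\alpha,1)}(x)\,e^{\lambda/(z-a)},\qquad x=-\frac{\lambda}{z-a}.
\]
For $z\ne a$ the exponential and the power of $z-a$ do not vanish. Hence the finite zeros of $g_\alpha^{(n)}$ are exactly $z=a-\lambda/x$ over the nonzero zeros $x$ of $\Pi_n^{(\alpha,1)}=n!\,L_n^{(-\alpha-1)}$, and multiplicities are preserved because $x\mapsto a-\lambda/x$ is a local biholomorphism on $\C^\times$. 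The count $N_n$ then follows.
\begin{itemize}
\item If $\alpha<0$, the constant term of $\Pi_n^{(\alpha,1)}$ is $(-\alpha)_n\neq0$ by the explicit formula, so $N_n=n$.
\item If $\alpha\ge0$, apply the order computation from the proof of Corollary~\ref{cor:micro-higher-zero} with $m=1$, so $q=\alpha$ and $s=0$. This gives $\ord_0\Pi_n^{(\alpha,1)}=\alpha+1$ for $n\ge\alpha+1$, hence $N_n=n-\alpha-1$.
\end{itemize}
That argument did not use $m\ge2$.

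\emph{Step 2: the Laguerre law.} Proposition~\ref{prop:micro-higher-limit} with $m=1$ shows that the normalized zero measures of $\Pi_n^{(\alpha,1)}(n\cdot)$ converge weakly to $\mu_1$. Remark~\ref{rem:mp-from-sheffer} identifies $\mu_1=\mu_{\mathrm{MP}}$ on $[0,4]$. Two changes are needed to reach the statement.
\begin{itemize}
\item Removing the $n-N_n=O(1)$ zeros at the origin changes the normalized measure by $O(1/n)$ in total variation.
\item Rescaling from $x/n$ to $x/N_n$ is a dilation by $n/N_n\to1$, which preserves weak limits because the test functions are uniformly continuous on compacts and the measures are tight.
\end{itemize}
Together these give $\frac1{N_n}\sum\delta_{x_{k,n}/N_n}\to\mu_{\mathrm{MP}}$.

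\emph{Step 3: pushforward.} We have $N_n(z_{k,n}-a)=-\lambda/(x_{k,n}/N_n)$. The map $\phi(y)=-\lambda/y$, extended by $0\mapsto\infty$, is continuous $\widehat\C\to\widehat\C$. The continuous mapping theorem therefore gives weak convergence on $\widehat\C$ to $\phi_\#\mu_{\mathrm{MP}}=\widehat\mu_{1,\lambda}$.

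\emph{Main obstacle.} The only delicate point is the absence of mass at $\infty$. This follows from $\mu_{\mathrm{MP}}(\{0\})=0$, which is the case $m=1$ of Proposition~\ref{prop:micro-higher-limit}. As in Corollary~\ref{cor:micro-higher-zero}, weak convergence on $\widehat\C$ to a limit with no atom at $\infty$ yields tightness in $\C$, hence weak convergence on $\C$.

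\emph{Final remark.} Replacing $N_n$ by $n$, in the scale or in the prefactor, is handled by the same dilation argument. Dilating by $n/N_n\to1$ acts continuously on $\widehat\C$, and the prefactor changes by the factor $N_n/n\to1$.
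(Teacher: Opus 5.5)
Your proposal is correct and follows essentially the same route as the paper: you use the $m=1$ derivative identity to transfer zeros via $z=a-\lambda/x$, then the Laguerre limit from Proposition~\ref{prop:micro-higher-limit} with Remark~\ref{rem:mp-from-sheffer}, removal of the $O(1)$ zeros at the origin, the harmless replacement $n\leftrightarrow N_n$, and the continuous mapping theorem on $\widehat\C$ with $\mu_{\mathrm{MP}}(\{0\})=0$. The only difference is cosmetic: for $\alpha\ge0$ the paper reads off $\ord_0\Pi_n^{(\alpha,1)}=\alpha+1$ from the classical identity $n!L_n^{(-\alpha-1)}(x)=(-x)^{\alpha+1}(n-\alpha-1)!\,L_{n-\alpha-1}^{(\alpha+1)}(x)$, whereas you reuse the coefficient-vanishing computation from Corollary~\ref{cor:micro-higher-zero}, which indeed does not need $m\ge2$.
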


\begin{proof}
By Proposition~\ref{prop:micro-higher} with $m=1$,
$$g_\alpha^{(n)}(z)=(-1)^n(z-a)^{\alpha-n}\Pi_n^{(\alpha,1)}\!\left(-\frac{\lambda}{z-a}\right)\exp\!\left(\frac{\lambda}{z-a}\right),$$
so the finite zeros of $g_\alpha^{(n)}$ are obtained from the nonzero zeros of $\Pi_n^{(\alpha,1)}$ by $z=a-\lambda/x$.
If $\alpha<0$, then the constant term in the explicit formula for $\Pi_n^{(\alpha,1)}$ is $(-\alpha)_n\ne0$, so $N_n=n$. If $\alpha\ge0$ and $\varkappa:=\alpha+1$, then
$$\Pi_n^{(\alpha,1)}(x)=n!L_n^{(-\varkappa)}(x)=(-x)^\varkappa (n-\varkappa)!\,L_{n-\varkappa}^{(\varkappa)}(x)\qquad(n\ge\varkappa),$$
hence $\ord_0\Pi_n^{(\alpha,1)}=\varkappa$ and $N_n=n-\varkappa$ for $n\ge\varkappa$. Thus $N_n/n\to1$.

Proposition~\ref{prop:micro-higher-limit} with $m=1$ and Remark~\ref{rem:mp-from-sheffer} give the Marchenko--Pastur limit for the rescaled zero-counting measures of $\Pi_n^{(\alpha,1)}(n\zeta)$. When $\alpha\ge0$, deleting the fixed zero at $0$ and renormalizing by $N_n$ changes the integral against any bounded continuous test function by $O(n^{-1})$, so the same limit holds for the nonzero zeros. Since $N_n/n\to1$, replacing $n$ by $N_n$ in either the $x$- or the $z$-scaling does not change the weak limit. Applying the continuous mapping theorem on $\widehat\C$ to the extension of $x\mapsto-\lambda/x$ with $0\mapsto\infty$ gives the spherical convergence, because $\mu_{\mathrm{MP}}(\{0\})=0$. The limit has no mass at $\infty$, so spherical convergence implies tightness in $\C$ and therefore weak convergence on $\C$; replacing $N_n$ by $n$ in the scaling variables or in the normalizing prefactor changes integrals by $o(1)$.
\end{proof}

For a meromorphic function $g$ that is nonzero on a punctured neighborhood in the local coordinate $x$, we call
$$d-d\log g$$
the \emph{scalar connection trivialized by $g$}. Two such local connections are \emph{holomorphically gauge equivalent at $x=\infty$} if their connection forms differ by $(\log h)'(x)\,dx$ for some holomorphic function $h$ with $h(\infty)\ne0$.

\begin{prop}[rigidity of the Laguerre reduction]\label{prop:rigid-laguerre}
Fix $c_j\in \Z(T)$, set $\alpha_j:=p_j-\nu_j$, and let $E(z)=\sum_{s=1}^{m_j}\lambda_{j,s}(z-c_j)^{-s}+E_j^{\mathrm{reg}}(z)$ with $\lambda_{j,m_j}\ne0$. In the coordinate $x=(z-c_j)^{-1}$, the scalar connection trivialized by $f$ becomes, after holomorphic gauge equivalence at $x=\infty$,
$$d-\left(Q_j'(x)-\frac{\alpha_j}{x}\right)dx,\qquad Q_j(x):=\sum_{s=1}^{m_j}\lambda_{j,s}x^s.$$
Since a holomorphic gauge at $x=\infty$ can only modify the connection coefficient by $O(x^{-2})$ terms, the polynomial part $Q_j'(x)$ of the coefficient is invariant; hence its degree $m_j-1$, equivalently the degree $m_j$ of the irregular type $Q_j$, is invariant. In particular, after a linear rescaling of $x$, the local connection is holomorphically equivalent to
$$d-\left(\lambda-\frac{\alpha_j}{x}\right)dx,\qquad \lambda\ne0,$$
if and only if $m_j=1$.
\end{prop}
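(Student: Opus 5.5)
The plan is a direct computation of $d\log f$ in the coordinate $x=(z-c_j)^{-1}$, followed by an elementary analysis of what holomorphic gauge changes at $x=\infty$ can do.

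\emph{Step 1: compute the connection form.} Near $c_j$, write the local factorization
\[
f(z)=(z-c_j)^{\alpha_j}\exp\Bigl(\sum_{s=1}^{m_j}\lambda_{j,s}(z-c_j)^{-s}\Bigr)\phi(z),
\]
with $\phi$ holomorphic and nonvanishing near $c_j$, as in the reduced-model discussion at the start of Section~\ref{sec:microscopic}. Substituting $z-c_j=x^{-1}$ gives
\[
f=x^{-\alpha_j}e^{Q_j(x)}\widetilde\phi(x),\qquad \widetilde\phi(x):=\phi(c_j+x^{-1}),
\]
where $\widetilde\phi$ is holomorphic at $x=\infty$ and $\widetilde\phi(\infty)=\phi(c_j)\ne0$. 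Taking logarithmic derivatives in $x$,
\[
d\log f=\Bigl(Q_j'(x)-\frac{\alpha_j}{x}+(\log\widetilde\phi)'(x)\Bigr)dx .
\]
By definition, the last term is a holomorphic gauge term with $h=\widetilde\phi$. Discarding it gives the stated normal form.

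\emph{Step 2: size of gauge terms.} Let $h$ be holomorphic at $\infty$ with $h(\infty)\ne0$. Then $h(x)=h(\infty)+b_1x^{-1}+\cdots$, so $\log h$ is holomorphic at $\infty$ and
\[
(\log h)'(x)=-b_1x^{-2}/h(\infty)+O(x^{-3}).
\]
Thus every gauge term is $O(x^{-2})$. Consequently the Laurent coefficients at $\infty$ of the connection coefficient in degrees $\ge -1$ are gauge invariants. These are the polynomial part $Q_j'$ together with the residue term $-\alpha_j/x$. In particular $\deg Q_j'=m_j-1$ is invariant, because $\lambda_{j,m_j}\ne0$.

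\emph{Step 3: the equivalence claim.} A linear rescaling $x\mapsto cx$ with $c\ne0$ keeps the coefficient polynomial in $x$ and preserves its degree. It sends $-\alpha_j\,dx/x$ to itself, and it sends $O(x^{-2})\,dx$ to $O(x^{-2})\,dx$.

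If $m_j=1$, then $Q_j'=\lambda_{j,1}$ is already constant, so the normal form is the target with $\lambda=\lambda_{j,1}$; no rescaling is needed.

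Conversely, suppose the connection is equivalent to $d-(\lambda-\alpha_j/x)\,dx$ after rescaling. Then the polynomial parts must agree, so $\deg Q_j'=0$ and hence $m_j=1$.

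The main subtlety is Step 3. One has to make sure that rescaling and gauge equivalence are combined in the right order. The cleanest way is to note that rescaling maps gauge classes to gauge classes, so it suffices to compare the polynomial parts of the coefficients after rescaling. Everything else is a routine Laurent expansion.
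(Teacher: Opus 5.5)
Your proposal is correct and follows essentially the same route as the paper: factor $f$ locally near $c_j$, substitute $x=(z-c_j)^{-1}$, absorb the holomorphic nonvanishing factor (including $e^{E_j^{\mathrm{reg}}}$) into a gauge term, and use $(\log h)'=O(x^{-2})$ to conclude that the polynomial part $Q_j'$ is invariant. Your Step~3 checks that $x\mapsto cx$ preserves $-\alpha_j\,dx/x$, the degree of the polynomial part, and gauge classes; this makes explicit the ``if and only if'' that the paper's proof leaves implicit.
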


\begin{proof}
By Proposition~\ref{prop:local} with $n=0$, $f(z)=(z-c_j)^{\alpha_j}\phi_{j,0}(z)e^{E(z)}$ near $c_j$ with $\phi_{j,0}(c_j)\ne0$. Setting $x=(z-c_j)^{-1}$ gives
$$f(z)=x^{-\alpha_j}e^{Q_j(x)}h_j(x),\qquad
h_j(x):=\phi_{j,0}(c_j+x^{-1})e^{E_j^{\mathrm{reg}}(c_j+x^{-1})},$$
where $h_j$ is holomorphic near $x=\infty$ and $h_j(\infty)\ne0$. The connection coefficient of $d-d\log f$ is
$$Q_j'(x)-\frac{\alpha_j}{x}+\frac{h_j'(x)}{h_j(x)},$$
which differs from $Q_j'(x)-\alpha_j/x$ by the logarithmic derivative of a holomorphic nonvanishing gauge. Any further holomorphic gauge $g$ near $x=\infty$ with $g(\infty)\ne0$ satisfies $(\log g)'(x)=O(x^{-2})$, so the polynomial part $Q_j'(x)$ is invariant, and hence so is $\deg Q_j=m_j$.
\end{proof}

\subsection{Microscopic law for the full local factor}\label{sec:full-local-factor}

We now pass from the highest-term reduction to the full local factor. For $m\ge1$, put
$$c_m:=\left(\frac{m+1}{m}\right)^{m+1}.$$
Let $\mu_m$ be the probability measure on $[0,c_m]$ characterized in Proposition~\ref{prop:micro-higher-limit}; equivalently, its Cauchy transform is
$$C_m(\zeta)=1-v(\zeta)^{-m},\qquad v^{m+1}=m\zeta(v-1),\qquad v(\zeta)=1+\frac1{m\zeta}+O(\zeta^{-2})$$
as $\zeta\to\infty$. By Proposition~\ref{prop:micro-higher-limit}, $\mu_m(\{0\})=0$. For $\lambda\in\C^\times$, choose $\eta$ with $\eta^m=-\lambda$ and define continuous maps $\phi_\nu:[0,c_m]\to\widehat\C$ by
$$
\phi_\nu(0)=\infty,
\qquad
\phi_\nu(x)=e^{2\pi i\nu/m}\eta\,x^{-1/m}\quad(0<x\le c_m),
\qquad 0\le\nu\le m-1,
$$
where $x^{-1/m}$ is the positive real branch. Set
$$
\widehat\mu_{m,\lambda}:=\frac1m\sum_{\nu=0}^{m-1}(\phi_\nu)_\#\mu_m.
$$
Since $\mu_m(\{0\})=0$, the measure $\widehat\mu_{m,\lambda}$ is a probability measure on $\widehat\C$ with no atom at $\infty$. Changing $\eta$ cyclically permutes the $m$ summands. In particular,
\[
\operatorname{supp}\widehat\mu_{m,\lambda}
\subset
\{\infty\}\cup
\bigcup_{\nu=0}^{m-1}e^{2\pi i\nu/m}\eta\,[c_m^{-1/m},\infty),
\]
so every finite point in the support satisfies
\[
        |u|\ge|\eta|\,c_m^{-1/m}=|\lambda|^{1/m}\left(\frac{m}{m+1}\right)^{(m+1)/m}.
\]
For $m=1$ this recovers, after the inversion $x\mapsto-\lambda/x$, the reciprocal Marchenko--Pastur law in Corollary~\ref{cor:mp}. For $m\ge2$, Proposition~\ref{prop:micro-mop-ensemble} identifies $\mu_m$ (for negative exponent) as the total-degree-scaled Laguerre Muttalib--Borodin average-characteristic-polynomial zero law; Proposition~\ref{prop:micro-higher-limit} then removes the sign restriction on the integral exponent. Thus the microscopic essential-singularity measure is the equal-weight average of the reciprocal $m$th-root push-forwards of this multiple-Laguerre law.

\begin{lem}[compact Picard--Lefschetz deformation for the microscopic phase]\label{lem:micro-compact-pl}
Let $m\ge1$ and $\lambda\in\C^\times$. For $u\in\C^\times$ put
\[
\zeta(u):=-\lambda u^{-m},\qquad
\Phi_u(t):=\zeta(u)(1-(1-t)^{-m})-\log t,
\]
where \(\log t\) is read on the logarithmic cut surface specified below. Let
\[
\mathcal D_\lambda:=\{u\in\C^\times:\ -\lambda u^{-m}=c_m\},
\qquad c_m=\left(\frac{m+1}{m}\right)^{m+1}.
\]
Let $\mathcal S_\lambda$ be the union, in $\C^\times\setminus\mathcal D_\lambda$, of the Stokes arcs on which two lifted critical values of $\Phi_u$ have equal imaginary parts. Let $V$ be a connected component of $\C^\times\setminus(\mathcal D_\lambda\cup\mathcal S_\lambda)$, and let $K_0\Subset V$. Then there are finitely many simply connected open sets $U\Subset V$ covering $K_0$ with the following data.

For each such $U$, the critical points $t_0(u),\ldots,t_m(u)$ are holomorphic and simple on $U$. There are pairwise disjoint closed discs \(D_0,D_1,D_\infty\) about \(0,1,\infty\), independent of \(u\in U\), and a fixed Jordan cut \(\mathfrak c\) joining \(\partial D_0\) to \(\partial D_\infty\), disjoint from \(D_1\) and from the critical graphs, such that
\[
X_U:=\overline{\mathbb P^1\setminus(D_0\cup D_1\cup D_\infty\cup\mathfrak c)}
\]
is a compact bordered surface on which a branch of \(\log t\) is fixed. The coefficient core \(\gamma\) is the lift of a positively oriented small circle about \(0\), cut at its intersection with \(\mathfrak c\). The two endpoints of this lift are paired by the deck translation \(\log t\mapsto\log t+2\pi i\). For every one-form used below, the boundary values on the two sides of \(\mathfrak c\) are paired by this deck translation; in particular \(e^{n\Phi_u}\) is paired because \(n\in\mathbb Z\). Thus the integral over the closed coefficient circle equals the integral over the lifted relative core.

For each \(u\in U\) there is a locally trivial boundary split \(A_u\cup B_u=\partial X_U\), with the paired cut sides included in the same boundary subcomplex, such that, away from the paired cut sides, the downward field of \(h_u:=\Re\Phi_u\) points outward on the relative interiors of the \(B_u\)-arcs and inward on the relative interiors of the \(A_u\)-arcs, with a margin uniform on compact subsets of \(U\). Downward separatrices from critical points land on \(B_u\), upward separatrices land on \(A_u\), all landings are transverse, and there are no saddle connections.

The downward thimbles $\Gamma_{\nu,u}\in H_1(X_U,B_u;\mathbb Z)$ and upward thimbles $\Gamma^{\ast}_{\nu,u}\in H_1(X_U,A_u;\mathbb Z)$ form dual integral bases. Thus
\[
[\gamma]=\sum_{\nu=0}^m\sigma_{\nu,U}[\Gamma_{\nu,u}],
\qquad
\sigma_{\nu,U}=[\gamma]\cdot[\Gamma^{\ast}_{\nu,u}]\in\mathbb Z,
\]
and the integers are independent of $u\in U$. Since the coefficient core $[\gamma]$ is nonzero and the thimbles form a basis,
\[
I_U:=\{\nu:\sigma_{\nu,U}\ne0\}
\]
is nonempty; set
\[
M_U(u):=\max_{\nu\in I_U}\Re\Phi_u(t_\nu(u)).
\]
There are disjoint Morse neighborhoods of the active critical graphs and a constant $c_U>0$ such that the chain
\[
C_u:=\sum_{\nu\in I_U}\sigma_{\nu,U}\Gamma_{\nu,u}
\]
represents $[\gamma]$ modulo a boundary chain $E_u\subset B_u$ and a two-chain in $X_U$, has uniformly bounded length and multiplicity on compact subsets of $U$, contains no zero-coefficient thimble at chain level, and satisfies
\[
\operatorname{supp} C_u\setminus\bigcup_{\nu\in I_U}\mathcal N_\nu(u)
\subset\{\Re\Phi_u\le M_U(u)-c_U\},
\qquad
\operatorname{supp} E_u\subset\{\Re\Phi_u\le M_U(u)-c_U\}.
\]
Consequently, for every holomorphic one-form $\Omega$ on a neighborhood of $X_U$ whose boundary values agree on the paired cut sides,
\[
\int_\gamma\Omega=\int_{C_u}\Omega+\int_{E_u}\Omega .
\]
\end{lem}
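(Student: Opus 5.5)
I will follow the template of Lemmas~\ref{lem:wright-exit-arcs}--\ref{lem:filtered-wright}, replacing the annulus by the multiply punctured sphere $X_U$ and the Wright phase by $\Phi_u$.

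\textbf{Critical points.} First I analyze the critical equation. Differentiating gives
\[
\Phi_u'(t)=-m\zeta(u)(1-t)^{-m-1}-t^{-1},
\]
so critical points solve $m\zeta t=-(1-t)^{m+1}$. With $v=1-t$ this is a polynomial equation of degree $m+1$, with $m+1$ roots counted with multiplicity. A double root occurs exactly when the discriminant vanishes. A short elimination, equivalent to the branch-point computation $\zeta((m+1)/m)=c_m$ in Lemma~\ref{lem:nva-staircase}, shows that this happens only at $\zeta=c_m$, i.e.\ on $\mathcal D_\lambda$. Hence on each simply connected $U\Subset V$ the $t_\nu(u)$ are holomorphic, simple, and bounded away from $0,1,\infty$ uniformly on $\overline U$. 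I then pick discs $D_0,D_1,D_\infty$ small (resp.\ large) enough to avoid the critical graphs over $\overline U$. The cut $\mathfrak c$ is a fixed arc from $\partial D_0$ to $\partial D_\infty$ avoiding these graphs. Since $n\in\mathbb Z$, $e^{n\Phi_u}$ is single-valued in $t$, which gives the deck-pairing statement and the identification of the closed coefficient circle with the lifted relative core.

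\textbf{Boundary split.} Next I verify the hypotheses of Lemma~\ref{lem:gradient-landing} on each boundary circle.
\begin{itemize}
\item Near $t=1$, $\Phi_u\approx-\zeta(1-t)^{-m}$ dominates, giving $m$ exit and $m$ entrance arcs, exactly as on the inner circle in Lemma~\ref{lem:wright-exit-arcs}.
\item Near $t=0$, $\Phi_u\approx-\log t$, so $\partial_{\mathbf n}\Re\Phi_u=1/r>0$ for the outward normal of $X_U$ (pointing into $D_0$). The whole circle $\partial D_0$ is therefore an entrance ($A$) boundary.
\item Near $\infty$, $\Phi_u=\zeta-\log t+O(t^{-1})$, so $\Re\Phi_u\to-\infty$ and $\partial D_\infty$ is an exit ($B$) circle.
\item The two cut sides carry equal $\nabla h_u$ but differ in $g_u$ by $2\pi$. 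I put them together in $A_u$ (or in $B_u$), consistent with the lifted core having its endpoints there.
\end{itemize}

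\textbf{Restricting to a $g$-strip.} As in Lemma~\ref{lem:wright-exit-arcs}, I work in a strip $|g_u|\le C_0+1$ and define the split by the sign of the normal derivative inside the strip. Shrinking the discs makes the tangency points on $\partial D_1$ have $|g|\gtrsim\delta^{-m}$, which puts them outside the strip. Separation of critical imaginary parts on $V$ (the definition of $\mathcal S_\lambda$) then gives landing and the absence of saddle connections via Lemma~\ref{lem:gradient-landing}, uniformly on compact subsets of $U$.

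\textbf{Homology and filtered representative.} For the relative groups I use the long exact sequence of the pair. $X_U$ is a disc with holes (after the cut, $\mathbb P^1$ minus three discs with one cut, which is homotopic to a circle). Counting the $B$-components gives rank $m+1$ for both $H_1(X_U,B_u)$ and $H_1(X_U,A_u)$; I will check this count explicitly, adjusting by how the circle $\partial D_\infty$ and the cut sides are assigned. The thimble intersection computation is then identical to Lemma~\ref{lem:wright-thimble-basis}: local $+1$ at the common saddle, no other intersections by $g$-separation and $h$-monotonicity. Unimodularity of the Poincar\'e--Lefschetz pairing makes the thimbles dual integral bases. Local triviality gives constancy of $\sigma_{\nu,U}$. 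For nontriviality of $[\gamma]$ I use that its image in $\widetilde H_0(B_u)$ or in $H_1(X_U)$ is nonzero: the core generates $H_1$ of the cut-open surface relative to the paired sides. The filtered chain $C_u$ and boundary chain $E_u$ are constructed verbatim as in Lemma~\ref{lem:filtered-wright}, using the cumulative-coefficient construction on each $B$-interval. Stokes's theorem for one-forms with paired cut values gives the integral identity. Finally, a finite cover of $K_0$ by such $U$ follows from compactness.

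\textbf{Main obstacle.} The hardest step is the bookkeeping of the boundary split at the logarithmic cut and at $D_0$, $D_\infty$, where $\Re\Phi_u$ is monotone rather than oscillating. I must ensure that the paired cut sides can sit in one boundary subcomplex without spoiling the outward/inward sign conditions or the rank count. My first attempt is to place both cut sides and $\partial D_0$ in $A_u$, and to check transversality of the cut with the upward flow by choosing $\mathfrak c$ as a downward gradient line of $-\log|t|$-type near $D_0$ and $D_\infty$.
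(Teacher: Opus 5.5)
\textbf{Verdict: same route as the paper, but the step you flag as the main obstacle is resolved the wrong way, and that choice breaks the lemma.} Your outline otherwise matches the paper's proof: the double-root computation at $\zeta=c_m$, the boundary signs at $0$, $1$, $\infty$, removing tangencies near $t=1$ by shrinking $D_1$, landing via Lemma~\ref{lem:gradient-landing}, the annulus rank count, thimble duality, and the cumulative-sum chain $E_u$.

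\textbf{Why placing the cut sides in $A_u$ fails.} The lifted core $\gamma$ begins and ends on the two sides of $\mathfrak c$. If both cut sides (together with $\partial D_0$) go into $A_u$, then $\partial\gamma\not\subset B_u$. So $\gamma$ defines no class in $H_1(X_U,B_u;\mathbb Z)$, and the expansion $[\gamma]=\sum_\nu\sigma_{\nu,U}[\Gamma_{\nu,u}]$ has no meaning. Worse, closing $\gamma$ inside $A_u$ (along the two cut segments and the cut-open $\partial D_0$) gives a loop bounding the slit collar between $\gamma$ and $\partial D_0$. Hence $[\gamma]=0$ even in $H_1(X_U,A_u;\mathbb Z)$, and nonemptiness of $I_U$ is lost. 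A cut transverse to the flow has a further defect under this assignment: downward separatrices crossing it would terminate on $A_u$, contradicting landing on $B_u$. Your nontriviality argument via $\widetilde H_0(B_u)$ also fails under the correct assignment, since the image of $[\gamma]$ there is zero.

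\textbf{What the statement forces instead.} The requirement $[\gamma]\in H_1(X_U,B_u;\mathbb Z)$ forces the following setup.
\begin{itemize}
\item Put the paired cut sides in $B_u$.
\item Take the $g$-window wide enough that the whole cut-open outer circle is exit. Then $\mathfrak c_+\cup(\text{outer arc})\cup\mathfrak c_-$ is a single $B_u$-interval.
\item Now $B_u$ has $m+1$ components ($m$ near $t=1$ plus this one). $A_u$ consists of the $m$ entrance arcs near $1$ and the cut-open $\partial D_0$. Both relative ranks are $m+1$.
\item Continue separatrices that meet $\mathfrak c$ through the deck identification. Saddle connections are then excluded by separation of the \emph{lifted} critical imaginary parts.
\end{itemize}

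\textbf{Nontriviality of $[\gamma]$.} Since $\partial\gamma$ lies in one component of $B_u$, $[\gamma]$ comes from $H_1(X_U;\mathbb Z)\cong\mathbb Z$. Close $\gamma$ through $\mathfrak c_\pm$ and the outer arc. The resulting loop has winding number $0$ about $t=0$ and $-1$ about $t=1$, so it is the annulus generator. Because $H_1(B_u)=0$, the map $H_1(X_U)\to H_1(X_U,B_u)$ is injective, so $[\gamma]\neq0$.

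\textbf{Height bound for $E_u$.} In the cumulative-sum chain $E_u$, the segments running along $\mathfrak c_+$ and $\mathfrak c_-$ carry equal coefficients with opposite orientations relative to the deck pairing. This holds because the boundary points on the two sides come in paired pairs with opposite signs. So these segments cancel for every paired one-form. Only the outer-arc part survives, and there $\Re\Phi_u=\Re\zeta(u)-\log R+O(R^{-1})$ gives the required height gap.
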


\begin{proof}
The critical equation is
\[
(1-t)^{m+1}+m\zeta(u)t=0.
\]
Writing \(y=1-t\), a multiple root occurs exactly when
\[
y^{m+1}+m\zeta(1-y)=0,\qquad (m+1)y^m-m\zeta=0,
\]
hence \(y=(m+1)/m\) and \(\zeta=c_m\). Thus all critical points are simple on \(\C^\times\setminus\mathcal D_\lambda\). Since \(K_0\Subset V\), their lifted imaginary critical values are separated by a positive margin after \(U\) is chosen simply connected and small enough, and the critical graphs stay a positive distance from \(0\), \(1\), and \(\infty\).

Choose \(D_0,D_1,D_\infty\) and the cut \(\mathfrak c\) as in the statement, shrinking \(U\) if necessary. Choose numbers \(\alpha_-<\alpha_+\) such that, for all \(u\in\overline U\), the lifted critical values \(g_u(t_\nu(u))\) lie in \((\alpha_-,\alpha_+)\). After a harmless enlargement of this interval, choose the boundary circles so that the levels \(g_u=\alpha_\pm\) meet the boundary transversely and no boundary tangency of \(-\nabla h_u\) occurs on \(\partial X_U\cap g_u^{-1}([\alpha_-,\alpha_+])\).

The boundary signs are explicit. On \(|t|=\varepsilon\), with the outward normal of \(X_U\) pointing toward \(0\),
\[
h_u(t)=-\log|t|+O_U(\varepsilon),\qquad
\partial_{\mathbf n}h_u=\varepsilon^{-1}+O_U(1),
\]
so the downward field points inward there. Near \(t=1\), write \(s=1-t=re^{i\theta}\). Then
\[
h_u(t)=-r^{-m}\Re(\zeta(u)e^{-im\theta})+O_U(1),\qquad
\partial_{\mathbf n}h_u=-m r^{-m-1}\Re(\zeta(u)e^{-im\theta})+O_U(r^{-m}),
\]
where \(\mathbf n=-\partial_r\) is the outward normal of \(X_U\) on the boundary of the deleted disc about \(1\). At a boundary tangency near \(1\), the first displayed real part is \(O_U(r)\); hence \(|g_u(t)|\ge c_Ur^{-m}\) for \(r\) small, uniformly in \(u\in\overline U\). Shrinking \(D_1\) therefore removes all such tangencies from \(g_u^{-1}([\alpha_-,\alpha_+])\). On \(\partial D_1\cap g_u^{-1}([\alpha_-,\alpha_+])\) the outward part consists of exactly the \(m\) compact intervals contained in the sectors \(\Re(\zeta(u)e^{-im\theta})>0\), and their endpoints are transverse intersections with \(g_u=\alpha_\pm\). On the outer boundary \(|t|=R\),
\[
h_u(t)=\Re\zeta(u)-\log R+O_U(R^{-1}),\qquad
\partial_{\mathbf n}h_u=-R^{-1}+O_U(R^{-2}),
\]
so for \(R\) large the downward field points outward on the part of the outer boundary lying over \([\alpha_-,\alpha_+]\). This part is one compact interval on the cut surface, with the paired cut sides assigned to the same boundary subcomplex. Define \(B_u\) to be the union of these \(m\) intervals near \(1\) and this outer interval, and let \(A_u\) be the complementary closed boundary subcomplex. The remaining boundary arcs are entrance arcs on \(g_u^{-1}([\alpha_-,\alpha_+])\). The endpoint equations \(g_u=\alpha_\pm\) and the normal-sign inequalities are transverse, and compactness gives one positive sign margin, local triviality of the boundary triads, and, after shrinking \(D_1\) toward \(1\) and moving \(\partial D_\infty=\{|t|=R\}\) farther out if necessary, a number \(b_U>0\) such that
\[
\sup_{u\in\overline U}\sup_{t\in B_u}h_u(t)
\le
\inf_{u\in\overline U}\min_\nu h_u(t_\nu(u))-4b_U .
\]
For this last estimate, on the exit intervals near \(1\) the condition \(g_u(t)\in[\alpha_-,\alpha_+]\) forces \(\Im(\zeta(u)e^{-im\theta})=O_U(r^m)\); on the exit sectors \(\Re(\zeta(u)e^{-im\theta})\) is then bounded below by a positive constant, so \(h_u(t)\to-\infty\) uniformly as \(r\downarrow0\). On the outer exit interval \(h_u(t)=\Re\zeta(u)-\log R+O_U(R^{-1})\to-\infty\) uniformly as \(R\to\infty\).

Lemma~\ref{lem:gradient-landing}, applied on this cut surface and with the paired cut sides transported by the deck identification, gives finite-time landing of downward separatrices on \(B_u\), finite-time landing of upward separatrices on \(A_u\), absence of saddle connections, and isotopic continuation under \(u\)-variation. The equality of boundary values of the coefficient one-forms on the paired cut sides means that no extra boundary integral is created by the cut.

The topological calculation is explicit. The cut reduces the three-punctured sphere to an annular surface; \(B_u\) has \(m+1\) interval components, namely the \(m\) exit intervals at \(t=1\) and the outer exit interval, transported through the cut identification. The lifted coefficient core is nonzero in \(H_1(X_U,B_u;\mathbb Z)\), since its projection winds once around \(0\). Therefore
\[
\operatorname{rank}H_1(X_U,B_u;\mathbb Z)=m+1,
\qquad
\operatorname{rank}H_1(X_U,A_u;\mathbb Z)=m+1.
\]
After collar perturbing endpoints away from \(A_u\cap B_u\), a downward and an upward thimble through the same saddle meet once with the chosen orientation. If thimbles attached to distinct saddles met in the interior, the common point would lie on trajectories on which \(g_u=\Im\Phi_u\) is constant, forcing equality of the two lifted critical imaginary values; this is excluded on \(U\). For the two thimbles attached to the same saddle, strict monotonicity of \(h_u=\Re\Phi_u\) away from the saddle excludes any second interior intersection. Poincar\'e--Lefschetz duality for the boundary split gives a perfect integral pairing; the identity intersection matrix then implies that the downward thimbles and upward thimbles are dual integral bases. The integers \(\sigma_{\nu,U}\) are locally constant because all representatives are continued by the boundary-triad isotopy.

It remains to build the filtered representative. Let \(I_U=\{\nu:\sigma_{\nu,U}\ne0\}\) and set \(C_u=\sum_{\nu\in I_U}\sigma_{\nu,U}\Gamma_{\nu,u}\). In Morse neighborhoods of the critical graphs,
\[
\Phi_u(t)=\Phi_u(t_\nu(u))-v^2/2
\]
with uniform radii. Shrinking the neighborhoods once, every downward branch leaves its own neighborhood at height at most \(\Re\Phi_u(t_\nu(u))-2\varepsilon\), with \(\varepsilon>0\) uniform on compact subsets of \(U\). Since \(h_u\) decreases along the rest of the branch and the exit boundary is below the minimum critical height by a fixed amount, the stated height estimates hold after replacing \(\varepsilon\) by a smaller \(c_U\). The length bounds follow from the Morse-coordinate radii, the transverse landing flow boxes, and the estimate
\[
\operatorname{length}(\alpha)\le
\frac{\operatorname{osc}_{X_U}h_u}{\inf_\alpha|\nabla h_u|}
\]
on each compact flow-box complement; only finitely many branches occur and the integers \(\sigma_{\nu,U}\) are fixed.

Since \([C_u]=[\gamma]\) in \(H_1(X_U,B_u;\mathbb Z)\), the boundary of \(C_u\) has total coefficient zero on each interval component of \(B_u\). On each such component, order the boundary points and connect them by the cumulative-sum chain, as in Lemma~\ref{lem:filtered-wright}. This gives \(E_u\subset B_u\) with \(\partial E_u=-\partial C_u\), uniformly bounded length and multiplicity, and support below \(M_U-c_U\). The remaining difference \(\gamma-C_u-E_u\) bounds a two-chain in \(X_U\). Stokes' theorem, together with the paired equality of the cut-side boundary values, gives the integral identity.
\end{proof}

\begin{lem}[sublinear perturbations on fixed thimble decompositions]\label{lem:sublinear-cycle-perturb}
Assume the notation and conclusions of Lemma~\ref{lem:micro-compact-pl} on $U$, and let $0<\beta<1$. Let $X_U$ be the corresponding cut compact surface. For every compact $L\Subset U$, assume that there is a fixed neighborhood $\mathcal N_L$ of
\[
\{(u,t):u\in L,\ t\in X_U\}
\]
on which $R_n(u,t)$ and $a_n(u,t)$ are holomorphic, locally uniformly in $u$, and satisfy
\[
\sup_{\mathcal N_L}|R_n|=O_L(n^\beta),
\qquad
\sup_{\mathcal N_L}\log^+ |a_n|=O_L(\log n).
\]
After shrinking $\mathcal N_L$ if necessary, Cauchy's estimates then give $\partial_t^jR_n=O_{L,j}(n^\beta)$ for every fixed $j$. Assume that the one-forms
\[
a_n(u,t)\exp\!\left(n\Phi_u(t)+R_n(u,t)\right)\,\dd t
\]
have paired boundary values on the two sides of the cut in Lemma~\ref{lem:micro-compact-pl}. Suppose moreover that, on the active Morse neighborhoods over each compact $L\Subset U$, $a_n$ is nonvanishing and $\bigl|\log |a_n(u,t)|\bigr|=O_L(\log n)$. Define
\[
J_n(u):=\frac1{2\pi i}\int_\gamma a_n(u,t)
\exp\!\left(n\Phi_u(t)+R_n(u,t)\right)\,\dd t .
\]
Then for every compact $L\Subset U$,
\[
\limsup_{n\to\infty}\sup_{u\in L}
\left(\frac1n\log|J_n(u)|-M_U(u)\right)\le0.
\]
If, on $L$, a single active saddle $t_{\nu_\ast}$ satisfies
\[
\Re\Phi_u(t_{\nu_\ast}(u))\ge
\Re\Phi_u(t_\nu(u))+\delta
\qquad(\nu\in I_U\setminus\{\nu_\ast\})
\]
for some $\delta>0$, then
\[
\frac1n\log|J_n(u)|\longrightarrow M_U(u)
\]
uniformly on $L$. The same assertions hold for the unperturbed integral obtained by taking $R_n\equiv0$.
\end{lem}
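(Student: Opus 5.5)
The plan is to use the integral identity from Lemma~\ref{lem:micro-compact-pl} with the one-form $\Omega_n:=a_n\exp(n\Phi_u+R_n)\,\dd t$. This form is holomorphic near $X_U$ and has paired cut values by hypothesis, so
\[
2\pi i\,J_n(u)=\int_{C_u}\Omega_n+\int_{E_u}\Omega_n .
\]
We then estimate the right-hand side piece by piece, uniformly for $u\in L$. The perturbation is $R_n=O(n^\beta)$ with $\beta<1$, and the amplitude satisfies $\log^+|a_n|=O(\log n)$. Both are $o(n)$ and therefore invisible at the exponential scale $n$.

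\textbf{Upper bound.} On $\operatorname{supp}E_u$, and on the part of $\operatorname{supp}C_u$ outside the active Morse neighborhoods, we have $\Re\Phi_u\le M_U(u)-c_U$. Since the chain lengths and multiplicities are uniformly bounded, these pieces contribute at most
\[
\exp\bigl(n(M_U-c_U)+O(n^\beta)+O(\log n)\bigr).
\]
Inside each active Morse neighborhood $\mathcal N_\nu$, the downward thimble has $\Re\Phi_u\le\Re\Phi_u(t_\nu)\le M_U$. Hence those pieces are at most $\exp(nM_U+o(n))$. Taking $\frac1n\log$ and the $\limsup$ gives the first claim, uniformly on $L$ because all constants are uniform on compact subsets of $U$.

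\textbf{Lower bound under strict dominance.} Fix $\nu_\ast$. The contributions of the other active saddles and of the remainder pieces are $O(\exp(n(M_U-\min(\delta,c_U))+o(n)))$. For the $\nu_\ast$ piece we use the Morse coordinate $v$, in which
\[
\Phi_u=\Phi_u(t_{\nu_\ast})-v^2/2,
\]
and write the integral as
\[
e^{n\Phi_u(t_{\nu_\ast})}\int_{-r}^r b_n(v)\,e^{-nv^2/2+\widetilde R_n(v)}\,\dd v,
\]
where $b_n=a_n\,\partial t/\partial v$ and $\widetilde R_n(v)=R_n(u,t(v))$.

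\textbf{Main obstacle.} The obstacle is that $\widetilde R_n$ is not small: it is only $O(n^\beta)$, so it cannot simply be Taylor-expanded away. The approach is to absorb it at a shifted saddle. Split
\[
\widetilde R_n(v)=\widetilde R_n(0)+\widetilde R_n'(0)\,v+O(n^\beta v^2),
\]
where the derivative bounds come from Cauchy's estimates as stated. Then restrict to $|v|\le n^{-1/2+\epsilon}$ with $\epsilon$ small enough that $n^\beta v^2$ is $o(1)$ there, i.e.\ $\beta+2\epsilon-1<0$. The linear term $\widetilde R_n'(0)v=O(n^{\beta-1/2+\epsilon})$ may still be large. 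To handle it, complete the square: shift the contour in the complex $v$-plane by $v_0=\widetilde R_n'(0)/n=O(n^{\beta-1})$, which is harmless inside the Morse disc. This produces
\[
\sqrt{2\pi/n}\;b_n(v_0)\,\exp\bigl(\widetilde R_n(0)+O(n^{2\beta-1})+o(1)\bigr).
\]
If $2\beta-1\ge1$ this correction is still $o(n)$. We only need $\frac1n\log|\cdot|$, so the crude bound $|\log|\text{local integral}||=O(n^\beta+\log n)$ suffices.

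To make that bound rigorous without fine asymptotics, one can apply a one-dimensional saddle bound to the rescaled phase $\Phi_u+R_n/n$. Its critical point lies within $O(n^{\beta-1})$ of $t_{\nu_\ast}$ by Rouché's theorem, and the critical value differs by $O(n^{\beta-1})$. The standard Laplace evaluation then gives modulus
\[
\exp\bigl(n\Re\Phi_u(t_{\nu_\ast})+O(n^\beta)\bigr)\,n^{-1/2}\,|a_n|,
\]
and $|\log|a_n||=O(\log n)$ by the nonvanishing hypothesis. The tails of this local integral are smaller by a factor $e^{-cn^{2\epsilon}}$. Combining the pieces, $\frac1n\log|J_n|=M_U+O(n^{\beta-1}+\log n/n)$ uniformly on $L$. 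The case $R_n\equiv0$ is the special case of the same argument.
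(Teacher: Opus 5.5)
Your argument is essentially the paper's. It uses the chain identity of Lemma~\ref{lem:micro-compact-pl}, the uniform gap $c_U$ on the boundary chain and on the nonlocal thimble pieces, and a one-dimensional saddle evaluation at the critical point of $\Phi_u+R_n/n$, which lies within $O(n^{\beta-1})$ of $t_{\nu_\ast}$. The paper locates that point by the implicit function theorem and deforms the local thimble piece to the steepest segment through it, keeping the endpoints on the boundary of the Morse neighborhood. Strict dominance then excludes cancellation, and your direct sup-bound for the upper estimate is a harmless simplification of the paper's use of the saddle asymptotics there.

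Do drop the intermediate window computation. A shift $v_0\asymp n^{\beta-1}$ can only be kept inside $|v|\le n^{-1/2+\epsilon}$ with $\beta+2\epsilon<1$ when $\beta<2/3$. It is the rescaled-phase saddle argument you give afterwards that covers all $0<\beta<1$. Also, your condition ``$2\beta-1\ge1$'' is a slip: the correction $O(n^{2\beta-1})$ is $o(n)$ simply because $2\beta-1<1$.
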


\begin{proof}
Use the chain identity from Lemma~\ref{lem:micro-compact-pl}. On the boundary chain and on the nonlocal parts of the active thimbles one has
\[
\Re(n\Phi_u+R_n)\le nM_U(u)-c_Un+O(n^\beta),
\]
so these pieces are $O(\exp(nM_U-c n))$ after decreasing $c>0$, because $\beta<1$ and the lengths and multiplicities are uniformly bounded.

Near an active saddle, write a parameter-dependent Morse coordinate $v$ with
\[
\Phi_u(t)=\Phi_u(t_\nu(u))-v^2/2,
\qquad v(t_\nu(u))=0.
\]
After shrinking the Morse neighborhood, Cauchy's estimates applied to the assumed bounds for $R_n$ give $\partial_v^jR_n=O_L(n^\beta)$ for every fixed $j$. The perturbed critical equation
\[
-nv+\partial_vR_n(u,v)=0
\]
has a unique solution $v_{\nu,n}(u)=O_L(n^{\beta-1})$ by the implicit function theorem, and
\[
\partial_v^2\bigl(-nv^2/2+R_n(u,v)\bigr)\big|_{v=v_{\nu,n}}
=-n+O_L(n^\beta).
\]
The local piece of the thimble may be deformed, with endpoints kept on the boundary of the Morse neighborhood where $\Re\Phi_u$ is lower by a fixed amount, to the steepest segment through this perturbed critical point. If $b_n(u,v)=a_n(u,t(v))dt/dv$, then $b_n$ is nonvanishing there and $\bigl|\log|b_n|\bigr|=O_L(\log n)$. The one-dimensional saddle estimate, uniformly for $u\in L$, is
\[
\begin{aligned}
&\int b_n(u,v)
\exp\left(n\Phi_u(t_\nu(u))-nv^2/2+R_n(u,v)\right)\dd v \\
&\qquad=
\exp\left(n\Phi_u(t_\nu(u))+R_n(u,v_{\nu,n})\right)
 b_n(u,v_{\nu,n})
\left(\frac{2\pi}{n-\partial_v^2R_n(u,v_{\nu,n})}\right)^{1/2}
\left(1+O_L(n^{\beta-1})\right),
\end{aligned}
\]
with the square-root branch determined by the chosen thimble orientation. In particular each active local contribution has logarithmic size
\[
n\Re\Phi_u(t_\nu(u))+O_L(n^\beta)+O_L(\log n),
\]
and the upper bound follows after summing finitely many active saddles and adding the exponentially lower remainder.

Under the displayed strict dominance hypothesis, the $\nu_\ast$ contribution is larger than every other active contribution and the boundary remainder by $\exp(\delta n+O(n^\beta))$. Its prefactor is nonzero and has logarithm $O_L(\log n)$, so no cancellation is possible at exponential scale. Hence $n^{-1}\log|J_n(u)|\to M_U(u)$ uniformly on $L$. The case $R_n\equiv0$ is the same argument with the perturbation terms omitted.
\end{proof}

\begin{lem}[coalescing-saddle upper bound]\label{lem:micro-coalescing-upper}
Let $m\ge1$ and $\lambda\in\C^\times$. Put
\[
\zeta(u):=-\lambda u^{-m},\qquad
\Phi_u(t):=\zeta(u)(1-(1-t)^{-m})-\log t,
\qquad
c_m:=\left(\frac{m+1}{m}\right)^{m+1},
\]
where $\log t$ is read on a cut surface as in Lemma~\ref{lem:micro-compact-pl}. Define
\[
\mathcal D_\lambda:=\{u\in\C^\times:\ -\lambda u^{-m}=c_m\},
\]
and let $\mathcal S_\lambda$ denote the union, in $\C^\times\setminus\mathcal D_\lambda$, of the noncoalescing Stokes arcs on which two lifted critical values of $\Phi_u$ have equal imaginary parts. Let $u_0\in\mathcal D_\lambda$. There is a neighborhood $U_0$ of $u_0$ and a continuous subharmonic function $M$ on $U_0$ which agrees on each component of
\[
U_0\setminus(\mathcal D_\lambda\cup\mathcal S_\lambda)
\]
with the corresponding chamber envelope $M_U$ from Lemma~\ref{lem:micro-compact-pl}.

After shrinking $U_0$, choose one compact cut surface $X_0$ obtained from the $t$-sphere by deleting fixed disjoint discs about $0,1,\infty$ and one fixed cut, with the coalescing critical point and all nearby critical points in its interior; let $\gamma\subset X_0$ be the lifted coefficient core. Let $J_n$ be coefficient-circle integrals on $U_0$ of the form
\[
J_n(u)=\frac1{2\pi i}\int_\gamma a_n(u,t)
\exp\!\left(n\Phi_u(t)+R_n(u,t)\right)\,\dd t.
\]
Assume that one exponent $0<\beta<1$ works on $U_0$ in the following sense: for every compact $L\Subset U_0$, the functions $R_n$ and $a_n$ are holomorphic on a fixed neighborhood of
\[
\{(u,t):u\in L,\ t\in X_0\},
\]
their boundary values are paired on the two sides of the cut, and
\[
\sup_{u\in L,\,t\in X_0} |R_n(u,t)|=O_L(n^\beta),
\qquad
\sup_{u\in L,\,t\in X_0} \log^+|a_n(u,t)|=O_L(\log n).
\]
Then
\[
\limsup_{n\to\infty}\sup_{u\in L}
\left(\frac1n\log|J_n(u)|-M(u)\right)
\le0
\]
for every compact $L\Subset U_0$.
\end{lem}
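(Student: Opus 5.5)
The plan is to prove the upper bound by a fixed-contour continuity argument, rather than by uniform thimble asymptotics, which break down at the coalescence point. Lemma~\ref{lem:sublinear-cycle-perturb} is not uniform as $u$ approaches $\mathcal D_\lambda$. However, a crude bound
\[
|J_n(u)|\le \operatorname{length}(\Gamma)\,\sup_\Gamma|a_n|\,\exp\Bigl(n\sup_{\Gamma}\Re\Phi_u+O_L(n^\beta)\Bigr),
\]
valid on any contour $\Gamma$ homologous to $\gamma$ rel the exit boundary, suffices for a $\limsup$ statement. It is enough to produce, for every $u_1\in U_0$ and every $\varepsilon>0$, a neighborhood $N_{u_1}$ and one fixed chain $\Gamma_{u_1}$ such that two conditions hold:
\[
\sup_{\Gamma_{u_1}}\Re\Phi_u\le M(u)+\varepsilon\quad(u\in N_{u_1}),
\]
and $\Gamma_{u_1}$ differs from $\gamma$ by a boundary plus a chain on which $\Re\Phi_u\le M(u)-1$ for $u\in N_{u_1}$. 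Covering $L$ by finitely many $N_{u_1}$ then gives $\limsup\sup_L(n^{-1}\log|J_n|-M)\le\varepsilon$, because the $n^\beta$ and $\log n$ terms are $o(n)$ and $\beta<1$. Letting $\varepsilon\downarrow0$ proves the claim.

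\textbf{Step 1: construct $M$.} Near $u_0$, exactly two critical points $t_\pm(u)$ coalesce at $t=-1/m$, which corresponds to $y=(m+1)/m$ in the proof of Lemma~\ref{lem:micro-compact-pl}. The remaining $m-1$ critical points stay simple and holomorphic, and by continuity their critical values stay Stokes-separated from the others on a small $U_0$. The pair of critical values $\Phi_u(t_\pm(u))$ are the two branches of a function with square-root branching at $\mathcal D_\lambda\cap U_0=\{u_0\}$ (after shrinking $U_0$). The symmetric functions of the pair are holomorphic, so $\max\Re\Phi_u(t_\pm)$ is continuous.

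On each chamber I take $M=M_U$. I then check that $M$ is continuous across Stokes arcs: when two imaginary parts cross, the coefficient of a thimble can jump only for the saddle of lower height, since this is the Picard--Lefschetz transformation at a Stokes crossing. Hence the maximum over the active set is unchanged there. Away from $u_0$, $M$ is locally a maximum of harmonic functions $\Re\Phi_u(t_\nu(u))$, so it is subharmonic. It is continuous and bounded near the single point $u_0$, which is polar, so it extends subharmonically across $u_0$.

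\textbf{Step 2: build the contours.} For $u_1\notin\mathcal D_\lambda\cup\mathcal S_\lambda$, take $\Gamma_{u_1}=C_{u_1}+E_{u_1}$ from Lemma~\ref{lem:micro-compact-pl}, transported into $X_0$. By continuity in $u$ of $\Re\Phi_u$ on this fixed compact chain, its height is at most $M(u_1)+\varepsilon/2\le M(u)+\varepsilon$ near $u_1$.

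For $u_1$ on a Stokes arc, I use the chain from an adjacent chamber. The continuity of $M$ from Step~1 absorbs the difference.

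For $u_1=u_0$, I use the degenerate saddle. In a local coordinate, $\Phi_{u_0}$ is cubic at the coalesced point. The relative class $[\gamma]$ in $H_1(X_0,B;\mathbb Z)$ at $u_0$ is still represented by steepest-descent arcs: two of the three descent valleys of the cubic saddle are used, together with the thimbles of the simple saddles. The filtration argument of Lemma~\ref{lem:filtered-wright} shows this chain reaches height at most $M(u_0)$, and strictly lower away from the critical points. Continuity of $\Phi_u$ on this fixed chain then gives height at most $M(u)+\varepsilon$ for $u$ near $u_0$. Here I use that $M$ is continuous at $u_0$ and that $M(u_0)$ equals the height of the active saddles at $u_0$.

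\textbf{Main obstacle.} The main obstacle is the degenerate case in Step~2. It requires checking that the relative class of $\gamma$ at $u_0$ is carried by the cubic-saddle descent valleys with no ascent above the coalesced critical height. It also requires checking that this height matches the limit of the chamber envelopes $M_U$ from every adjacent chamber. I would handle this first in the local cubic normal form $\Phi=\Phi(t_c)+w^3/3-s(u)w$, where the two-valley picture is explicit. The identification of the active set at $u_0$ would then follow from a limiting version of the angular visibility argument used in Proposition~\ref{prop:active-wright-coefficients}.
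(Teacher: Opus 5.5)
Your route is sound in outline and genuinely different from the paper's. The paper works as follows:
\begin{itemize}
\item It passes to the double cover $s^2=\zeta-c_m$ and puts $\Phi_u$ into the Chester--Friedman--Ursell form $\Phi_c(s)+x^3/3-\alpha(s)x$.
\item It defines $M$ on the cover as the maximum of the active height branches, the two Airy heights coalescing at $s=0$.
\item It descends $M$ by the involution $s\mapsto-s$ and extends it across $u_0$ by the removable-singularity theorem.
\item It chooses Airy descent arcs depending continuously on $s$.
\end{itemize}
This yields, for \emph{every} $u$ near $u_0$, a representative of $[\gamma]$ of height $\le M(u)$, and hence the bound $C_Ln^A\exp(nM(u)+C_Ln^\beta)$ with no $\varepsilon$-loss.

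You use only upper semicontinuity of the min--max height: one fixed chain per point, uniform continuity of $\Re\Phi_u$ on that chain, and an $\varepsilon n$ loss removed after the $\limsup$. This is the device the paper itself uses at noncoalescing Stokes points in the proof of Lemma~\ref{lem:full-local-perturb}. It dispenses with the normal form and with any uniformity of Airy arcs in $s$. It suffices for a $\limsup$ statement but gives no quantitative error term. Your Stokes-crossing argument for continuity and subharmonicity of $M$ off $u_0$ is correct. The reason is that across a Stokes arc only the lower saddle's coefficient jumps, and it jumps by a multiple of the higher saddle's coefficient.

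The step you flag is still open, and the tool you propose for it does not apply. Proposition~\ref{prop:active-wright-coefficients} concerns the Wright phase $\lambda_m\tau^{-m}+\tau$; its proof rests on univalence of $w+w^{-m}/m$ on $|w|>1$. The paper has no visibility rule for $\Phi_u$, and none is needed. Here is how to close the step:
\begin{itemize}
\item Approach $u_0$ inside an adjacent chamber $U$. The simple thimbles vary continuously, and the two coalescing thimbles degenerate to descent arcs through $t_0=-1/m$.
\item Hence the chamber representative of $[\gamma]$ has a limit at $u_0$ of height at most $\lim_{U\ni u\to u_0}M_U(u)$.
\item These one-sided limits coincide for all chambers at $u_0$ by the same jump rule. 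Apply it on the Stokes arcs issuing from $u_0$ where $\Im\Phi_u(t_+)=\Im\Phi_u(t_-)$, along which both heights tend to $\Re\Phi_{u_0}(t_0)$.
\item Apply it also on any arc through $u_0$ that involves a noncoalescing saddle.
\end{itemize}
In each case a coefficient can change only when the higher saddle on that arc is active on both sides, and its height at $u_0$ is at least that of the changing saddle. So the limit value never moves. This gives continuity of $M$ at $u_0$ and a fixed chain at $u_0$ of height $\le M(u_0)$, after which your $\varepsilon$-argument finishes the proof. Your assumption that the noncoalescing critical values stay Stokes-separated from the coalescing pair is then unnecessary.
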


\begin{proof}
At $u_0$ exactly two critical points merge. The map $u\mapsto\zeta=-\lambda u^{-m}$ is locally biholomorphic at $u_0$, because $u_0\ne0$ and $\zeta(u_0)=c_m\ne0$. In the parameter $\zeta$ the collision occurs at $\zeta=c_m$ and at the double critical point $t_0=-1/m$; the third $t$-derivative of $\Phi$ there is nonzero, and all other critical points remain simple after the parameter neighborhood is shrunk. Pass to the two-sheeted cover $s^2=\zeta-c_m$. On this cover the two merging critical points are holomorphic and are exchanged by $s\mapsto -s$. The Chester--Friedman--Ursell normal form, obtained here from Weierstrass preparation and the holomorphic Morse lemma with parameter, gives a holomorphic coordinate $x$ in a fixed neighborhood of $t_0$ such that
\[
\Phi_u(t)=\Phi_c(s)+\frac{x^3}{3}-\alpha(s)x,
\qquad
\alpha(s)=a s^2+O(s^4),\quad a\ne0,
\]
up to replacing $\Phi_c$ by another holomorphic function of $s$ only. After choosing the holomorphic square root of $\alpha(s)$ on the $s$-cover, the two critical points in the chart are $x=\pm\alpha(s)^{1/2}$, and their critical values are the two cubic saddle values.

On the $s$-cover define $M^\sharp$ near $s=0$ as the maximum of the finitely many height branches which occur in the min--max class of the coefficient contour: the noncoalescing active saddle heights and, in the cubic chart, the Airy saddle heights whose local thimbles have nonzero coefficient in the corresponding sector. On each sector of the lifted complement of the Stokes arcs this maximum is exactly the chamber envelope $M_U$ from Lemma~\ref{lem:micro-compact-pl}. The two Airy height branches coalesce at $s=0$, and the remaining branches stay separated; hence $M^\sharp$ extends continuously across $s=0$. Being the maximum of finitely many harmonic functions on the cover, $M^\sharp$ is subharmonic. The construction is invariant under $s\mapsto -s$, so on the punctured neighborhood $M^\sharp$ is the pullback of a continuous function $M$ of $\zeta-c_m=s^2$, hence of $u$. The descended function is subharmonic away from $u_0$ and locally bounded above at $u_0$; the removable-singularity theorem for subharmonic functions therefore makes $M$ subharmonic on $U_0$. Thus $M$ agrees with the chamber envelopes on the components of $U_0\setminus(\mathcal D_\lambda\cup\mathcal S_\lambda)$.

Outside the cubic chart the critical points remain simple and the compact Picard--Lefschetz construction of Lemma~\ref{lem:micro-compact-pl} gives representatives with a uniform height gap below this envelope. Inside the chart, the standard level geometry of $x^3/3-\alpha x$ gives a finite set of Airy descent arcs representing the restrictions of the same relative coefficient class from the adjacent noncoalescing sectors. The arcs can be chosen continuously on the $s$-cover; their union is invariant under the involution $s\mapsto -s$, so it descends to the original $u$-plane. Whenever the local representative has an Airy part in a fixed sector, let $I_{\rm Ai}\subset\{\pm1\}$ be the nonempty set of Airy saddle labels that occur in that part. Along the corresponding Airy arcs,
\[
\Re\left(\frac{x^3}{3}-\alpha x\right)
\le
\max_{\varepsilon\in I_{\rm Ai}}
\Re\left(\frac{(\varepsilon\alpha^{1/2})^3}{3}-\alpha(\varepsilon\alpha^{1/2})\right),
\]
and, because $\Phi_u(t)=\Phi_c(s)+x^3/3-\alpha x$, the right-hand side plus $\Re\Phi_c(s)$ is bounded by $M(u)$. The portions on the boundary of the chart are lower by a fixed amount after the chart is chosen small enough. With the perturbation $R_n$ included, Taylor's formula in the cubic coordinate gives the uniform bound
\[
\left|\int_{\text{cubic chart}} a_n(u,t)e^{n\Phi_u(t)+R_n(u,t)}\,dt\right|
\le C_L n^A\exp\left(nM(u)+C_L n^\beta\right)
\]
for some fixed $A$, because the chart has bounded length in Airy coordinates and the amplitude has only polynomial size. The same estimate, with an exponentially smaller right-hand side, holds outside the chart. Since $\beta<1$, division by $n$ gives the asserted upper bound.
\end{proof}

\begin{lem}[compact perturbation for the full local factor]\label{lem:full-local-perturb}
Let $m\ge1$, $\alpha\in\mathbb Z$, $\lambda_1,\ldots,\lambda_m\in\C$ with $\lambda_m\ne0$, and let $\phi$ be holomorphic and nonvanishing on a neighborhood of the closed disk $\{|z|\le r\}$. Define $H_n$ by
$$
F^{(n)}(z)=(-1)^n z^{\alpha-(m+1)n}
\exp\!\left(\sum_{s=1}^m\lambda_s z^{-s}\right)\phi(z)H_n(z),
$$
where
$$
F(z)=z^\alpha\exp\!\left(\sum_{s=1}^m\lambda_s z^{-s}\right)\phi(z)
$$
on $0<|z|<r$. Then $H_0\equiv1$, each $H_n$ is holomorphic on $|z|<r$, and
\[
H_{n+1}=-z^{m+1}H_n'
+\left(((m+1)n-\alpha)z^m+\sum_{s=1}^m s\lambda_s z^{m-s}-z^{m+1}\frac{\phi'}{\phi}\right)H_n,
\]
so $H_n(0)=(m\lambda_m)^n$. For $u\ne0$, $|u|<n^{1/m}r$, set
$$
Y_n(u):=\frac{(-z)^n}{n!}\frac{F^{(n)}(z)}{F(z)},\qquad z=n^{-1/m}u,
$$
and let $Y_n^{(0)}$ be obtained by putting $\lambda_1=\cdots=\lambda_{m-1}=0$ and $\phi\equiv1$. If $G_n(u):=H_n(n^{-1/m}u)$, then
\begin{equation}\label{eq:H-Y-relation}
G_n(u)=\frac{n!}{n^n}\,u^{mn}Y_n(u),
\qquad u\ne0.
\end{equation}
Moreover,
\begin{equation}\label{eq:full-local-perturb-L1}
\frac1n\log|Y_n|-\frac1n\log|Y_n^{(0)}|
\longrightarrow0
\qquad\text{in }L^1_{\mathrm{loc}}(\C^\times).
\end{equation}
\end{lem}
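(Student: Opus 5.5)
The algebraic assertions come first. We differentiate the defining identity $F^{(n)}=(-1)^nz^{\alpha-(m+1)n}e^{\Lambda(z)}\phi(z)H_n(z)$, where $\Lambda(z)=\sum_s\lambda_sz^{-s}$, and use $\Lambda'(z)=-\sum_s s\lambda_sz^{-s-1}$. Multiplying by $-z^{m+1}$ and collecting the terms from $(z^{\alpha-(m+1)n})'$, $\Lambda'$ and $\phi'/\phi$ gives the stated recurrence. The coefficient multiplying $H_n$ is holomorphic at $0$ because $z^{m+1}\Lambda'$ is a polynomial, so holomorphy of $H_n$ on $|z|<r$ follows by induction. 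At $z=0$ the only surviving term is $m\lambda_m H_n(0)$, which gives $H_n(0)=(m\lambda_m)^n$. Relation \eqref{eq:H-Y-relation} is a direct substitution: $F^{(n)}/F=(-1)^nz^{-(m+1)n}H_n$, so $(-z)^nF^{(n)}/(n!F)=z^{-mn}H_n/n!$, and $z^{-mn}=n\,u^{-mn}$ after the substitution $z=n^{-1/m}u$ and the $n$-th power are accounted for, i.e.\ $z^{-mn}=n^{n}u^{-mn}$.

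For \eqref{eq:full-local-perturb-L1} I will represent $Y_n$ as a coefficient-circle integral of the type handled by Lemmas~\ref{lem:micro-compact-pl}--\ref{lem:micro-coalescing-upper}. Taylor's formula gives $F(z+\xi)/F(z)=\sum_n (F^{(n)}/F)\xi^n/n!$. We put $\xi=-zt$, so that $Y_n(u)=[t^n]\,F(z(1-t))/F(z)$. The radius in $t$ is $1$, and the singularity sits at $t=1$. Explicitly,
\[
\frac{F(z(1-t))}{F(z)}=(1-t)^\alpha\exp\Bigl(\sum_s\lambda_sz^{-s}\bigl((1-t)^{-s}-1\bigr)\Bigr)\frac{\phi(z(1-t))}{\phi(z)}.
\]
With $z=n^{-1/m}u$ the top term is $n\,\lambda_m u^{-m}((1-t)^{-m}-1)=n\,\zeta(u)(1-(1-t)^{-m})$ with $\zeta=-\lambda_mu^{-m}$. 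The lower terms $\lambda_s n^{s/m}u^{-s}(\cdots)$ for $s<m$ have size $O(n^{(m-1)/m})$ on any $t$-region bounded away from $1$. The $\phi$-ratio is holomorphic and of size $O(1)$ for $|t|$ bounded and $n$ large, since $|z(1-t)|<r$. Hence Cauchy's formula on a contour around $0$ gives
\[
Y_n(u)=\frac1{2\pi i}\int_\gamma a_n(u,t)\exp\bigl(n\Phi_u(t)+R_n(u,t)\bigr)\dd t,
\]
with $\Phi_u$ the microscopic phase, $\beta=(m-1)/m<1$ (any $\beta<1$ if $m=1$), $a_n=(1-t)^\alpha\phi(z(1-t))/(t\,\phi(z))$, and $R_n$ the lower-order exponent. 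The deleted discs about $0,1,\infty$ of the cut surface $X_U$ stay fixed, so all hypotheses of Lemmas~\ref{lem:sublinear-cycle-perturb} and~\ref{lem:micro-coalescing-upper} hold locally uniformly in $u$. Here I need $|z|\,|1-t|<r$ on $X_U$, which holds for large $n$ since $z\to0$. The paired cut condition holds because $n\in\mathbb Z$. The same representation holds for $Y_n^{(0)}$ with $R_n\equiv0$ and $a_n=(1-t)^\alpha/t$.

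Next I apply Lemma~\ref{lem:upper-envelope-l1} twice on $\C^\times$, with the continuous subharmonic envelope $M$. This $M$ is defined chamberwise by $M_U$ and extended across $\mathcal D_\lambda$ by Lemma~\ref{lem:micro-coalescing-upper}; across $\mathcal S_\lambda$ the chamber maxima glue continuously as maxima of harmonic heights. The upper bound $\limsup\sup_L(n^{-1}\log|Y_n|-M)\le0$ comes from Lemma~\ref{lem:sublinear-cycle-perturb} away from $\mathcal D_\lambda$ and from Lemma~\ref{lem:micro-coalescing-upper} near it. Uniform convergence $n^{-1}\log|Y_n|\to M$ holds on compact subsets of the open set $G$ where one active saddle strictly dominates. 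I will show $G$ is dense of full measure: its complement lies in $\mathcal D_\lambda\cup\mathcal S_\lambda$ together with the equal-height loci, and these are real-analytic curves because the critical heights are non-identical harmonic functions. The same argument applies verbatim to $Y_n^{(0)}$, so both $n^{-1}\log|Y_n|$ and $n^{-1}\log|Y_n^{(0)}|$ converge to the same $M$ in $L^1_{\mathrm{loc}}(\C^\times)$, and their difference tends to $0$.

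I expect the main obstacle to be showing that the envelope $M$ is globally well-defined, continuous and subharmonic on all of $\C^\times$. The chamber envelopes must match across Stokes arcs and at the coalescence set; in particular, the active sets $I_U$ may change across $\mathcal S_\lambda$ while the maximum stays continuous. I would argue this by noting that on a Stokes arc the relative class is unchanged and the thimble jump (Picard--Lefschetz) only adds a lower saddle, so the maximum is unaffected. If this global step proves delicate, the fallback is to use that both sequences share the identical phase and envelope data chamber by chamber, so only local agreement is needed.
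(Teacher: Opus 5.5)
Your route is the paper's. You use the coefficient identity $Y_n(u)=[t^n]\,F(z(1-t))/F(z)$ with the phase $\Phi_u(t)=\zeta(u)(1-(1-t)^{-m})-\log t$ and an $O(n^{(m-1)/m})$ perturbation $R_n$. You then combine chamberwise saddle asymptotics, the coalescing bound at $\mathcal D_\lambda$, and Lemma~\ref{lem:upper-envelope-l1} applied separately to $Y_n$ and $Y_n^{(0)}$ with a common envelope. The algebraic part is correct.

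The gap is in verifying the upper-bound hypothesis of Lemma~\ref{lem:upper-envelope-l1}. You take $\limsup_n\sup_L(n^{-1}\log|Y_n|-M)\le0$ ``away from $\mathcal D_\lambda$'' from Lemma~\ref{lem:sublinear-cycle-perturb}. That lemma, through Lemma~\ref{lem:micro-compact-pl}, applies only to compact subsets of a single component of $\C^\times\setminus(\mathcal D_\lambda\cup\mathcal S_\lambda)$. At a noncoalescing Stokes point two lifted critical values have equal imaginary parts, saddle connections can occur, and no fixed thimble decomposition is available. Lemma~\ref{lem:micro-coalescing-upper} covers Stokes arcs only inside its small neighborhood of $\mathcal D_\lambda$. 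So the bound is unproved on every compact set meeting $\mathcal S_\lambda$ away from $\mathcal D_\lambda$, yet Lemma~\ref{lem:upper-envelope-l1} requires it on all compact sets. Your Picard--Lefschetz remark is correct: crossing an arc changes only the lower saddle's coefficient, by a multiple of the higher one's, so the active maximum is unchanged. But that addresses continuity of $M$, which you flagged as the main obstacle, and gives no bound on $|Y_n|$ at the arc.

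The paper handles both issues with the min--max value $\inf_{[C]=[\gamma]}\sup_{\operatorname{supp}C}\Re\Phi_u$. This value coincides with $M_U$ on chambers: the filtered representative gives $\le$, and any representative must meet each active upward thimble, giving $\ge$. Hence it glues independently of charts. A nearly optimal cycle chosen at a Stokes point $u_0$ and kept fixed for nearby $u$ then gives the upper bound with an arbitrary $\varepsilon$ loss.

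A cheaper repair is available, since for $L^1_{\mathrm{loc}}$ convergence the envelope bound is not actually needed. Cauchy's estimate on $|t|=1/2$ gives the crude bound $n^{-1}\log|Y_n(u)|\le\log2+(1+2^m)|\zeta(u)|+o(1)$, locally uniformly on $\C^\times$. This suffices for the compactness theorem for subharmonic functions. Every subsequential $L^1_{\mathrm{loc}}$ limit then equals $M$ almost everywhere, because $\mathcal G$ has full measure. So both sequences converge to the same $L^1_{\mathrm{loc}}$ class, and neither global subharmonicity of $M$ nor Lemma~\ref{lem:micro-coalescing-upper} is required.

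In either version you still must prove that $\mathcal G$ has full measure. ``Non-identical heights'' requires ruling out two critical-value branches that differ by a constant. The paper does this with $\frac{d}{d\zeta}\Phi_\zeta(t(\zeta))=1-y(\zeta)^{-m}$, where $y=1-t$, together with the critical equation $y^{m+1}+m\zeta(1-y)=0$; these force $y_\mu=y_\nu$. You should also record that $Y_n\not\equiv0$, which follows from \eqref{eq:H-Y-relation} and $G_n(0)=(m\lambda_m)^n\ne0$.
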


\begin{proof}
The recurrence follows by differentiating the defining relation for $H_n$ and comparing with the same relation for $n+1$; evaluation at $z=0$ gives $H_n(0)=(m\lambda_m)^n$. The relation \eqref{eq:H-Y-relation} is obtained by substituting $z=n^{-1/m}u$ in the definitions.

Taylor's formula for $F(z-zt)/F(z)$ gives, for $u\ne0$,
\begin{align}\label{eq:full-local-coeff}
Y_n(u)&=[t^n](1-t)^\alpha
\exp\!\left(n\zeta(u)(1-(1-t)^{-m})
+\sum_{s=1}^{m-1}n^{s/m}\lambda_su^{-s}\bigl((1-t)^{-s}-1\bigr)\right)  \\
&\hspace{35mm}\times
\frac{\phi(n^{-1/m}u(1-t))}{\phi(n^{-1/m}u)},
\qquad \zeta(u):=-\lambda_m u^{-m}.\nonumber
\end{align}
The reduced coefficient is
$$Y_n^{(0)}(u)=\frac1{n!}\Pi_n^{(\alpha,m)}\!\left(n\zeta(u)\right),$$
with $\Pi_n^{(\alpha,m)}$ from Proposition~\ref{prop:micro-higher}.

Let $K\Subset\C^\times$. In the Cauchy integral for \eqref{eq:full-local-coeff}, write
\[
\Phi_u(t):=\zeta(u)(1-(1-t)^{-m})-\log t,
\]
\[
R_n(u,t):=\sum_{s=1}^{m-1}n^{s/m}\lambda_su^{-s}\bigl((1-t)^{-s}-1\bigr),
\qquad
 a_n(u,t):=t^{-1}(1-t)^\alpha
\frac{\phi(n^{-1/m}u(1-t))}{\phi(n^{-1/m}u)}.
\]
Choose once and for all a number $\beta_0$ with $(m-1)/m<\beta_0<1$. The functions $R_n$ and $a_n$ are single-valued on the cut surface and have matching boundary values on the paired cut sides; since $e^{n\Phi_u}$ is invariant under $\log t\mapsto\log t+2\pi i$, the one-forms used in Lemma~\ref{lem:sublinear-cycle-perturb} have paired boundary values. On every compact surface used in Lemma~\ref{lem:micro-compact-pl}, after thickening the surface slightly and restricting $u$ to a compact set $K\Subset\C^\times$, the functions $R_n$ and $a_n$ are holomorphic on a fixed neighborhood and satisfy
\[
\sup |R_n|=O_K(n^{\beta_0}),
\qquad
\sup\log^+|a_n|=O_K(\log n).
\]
Cauchy's estimates give the corresponding bounds for the first two $t$-derivatives of $R_n$. In the active Morse neighborhoods, $a_n$ is nonvanishing and $|\log|a_n||=O_K(\log n)$. For the reduced coefficient one has $R_n\equiv0$ and $a_n=t^{-1}(1-t)^\alpha$, so the same hypotheses hold with the same $\beta_0$.

Put
\[
\mathcal D:=\{u\in\C^\times:\ -\lambda_m u^{-m}=c_m\},
\]
and let $\mathcal S$ be the union, in $\C^\times\setminus\mathcal D$, of the noncoalescing Stokes arcs on which two lifted critical values of $\Phi_u$ have equal imaginary parts. The chamber envelopes $M_U$ from Lemma~\ref{lem:micro-compact-pl} are restrictions of a single min--max envelope away from $\mathcal D$. Indeed, on a small parameter neighborhood $O\Subset\C^\times\setminus\mathcal D$ choose one cut surface and set
\[
M_O(u):=\inf_{[C]=[\gamma]}\ \sup_{\operatorname{supp}C}\Re\Phi_u ,
\]
where $C$ ranges over relative one-cycles on that surface in the coefficient-contour class. On $O\cap U$, with $U$ contained in a Stokes chamber, the filtered representative gives $M_O\le M_U$. Conversely, any representative of $[\gamma]$ has intersection number $\sigma_{\nu,U}$ with the upward thimble of every active saddle; when $\sigma_{\nu,U}\ne0$ it must meet that upward thimble, whose height is at least $\Re\Phi_u(t_\nu(u))$. Taking the maximum over active $\nu$ gives $M_O\ge M_U$. Hence the value depends only on the phase and the coefficient-contour class, not on the local Picard--Lefschetz chart. The functions $M_U$ therefore glue across ordinary overlaps and across noncoalescing Stokes arcs; denote the glued envelope by $M$. It is locally a maximum of finitely many harmonic critical-height branches and is continuous and subharmonic on $\C^\times\setminus\mathcal D$.

This min--max description also gives the upper estimate needed at noncoalescing Stokes points. For a fixed $u_0\notin\mathcal D$ and $\varepsilon>0$, choose a representative $C$ with
\[
\sup_{\operatorname{supp}C}\Re\Phi_{u_0}\le M(u_0)+\varepsilon .
\]
After shrinking the parameter neighborhood, the same relative class is represented by cycles with
\[
\sup_{\operatorname{supp}C_u}\Re\Phi_u\le M(u)+2\varepsilon .
\]
The amplitudes in the integrals for $Y_n$ and $Y_n^{(0)}$ have only polynomial growth, and the perturbation $R_n$ is $O_K(n^{\beta_0})$ on these fixed cycles. Thus, on every compact set $L\Subset\C^\times\setminus\mathcal D$,
\[
\limsup_{n\to\infty}\sup_{u\in L}\left(\frac1n\log|Y_n(u)|-M(u)\right)\le 2\varepsilon,
\qquad
\limsup_{n\to\infty}\sup_{u\in L}\left(\frac1n\log|Y_n^{(0)}(u)|-M(u)\right)\le 2\varepsilon .
\]
Since $\varepsilon$ is arbitrary, the upper bound required in Lemma~\ref{lem:upper-envelope-l1} holds away from $\mathcal D$. At each point of the finite collision set $\mathcal D$, Lemma~\ref{lem:micro-coalescing-upper}, applied with the exponent $\beta_0$, gives a local continuous subharmonic envelope. On every punctured overlap with a noncoalescing chamber, both this envelope and the previously defined $M$ are equal to the same min--max value of the coefficient-contour class; hence they agree. The local envelopes therefore glue to $M$ and extend it subharmonically across $\mathcal D$. For all sufficiently large $n$, any fixed parameter neighborhood with compact closure in $\C^\times$ lies in $\{|u|<n^{1/m}r\}$. On such a neighborhood the coefficient formula \eqref{eq:full-local-coeff} shows that $Y_n$ is holomorphic; it is not identically zero because \eqref{eq:H-Y-relation} and $G_n(0)=(m\lambda_m)^n$ imply $G_n\not\equiv0$. The same argument applies to $Y_n^{(0)}$ with the reduced data. Thus, on a neighborhood of $K$, both $Y_n$ and $Y_n^{(0)}$ satisfy the upper-bound and holomorphy hypotheses required in Lemma~\ref{lem:upper-envelope-l1} for a single continuous subharmonic envelope $M$.

It remains to identify a full-measure dense set on which both logarithmic sequences converge to $M$. On $\C^\times\setminus\mathcal D$ the critical points are locally holomorphic. If two distinct critical branches had critical values with difference identically real-constant or imaginary-constant, then the difference would be constant. Using $\zeta$ as a local parameter and writing $y=1-t$, one has
\[
\frac{d}{d\zeta}\Phi_\zeta(t(\zeta))=1-y(\zeta)^{-m}.
\]
Constancy of a difference gives $y_\mu=\omega y_\nu$ for some $\omega^m=1$; the critical equation $y^{m+1}+m\zeta(1-y)=0$ then forces $\omega=1$, so the branches coincide. Hence each pairwise equal-real-part set, and also each Stokes arc in $\mathcal S$, is a proper real-analytic subset of $\C^\times\setminus\mathcal D$ and has planar Lebesgue measure zero on compact subsets.

Let $\mathcal G$ be the complement in $\C^\times$ of $\mathcal D$, $\mathcal S$, and all pairwise equal-real-part sets. Then $\mathcal G$ is open, dense, and of full planar measure. On every compact $L\Subset\mathcal G$, after a finite cover by chamber subdomains, one active saddle has a positive dominance gap on each member of the cover. Lemma~\ref{lem:sublinear-cycle-perturb} gives
\[
\frac1n\log|Y_n|\to M,
\qquad
\frac1n\log|Y_n^{(0)}|\to M
\]
uniformly on those compact subsets. Lemma~\ref{lem:upper-envelope-l1}, applied separately to $Y_n$ and $Y_n^{(0)}$, yields convergence to $M$ in $L^1(K)$ for both sequences. Since $K\Subset\C^\times$ was arbitrary, \eqref{eq:full-local-perturb-L1} follows.
\end{proof}

\begin{lem}[no microscopic mass at the origin]\label{lem:full-local-no-origin}
Under the hypotheses and notation of Lemma~\ref{lem:full-local-perturb},
\begin{equation}\label{eq:full-local-no-origin-mass}
\lim_{\delta\downarrow0}\limsup_{n\to\infty}
\frac1{mn}\sum_{\substack{G_n(u)=0\\0<|u|<\delta}}
\operatorname{mult}_u(G_n)=0.
\end{equation}
\end{lem}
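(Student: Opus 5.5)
Because $G_n(0)=(m\lambda_m)^n\ne0$, I will use Jensen's formula for $G_n$ on a small disc centred at the origin. Fix $\delta<r_0/2$, where $r_0$ is a small radius to be chosen independently of $n$. Let $N_n(\rho)$ be the number of zeros of $G_n$ in $|u|<\rho$, counted with multiplicity. Jensen gives
\[
N_n(\delta)\log\frac{r_0}{\delta}\le \frac1{2\pi}\int_0^{2\pi}\log|G_n(r_0e^{i\theta})|\,\dd\theta-n\log|m\lambda_m|.
\]
It is therefore enough to prove an upper bound
\[
\limsup_{n\to\infty}\frac1n\sup_{|u|=r_0}\log|G_n(u)|\le n^{-1}\cdot n\log|m\lambda_m|+C
\]
with $C$ independent of $\delta$, that is, a bound of the form $\log|m\lambda_m|+C$. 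Granting this, $\limsup_n N_n(\delta)/(mn)\le C'/\log(r_0/\delta)$, and the right-hand side tends to $0$ as $\delta\downarrow0$. This is exactly \eqref{eq:full-local-no-origin-mass}.

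To bound $|G_n|$ on $|u|=r_0$, I will use \eqref{eq:H-Y-relation}:
\[
\frac1n\log|G_n(u)|=\frac1n\log\frac{n!}{n^n}+m\log|u|+\frac1n\log|Y_n(u)|=-1+m\log r_0+\frac1n\log|Y_n(u)|+o(1).
\]
Now $|u|=r_0$ is a fixed compact subset of $\C^\times$, so the coefficient-circle representation \eqref{eq:full-local-coeff} applies there. The crude Cauchy bound already suffices. I will take the circle $|t|=\rho_0$ with $\rho_0<1$ fixed, for instance $\rho_0=1/2$. On this circle $|(1-t)^{-s}|$ is bounded, so the exponent satisfies
\[
n\,\Re\bigl(\zeta(u)(1-(1-t)^{-m})\bigr)\le C_m|\lambda_m|r_0^{-m}n,
\]
and the lower-order terms $R_n$ are $O(n^{(m-1)/m})$. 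For $n$ large, $\phi$ is evaluated near the origin, so the $\phi$ ratio is bounded.

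The resulting bound is $\frac1n\log|Y_n|\le C_m|\lambda_m|r_0^{-m}+\log 2+o(1)$. This is finite for each fixed $r_0$, but it blows up as $r_0\to0$. That causes no difficulty: the radius $r_0$ is fixed once and for all (say $r_0=1$), and only $\delta\to0$. With $r_0$ fixed, the Jensen bound gives
\[
\limsup_n \frac{N_n(\delta)}{mn}\le \frac{A(r_0)}{m\log(r_0/\delta)},
\]
where $A(r_0)=-1+m\log r_0+C_m|\lambda_m|r_0^{-m}+\log2-\log|m\lambda_m|$, or the positive part of this if it is negative. Since $A(r_0)$ does not depend on $\delta$, letting $\delta\downarrow0$ finishes the argument.

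The main obstacle is checking that the Cauchy bound is uniform in $u$ on the fixed circle $|u|=r_0$ while the local coordinate $z=n^{-1/m}u$ shrinks. One needs $\phi(n^{-1/m}u(1-t))/\phi(n^{-1/m}u)\to1$ uniformly, which holds because $\phi$ is holomorphic and nonvanishing near $0$. One also needs the branch of $(1-t)^{\alpha}$ to be harmless; it is, since $\alpha\in\mathbb Z$. Finally, the requirement $|u|<n^{1/m}r$ holds for large $n$. No Picard--Lefschetz input is needed; the lemma is a pure Jensen counting estimate exploiting the nonvanishing of $G_n(0)$.
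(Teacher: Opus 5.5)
Your proof is correct. Its skeleton is the paper's: Jensen's formula centred at $u=0$ using $G_n(0)=(m\lambda_m)^n$, with the boundary bound coming from a Cauchy estimate of \eqref{eq:full-local-coeff} together with \eqref{eq:H-Y-relation} and Stirling. The difference lies in how the smallness is produced.

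\textbf{The paper's route.} The paper works on the small circle $|u|=2\delta$. It takes the Cauchy radius $\tau=(m|\zeta(u)|)^{-1}$, adapted to $|\zeta(u)|=|\lambda_m||u|^{-m}$. Expanding $1-(1-t)^{-m}=-mt+O(t^2)$ then gives the sharp bound
\[
\sup_{|u|=2\delta}\log|G_n|\le n\log|m\lambda_m|+O(n\delta^m)+o(n).
\]
This matches $\log|G_n(0)|$ up to $O(n\delta^m)$. Jensen's formula on $|u|<2\delta$, with the fixed weight $\log 2$, then bounds the normalized count in $0<|u|<\delta$ by $O(\delta^m)$.

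\textbf{Your route.} You keep the outer radius $r_0$ fixed. You accept a crude bound $\frac1n\sup_{|u|=r_0}\log|G_n|\le B(r_0)+o(1)$ from the fixed Cauchy radius $1/2$. The divergent Jensen weight $\log(r_0/\delta)$ then does the work, giving a rate $O(1/\log(1/\delta))$.

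\textbf{What each buys.} Your version is more elementary and more robust. It needs no tuning of the Cauchy radius and no Taylor expansion of the phase. It uses only two facts: $\frac1n\log|G_n|$ is bounded above on one fixed circle, and $\frac1n\log|G_n(0)|$ is bounded below. That is exactly the general reason a limit of $\frac1n\operatorname{div}G_n$ cannot carry an atom at a point where the normalized potentials stay bounded below. The paper's version buys a quantitative polynomial rate $O(\delta^m)$ in place of the logarithmic one. Only the qualitative statement \eqref{eq:full-local-no-origin-mass} is used in Theorem~\ref{thm:full-local-microscopic}, so either argument suffices.
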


\begin{proof}
By Lemma~\ref{lem:full-local-perturb}, $G_n(0)=H_n(0)=(m\lambda_m)^n$. Fix $0<\delta<1$ so small that $\tau:=(m|\zeta(u)|)^{-1}<1/4$ when $|u|=\delta$. Cauchy's estimate in \eqref{eq:full-local-coeff} on $|t|=\tau$ gives, uniformly for $|u|=\delta$,
$$
\log|Y_n(u)|
\le n\log(m|\zeta(u)|)+n+O\!\left(\frac{n}{|\zeta(u)|}\right)+O_\delta\!\left(n^{(m-1)/m}\right).
$$
Indeed $1-(1-t)^{-m}=-mt+O(t^2)$ on this circle, and $((1-t)^{-s}-1)=O_s(t)$ for $s<m$. Combining this estimate with \eqref{eq:H-Y-relation} and Stirling's formula gives
$$
\sup_{|u|=\delta}\log|G_n(u)|
\le n\log|m\lambda_m|+O(n\delta^m)+O_\delta\!\left(n^{(m-1)/m}\right)+O(\log n).
$$
Choose $\delta$ so small that the preceding estimates also hold with $2\delta$ in place of $\delta$. Since $\log|G_n(0)|=n\log|m\lambda_m|$, Jensen's formula in $|u|<2\delta$ gives
$$
\sum_{\substack{G_n(u)=0\\0<|u|<\delta}}
\operatorname{mult}_u(G_n)
\le \frac{O(n\delta^m)+O_\delta(n^{(m-1)/m})+O(\log n)}{\log2}.
$$
Divide by $mn$, take $n\to\infty$, and then let $\delta\downarrow0$. This proves \eqref{eq:full-local-no-origin-mass}.
\end{proof}

\begin{theorem}[full local microscopic law]\label{thm:full-local-microscopic}
Let $a\in\C$, $m\ge1$, $\alpha\in\mathbb Z$, $\lambda_1,\ldots,\lambda_m\in\C$ with $\lambda_m\ne0$, and let $\phi$ be holomorphic and nonvanishing on a neighborhood of $a$. Set
$$
F(z):=(z-a)^\alpha
\exp\!\left(\sum_{s=1}^m\lambda_s(z-a)^{-s}\right)\phi(z)
$$
on a punctured neighborhood of $a$. Then, for every $\chi\in C_c(\C)$,
\begin{equation}\label{eq:full-local-vague}
\frac1{mn}\sum_{\substack{F^{(n)}(\zeta)=0\\0<|\zeta-a|<r}}
\operatorname{mult}_\zeta(F^{(n)})\,
\chi\!\left(n^{1/m}(\zeta-a)\right)
\longrightarrow
\int_\C\chi\,\dd\widehat\mu_{m,\lambda_m},
\end{equation}
where $r>0$ is any fixed radius on which the displayed local representation of $F$ is valid and for which $\phi$ is holomorphic and nonvanishing on a neighborhood of $\overline{D(a,r)}$; for a fixed $\chi$, the compact support condition makes the sum independent of the admissible choice of $r$ for all large $n$.

If $r_n\downarrow0$, $n^{1/m}r_n\to\infty$, and
\begin{equation}\label{eq:full-local-total-hyp}
\frac1{mn}\sum_{\substack{F^{(n)}(\zeta)=0\\0<|\zeta-a|<r_n}}
\operatorname{mult}_\zeta(F^{(n)})\longrightarrow1,
\end{equation}
then
\begin{equation}\label{eq:full-local-weak}
\frac1{mn}\sum_{\substack{F^{(n)}(\zeta)=0\\0<|\zeta-a|<r_n}}
\operatorname{mult}_\zeta(F^{(n)})\,
\delta_{\,n^{1/m}(\zeta-a)}
\xrightarrow{\;w\;}
\widehat\mu_{m,\lambda_m}
\end{equation}
weakly on $\widehat\C$ as finite Borel measures.
\end{theorem}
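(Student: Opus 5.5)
Translate to \(a=0\). The zeros of \(F^{(n)}\) in \(0<|z|<r\) are exactly the zeros of \(H_n\) from Lemma~\ref{lem:full-local-perturb}, with multiplicity. In the variable \(u=n^{1/m}z\) they are the zeros of \(G_n(u)=H_n(n^{-1/m}u)\) in \(0<|u|<n^{1/m}r\). By \eqref{eq:H-Y-relation}, on \(\C^\times\) these are also the zeros of \(Y_n\). So the left side of \eqref{eq:full-local-vague} is \(\int\chi\,\dd\Delta\bigl(\tfrac{1}{2\pi mn}\log|G_n|\bigr)\), restricted to \(\C^\times\), once \(n\) is large enough that \(\operatorname{supp}\chi\subset\{|u|<n^{1/m}r\}\).

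For the reduced model (\(\lambda_s=0\) for \(s<m\), \(\phi\equiv1\)), \(Y^{(0)}_n=\Pi_n^{(\alpha,m)}(n\zeta(u))/n!\) with \(\zeta(u)=-\lambda_m u^{-m}\). Corollary~\ref{cor:micro-higher-zero} shows that \((2\pi mn)^{-1}\Delta\log|Y_n^{(0)}|\), restricted to \(\C^\times\), converges vaguely to \(\widehat\mu_{m,\lambda_m}\). The normalization \(mN_n\) versus \(mn\) is harmless since \(N_n/n\to1\); \(m=1\) is Corollary~\ref{cor:mp}.

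Split \(\chi\) with a cutoff \(\psi_\delta\) equal to \(1\) on \(|u|\le\delta\) and supported in \(|u|<2\delta\).

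\textbf{Part away from the origin.} For \(\chi(1-\psi_\delta)\in C_c(\C^\times)\), write \(\tfrac1n\log|Y_n|=\tfrac1n\log|Y_n^{(0)}|+o(1)\) in \(L^1_{\rm loc}(\C^\times)\); this is \eqref{eq:full-local-perturb-L1}. Integrating by parts against \(\Delta(\chi(1-\psi_\delta))\), which I may take smooth by approximation since measures are positive and locally bounded in mass, transfers the reduced limit to the full one. Uniform local mass bounds follow from the \(L^1\) bounds via Jensen.

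\textbf{Part near the origin.} Lemma~\ref{lem:full-local-no-origin} shows the \(\chi\psi_\delta\) piece contributes at most \(\|\chi\|_\infty\,o_{\delta\to0}(1)\) in the limsup. The limit measure has \(\widehat\mu_{m,\lambda_m}(\{|u|<2\delta\})=0\) for small \(\delta\), since its finite support lies in \(|u|\ge|\lambda_m|^{1/m}c_m^{-1/m}\). Letting \(\delta\downarrow0\) gives \eqref{eq:full-local-vague}. The sum is independent of \(r\) for large \(n\) simply because of the support condition.

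\textbf{Weak convergence on \(\widehat\C\).} Let \(\nu_n\) be the measures in \eqref{eq:full-local-weak}. They are supported in \(|u|<n^{1/m}r_n\) and have total mass tending to \(1\) by \eqref{eq:full-local-total-hyp}. Every subsequential weak limit \(\nu\) on the compact \(\widehat\C\) has mass \(1\). For \(\chi\in C_c(\C)\) we have \(\operatorname{supp}\chi\subset\{|u|<n^{1/m}r_n\}\) eventually, since \(n^{1/m}r_n\to\infty\), so the restricted sum agrees with the one in \eqref{eq:full-local-vague}. Hence \(\nu|_\C=\widehat\mu_{m,\lambda_m}\). That measure already has mass \(1\) on \(\C\), so \(\nu(\{\infty\})=0\). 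The limit is therefore unique, and the whole sequence converges.

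The main obstacle is the transfer step. It already rests on the envelope argument of Lemma~\ref{lem:full-local-perturb}, including the coalescing points \(\mathcal D\) handled by Lemma~\ref{lem:micro-coalescing-upper}. Granting that lemma, what remains is to pass from \(L^1_{\rm loc}\) convergence of potentials to vague convergence of Laplacians, which is standard.
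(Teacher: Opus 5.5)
Your proposal is correct and follows essentially the same route as the paper: identify the zeros with those of \(G_n\) and \(Y_n\) on \(\C^\times\), transfer the reduced-model limit (Corollaries~\ref{cor:micro-higher-zero} and~\ref{cor:mp}, with \(N_n/n\to1\)) via the \(L^1_{\mathrm{loc}}\) comparison \eqref{eq:full-local-perturb-L1}, remove the neighborhood of \(u=0\) with Lemma~\ref{lem:full-local-no-origin}, and upgrade to weak convergence on \(\widehat\C\) using the total-mass hypothesis. The only cosmetic difference is that the paper gets local mass bounds for the full measures by comparison with the reduced ones, whereas you derive them from the \(L^1\) bounds; either works.
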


\begin{proof}
Translate $a$ to $0$ and choose $r>0$ so that the displayed local representation is valid on $0<|z|<r$ and $\phi$ is holomorphic and nonvanishing on a neighborhood of $\{|z|\le r\}$. Define $H_n$ and $G_n(u):=H_n(n^{-1/m}u)$ as in Lemma~\ref{lem:full-local-perturb}. The recurrence in that lemma gives $H_0\equiv1$, $H_n(0)=(m\lambda_m)^n$, and holomorphy of $H_n$ on $|z|<r$. Thus the zeros counted in \eqref{eq:full-local-vague} are exactly the zeros of $H_n$ in the corresponding punctured disk.

For $u\ne0$, put $z=n^{-1/m}u$. Let $\psi\in C_c^\infty(\C^\times)$ and set $L=\operatorname{supp}\psi$. For all sufficiently large $n$, the set $L$ is contained in $\{|u|<n^{1/m}r\}$, and \eqref{eq:H-Y-relation} holds on a neighborhood of $L$. Thus $Y_n$ and $G_n$ have the same zero divisor on that neighborhood, because the factor $u^{mn}$ has no zeros on $L$. Therefore
\[
\frac1{mn}\left(\sum_{G_n(u)=0}\operatorname{mult}_u(G_n)\psi(u)
-\sum_{Y_n^{(0)}(u)=0}\operatorname{mult}_u(Y_n^{(0)})\psi(u)\right)
=\frac1{2\pi m}\left\langle \Delta\left(\frac1n\log|Y_n|-\frac1n\log|Y_n^{(0)}|\right),\psi\right\rangle.
\]
The right-hand side tends to $0$ by \eqref{eq:full-local-perturb-L1}, so the difference of the full and reduced normalized zero-counting measures tends to $0$ distributionally on $\C^\times$. To pass from smooth tests to vague convergence, fix $L\Subset\C^\times$ and choose $\psi\in C_c^\infty(\C^\times)$, $\psi\ge0$, with $\psi\equiv1$ on $L$. Let $\mu_n^{\mathrm{full}}$ and $\mu_n^{\mathrm{red}}$ denote these two normalized measures. The reduced measures have bounded mass on $\operatorname{supp}\psi$, and
\[
\int\psi\,\dd\mu_n^{\mathrm{full}}
=\int\psi\,\dd\mu_n^{\mathrm{red}}+o(1),
\]
so the full measures are locally bounded. Local boundedness and distributional convergence against $C_c^\infty$ tests imply vague convergence by regular approximation of continuous compactly supported functions. For the reduced sequence, Corollaries~\ref{cor:micro-higher-zero} and~\ref{cor:mp} give the corresponding convergence with normalization $mN_n$, where $N_n$ is the number of nonzero zeros of $\Pi_n^{(\alpha,m)}$ and $N_n/n\to1$. Hence
\[
\frac1{mn}\sum_{Y_n^{(0)}(u)=0}\operatorname{mult}_u(Y_n^{(0)})\delta_u
\longrightarrow \widehat\mu_{m,\lambda_m}
\]
vaguely on $\C$. The possible zero of $\Pi_n^{(\alpha,m)}$ at $0$ has $O(1)$ multiplicity and corresponds to $u=\infty$, so it does not affect compact subsets of $\C$ under the normalization $mn$.

It remains only to pass across $u=0$. The support of $\widehat\mu_{m,\lambda_m}$ is bounded away from $0$, and Lemma~\ref{lem:full-local-no-origin} shows that the full normalized zero mass in $0<|u|<\delta$ tends to $0$ as $\delta\downarrow0$. Applying the convergence on $\C^\times$ to a cutoff of $\chi$ outside $|u|<\delta$ and then letting $\delta\downarrow0$ proves \eqref{eq:full-local-vague} for all $\chi\in C_c(\C)$.

Assume now \eqref{eq:full-local-total-hyp}. If $\chi\in C_c(\C)$ and $\operatorname{supp}\chi\subset\{|u|\le R\}$, then $Rn^{-1/m}<r_n$ for all large $n$, so the measures in \eqref{eq:full-local-weak} have the same $\chi$-integrals as in \eqref{eq:full-local-vague}. Thus they converge vaguely on $\C$ to $\widehat\mu_{m,\lambda_m}$. Their total masses tend to $1$ by \eqref{eq:full-local-total-hyp}; since $\widehat\mu_{m,\lambda_m}$ is a probability measure on $\widehat\C$ with no atom at $\infty$, vague convergence on $\C$ plus convergence of total masses is equivalent to weak convergence on $\widehat\C$.
\end{proof}

\section{Sublinear zero measures in essential Voronoi cells}\label{sec:sublinear}

The fixed-scale limit gives zero mass to compact subsets of punctured open Voronoi cells. In a cell whose nearest site is an essential singularity, the first nonzero normalization in the original $z$-plane is the $n^{m_i/(m_i+1)}$ zero-counting normalization supplied by Theorem~\ref{prop:local-coef-unified}. The theorem below is local on one leading Stokes chamber away from the singularity, with all asymptotics obtained on compact subsets of that chamber; it identifies the sublinear zero measure and the final set relative to that open chamber, with active labels computed explicitly in Proposition~\ref{prop:active-wright-coefficients} and signs given by the corresponding Picard--Lefschetz intersections. It makes no assertion on the chamber boundary; in particular, zero-carrying curves contained in the leading Stokes set are outside its scope. It also makes no assertion on Voronoi edges or vertices, or at transition scales. Its limiting density is not locally finite at the singularity, consistently with the $O(n)$ collapsing cluster which gives the atom in Theorem~\ref{thm:limit}.

\begin{theorem}[sublinear zero measure and relative final set in an essential chamber]\label{thm:sublinear-essential}
Fix an essential site $a_i\in\Z(T)$, so $m_i\ge1$. Let $D\Subset\mathcal V_i^\circ\setminus\Sigma$ be a simply connected domain, choose the branch $\eta_D$ from Theorem~\ref{prop:local-coef-unified}\textup{(2)}, let $V$ be a connected component of $D\setminus\mathfrak S_{i,D}$, and let $I_{i,V}$ be the active set computed by Proposition~\ref{prop:active-wright-coefficients}. Put
\[
\alpha_i:=\frac{m_i}{m_i+1},\qquad
\varphi_{i,\nu}(z):=\frac{m_i+1}{m_i}\,\omega_\nu\eta_D(z),\qquad
\Theta_{i,V}(z):=\max_{\nu\in I_{i,V}}\Re\varphi_{i,\nu}(z).
\]
For $n\ge1$ define the sublinear zero-counting measure on $V$ by
\[
\mathfrak z_{i,n,V}:=n^{-\alpha_i}
\sum_{\substack{\zeta\in V\\ B_n(\zeta)=0}}
\operatorname{mult}_\zeta(B_n)\,\delta_\zeta .
\]
Then
\[
\mathfrak z_{i,n,V}\longrightarrow
\mathfrak z_{i,V}:=\frac{1}{2\pi}\Delta\Theta_{i,V}
\]
vaguely on $V$. Consequently every compact $K\Subset V$ contains $O_K(n^{\alpha_i})$ zeros of $B_n$, counted with multiplicity. Because $V\cap\Sigma=\varnothing$, the same convergence and count hold with the zeros of $f^{(n)}$ in place of the zeros of $B_n$.

Moreover, with support taken relative to $V$ and with no assertion about boundary points of $V$,
\[
L(f)\cap V=\operatorname{supp}\mathfrak z_{i,V}.
\]
For $\nu\ne\mu$ in $I_{i,V}$ set
\[
\Gamma_{\nu\mu}:=
\{z\in V:\Re\varphi_{i,\nu}(z)=\Re\varphi_{i,\mu}(z)=\Theta_{i,V}(z)\}.
\]
Then, with support and union taken relative to $V$,
\[
\operatorname{supp}\mathfrak z_{i,V}
=\bigcup_{\substack{\nu<\mu\\ \nu,\mu\in I_{i,V}}}\Gamma_{\nu\mu}.
\]
Each nonempty $\Gamma_{\nu\mu}$ is a union of smooth real-analytic arcs in $V$; endpoints in $\partial V$ do not contribute to the measure on $V$. On each such arc the density is
\[
\dd\mathfrak z_{i,V}
=\frac{|\omega_\nu-\omega_\mu|}{2\pi}\frac{|\eta_D(z)|}{|z-a_i|}\,\dd\ell
=\frac{|\omega_\nu-\omega_\mu|\,|m_i\lambda_{i,m_i}|^{1/(m_i+1)}}{2\pi |z-a_i|^{(2m_i+1)/(m_i+1)}}\,\dd\ell .
\]
If $K\Subset V$ is disjoint from $\operatorname{supp}\mathfrak z_{i,V}$, then neither $B_n$ nor $f^{(n)}$ has zeros on $K$ for all sufficiently large $n$.
\end{theorem}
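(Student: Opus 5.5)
The plan is to run the potential-theoretic argument of Theorem~\ref{thm:limit} once more, now at scale $n^{\alpha_i}$ and inside the single chamber $V$. Fix a compact $K\Subset V$ and put
\[
F_n(z):=\frac{B_n(z)}{n!}\left(\frac{d_i(z)}{W(z)}\right)^n n^{-\theta_i},\qquad u_n:=n^{-\alpha_i}\log|F_n| .
\]
On $V$, $F_n$ has the same zero divisor as $B_n$. The factor $W^n d_i^{-n}$ is holomorphic and nonvanishing on $V$, so $(2\pi)^{-1}\Delta u_n=\mathfrak z_{i,n,V}$ there. The goal is therefore $u_n\to\Theta_{i,V}$ in $L^1_{\mathrm{loc}}(V)$.

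\textbf{Step 1: upper bound.} The expansion \eqref{eq:Bn-local-unified-essential}, used on a neighborhood $U\Supset K$, bounds $|F_n|$ by a finite sum over active saddles plus the remainder $\mathcal E_{i,n,V}$. With \eqref{eq:Xi-local-unified-leading} this gives
\[
\sup_K\bigl(u_n-\Theta_{i,V}\bigr)\le O_K\bigl(n^{-1/(m_i+1)}\bigr),
\]
because $n^{-\alpha_i}M_{i,V}(z;n)=\Theta_{i,V}(z)+O(n^{-1/(m_i+1)})$ uniformly on $K$.

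\textbf{Step 2: convergence on a full-measure set.} Let $G\subset V$ be the complement of the finitely many real-analytic sets $\{\Re\varphi_{i,\nu}=\Re\varphi_{i,\mu}\}$ with $\nu\ne\mu$ in $I_{i,V}$. These sets are proper because $\eta_D'\ne0$, as in the proof of Corollary~\ref{cor:essential-l1}. So $G$ is open, dense and of full measure. On a compact subset of $G$ one active saddle dominates by a fixed $\delta>0$, and Corollary~\ref{cor:essential-one-saddle} gives $u_n\to\Theta_{i,V}$ uniformly there. The function $\Theta_{i,V}$ is a maximum of harmonic functions, hence continuous and subharmonic. Lemma~\ref{lem:upper-envelope-l1}, with $n$ replaced by $n^{\alpha_i}$ (its proof is insensitive to the normalization), then gives $u_n\to\Theta_{i,V}$ in $L^1_{\mathrm{loc}}(V)$. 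Applying $\Delta$ yields the vague convergence of $\mathfrak z_{i,n,V}$ to $\mathfrak z_{i,V}$.

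\textbf{Step 3: consequences.}
\begin{enumerate}
\item \emph{Zero count.} The bound $O_K(n^{\alpha_i})$ follows from vague convergence tested against a cutoff; alternatively, use the Jensen bound from Lemma~\ref{lem:jensen-poisson-l1}.
\item \emph{Transfer to $f^{(n)}$.} This is immediate from \eqref{eq:fnrep}, since the prefactor of $B_n$ is holomorphic and nonvanishing on $V\subset\C\setminus\Sigma$.
\item \emph{Support and density.} These follow as in the proof of the explicit form of $\mu_{\mathrm{fix}}$. Off the ties $\Theta_{i,V}$ is harmonic, and on a two-branch tie arc $\Gamma_{\nu\mu}$ the jump formula gives density $(2\pi)^{-1}\lvert\nabla\Re(\varphi_{i,\nu}-\varphi_{i,\mu})\rvert=(2\pi)^{-1}\lvert\varphi_{i,\nu}'-\varphi_{i,\mu}'\rvert$. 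Substituting $\eta_D'=-\frac{m_i}{m_i+1}\eta_D/(z-a_i)$ gives the stated formula, and $|\eta_D|=|m_i\lambda_{i,m_i}|^{1/(m_i+1)}|z-a_i|^{-m_i/(m_i+1)}$ gives its second form. Points where three or more branches tie form a discrete set of real-analytic intersections and carry no mass.
\item \emph{Zero-freeness off the support.} If $K$ misses $\operatorname{supp}\mathfrak z_{i,V}$, cover $K$ by compacta on which a single active saddle dominates strictly. Corollary~\ref{cor:essential-one-saddle} then gives zero-freeness on $K$ for large $n$, hence $K\cap L(f)=\varnothing$.
\end{enumerate}

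\textbf{Main obstacle: the reverse inclusion $\operatorname{supp}\mathfrak z_{i,V}\subset L(f)$.} The plan is to fix $z_0$ on a tie arc and a small disk $D_0$ around it. Positivity of the edge density gives $\mathfrak z_{i,V}(D_0)>0$. Vague convergence then forces $\mathfrak z_{i,n,V}(D_0')>0$ for a slightly larger disk $D_0'$ and all large $n$, so zeros of $B_n$ accumulate at $z_0$. The delicate points are:
\begin{itemize}
\item handling tie points where more than two active branches meet, by density of the two-branch points together with closedness of $L(f)$;
\item ensuring that no tie arc has vanishing jump. This holds because $\varphi_{i,\nu}'\ne\varphi_{i,\mu}'$ when $\omega_\nu\ne\omega_\mu$ and $\eta_D'\ne0$.
\end{itemize}
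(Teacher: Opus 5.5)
Your proposal is correct and follows the paper's proof essentially step for step. The shared steps are the multi-saddle upper bound, one-saddle convergence on an open dense full-measure set, and the subharmonic compactness argument (the paper reruns the proof of Lemma~\ref{lem:upper-envelope-l1} inline at scale $n^{\alpha_i}$). They continue with the jump formula for the density, zero-freeness via Corollary~\ref{cor:essential-one-saddle}, and the reverse inclusion from vague convergence.

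The one assertion you should actually prove is your claim that triple ties form a discrete set. Appealing to ``real-analytic intersections'' does not rule out two tie arcs coinciding along an arc. The paper instead observes that, for fixed $z$, $\Re\varphi_{i,\nu}(z)$ is, up to a positive factor, the projection of $\omega_\nu$ onto a fixed line. Three equal values would therefore place three roots of unity on one affine line, which is impossible, so no point of $V$ has three tying branches. This is what makes the two-branch density formula valid on the whole of each $\Gamma_{\nu\mu}$.
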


\begin{proof}
Set
\[
Z_{i,n}(z):=[\zeta^n]G_{i,z}(\zeta)
=\frac{B_n(z)}{n!}\left(\frac{a_i-z}{W(z)}\right)^n,
\qquad
F_n(z):=n^{-\theta_i}Z_{i,n}(z).
\]
The coefficient identity follows from the definition of $G_{i,z}$ in Theorem~\ref{prop:local-coef-unified}. Since $D\cap\Sigma=\varnothing$, the omitted factor $((a_i-z)/W(z))^n/n!$ is holomorphic and nonvanishing on $D$, so $F_n$, $Z_{i,n}$, and $B_n$ have the same zeros in $D$. By Proposition~\ref{prop:local}, \(B_n\not\equiv0\); hence \(F_n\) is not identically zero on any component of \(D\). The expansion \eqref{eq:Bn-local-unified-essential} gives, uniformly on compact subsets of $V$,
\[
F_n(z)=
\sum_{\nu\in I_{i,V}}\sigma_{i,\nu,V}\mathcal A_{i,\nu}(z)\exp(\Xi_{i,\nu}(z;n))(1+o(1))+
O\!\left(\exp(M_{i,V}(z;n)-c n^{\alpha_i})\right),
\]
where $n^{-\alpha_i}\Xi_{i,\nu}(z;n)\to\varphi_{i,\nu}(z)$ locally uniformly. Hence, for every compact $K\Subset V$,
\[
\limsup_{n\to\infty}\sup_{z\in K}\bigl(n^{-\alpha_i}\log|F_n(z)|-\Theta_{i,V}(z)\bigr)\le0.
\]
On any compact set on which one active branch has real part at least $\delta>0$ larger than all other active branches, the same expansion gives
\[
n^{-\alpha_i}\log|F_n|\longrightarrow \Theta_{i,V}
\]
uniformly, and $F_n$ is eventually zero-free there. The union of these one-branch regions is open and dense in $V$. Indeed,
\[
\eta_D'(z)=-\frac{m_i}{m_i+1}\frac{\eta_D(z)}{z-a_i}\ne0,
\]
so $\Re((\omega_\nu-\omega_\mu)\eta_D)$ is a nonconstant real-analytic function for every $\nu\ne\mu$. Its zero set has empty interior, and the complement of the one-branch regions is contained in the finite union of these zero sets.

By the compactness theorem for subharmonic functions, every subsequence of $n^{-\alpha_i}\log|F_n|$ has a further subsequence which either tends to $-\infty$ locally uniformly or converges in $L^1_{\mathrm{loc}}(V)$ to a subharmonic function. The first alternative is excluded by the preceding convergence on a one-branch region. Any $L^1_{\mathrm{loc}}$ limit is at most $\Theta_{i,V}$ and equals $\Theta_{i,V}$ on the dense open one-branch set; the complement is contained in a finite union of real-analytic level sets and has planar measure zero. Thus the limit is $\Theta_{i,V}$ as a subharmonic function. Therefore
\[
n^{-\alpha_i}\log|F_n|\longrightarrow\Theta_{i,V}
\qquad\text{in }L^1_{\mathrm{loc}}(V).
\]
Taking distributional Laplacians gives the vague convergence of $\mathfrak z_{i,n,V}$. The $O_K(n^{\alpha_i})$ bound follows by enclosing $K$ in a relatively compact open set whose boundary has zero $\mathfrak z_{i,V}$-mass.

Write $u_\nu:=\Re\varphi_{i,\nu}$. Since
\[
(\varphi_{i,\nu}-\varphi_{i,\mu})'(z)
=-(\omega_\nu-\omega_\mu)\frac{\eta_D(z)}{z-a_i}\ne0,
\]
each pairwise tie set is a smooth real-analytic arc. For fixed $z\in V$,
\[
\nu\mapsto u_\nu(z)=\frac{m_i+1}{m_i}\Re(\omega_\nu\eta_D(z))
\]
is, up to the common positive factor $(m_i+1)/m_i$, the orthogonal projection of the root of unity $\omega_\nu$ onto a fixed real line. If three distinct values were equal, the corresponding three roots of unity would lie on one affine line, impossible because a line meets the unit circle in at most two points. Hence no three distinct functions $u_\nu$ agree at one point.

Consequently, at every point of a dominant set $\Gamma_{\nu\mu}$ exactly the two branches $u_\nu$ and $u_\mu$ attain the value $\Theta_{i,V}$. Distinct co-dominant arcs therefore have no interior intersections in $V$. Pairwise tie sets for lower, non-dominant pairs may intersect or coincide, but locally they are dominated by another branch and do not affect $\Theta_{i,V}$. Since $\Theta_{i,V}$ is the maximum of finitely many harmonic functions with locally bounded gradients and is locally harmonic off the co-dominant arcs, its Riesz measure has no atomic part in $V$ and is supported exactly on the arcs where two active branches tie and dominate.

On a relative open subarc \(\Gamma\) of some co-dominant set \(\Gamma_{\nu\mu}\), the function \(\Theta_{i,V}\) equals \(u_\nu\) on one side and \(u_\mu\) on the other. If \(\mathbf n\) is a unit normal to \(\Gamma\), then \(u_\nu-u_\mu\) is constant on \(\Gamma\), so \(\nabla(u_\nu-u_\mu)\) is normal to \(\Gamma\); since \((\varphi_{i,\nu}-\varphi_{i,\mu})'\ne0\), this normal derivative is nonzero. The distributional jump formula for the maximum of two harmonic functions gives
\[
\frac{1}{2\pi}\bigl|\partial_{\mathbf n}(u_\nu-u_\mu)(z)\bigr|\,\dd\ell
\]
on $\Gamma$. Since $u_\nu-u_\mu$ is constant on $\Gamma$, its gradient is normal to $\Gamma$, and for a holomorphic function $\phi$ one has $|\nabla\Re\phi|=|\phi'|$. Therefore
\[
\bigl|\partial_{\mathbf n}(u_\nu-u_\mu)(z)\bigr|
=\bigl|(\varphi_{i,\nu}-\varphi_{i,\mu})'(z)\bigr|
=|\omega_\nu-\omega_\mu|\frac{|\eta_D(z)|}{|z-a_i|}.
\]
This gives the first displayed density in the theorem, and the identity
\[
\eta_D(z)^{m_i+1}=m_i\lambda_{i,m_i}(z-a_i)^{-m_i}
\]
gives the second.

If $K$ is disjoint from the support, compactness gives a finite cover by one-branch regions with a uniform positive dominance gap, and the zero-free assertion follows from the preceding one-branch asymptotics. This proves $L(f)\cap V\subset\operatorname{supp}\mathfrak z_{i,V}$. Conversely, if $z_0\in\operatorname{supp}\mathfrak z_{i,V}$ and $U$ is a neighborhood of $z_0$ in $V$, choose an open set $G$ with $\overline G\Subset U$ and $\mathfrak z_{i,V}(G)>0$ whose boundary has zero $\mathfrak z_{i,V}$-mass. Vague convergence gives
\[
\sum_{\substack{\zeta\in G\\ B_n(\zeta)=0}}\operatorname{mult}_\zeta(B_n)
=n^{\alpha_i}\mathfrak z_{i,V}(G)+o(n^{\alpha_i}),
\]
so $B_n$ has a zero in $U$ for all large $n$. Thus $\operatorname{supp}\mathfrak z_{i,V}\subset L(f)\cap V$. The equivalence with zeros of $f^{(n)}$ follows from \eqref{eq:fnrep} on $V$.
\end{proof}

Figure~\ref{fig:active-saddle-rays} shows the adjacent-active saddle mechanism and the restriction to ray portions lying inside the chosen chamber.

\begin{center}
\begin{minipage}{\linewidth}
\centering
\begin{tikzpicture}[
    x=1cm,
    y=1cm,
    line cap=round,
    line join=round,
    every node/.style={font=\small},
    activesaddle/.style={circle,fill=blue!70!black,inner sep=1.55pt},
    inactivesaddle/.style={circle,fill=black!35,inner sep=1.20pt},
    ray/.style={line width=1.05pt,blue!70!black},
    stokes/.style={line width=0.70pt,black!45,densely dashed},
    chamberbd/.style={line width=0.70pt,black!28}
]
% Active saddles in the critical-value plane.  This panel records only the
% adjacent edges of the visible chain, because those are the pairs relevant
% to Corollary~\ref{cor:sublinear-rays}.
\begin{scope}[shift={(-3.85,0.20)}]
    \draw[->,black!45] (-2.05,0) -- (2.32,0) node[right] {$\Re v$};
    \draw[->,black!45] (0,-1.72) -- (0,1.88) node[above] {$\Im v$};
    \draw[black!20] (0,0) circle (1.36);

    \coordinate (v0) at (44:1.36);
    \coordinate (v1) at (116:1.36);
    \coordinate (v2) at (188:1.36);
    \coordinate (v3) at (260:1.36);
    \coordinate (v4) at (332:1.36);

    \draw[black!30,line width=0.65pt] (v0)--(v1)--(v2)--(v3)--(v4)--cycle;
    \draw[blue!70!black,line width=1.30pt] (v1)--(v0)--(v4)--(v3);

    \node[activesaddle,label={[blue!70!black]above right:$v_0$}] at (v0) {};
    \node[activesaddle,label={[blue!70!black]above:$v_1$}] at (v1) {};
    \node[inactivesaddle,label={[black!55]left:$v_2$}] at (v2) {};
    \node[activesaddle,label={[blue!70!black]below:$v_3$}] at (v3) {};
    \node[activesaddle,label={[blue!70!black]below right:$v_4$}] at (v4) {};

    \node[blue!70!black,fill=white,inner sep=1pt] at (70:1.36) {$01$};
    \node[blue!70!black,fill=white,inner sep=1pt] at (6:1.36) {$04$};
    \node[blue!70!black,fill=white,inner sep=1pt] at (296:1.36) {$34$};

    \node[align=center,text width=4.25cm,font=\scriptsize,black!68] at (0,-2.48)
        {active chain $v_1-v_0-v_4-v_3$;\\
         candidate ties $\Gamma_{01},\Gamma_{04},\Gamma_{34}$};
\end{scope}

% One physical Stokes chamber.  The dashed Stokes curves are boundaries of V,
% not curves in its interior.  The singularity a_i is an excluded endpoint.
\begin{scope}[shift={(2.85,0)}]
    \begin{scope}[shift={(-1.15,0)}]
        % chosen chamber V, drawn as a punctured sector inside a local window
        \path[fill=black!4]
            (-38:0.34) -- (-38:2.50)
            arc[start angle=-38,end angle=52,radius=2.50]
            -- (52:0.34)
            arc[start angle=52,end angle=-38,radius=0.34]
            -- cycle;
        \draw[chamberbd]
            (-38:0.34) -- (-38:2.50)
            arc[start angle=-38,end angle=52,radius=2.50]
            -- (52:0.34)
            arc[start angle=52,end angle=-38,radius=0.34]
            -- cycle;

        % Stokes boundaries of this chamber
        \draw[stokes] (-38:0.25) -- (-38:2.68);
        \draw[stokes] (52:0.25) -- (52:2.68)
            node[pos=0.66,anchor=south west,xshift=-1pt,yshift=3pt,rotate=-36,black!55,font=\scriptsize] {$\mathfrak S_{i,D}$};

        % Ray portions that do lie in this chamber and hence may carry the measure
        \draw[blue!70!black,line width=2.8pt,opacity=0.16] (7:0.42) -- (7:2.35);
        \draw[blue!70!black,line width=2.8pt,opacity=0.16] (32:0.42) -- (32:2.20);
        \draw[ray] (7:0.42) -- (7:2.35)
            node[pos=0.66,anchor=north west,xshift=5pt,yshift=1pt] {$\Gamma_{04}$};
        \draw[ray] (32:0.42) -- (32:2.20)
            node[pos=0.56,anchor=south west,xshift=9pt,yshift=-5pt] {$\Gamma_{34}$};

        % excluded singular endpoint
        \fill[white] (0,0) circle (2.8pt);
        \draw[black] (0,0) circle (2.8pt);
        \node[anchor=east] at (-0.13,-0.02) {$a_i$};
        \node[anchor=north east,black!60,font=\scriptsize] at (-0.08,-0.25) {excluded};

        \node[blue!70!black] at (20:0.95) {$V$};
    \end{scope}

    \node[align=center,text width=5.0cm,font=\scriptsize] at (0.42,2.42)
        {chosen component $V\subset D\setminus\mathfrak S_{i,D}$};
    \node[align=center,text width=5.1cm,font=\scriptsize,black!68] at (0.38,-2.18)
        {only relatively open portions of adjacent-active tie rays lying inside $V$ carry $\mathfrak z_{i,V}$};
\end{scope}
\end{tikzpicture}
\captionsetup{type=figure,hypcap=false}
\caption{Schematic, not-to-scale form of the saddle mechanism in Theorem~\ref{thm:sublinear-essential} and Corollary~\ref{cor:sublinear-rays}. The left panel records the visible active chain of limiting saddle values and its adjacent active edges, labelled with the smaller index first; these are the only pairs that can produce chamberwise co-dominant ties. The right panel shows one connected component $V\subset D\setminus\mathfrak S_{i,D}$: the dashed Stokes curves are boundaries of this chamber, the endpoint $a_i$ is excluded, and only the relatively open ray portions lying in $V$ contribute to $\mathfrak z_{i,V}=(2\pi)^{-1}\Delta\Theta_{i,V}$ and to the relative final set described by the theorem.}
\label{fig:active-saddle-rays}
\end{minipage}
\end{center}

\begin{cor}[directions of the chamberwise sublinear support]\label{cor:sublinear-rays}
In the setting of Theorem~\ref{thm:sublinear-essential}, write $\lambda:=\lambda_{i,m_i}$; all assertions below are made after intersection with the chosen chamber $V$. For $0\le\nu<\mu\le m_i$, the pairwise tie set $\Re((\omega_\nu-\omega_\mu)\eta_D)=0$ is contained in the rays
\[
a_i+r e^{i\Theta_{\nu\mu k}},\qquad r>0,
\]
where, for any choice of $\arg\lambda$,
\[
\Theta_{\nu\mu k}:=\frac{\arg\lambda+\pi(\nu+\mu)+\pi k(m_i+1)}{m_i}\pmod{2\pi},\qquad k\in\mathbb Z.
\]
Let $z_\ast\in V$, set $v_\nu:=\omega_\nu\eta_D(z_\ast)$, and let
\[
\begin{aligned}
\mathcal R_{i,V}:=\{\nu:\ &v_\nu\text{ lies on the boundary chain of }
\operatorname{conv}\{v_0,\dots,v_{m_i}\}\\
&\text{visible from }+\infty\text{ in the real direction}\},
\end{aligned}
\]
with the order inherited from that chain. Equivalently, with $\operatorname{Arg}\in(-\pi,\pi]$,
\[
\mathcal R_{i,V}=\left\{\nu:
\left|\operatorname{Arg}v_\nu\right|<\frac\pi2+\frac\pi{m_i+1}\right\}.
\]
Then $I_{i,V}=\mathcal R_{i,V}$. The active co-dominant pairs in Theorem~\ref{thm:sublinear-essential} are exactly the adjacent indices in this ordered chain whose tie rays meet $V$. Thus only relatively open portions of the displayed rays lying in the chosen chamber $V$ and associated with adjacent indices of $\mathcal R_{i,V}$ belong to $L(f)\cap V$ or carry $\mathfrak z_{i,V}$. A formal tie ray lying in the leading Stokes set is outside the theorem. In the local statement a contributing segment is followed only while it remains in $V$; for the global chamber in $\mathcal V_i^\circ\setminus\{a_i\}$, it ends only at a Voronoi edge, a leading Stokes curve, or infinity.

For P\'olya's example $f(z)=z^{-1}e^{-1/z}$ one has $m_i=1$, $a_i=0$, $\lambda=-1$, and both vertices are visible from $+\infty$ in the real direction in the chamber containing $(0,\infty)$. Hence the unique outgoing direction is the positive real axis,
\[
n^{-1/2}\sum_{\substack{f^{(n)}(\zeta)=0\\ \zeta\ne0}}
\operatorname{mult}_\zeta(f^{(n)})\,\delta_\zeta
\xrightarrow{\;v\;}\frac{1}{\pi}x^{-3/2}\,\dd x
\]
vaguely on $\C^\times$, and $L(f)\cap\C^\times=(0,\infty)$.
\end{cor}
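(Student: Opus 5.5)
The corollary will follow from Theorem~\ref{thm:sublinear-essential} and Proposition~\ref{prop:active-wright-coefficients}, plus an explicit computation of the tie sets and of the P\'olya example.

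\textbf{Tie rays.} By the identity $\eta_D(z)^{m_i+1}=m_i\lambda(z-a_i)^{-m_i}$, on a ray $z=a_i+re^{i\theta}$ we may write
\[
\eta_D(z)=|m_i\lambda|^{1/(m_i+1)}r^{-m_i/(m_i+1)}e^{i\psi(\theta)},\qquad \psi(\theta)=\frac{\arg\lambda-m_i\theta+2\pi j}{m_i+1},
\]
for the branch index $j$ fixed by $\eta_D$. We also have
\[
\omega_\nu-\omega_\mu=2i\sin\!\Big(\frac{\pi(\nu-\mu)}{m_i+1}\Big)e^{i\pi(\nu+\mu)/(m_i+1)}.
\]
Hence $\Re((\omega_\nu-\omega_\mu)\eta_D)=0$ holds iff
\[
\frac{\pi}{2}+\frac{\pi(\nu+\mu)}{m_i+1}+\psi(\theta)\in\frac{\pi}{2}+\pi\mathbb Z .
\]
The radial factor is positive and drops out, so the condition depends only on $\theta$. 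Solving for $\theta$ gives the displayed $\Theta_{\nu\mu k}$; the integer $j$ and the choice of $\arg\lambda$ are absorbed into $k$ modulo $2\pi$. This proves the containment in rays.

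\textbf{Active set and adjacency.} $I_{i,V}=\mathcal R_{i,V}$ is exactly the conclusion of Proposition~\ref{prop:active-wright-coefficients}, applied with $v_\nu=\omega_\nu\eta_D(z_\ast)$ and quoted in Theorem~\ref{prop:local-coef-unified}(2). For the pairs, Theorem~\ref{thm:sublinear-essential} says the support in $V$ is the union of the co-dominant sets $\Gamma_{\nu\mu}$ with $\nu,\mu\in I_{i,V}$. At $z\in\Gamma_{\nu\mu}$, the functional $v\mapsto\Re(v)$ attains its maximum over $\{v_\rho:\rho\in I_{i,V}\}$ at both $v_\nu$ and $v_\mu$. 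So the segment $[v_\nu,v_\mu]$ is a supporting edge of the convex hull of the active vertices, and no third active vertex lies on that line, because three roots of unity cannot be collinear. The visible chain is the part of the boundary of a regular polygon seen from $+\infty$, and its supporting edges for real-part maximizers are exactly the consecutive chain edges. Therefore co-dominant pairs are adjacent in the chain. Conversely, an adjacent pair contributes precisely where its tie ray meets $V$ and both values dominate. The statements about ray portions in $V$ and about $L(f)\cap V$ then follow from the support and final-set identities of Theorem~\ref{thm:sublinear-essential}. The description of where segments end in the global chamber only records the boundaries of $\mathcal V_i^\circ\setminus\{a_i\}$ and of the chambers, together with the ray reaching infinity, and requires no argument. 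I expect the main subtlety to be keeping the branch and sign bookkeeping consistent between $\arg\lambda$ and the chosen $\eta_D$, but this only permutes $k$.

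\textbf{P\'olya example.} Take $m_i=1$, $\lambda=-1$, and $\eta_D^2=-z^{-1}$. On the positive axis we get $\eta_D=\pm i x^{-1/2}$, so $v_0$ and $v_1=-v_0$ are purely imaginary and both satisfy $|\operatorname{Arg}|=\pi/2<3\pi/4$; both are therefore active. The tie direction is $\Theta=\arg(-1)+\pi+2\pi k\equiv 0$, which is the positive real axis. The leading Stokes set $\Im(2\eta_D)=0$ is the negative real axis, so the chamber is $\C\setminus(-\infty,0]$.

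The density formula of Theorem~\ref{thm:sublinear-essential} gives
\[
\frac{|\omega_0-\omega_1|}{2\pi}\cdot\frac{x^{-1/2}}{x}=\frac{2}{2\pi}\,x^{-3/2}=\frac1\pi x^{-3/2}.
\]
Using $D\Subset\mathcal V^\circ\setminus\Sigma=\C^\times$ exhausting the chamber, this yields vague convergence on $\C\setminus(-\infty,0]$. On the negative axis, Corollary~\ref{cor:essential-one-saddle} shows there are no zeros for large $n$: there $\eta_D$ is real, so one active saddle strictly dominates. This extends the convergence to $\C^\times$ and gives $L(f)\cap\C^\times=(0,\infty)$.
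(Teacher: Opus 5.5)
Your tie-ray computation, the identification $I_{i,V}=\mathcal R_{i,V}$ via Proposition~\ref{prop:active-wright-coefficients}, and the supporting-edge argument for adjacency are the same as in the paper. There are two small slips. First, for $m_i=1$ the threshold $\frac\pi2+\frac\pi{m_i+1}$ equals $\pi$, not $3\pi/4$; this does not change your conclusion that both vertices are active. Second, the converse direction of ``exactly'' needs a reason, not the added clause ``and both values dominate.'' The reason is that the edge joining two consecutive chain vertices has outward normal with positive real part throughout $V$. So wherever their real parts tie in $V$, that edge is vertical and rightmost, and the pair is co-dominant.

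The genuine gap is the last step of the P\'olya example. As you compute, the negative real axis is exactly the leading Stokes set $\mathfrak S_{i,D}=\{\Im(2\eta_D)=0\}$. Corollary~\ref{cor:essential-one-saddle}, like the expansion in Theorem~\ref{prop:local-coef-unified}\textup{(2)} it rests on, is stated only for compact $K\Subset V$ with $V$ a component of $D\setminus\mathfrak S_{i,D}$. On $\mathfrak S_{i,D}$ the two critical imaginary parts coincide, so the hypotheses of Lemma~\ref{lem:wright-exit-arcs} (separated critical values, no saddle connections) fail. The active set and the integers $\sigma_{i,\nu,V}$ are not even defined there. The paper explicitly leaves the Stokes walls and their $O(n^{-1/(m_i+1)})$ transition scale outside its theorems.

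Applying the one-saddle corollary in the adjacent chamber excludes zeros only on compact sets that avoid the axis. It says nothing about zeros accumulating at the axis as $n\to\infty$. Consequently, neither the extension of vague convergence from $\C\setminus(-\infty,0]$ to $\C^\times$ nor $L(f)\cap(-\infty,0)=\varnothing$ follows from what you cite.

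The paper closes this with an input specific to the example. It uses the identity $f^{(n)}(z)=(-1)^n n!\,z^{-n-1}e^{-1/z}L_n(1/z)$, which is Proposition~\ref{prop:micro-higher} with $m=1$, $\alpha=-1$, $\lambda=-1$, together with the classical positivity of the zeros of $L_n$. This puts every finite zero of $f^{(n)}$ on $(0,\infty)$. That gives $L(f)\cap\C^\times\subset(0,\infty)$ at once. It also makes the chamberwise vague convergence equivalent to vague convergence on $\C^\times$, since all the measures are supported inside the chamber. The positive limiting density $\pi^{-1}x^{-3/2}$ from Theorem~\ref{thm:sublinear-essential} then gives the reverse inclusion.
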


\begin{proof}
The tie condition is $\Re((\omega_\nu-\omega_\mu)\eta_D(z))=0$. Using
\[
\eta_D(z)^{m_i+1}=m_i\lambda(z-a_i)^{-m_i}
\]
and
\[
\arg(\omega_\nu-\omega_\mu)\equiv \frac{\pi(\nu+\mu)}{m_i+1}-\frac{\pi}{2}\pmod{\pi}
\]
gives the pairwise angle formula. Proposition~\ref{prop:active-wright-coefficients} gives $I_{i,V}=\mathcal R_{i,V}$. The maximum of the real projection over the vertices of a convex chain can have a two-vertex face only along an edge of that chain; hence the active co-dominant pairs are precisely adjacent indices of the ordered chain $\mathcal R_{i,V}$ whose tie rays meet $V$.

For P\'olya's example, on the chamber $\C\setminus(-\infty,0]$ choose $\eta_D(z)=iz^{-1/2}$. The two vertices of the critical-value segment are both visible from $+\infty$ in the real direction, so both saddle coefficients are nonzero by Proposition~\ref{prop:active-wright-coefficients}. The density in Theorem~\ref{thm:sublinear-essential} is
\[
\frac{|1-(-1)|}{2\pi}|z|^{-3/2}\,\dd\ell=\frac1\pi x^{-3/2}\,\dd x
\]
on the positive ray. The classical identity
\[
f^{(n)}(z)=(-1)^n n!\,z^{-n-1}e^{-1/z}L_n(1/z)
\]
and the classical positivity of the zeros of $L_n$ show that all finite zeros are positive, so no point of $\C^\times\setminus(0,\infty)$ lies in the final set. The local vague convergence on the chamber is the stated vague convergence on $\C^\times$, and the positive limiting density gives zeros in every subinterval of $(0,\infty)$ for all large $n$.
\end{proof}

\begin{figure}[t]
\centering
\begin{tikzpicture}
\begin{groupplot}[
    group style={group size=2 by 1, horizontal sep=1.95cm},
    width=0.335\linewidth,
    height=0.235\linewidth,
    scale only axis,
    tick align=outside,
    axis line style={line width=0.45pt},
    tick style={line width=0.45pt},
    grid=both,
    major grid style={draw=black!10},
    minor grid style={draw=black!5},
    label style={font=\small},
    tick label style={font=\small},
    title style={font=\small},
    legend style={font=\footnotesize, draw=none, fill=none},
]

\nextgroupplot[
    xmode=log,
    xmin=0.25, xmax=40,
    ymin=0, ymax=0.86,
    xlabel={$u=nz=n/y$},
    ylabel={density},
    title={microscopic reciprocal law},
    ytick={0,0.2,0.4,0.6,0.8},
    extra x ticks={0.25},
    extra x tick labels={$1/4$},
    legend style={at={(0.98,0.98)}, anchor=north east},
]
    \addplot+[
        ybar interval,
        mark=none,
        area legend,
        fill=black!12,
        draw=black!35,
        line width=0.25pt,
    ] table[x=u, y=density, row sep=\\] {
u density\\
0.25 0.46438\\
0.27512 0.75352\\
0.30277 0.82166\\
0.33320 0.82129\\
0.36668 0.79152\\
0.40353 0.73979\\
0.44408 0.67224\\
0.48871 0.62782\\
0.53782 0.55507\\
0.59187 0.50439\\
0.65135 0.44560\\
0.71680 0.40491\\
0.78883 0.34691\\
0.86811 0.31523\\
0.95535 0.27776\\
1.05140 0.23662\\
1.15700 0.21502\\
1.27330 0.18236\\
1.40120 0.16570\\
1.54200 0.13982\\
1.69700 0.12705\\
1.86750 0.10657\\
2.05520 0.096836\\
2.26180 0.080661\\
2.48900 0.073295\\
2.73920 0.060547\\
3.01440 0.055018\\
3.31740 0.047495\\
3.65070 0.040886\\
4.01760 0.035089\\
4.42140 0.031884\\
4.86570 0.027269\\
5.35460 0.023230\\
5.89270 0.019701\\
6.48490 0.017902\\
7.13660 0.015106\\
7.85380 0.013726\\
8.64300 0.011513\\
9.51160 0.0095901\\
10.46700 0.0087144\\
11.51900 0.0071988\\
12.67700 0.0065414\\
13.95100 0.0059441\\
15.35300 0.0048611\\
16.89600 0.0039264\\
18.59400 0.0040139\\
20.46200 0.0032421\\
22.51800 0.0025778\\
24.78100 0.0023424\\
27.27200 0.0021285\\
30.01200 0.0016578\\
33.02800 0.0017575\\
36.34700 0.0013689\\
40.00000 0\\
    };
    \addlegendentry{zeros, $n=1200$}

    \addplot[
        black,
        line width=1.05pt,
        mark=none,
        domain=0.25001:40,
        samples=500,
    ] {sqrt(4*x-1)/(2*pi*x^2)};
    \addlegendentry{$\rho_U(u)$}

\nextgroupplot[
    xmode=log,
    ymode=log,
    xmin=0.02, xmax=20,
    ymin=0.002, ymax=200,
    xlabel={fixed ray coordinate $x=z>0$},
    ylabel={$n^{3/2}\rho_U(nx)$},
    ylabel near ticks,
    ylabel style={xshift=0.35em},
    title={sublinear fixed-ray limit},
    legend style={at={(0.03,0.03)}, anchor=south west},
]
    \addplot[
        black!55,
        densely dotted,
        line width=0.95pt,
        mark=none,
        domain=0.0313:20,
        samples=240,
    ] {1/(pi*pow(x,1.5))*sqrt(1 - 1/(32*x))};
    \addlegendentry{$n=8$}

    \addplot[
        black!65,
        dashed,
        line width=0.85pt,
        mark=none,
        domain=0.02:20,
        samples=240,
    ] {1/(pi*pow(x,1.5))*sqrt(1 - 1/(128*x))};
    \addlegendentry{$n=32$}

    \addplot[
        black,
        line width=1.15pt,
        mark=none,
        domain=0.02:20,
        samples=240,
    ] {1/(pi*pow(x,1.5))};
    \addlegendentry{$\pi^{-1}x^{-3/2}$}
\end{groupplot}
\end{tikzpicture}
\caption{Microscopic and sublinear zero laws for P\'olya's example $f(z)=z^{-1}e^{-1/z}$. In the left panel the scaled zeros $u_{k,n}=nz_{k,n}=n/y_{k,n}$, where $y_{k,n}$ are the zeros of $L_n$, are shown as a histogram normalized in the $u$-variable and compared with the reciprocal Marchenko--Pastur density $\rho_U(u)=\sqrt{4u-1}/(2\pi u^2)$ for $u\ge1/4$ and $\rho_U(u)=0$ for $u<1/4$. The right panel plots the smooth scaled fixed-ray densities $n^{3/2}\rho_U(nx)$ for $n=8$ and $n=32$, together with their limiting density $\pi^{-1}x^{-3/2}$ on the fixed positive ray.}
\label{fig:laguerre-microscopic-sublinear}
\end{figure}

\subsection{Coefficient functions and divisor limits}\label{subsec:essential-partition}

Fix an essential site $a_i\in\Z(T)$ of order $m_i\ge1$, and let $D\Subset\mathcal V_i^\circ\setminus\Sigma$ be simply connected. For a domain $U$ and a holomorphic function $Z\not\equiv0$ on $U$, write
\[
\operatorname{div}_U Z:=
\sum_{\substack{w\in U:\,Z(w)=0}}
\operatorname{mult}_w(Z)\,\delta_w .
\]
Then, as measures on $U$,
\[
\operatorname{div}_U Z=\frac{1}{2\pi}\Delta\log|Z|.
\]

The coefficient generating function is
\[
G_{i,z}(\zeta)=\mathcal C(z,(a_i-z)\zeta)
=\sum_{n\ge0}Z_{i,n}(z)\zeta^n,
\]
where
\begin{equation}\label{eq:essential-canonical-partition}
Z_{i,n}(z):=[\zeta^n]G_{i,z}(\zeta)
=\frac{C_n(z)}{n!}(a_i-z)^n
=\frac{B_n(z)}{n!}\left(\frac{a_i-z}{W(z)}\right)^n .
\end{equation}
Since $D$ is disjoint from $a_i$ and from the zeros of $W$, the factor multiplying $B_n$ in \eqref{eq:essential-canonical-partition} is holomorphic and nonvanishing on $D$. Hence
\[
\operatorname{div}_D Z_{i,n}=\operatorname{div}_D B_n=\operatorname{div}_D(f^{(n)}|_D),
\]
with multiplicities.

The local factorization \eqref{eq:Gi-local-unified} gives, near the unique singularity of $G_{i,z}$ on $|\zeta|=1$,
\[
G_{i,z}(\zeta)=
(1-\zeta)^{\beta_i}
\exp\!\left(\sum_{s=1}^{m_i}\widetilde\lambda_{i,s}(z)(1-\zeta)^{-s}\right)R_i(z,\zeta),
\qquad
\widetilde\lambda_{i,s}(z)=\lambda_{i,s}(z-a_i)^{-s}.
\]
Cauchy's formula with $t=1-\zeta$ has phase
\[
\Phi_{i,z,n}(t)=\sum_{s=1}^{m_i}\widetilde\lambda_{i,s}(z)t^{-s}-(n+1)\log(1-t).
\]
Put
\[
M_{i,n}:=n^{m_i/(m_i+1)},
\qquad
\theta_i:=-\frac{2\beta_i+m_i+2}{2(m_i+1)},
\]
and choose the branch $\eta_D$ with $\eta_D^{m_i+1}=m_i\widetilde\lambda_{i,m_i}$. With $t=n^{-1/(m_i+1)}\tau$,
\[
M_{i,n}^{-1}\Phi_{i,z,n}\bigl(n^{-1/(m_i+1)}\tau\bigr)
=\widetilde\lambda_{i,m_i}(z)\tau^{-m_i}+\tau+O\!\left(n^{-1/(m_i+1)}\right)
\]
uniformly for $z$ in compact subsets of $D$ and $\tau$ in compact subsets of $\C^\times$. The critical points of the leading phase are $\tau_\nu(z)=\omega_\nu\eta_D(z)$, and their critical values are
\[
\varphi_{i,\nu}(z)=\frac{m_i+1}{m_i}\omega_\nu\eta_D(z),
\qquad 0\le\nu\le m_i.
\]

Let $V$ be a connected component of $D\setminus\mathfrak S_{i,D}$, and let $I_{i,V}$ be the active set from Proposition~\ref{prop:active-wright-coefficients}. Define
\[
\widehat Z_{i,n}(z):=n^{-\theta_i}Z_{i,n}(z),
\qquad
p_{i,n}(z):=M_{i,n}^{-1}\log|\widehat Z_{i,n}(z)|,
\]
and
\[
p_{i,V}(z):=\max_{\nu\in I_{i,V}}\Re\varphi_{i,\nu}(z).
\]
The proof of Theorem~\ref{thm:sublinear-essential} gives
\[
p_{i,n}\longrightarrow p_{i,V}
\qquad\text{in }L^1_{\mathrm{loc}}(V).
\]
Since $n^{-\theta_i}$ is independent of $z$, $Z_{i,n}$ and $\widehat Z_{i,n}$ have the same divisor. Applying Poincar\'e--Lelong to the preceding $L^1_{\mathrm{loc}}$ convergence gives
\begin{equation}\label{eq:essential-fisher-zero-convergence}
\frac1{M_{i,n}}
\sum_{\substack{w\in V\;:\; Z_{i,n}(w)=0}}
\operatorname{mult}_w(Z_{i,n})\,\delta_w
\longrightarrow
\frac{1}{2\pi}\Delta p_{i,V}
\end{equation}
vaguely on $V$. On a two-phase arc where the active phases $\nu$ and $\mu$ dominate, the limiting line density is
\[
\frac{1}{2\pi}|(\varphi_{i,\nu}-\varphi_{i,\mu})'(z)|\,\dd\ell
=
\frac{|\omega_\nu-\omega_\mu|\,|m_i\lambda_{i,m_i}|^{1/(m_i+1)}}{2\pi |z-a_i|^{(2m_i+1)/(m_i+1)}}\,\dd\ell .
\]

Removing the analytic insertion $R_i$ gives the singular factor
\begin{equation}\label{eq:essential-singular-factor}
\Xi^{\mathrm{sing}}_{i,z}(\zeta):=(1-\zeta)^{\beta_i}
\exp\!\left(\sum_{s=1}^{m_i}\widetilde\lambda_{i,s}(z)(1-\zeta)^{-s}\right).
\end{equation}
For $|\zeta|<1$, with the branch determined by expansion at $\zeta=0$,
\[
(1-\zeta)^{\beta_i}=\exp\!\left(-\beta_i\sum_{k\ge1}\frac{\zeta^k}{k}\right),
\qquad
(1-\zeta)^{-s}=\sum_{k\ge0}\binom{k+s-1}{s-1}\zeta^k.
\]
Thus
\begin{equation}\label{eq:essential-cluster-activities}
\Xi^{\mathrm{sing}}_{i,z}(\zeta)
=e^{a_0(z)}\exp\!\left(\sum_{k\ge1}a_k(z)\zeta^k\right),
\end{equation}
where
\[
a_0(z)=\sum_{s=1}^{m_i}\widetilde\lambda_{i,s}(z),
\qquad
a_k(z)=\sum_{s=1}^{m_i}\widetilde\lambda_{i,s}(z)\binom{k+s-1}{s-1}-\frac{\beta_i}{k}\quad(k\ge1).
\]
Let $b_k(z):=k!a_k(z)$, and let $\mathfrak B_n$ denote the complete exponential Bell polynomial, normalized by
\[
\exp\!\left(\sum_{k\ge1}b_k\frac{\zeta^k}{k!}\right)
=\sum_{n\ge0}\mathfrak B_n(b_1,\ldots,b_n)\frac{\zeta^n}{n!}.
\]
The exponential formula gives
\[
e^{-a_0(z)}[\zeta^n]\Xi^{\mathrm{sing}}_{i,z}(\zeta)
=\frac{1}{n!}\mathfrak B_n\bigl(b_1(z),\ldots,b_n(z)\bigr).
\]
Equivalently, coefficient extraction gives the finite occupation-number identity
\begin{equation}\label{eq:essential-occupation}
[\zeta^n]\Xi^{\mathrm{sing}}_{i,z}(\zeta)
=e^{a_0(z)}
\sum_{\substack{N_1,N_2,\ldots\ge0\\ \sum_{k\ge1}kN_k=n}}
\prod_{k\ge1}\frac{a_k(z)^{N_k}}{N_k!}.
\end{equation}
The set-partition form is
\begin{equation}\label{eq:essential-labelled-polymer}
n!\,e^{-a_0(z)}[\zeta^n]\Xi^{\mathrm{sing}}_{i,z}(\zeta)
=\sum_{\pi\in\mathfrak P_n}\prod_{B\in\pi}b_{|B|}(z),
\end{equation}
where $\mathfrak P_n$ is the set of set partitions of $\{1,\ldots,n\}$.
For fixed $z\in D$, write $R_i(z,\zeta)=\sum_{\ell\ge0}r_{i,\ell}(z)\zeta^\ell$ at $\zeta=0$; its radius of convergence is greater than $1$. The full coefficient is
\begin{equation}\label{eq:essential-full-insertion}
Z_{i,n}(z)=\sum_{\ell=0}^n r_{i,\ell}(z)
[\zeta^{n-\ell}]\Xi^{\mathrm{sing}}_{i,z}(\zeta).
\end{equation}
The divisor limit \eqref{eq:essential-fisher-zero-convergence} is for the full coefficient $Z_{i,n}$, not for the singular factor alone. In the saddle expansion at the scale \(M_{i,n}\), \(R_i\) contributes the nonzero leading factor \(R_i(z,1)\); higher Taylor terms at \(\zeta=1\) are lower order, and the exponential phases determining the limiting divisor are unchanged.

\begin{prop}[simple-pole singular factor]\label{prop:essential-simple-laguerre}
Assume $m_i=1$ and put
\[
\ell_i(z):=\widetilde\lambda_{i,1}(z)=\frac{\lambda_{i,1}}{z-a_i}.
\]
For
\[
S_{i,n}(z):=[\zeta^n]\Xi^{\mathrm{sing}}_{i,z}(\zeta)
\]
one has
\[
e^{-\ell_i(z)}S_{i,n}(z)=L_n^{(-\beta_i-1)}(-\ell_i(z)).
\]
If $\beta_i<0$ and $n\ge1$, then the zeros in the $\ell_i$-variable are $\ell_i=-x$, where $x$ runs over the $n$ positive zeros of $L_n^{(-\beta_i-1)}$. Consequently the zeros in the $z$-plane are
\[
z=a_i-\frac{\lambda_{i,1}}{x},
\]
for those same $x$, with only points belonging to $D$ retained. This ray localization is for the singular factor; the analytic insertion $R_i$ need not preserve it.
\end{prop}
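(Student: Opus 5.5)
The identity follows from the classical Laguerre generating function. For $m_i=1$ the singular factor is
\[
\Xi^{\mathrm{sing}}_{i,z}(\zeta)=(1-\zeta)^{\beta_i}\exp\!\bigl(\ell_i(z)(1-\zeta)^{-1}\bigr).
\]
Writing $(1-\zeta)^{-1}=1+\zeta/(1-\zeta)$, we get
\[
e^{-\ell_i}\Xi^{\mathrm{sing}}_{i,z}(\zeta)=(1-\zeta)^{-(-\beta_i-1)-1}\exp\!\left(-\frac{(-\ell_i)\zeta}{1-\zeta}\right).
\]
This is exactly $\sum_n L_n^{(\gamma)}(x)\zeta^n$ with $\gamma=-\beta_i-1$ and $x=-\ell_i(z)$. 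Extracting $[\zeta^n]$ gives the stated identity directly. As a cross-check, Proposition~\ref{prop:micro-higher} with $m=1$ identifies $\Pi_n^{(\beta_i,1)}=n!L_n^{(-\beta_i-1)}$. Its generating function $(1-t)^{\beta_i}\exp(x(1-(1-t)^{-1}))$ evaluated at $x=-\ell_i$ is $e^{-\ell_i}\Xi^{\mathrm{sing}}_{i,z}(t)$, so $e^{-\ell_i}S_{i,n}=\Pi_n^{(\beta_i,1)}(-\ell_i)/n!$ and the two computations agree.

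For the zeros, I would first note that $\beta_i<0$ makes $\gamma=-\beta_i-1\ge0>-1$. Hence $L_n^{(\gamma)}$ is orthogonal on $(0,\infty)$ with respect to $x^\gamma e^{-x}$, and has $n$ simple positive zeros. This matches the case $\alpha<0$ of Corollary~\ref{cor:mp}, where the constant term $(-\beta_i)_n$ is nonzero. Since $e^{-\ell_i}$ never vanishes, $S_{i,n}(z)=0$ exactly when $-\ell_i(z)$ is one of these zeros $x$. Solving $\lambda_{i,1}/(z-a_i)=-x$ gives $z=a_i-\lambda_{i,1}/x$. The map $z\mapsto\ell_i(z)$ is a Möbius bijection of $\C\setminus\{a_i\}$ onto $\C^\times$, so multiplicities are preserved, and intersecting with $D$ keeps only the points lying there.

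The final sentence needs only a remark: by \eqref{eq:essential-full-insertion}, $Z_{i,n}$ mixes the shifted coefficients $S_{i,n-\ell}$ with the Taylor coefficients of $R_i$. No ray statement is claimed for it, and none is needed. The only place requiring care is the branch and sign bookkeeping, namely $(1-\zeta)^{\beta_i}$ versus the Laguerre parameter and the sign inside the exponential. The decomposition $(1-\zeta)^{-1}=1+\zeta/(1-\zeta)$ settles both.
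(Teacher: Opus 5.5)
Your proof is correct and follows essentially the same route as the paper. Both rewrite $(1-\zeta)^{-1}=1+\zeta/(1-\zeta)$ to match the classical Laguerre generating function with $\gamma=-\beta_i-1$ and $x=-\ell_i(z)$, then use that $L_n^{(\gamma)}$ has $n$ simple positive zeros for $\gamma\ge0$ and invert the bijection $z\mapsto\ell_i(z)$. Your cross-check through $\Pi_n^{(\beta_i,1)}=n!\,L_n^{(-\beta_i-1)}$ is a consistent but unneeded extra.
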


\begin{proof}
For $m_i=1$,
\[
\Xi^{\mathrm{sing}}_{i,z}(\zeta)=(1-\zeta)^{\beta_i}\exp\!\left(\frac{\ell_i(z)}{1-\zeta}\right)
=e^{\ell_i(z)}(1-\zeta)^{\beta_i}\exp\!\left(\frac{\ell_i(z)\zeta}{1-\zeta}\right).
\]
The Laguerre generating function
\[
\sum_{n\ge0}L_n^{(\gamma)}(x)\zeta^n=(1-\zeta)^{-\gamma-1}\exp\!\left(-\frac{x\zeta}{1-\zeta}\right)
\]
with $\gamma=-\beta_i-1$ and $x=-\ell_i(z)$ gives the displayed identity. If $\beta_i<0$, then $\gamma=-\beta_i-1\in\mathbb Z_{\ge0}$, so $L_n^{(\gamma)}$ has $n$ simple positive zeros. The factor $e^{\ell_i(z)}$ is nonvanishing, and solving $\lambda_{i,1}/(z-a_i)=-x$ gives the stated points.
\end{proof}

\begin{prop}[positivity of occupation weights]\label{prop:essential-cluster-potts}
Fix $z\in D$, and put
\[
q^{\mathrm{cl}}_{i,z}(k):=
 k\sum_{s=1}^{m_i}\widetilde\lambda_{i,s}(z)\binom{k+s-1}{s-1}-\beta_i,
\qquad k\ge1,
\]
so that $a_k(z)=q^{\mathrm{cl}}_{i,z}(k)/k$. Ignoring the common nonzero factor $e^{a_0(z)}$, a single fugacity rotation $\zeta=e^{-i\theta}\xi$ makes every summand on the right-hand side of \eqref{eq:essential-occupation}, for every coefficient $n$, nonnegative real if and only if
\begin{equation}\label{eq:essential-phase-locking}
e^{-ik\theta}a_k(z)\in[0,\infty)\qquad(k\ge1).
\end{equation}
For the activities \eqref{eq:essential-cluster-activities}, condition \eqref{eq:essential-phase-locking} for some $\theta$ is equivalent to $a_k(z)\in[0,\infty)$ for every $k\ge1$, and this is equivalent to
\[
\widetilde\lambda_{i,s}(z)\in\mathbb R\quad(1\le s\le m_i),
\qquad
q^{\mathrm{cl}}_{i,z}(k)\ge0\quad(k\ge1).
\]
Under these conditions $e^{a_0(z)}>0$. For $n\ge1$ set
\[
\Omega_n:=\left\{(N_1,\ldots,N_n)\in\mathbb Z_{\ge0}^n:
\sum_{k=1}^n kN_k=n\right\}
\]
and
\[
\mathcal Q^{\mathrm{sing}}_{i,n}(z):=e^{-a_0(z)}[\zeta^n]\Xi^{\mathrm{sing}}_{i,z}(\zeta)
=\sum_{N\in\Omega_n}\prod_{k=1}^n\frac{a_k(z)^{N_k}}{N_k!}.
\]
Whenever $\mathcal Q^{\mathrm{sing}}_{i,n}(z)>0$,
\[
\mathbb P_{i,n,z}(N)=
\frac{1}{\mathcal Q^{\mathrm{sing}}_{i,n}(z)}
\prod_{k=1}^n\frac{a_k(z)^{N_k}}{N_k!},
\qquad N\in\Omega_n,
\]
is a probability measure on $\Omega_n$. Equivalently, for any $0<x<1$, it is the law of independent Poisson variables with means $a_k(z)x^k$ conditioned on $\sum_{k\ge1}kN_k=n$; the auxiliary $x$ cancels under conditioning. If $R_i(z,\zeta)=\sum_{\ell\ge0}r_{i,\ell}(z)\zeta^\ell$ has $r_{i,\ell}(z)\in[0,\infty)$ for all $\ell$, then \eqref{eq:essential-full-insertion} gives the analogous positive convolution with one marked insertion of size $\ell$, whenever the normalizing coefficient is positive.
\end{prop}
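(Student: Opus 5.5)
The plan is to prove the three assertions in order: the rotation criterion, its reduction to real nonnegative activities, and then the probabilistic reformulations. Throughout, $z\in D$ is fixed, $z\ne a_i$, and $\widetilde\lambda_{i,m_i}(z)=\lambda_{i,m_i}(z-a_i)^{-m_i}\ne0$.

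\emph{Rotation criterion.} Substituting $\zeta=e^{-i\theta}\xi$ multiplies $[\zeta^n]$ by $e^{-in\theta}$. Since $\sum_k kN_k=n$, the summand of \eqref{eq:essential-occupation} indexed by $N$ becomes
\[
\prod_k \frac{\bigl(e^{-ik\theta}a_k(z)\bigr)^{N_k}}{N_k!}.
\]
Hence \eqref{eq:essential-phase-locking} is sufficient. For necessity, take $n=k$ and the single-block configuration with $N_k=1$ and all other $N_j=0$; that summand is exactly $e^{-ik\theta}a_k(z)$, so it must lie in $[0,\infty)$.

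\emph{Eliminating $\theta$.} This is the step needing an actual argument. Suppose \eqref{eq:essential-phase-locking} holds for some $\theta$. From the definition,
\[
a_k(z)=\frac{\widetilde\lambda_{i,m_i}(z)}{(m_i-1)!}\,k^{m_i-1}\bigl(1+O(k^{-1})\bigr)
\]
for $m_i\ge2$, and $a_k(z)=\widetilde\lambda_{i,1}(z)-\beta_i/k$ for $m_i=1$. In both cases $a_k\ne0$ for large $k$ and $\operatorname{Arg}a_k\to\arg\widetilde\lambda_{i,m_i}(z)$. Phase locking gives $\arg a_k\equiv k\theta \pmod{2\pi}$. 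Comparing consecutive large $k$ yields $\theta\equiv\arg a_{k+1}-\arg a_k\to0 \pmod{2\pi}$, and since $\theta$ is fixed this forces $\theta\in2\pi\mathbb Z$. Thus every $a_k(z)\in[0,\infty)$.

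\emph{Reality of the principal coefficients.} Since $\beta_i\in\mathbb Z$, the quantity
\[
\Im a_k=\sum_{s=1}^{m_i}\Im\widetilde\lambda_{i,s}(z)\binom{k+s-1}{s-1}
\]
is a polynomial in $k$ vanishing for all $k\ge1$. The binomials $\binom{k+s-1}{s-1}$, $1\le s\le m_i$, have distinct degrees $0,\dots,m_i-1$ and are therefore linearly independent, so each $\widetilde\lambda_{i,s}(z)$ is real. Then $a_k\ge0$ is equivalent to $q^{\mathrm{cl}}_{i,z}(k)=ka_k\ge0$. The converse implications are immediate, taking $\theta=0$. Finally $a_0(z)$ is real, so $e^{a_0(z)}>0$.

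\emph{Probability measure and Poisson conditioning.} With all weights nonnegative and $\mathcal Q^{\mathrm{sing}}_{i,n}(z)>0$, normalization gives a probability on the finite set $\Omega_n$; note that $N_k=0$ is forced for $k>n$. Fix $0<x<1$. The sum $\sum_k a_kx^k$ converges because $a_k$ has polynomial growth. The independent Poisson law gives $N$ the mass
\[
\prod_k e^{-a_kx^k}\frac{(a_kx^k)^{N_k}}{N_k!}=e^{-\sum_k a_kx^k}\,x^{\sum_k kN_k}\prod_k\frac{a_k^{N_k}}{N_k!}.
\]
On $\Omega_n$ the factor $x^{\sum_k kN_k}=x^n$ is constant, so conditioning cancels both $x^n$ and the exponential and reproduces $\mathbb P_{i,n,z}$. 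Degenerate means $a_k=0$ cause no issue.

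\emph{Insertion.} If every $r_{i,\ell}(z)\ge0$, then \eqref{eq:essential-full-insertion} expresses $e^{-a_0}Z_{i,n}$ as
\[
\sum_{\ell=0}^n\ \sum_{N\in\Omega_{n-\ell}} r_{i,\ell}(z)\prod_k\frac{a_k^{N_k}}{N_k!},
\]
with $\Omega_0=\{0\}$. This is a sum of nonnegative terms, and normalizing gives the marked-insertion law on pairs $(\ell,N)$.

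The main obstacle is the elimination of $\theta$. It relies on the nonvanishing leading coefficient $\widetilde\lambda_{i,m_i}(z)$ together with the asymptotic stabilization of $\operatorname{Arg}a_k$.
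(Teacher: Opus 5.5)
Your proof is correct and follows essentially the same route as the paper. Necessity comes from the single-cluster configuration. The nonvanishing leading coefficient $\widetilde\lambda_{i,m_i}(z)$ forces $e^{i\theta}=1$; you argue via arguments modulo $2\pi$, the paper via $a_{k+1}/a_k\to1$, which is the same point. Reality of the $\widetilde\lambda_{i,s}(z)$ follows from linear independence of the binomial polynomials, and the rest is Poisson conditioning and the convolution. One small precision: the principal $\operatorname{Arg}a_k$ need not converge if $\widetilde\lambda_{i,m_i}(z)$ is negative real, but this is harmless because you only use differences modulo $2\pi$.
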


\begin{proof}
The rotation $\zeta=e^{-i\theta}\xi$ replaces $a_k(z)$ by $e^{-ik\theta}a_k(z)$ in \eqref{eq:essential-occupation}. Thus \eqref{eq:essential-phase-locking} is sufficient. Conversely, the one-cluster configuration $N_k=1$, $N_j=0$ for $j\ne k$, occurs in the coefficient of $\xi^k$, so nonnegativity of all rotated summands forces \eqref{eq:essential-phase-locking} for every $k$.

Since $\widetilde\lambda_{i,m_i}(z)\ne0$,
\[
a_k(z)=\frac{\widetilde\lambda_{i,m_i}(z)}{(m_i-1)!}k^{m_i-1}+O_z(k^{m_i-2}),
\]
with the evident interpretation $O_z(k^{-1})$ when $m_i=1$. Hence $a_{k+1}(z)/a_k(z)\to1$ and only finitely many $a_k(z)$ vanish. If \eqref{eq:essential-phase-locking} holds, write $a_k(z)=e^{ik\theta}c_k$ with $c_k\ge0$. For all large $k$, $c_k>0$, and
\[
\frac{a_{k+1}(z)}{a_k(z)}=e^{i\theta}\frac{c_{k+1}}{c_k}.
\]
The left-hand side tends to $1$ and $c_{k+1}/c_k$ is positive real, so $e^{i\theta}=1$. Therefore phase locking is equivalent to $a_k(z)\ge0$ for every $k\ge1$.

Multiplying by $k$ gives $q^{\mathrm{cl}}_{i,z}(k)=ka_k(z)$. If $a_k(z)\ge0$ for all $k$, then the imaginary part of the polynomial $q^{\mathrm{cl}}_{i,z}$ vanishes on infinitely many integers and hence vanishes identically. Since the polynomials $k\binom{k+s-1}{s-1}$, $1\le s\le m_i$, have distinct degrees, all $\widetilde\lambda_{i,s}(z)$ are real; the inequalities $q^{\mathrm{cl}}_{i,z}(k)\ge0$ are exactly $a_k(z)\ge0$. The converse is immediate, and then $a_0(z)$ is real, so $e^{a_0(z)}>0$.

Dividing \eqref{eq:essential-occupation} by $e^{a_0(z)}$ and restricting to $1\le k\le n$ gives the formula for $\mathcal Q^{\mathrm{sing}}_{i,n}$ and hence the probability measure after normalization. For $0<x<1$ the series $\sum_{k\ge1}a_k(z)x^k$ converges; independent Poisson variables with means $a_k(z)x^k$ have joint weights proportional to $x^{\sum kN_k}\prod_k a_k(z)^{N_k}/N_k!$, and conditioning on total size $n$ cancels $x^n$. The assertion about $R_i$ follows from the convolution formula \eqref{eq:essential-full-insertion}.
\end{proof}

Thus \(Z_{i,n}\) has the same local divisor as \(f^{(n)}\) on \(D\), and \eqref{eq:essential-fisher-zero-convergence} is the chamberwise zero-divisor limit for these coefficient functions. Viewed in the terminology of Fisher zeros \cite{Fisher1965}, \(z\) is the complex parameter with respect to which the zeros are taken, while \(Z_{i,n}(z)\) is the finite-\(n\) coefficient partition function determined by the generating function \(G_{i,z}\). The singular part of \(G_{i,z}\) has the occupation-number and labelled-polymer expansions \eqref{eq:essential-occupation}--\eqref{eq:essential-labelled-polymer}, and the analytic factor enters through the convolution \eqref{eq:essential-full-insertion}. A probabilistic interpretation is available only at parameter values satisfying the real-activity conditions of Proposition~\ref{prop:essential-cluster-potts}, with a positive normalizing coefficient; for the full coefficient one also needs \(r_{i,\ell}(z)\ge0\) for all \(\ell\). Away from this positivity regime, the construction is a complex-weight coefficient identity and divisor-limit statement, not a positivity statement about the zeros.

\section{Consequences for final sets and local clusters}\label{sec:consequences}

The two corollaries in this section combine the fixed-scale theorem with the refinements proved afterward. The first uses the sublinear chamber theorem to describe the final set away from the finite exceptional loci. The second connects the atomic part of the fixed-scale measure with the microscopic laws at essential singularities.

\begin{cor}[final set away from the exceptional finite-plane sets]\label{cor:regular-final-set}
For each essential site $a_i\in\Z(T)$, let
\[
\mathfrak S_i\subset\mathcal V_i^\circ\setminus\{a_i\}
\]
be the leading Stokes set, defined locally by \eqref{eq:local-stokes-sets}; changing the local branch of $\eta$ only permutes the saddle labels, so this set is well defined. Let $\mathcal C_i$ be the set of connected components of $(\mathcal V_i^\circ\setminus\{a_i\})\setminus\mathfrak S_i$. On each $V\in\mathcal C_i$, let $\mathfrak z_{i,V}$ be the chamber measure from Theorem~\ref{thm:sublinear-essential}, defined locally on simply connected subdomains of $V$; the local definitions agree on overlaps by the branch-relabeling statement in Proposition~\ref{prop:active-wright-coefficients}. Set
\[
\Omega_{\mathrm{reg}}
:=
\mathbb C\setminus
\left(
\Sigma\cup \operatorname{Vor}(\Sigma)\cup
\bigcup_{a_i\in\Z(T)}\mathfrak S_i
\right).
\]
Then, with supports taken relative to the corresponding chamber,
\[
L(f)\cap\Omega_{\mathrm{reg}}
=
\bigcup_{a_i\in\Z(T)}
\ \bigcup_{V\in\mathcal C_i}
\operatorname{supp}\mathfrak z_{i,V}.
\]
Moreover,
\[
\operatorname{Vor}(\Sigma)\cup\Z(T)\cup
\bigcup_{a_i\in\Z(T)}
\ \bigcup_{V\in\mathcal C_i}
\operatorname{supp}\mathfrak z_{i,V}
\subseteq L(f).
\]
Apart from the displayed inclusion, no complete description is asserted at ordinary poles, on Voronoi edges or vertices, or on the Stokes sets \(\mathfrak S_i\). When \(h>0\), the statement is only a fixed finite-plane assertion and does not describe zeros whose distance tends to infinity.
\end{cor}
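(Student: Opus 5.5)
The plan is to decompose $\Omega_{\mathrm{reg}}$ by Voronoi cells and treat ordinary-pole cells and essential cells separately. Then the inclusion statement is checked piece by piece, using the fixed-scale theorem for the Voronoi diagram and the atoms, and the chamber theorem for the sublinear supports.

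\textbf{Pole cells.} Fix $z_0\in\Omega_{\mathrm{reg}}$. Since $z_0\notin\Sigma\cup\operatorname{Vor}(\Sigma)$, it lies in a unique open cell $\mathcal V_i^\circ\setminus\Sigma$. If $m_i=0$, take a small closed disk $K$ around $z_0$ inside that cell. Theorem~\ref{prop:local-coef-unified}\textup{(1)} gives
\[
B_n(z)=n!\,W(z)^n d_i(z)^{-n}n^{-\beta_i-1}\Gamma(-\beta_i)^{-1}R_i(z,1)\bigl(1+O_K(n^{-1})\bigr)
\]
uniformly on $K$. Here $R_i(\cdot,1)$ is nonvanishing, and $W$ and $d_i$ do not vanish on $K$. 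Hence $B_n$, and by \eqref{eq:fnrep} also $f^{(n)}$, has no zeros on $K$ for large $n$, so $z_0\notin L(f)$. Such points contribute nothing to either side: the right-hand union only involves essential sites, and the chambers $V\in\mathcal C_i$ lie inside essential cells.

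\textbf{Essential cells.} If $m_i\ge1$, then $z_0\notin\mathfrak S_i$, so $z_0$ lies in some chamber $V\in\mathcal C_i$. Choose a simply connected disk $D\ni z_0$ with $\overline D\Subset V$. On $D$ the global chamber $V$ restricts to a component of $D\setminus\mathfrak S_{i,D}$. Apply Theorem~\ref{thm:sublinear-essential} on this local chamber. It gives $L(f)\cap D=\operatorname{supp}\mathfrak z_{i,V}\cap D$, where we use that $L(f)$ is defined by neighborhoods and is therefore local. Branch independence and gluing across overlapping disks come from the relabeling statement in Proposition~\ref{prop:active-wright-coefficients}. Taking the union over $z_0$ gives the displayed equality on $\Omega_{\mathrm{reg}}$.

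\textbf{Inclusion.} The sublinear supports lie in $L(f)$ by the chamber theorem just used; the relative supports are taken inside open chambers, so no closure issue arises. For $\operatorname{Vor}(\Sigma)$, Theorem~\ref{thm:limit} shows that $\mu_n$ converges vaguely to $\mu_{\mathrm{fix}}$, whose edge density is strictly positive on every relative interior point of a nonempty $E_{ij}$. Take a point $x$ on an edge and a neighborhood $U$ of $x$. Choose an open $G\Subset U$ with $\mu_{\mathrm{fix}}(G)>0$ and $\mu_{\mathrm{fix}}(\partial G)=0$. Then $\mu_n(G)\to\mu_{\mathrm{fix}}(G)>0$, so $B_n$ has zeros in $U$ for all large $n$. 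By Corollary~\ref{cor:fixed-scale-derivative-zeros} these zeros lie off $\Sigma$ and are zeros of $f^{(n)}$. So the edge interiors lie in $L(f)$, and since $L(f)$ is closed, so do the edge closures, which include the Voronoi vertices. For $a_i\in\Z(T)$ the same argument applies, using the atom $m_i/\kappa>0$: every neighborhood of $a_i$ carries $\mu_n$-mass bounded below, and these zeros lie in $\C\setminus\Sigma$.

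\textbf{Main obstacle.} The delicate step is the global-to-local passage on essential cells. One has to check that a component $V$ of $(\mathcal V_i^\circ\setminus\{a_i\})\setminus\mathfrak S_i$, which need not be simply connected, restricts on each small disk $D$ to a component of $D\setminus\mathfrak S_{i,D}$ with the same active set. The active set has to be the same so that the local measures $\mathfrak z_{i,V}$ agree. I would handle this by the base-point independence of $\mathcal R_V$ in Proposition~\ref{prop:active-wright-coefficients}, continued along paths in $V$. Monodromy of $\eta$ around $a_i$ only permutes the labels, and the angular criterion is invariant under that permutation.
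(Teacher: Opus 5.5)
Your proposal is correct and follows essentially the same route as the paper. The pole-cell step uses the nonvanishing asymptotics of Theorem~\ref{prop:local-coef-unified}\textup{(1)}, and the essential-cell step applies Theorem~\ref{thm:sublinear-essential} on a small simply connected disk inside a chamber; the inclusion uses positive $\mu_{\mathrm{fix}}$-mass of small open sets together with $B_n(a)\ne0$ at the sites, the only cosmetic difference being that you reach Voronoi vertices through closedness of $L(f)$, whereas the paper picks a set $G$ meeting an adjacent edge interior directly.
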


\begin{proof}
The equality on $\Omega_{\mathrm{reg}}$ is local. Let $z_0\in\Omega_{\mathrm{reg}}$. Then $z_0$ lies in a unique Voronoi interior. If its nearest site is an ordinary pole, Theorem~\ref{prop:local-coef-unified}\textup{(1)} gives a nonzero asymptotic for $B_n$ on a compact neighborhood of $z_0$, so $z_0\notin L(f)$. If its nearest site is an essential singularity, choose a simply connected $D\Subset\mathcal V_i^\circ\setminus\Sigma$ whose Stokes chamber $V$ contains a compact neighborhood of $z_0$. Theorem~\ref{thm:sublinear-essential} gives
\[
z_0\in L(f)\quad\Longleftrightarrow\quad z_0\in\operatorname{supp}\mathfrak z_{i,V},
\]
which proves the equality.

The chamber-support part of the inclusion follows from the same theorem. For the remaining part, let $x\in\operatorname{Vor}(\Sigma)\cup\Z(T)$ and let $U$ be a neighborhood of $x$. If $x\in\Z(T)$, choose a disk $G=D(x,r)\Subset U$ such that $G\cap\Sigma=\{x\}$ and $G\setminus\{x\}$ lies in the Voronoi interior of the site $x$. Then $\mu_{\mathrm{fix}}(G)>0$ by the atomic term in \eqref{eq:measure}, and $\mu_{\mathrm{fix}}(\partial G)=0$. Otherwise $x\in\operatorname{Vor}(\Sigma)$. Choose $G\Subset U\setminus\Sigma$ so that $G$ meets the relative interior of one nonempty Voronoi edge in a set of positive arclength and $\mu_{\mathrm{fix}}(\partial G)=0$; then $\mu_{\mathrm{fix}}(G)>0$ by the edge density in \eqref{eq:measure}. In either case, Theorem~\ref{thm:limit} gives $\mu_n(G)>0$ for all sufficiently large $n$, hence $B_n$ has a zero in $G$. In the first case Proposition~\ref{prop:local} gives $B_n(x)\ne0$, and in the second case $G\cap\Sigma=\varnothing$; therefore the zero lies in $G\setminus\Sigma$, where it is a zero of $f^{(n)}$ by \eqref{eq:fnrep}. Thus $x\in L(f)$.
\end{proof}

\begin{cor}[atomic mass and microscopic local laws]\label{cor:atomic-local-bridge}
Fix $c_j\in \Z(T)$ and choose $r>0$ so small that $\overline{D(c_j,r)}\cap\Sigma=\{c_j\}$, $c_j$ is the unique nearest site for every point of $\overline{D(c_j,r)}\setminus\{c_j\}$, and $P_\sharp$ has no zero on $\overline{D(c_j,r)}$. Then $\mu_{\mathrm{fix}}(\partial D(c_j,r))=0$ and
$$\sum_{\substack{\zeta:\,B_n(\zeta)=0\\ |\zeta-c_j|<r}}\operatorname{mult}_\zeta(B_n)=m_j n+o(n),$$
where $\operatorname{mult}_\zeta(B_n)$ denotes the multiplicity of $\zeta$ as a zero of $B_n$.
Equivalently, the same asymptotic count, with multiplicity, holds for the zeros of $f^{(n)}$ in $D(c_j,r)\setminus\{c_j\}$. Thus the atom $(m_j/\kappa)\delta_{c_j}$ in Theorem~\ref{thm:limit} represents a cluster of $m_j n+o(n)$ zeros collapsing to $c_j$.

Let
$$E(z)=\sum_{s=1}^{m_j}\lambda_{j,s}(z-c_j)^{-s}+E_j^{\mathrm{reg}}(z),\qquad \lambda_{j,m_j}\ne0,$$
and set $\alpha_j:=p_j-\nu_j$. For every $\chi\in C_c(\C)$,
\begin{equation}\label{eq:full-local-global-bridge}
\frac1{m_jn}\sum_{\substack{f^{(n)}(\zeta)=0\\0<|\zeta-c_j|<r}}
\operatorname{mult}_\zeta(f^{(n)})\,
\chi\!\left(n^{1/m_j}(\zeta-c_j)\right)
\longrightarrow
\int_\C\chi\,\dd\widehat\mu_{m_j,\lambda_{j,m_j}}.
\end{equation}
If $r_n\downarrow0$ and $n^{1/m_j}r_n\to\infty$, then
\[
\nu_{j,n}:=\frac1{m_jn}
\sum_{\substack{f^{(n)}(\zeta)=0\\0<|\zeta-c_j|<r_n}}
\operatorname{mult}_\zeta(f^{(n)})\,
\delta_{\,n^{1/m_j}(\zeta-c_j)}
\]
satisfies
\[
\nu_{j,n}\xrightarrow{\;w\;}\widehat\mu_{m_j,\lambda_{j,m_j}}
\]
weakly on $\widehat\C$ as a finite Borel measure. In particular $\nu_{j,n}(\widehat\C)\to1$.

For the reduced model
$$g_{\alpha_j,m_j}(z):=(z-c_j)^{\alpha_j}\exp\!\left(\frac{\lambda_{j,m_j}}{(z-c_j)^{m_j}}\right),$$
this specializes to Corollary~\ref{cor:mp} when $m_j=1$ and to Corollary~\ref{cor:micro-higher-zero} when $m_j\ge2$.
\end{cor}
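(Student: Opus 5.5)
The plan has three steps: count the zeros in the fixed disk from the fixed-scale law, transfer the microscopic law from the full local theorem, and then upgrade vague convergence to weak convergence by checking total mass.

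\emph{Step 1 (cluster count).} First, $\mu_{\mathrm{fix}}(\partial D(c_j,r))=0$. The circle avoids $\Sigma$ and meets no Voronoi edge, because $c_j$ is the unique nearest site on the closed disk minus its center; the edge part of \eqref{eq:measure} therefore gives it no mass, and it carries no atom. Next, on the punctured disk the limit measure is just the atom $(m_j/\kappa)\delta_{c_j}$, since $\Psi=\Psi_i$ is harmonic there away from $c_j$. Vague convergence from Theorem~\ref{thm:limit} together with the null boundary then gives $\mu_n(D(c_j,r))\to m_j/\kappa$. Multiplying by $\deg B_n=\kappa n+O(1)$ (Proposition~\ref{prop:deg}) yields $m_jn+o(n)$ zeros of $B_n$ in $D(c_j,r)$. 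Proposition~\ref{prop:local} gives $B_n(c_j)\ne0$, and \eqref{eq:fnrep} shows that $B_n$ and $f^{(n)}$ have the same zeros on $D(c_j,r)\setminus\{c_j\}$, with multiplicity. This proves the count for $f^{(n)}$ as well.

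\emph{Step 2 (microscopic vague law).} Near $c_j$, Proposition~\ref{prop:local} with $n=0$ writes $f=(z-c_j)^{\alpha_j}\exp\bigl(\sum_s\lambda_{j,s}(z-c_j)^{-s}\bigr)\phi$, where $\phi=\phi_{j,0}e^{E_j^{\mathrm{reg}}}$ is holomorphic and nonvanishing near $c_j$. If necessary I would shrink $r$ so that $\phi$ is nonvanishing on a neighborhood of $\overline{D(c_j,r)}$. This costs nothing: for a fixed $\chi$ with compact support, the sum in \eqref{eq:full-local-global-bridge} only sees zeros with $|\zeta-c_j|\lesssim n^{-1/m_j}$, so it does not depend on $r$ once $n$ is large. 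Theorem~\ref{thm:full-local-microscopic}, applied to $F=f$ at $a=c_j$, then gives \eqref{eq:full-local-global-bridge} directly.

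\emph{Step 3 (weak law at scale $r_n$).} The only new input needed to invoke the second half of Theorem~\ref{thm:full-local-microscopic} is the total-mass hypothesis \eqref{eq:full-local-total-hyp}. I expect this to be the main obstacle, because the two zero counts live at different scales. The fixed-disk count from Step 1 says $m_jn+o(n)$ zeros lie in $D(c_j,r)$. I must show that only $o(n)$ of them lie in the annulus $r_n\le|\zeta-c_j|<r$.

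I would use the vague law for the upper bound. For fixed $R$, take $\chi\ge\mathbf 1_{\{|u|\le R\}}$ with compact support. Since $n^{1/m_j}r_n\to\infty$, the disk of radius $Rn^{-1/m_j}$ lies inside $D(c_j,r_n)$ for large $n$. Hence $\liminf_n\nu_{j,n}(\widehat\C)\ge\widehat\mu_{m_j,\lambda_{j,m_j}}(\{|u|<R\})$, and letting $R\to\infty$ gives $\liminf_n\nu_{j,n}(\widehat\C)\ge1$. Conversely, $\nu_{j,n}(\widehat\C)$ is at most the count of Step 1 divided by $m_jn$, which tends to $1$. So $\nu_{j,n}(\widehat\C)\to1$, which is \eqref{eq:full-local-total-hyp}.

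This sandwich closes the gap between the scales without any annulus estimate: mass is squeezed between the fixed-scale atom from above and the microscopic probability measure from below. Theorem~\ref{thm:full-local-microscopic} then gives the weak convergence on $\widehat\C$. The reduced-model specializations are Corollaries~\ref{cor:mp} and~\ref{cor:micro-higher-zero}.
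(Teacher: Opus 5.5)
Your proposal is correct and follows the paper's proof step for step. You use the fixed-disk count from Theorem~\ref{thm:limit}, Proposition~\ref{prop:deg} and Proposition~\ref{prop:local}, then Theorem~\ref{thm:full-local-microscopic} applied to the local germ, and then the same sandwich (microscopic lower bound, fixed-radius upper bound) to verify \eqref{eq:full-local-total-hyp}. Two small touch-ups. In Step~3, take $\mathbf 1_{\{|u|\le R\}}\le\chi\le\mathbf 1_{\{|u|<2R\}}$ and compare the support radius $2R$, not $R$, with $n^{1/m_j}r_n$; this guarantees $\int\chi\,\dd\nu_{j,n}\le\nu_{j,n}(\widehat\C)$. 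Also, shrinking $r$ in Step~2 is unnecessary, because the stated conditions on $r$ already make $\phi$ holomorphic and nonvanishing on a neighborhood of $\overline{D(c_j,r)}$.
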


\begin{proof}
Choose $r$ as in the statement. Then $\overline{D(c_j,r)}$ meets no Voronoi edge and contains no site other than $c_j$, so
$$\mu_{\mathrm{fix}}\bigl(D(c_j,r)\bigr)=\frac{m_j}{\kappa}$$
and $\mu_{\mathrm{fix}}(\partial D(c_j,r))=0$. By Theorem~\ref{thm:limit},
$$\mu_n\bigl(D(c_j,r)\bigr)\longrightarrow \frac{m_j}{\kappa}.$$
Since $\mu_n$ is the normalized zero-counting measure of $B_n$ and Proposition~\ref{prop:deg} gives $\deg B_n=\kappa n+O(1)$, it follows that
$$\sum_{\substack{\zeta:\,B_n(\zeta)=0\\ |\zeta-c_j|<r}}\operatorname{mult}_\zeta(B_n)
=\deg B_n\,\mu_n\bigl(D(c_j,r)\bigr)
=m_j n+o(n).$$
By Proposition~\ref{prop:local}, one has $B_n(c_j)\ne0$, and away from $\Sigma$ the zeros of $f^{(n)}$ are exactly the zeros of $B_n$, which gives the equivalent formulation for $f^{(n)}$.

Proposition~\ref{prop:local} with $n=0$ gives, near $c_j$,
$$
f(z)=(z-c_j)^{\alpha_j}
\exp\!\left(\sum_{s=1}^{m_j}\lambda_{j,s}(z-c_j)^{-s}\right)\phi_j(z),
\qquad \phi_j(c_j)\ne0.
$$
By the choice of $r$, $\phi_j$ is holomorphic and nonvanishing on a neighborhood of $\overline{D(c_j,r)}$. Theorem~\ref{thm:full-local-microscopic} applied to this germ gives \eqref{eq:full-local-global-bridge}.

Let $r_n\downarrow0$ with $n^{1/m_j}r_n\to\infty$. Then $r_n<r$ for all large $n$. For $R>0$ with $\widehat\mu_{m_j,\lambda_{j,m_j}}(\{|u|=R\})=0$, \eqref{eq:full-local-global-bridge} and standard approximation of the indicator of $\{|u|<R\}$ give
$$
\frac1{m_jn}
\sum_{\substack{f^{(n)}(\zeta)=0\\0<n^{1/m_j}|\zeta-c_j|<R}}
\operatorname{mult}_\zeta(f^{(n)})
\longrightarrow
\widehat\mu_{m_j,\lambda_{j,m_j}}(\{|u|<R\}).
$$
Fix $\varepsilon>0$ and choose such an $R$ with $\widehat\mu_{m_j,\lambda_{j,m_j}}(\{|u|<R\})>1-\varepsilon$. For all large $n$, the disk $|\zeta-c_j|<Rn^{-1/m_j}$ is contained in $|\zeta-c_j|<r_n$, so the preceding display gives a lower bound $1-\varepsilon-o(1)$ for the normalized number of zeros in $0<|\zeta-c_j|<r_n$. Since $\varepsilon$ is arbitrary, the liminf is at least $1$. The fixed-radius count already proved gives the limsup at most $1$, because $r_n<r$ for all large $n$. Thus
\[
\frac1{m_jn}
\sum_{\substack{f^{(n)}(\zeta)=0\\0<|\zeta-c_j|<r_n}}
\operatorname{mult}_\zeta(f^{(n)})\longrightarrow1.
\]
The local germ $F=f|_{0<|z-c_j|<r}$ has the same derivatives as $f$ in the punctured disk, so this is exactly the total-mass hypothesis \eqref{eq:full-local-total-hyp} for Theorem~\ref{thm:full-local-microscopic}. The second part of that theorem gives the asserted weak convergence of $\nu_{j,n}$ on shrinking disks. Setting $\lambda_{j,s}=0$ for $s<m_j$ and $\phi_j\equiv1$ gives the reduced model, whose explicit forms are Corollaries~\ref{cor:mp} and~\ref{cor:micro-higher-zero}.
\end{proof}

\section{Open problems}\label{sec:open}

For the class \eqref{eq:mixed}, Theorem~\ref{thm:limit} settles the fixed-scale normalized zero-counting law, and Theorem~\ref{thm:full-local-microscopic} settles the leading microscopic law at each finite essential singularity. The following problems concern finer transition and escape scales and are not needed for the fixed-scale law. Theorem~\ref{thm:sublinear-essential} and Corollary~\ref{cor:regular-final-set} determine $L(f)$ only after intersection with $\Omega_{\mathrm{reg}}$ from Corollary~\ref{cor:regular-final-set} and give the stated inclusions on the excluded sets. Proposition~\ref{prop:active-wright-coefficients} gives a closed formula for the vanishing pattern of the Wright coefficients in each essential chamber; what remains open is the corresponding transition theory and the outer scale when $h>0$.

\paragraph{Escaping mass at infinity.}
When the polynomial part $H$ of $S/T$ is nonconstant, Theorem~\ref{thm:limit} identifies only the mass $h/\kappa$ that escapes to $\infty$. A sharper result should describe the distribution of these escaping zeros after the natural rescaling determined by $H$ and should separate this outer scale from the finite-plane Voronoi geometry.

\paragraph{Transition asymptotics.}
Theorem~\ref{thm:limit}, Corollary~\ref{cor:regular-final-set}, and Theorem~\ref{thm:sublinear-essential} identify the fixed-scale law and the final set on $\Omega_{\mathrm{reg}}$. What remains open is the local zero process at the natural transverse scale near active anti-Stokes arcs, near leading Stokes walls where, in an order-$m_i$ essential chamber, the finite-$n$ Stokes walls may be displaced by $O(n^{-1/(m_i+1)})$, on genuine Stokes-connection curves of the coefficient contour, and near Voronoi vertices, where the special transition kernels should be resolved separately.

\paragraph{Positive realizations.}
The coefficient identities in Subsection~\ref{subsec:essential-partition} are positive only under the real inequalities of Proposition~\ref{prop:essential-cluster-potts}. It remains open whether a natural positive finite-volume model realizes the subharmonic envelope $p_{i,V}$, the effective volume $n^{m_i/(m_i+1)}$, or the full perturbed local laws of Theorem~\ref{thm:full-local-microscopic} away from that locus.

\appendix
\section{Characterization of hyperexponential functions}\label{app:mixed}
\begin{prop}
Let $S\subset \mathbb P^1(\mathbb C)$ be finite and set
\[\Omega=\mathbb P^1(\mathbb C)\setminus S.\]
Let $f$ be a meromorphic function on $\Omega$, not identically zero; $f'/f$ is understood on the complement of the zeros and poles of $f$. Then the following are equivalent:
\begin{enumerate}
    \item there exist $c\in \mathbb C^\times$, $R\in\mathbb C(z)^\times$, and $H\in \mathbb C(z)$, with $H$ holomorphic on $\Omega$, such that
    \[
    f(z)=c\,R(z)e^{H(z)} \quad \text{on } \Omega;
    \]
    \item the logarithmic derivative $f'/f$ belongs to $\mathbb C(z)$.
\end{enumerate}
\end{prop}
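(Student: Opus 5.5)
The direction (1)$\Rightarrow$(2) is a direct computation. If $f=cRe^{H}$ on $\Omega$, then off the zeros and poles of $f$ we have
\[
f'/f=R'/R+H'.
\]
Both terms are rational, so $f'/f\in\C(z)$.

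For (2)$\Rightarrow$(1), write $g:=f'/f\in\C(z)$ and decompose it by partial fractions:
\[
g(z)=\sum_{k}\frac{r_k}{z-b_k}+G(z),
\]
where the sum runs over the finitely many simple-pole residues and $G$ collects the polynomial part together with all principal terms of order at least $2$.

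The first step is to show every $r_k$ is an integer. Fix a pole $b_k$ of $g$ and suppose first $b_k\in\Omega$. Then $f$ is meromorphic near $b_k$, so $g=f'/f$ has at most a simple pole there, with residue $\ord_{b_k}f\in\mathbb Z$.

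Now suppose $b_k\in S$, so $f$ is only known on a punctured disc around $b_k$. Integrate $g$ around a small circle about $b_k$. Along any closed loop in the domain where $f$ is holomorphic and nonvanishing, $\frac1{2\pi i}\oint f'/f$ is the winding number of $f$ about $0$, hence an integer. Any zeros and poles of $f$ that accumulate toward $b_k$ are excluded because $g$ is rational: its poles are finite in number, so near $b_k$ the function $f$ has no zeros or poles in a small punctured disc. The integral over a small circle therefore equals $r_k$, and $r_k\in\mathbb Z$. Only finitely many $b_k$ occur.

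Next set $R(z):=\prod_k(z-b_k)^{r_k}$, which lies in $\C(z)^\times$. The remainder $G$ has zero residue at every finite point, so it has a rational primitive $H$. For $\infty\in\Omega$ we need $H$ holomorphic on $\Omega$, so we check where $H$ can have poles. The poles of $H$ lie among the finite poles of $G$ together with $\infty$. A pole of $G$ of order $\ge2$ at a point of $\Omega$ is impossible, since there $f'/f$ has at most simple poles. Likewise, if $\infty\in\Omega$, then $f$ is meromorphic at $\infty$, so $g=O(1/z)$ there, which forces the polynomial part of $G$ to vanish. Hence all poles of $H$ lie in $S$.

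Finally, on the connected domain obtained from $\Omega$ by removing the discrete zero/pole set of $f$, we have
\[
\bigl(f/(Re^{H})\bigr)'=0,
\]
so $f/(Re^{H})$ is a constant $c\neq0$. The identity $f=cRe^{H}$ then extends to all of $\Omega$ by the identity theorem.

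The main obstacle is the integrality of the residues at points of $S$. The winding-number argument handles it, and it requires first noting that $f$ has no zeros or poles accumulating at $b_k$ (which follows from $g$ being rational), so that $f$ is holomorphic and nonvanishing on a punctured neighbourhood of $b_k$.
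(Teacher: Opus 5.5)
Your proof is correct and follows essentially the same route as the paper's: integrality of the residues via winding numbers, $R$ built from those residues, a rational primitive $H$ of the zero-residue remainder that is checked to be holomorphic on $\Omega$, and constancy of $f/(Re^{H})$. The only differences are minor. To make the small circles around points of $S$ avoid zeros and poles of $f$, you note that these are poles of the rational function $f'/f$ and hence finite in number, whereas the paper picks radii outside a countable forbidden set. At $\infty\in\Omega$, you argue that the polynomial part of $f'/f$ vanishes, whereas the paper uses the residue theorem to get $s=O(z^{-2})$.
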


\begin{proof}
The implication $(1)\Rightarrow(2)$ is immediate, since
\[\frac{f'}{f}=\frac{R'}{R}+H'\in\mathbb C(z).\]

Assume conversely that
\[r(z):=\frac{f'(z)}{f(z)}\in \mathbb C(z).\]
Let $A\subset\mathbb C$ be the finite set of finite poles of $r$. For each $a\in A$, choose $\varepsilon>0$ so small that the positively oriented circle
\[\gamma_a(t)=a+\varepsilon e^{it}, \qquad 0\le t\le 2\pi,\]
lies in $\Omega$, encloses no finite pole of $r$ other than $a$, and contains no zero or pole of $f$ on the path. This is possible because the zeros and poles of a meromorphic function are discrete in $\Omega$, so the forbidden radii form a countable set. This includes the case $a\in S$: only the center is removed, not the circle. Then $f\circ\gamma_a$ is a loop in $\mathbb C^\times$, and
\[\operatorname{Res}_{z=a}(r)=\frac{1}{2\pi i}\int_{\gamma_a}\frac{f'(z)}{f(z)}\,\dd z\]
is its winding number about $0$. Hence
\[n_a:=\operatorname{Res}_{z=a}(r)\in\mathbb Z\]
for every finite pole of $r$, including punctures in $S$. If $a\in\Omega$, the local form $f(z)=(z-a)^k u(z)$, $u(a)\ne0$, also shows that the pole is simple and has residue $k$.

Define, allowing negative exponents in the product,
\[R(z):=\prod_{a\in A}(z-a)^{n_a}\in\mathbb C(z)^\times,\qquad
s(z):=r(z)-\frac{R'(z)}{R(z)}.\]
Then $s$ has zero residue at every finite point. At each finite point of $\Omega$ it has no pole: if $r$ is regular this is clear, and if $r$ has a pole there the preceding local form shows that subtracting $R'/R$ removes it. If $\infty\in\Omega$, write $w=1/z$ and $f=w^k u(w)$ near $w=0$, with $u(0)\ne0$. Then $r(z)=-k/z+O(z^{-2})$ as $z\to\infty$; by the residue theorem, $R'(z)/R(z)=-k/z+O(z^{-2})$, and hence $s(z)=O(z^{-2})$, so $s$ is holomorphic at $\infty$. At every finite point of $S$ the residue of $s$ is zero by construction. If $\infty\in S$, then $\operatorname{Res}_\infty s=-\sum_{a\in\mathbb C}\operatorname{Res}_a s=0$. Thus $s$ is holomorphic on $\Omega$ and has only zero-residue poles at points of $S$.

The partial fraction expansion of $s$ has no simple finite-pole terms: its polynomial part integrates to a polynomial, and each higher-order principal part integrates to a rational function. Thus there is $H\in\mathbb C(z)$ with $H'=s$. The preceding paragraph shows that $H$ is holomorphic on $\Omega$; in particular, when $\infty\in\Omega$, the estimate $s(z)=O(z^{-2})$ makes $H$ holomorphic at infinity.

Now put
\[g(z):=\frac{f(z)}{R(z)e^{H(z)}}.\]
This is a meromorphic function on the connected surface $\Omega$, and
\[\frac{g'(z)}{g(z)}=\frac{f'(z)}{f(z)}-\frac{R'(z)}{R(z)}-H'(z)=0\]
on the complement of its zeros and poles. Hence $g'\equiv0$ as a meromorphic differential, so $g$ is constant. Since $f\not\equiv0$, this constant is some $c\in\mathbb C^\times$, proving $(2)\Rightarrow(1)$.
\end{proof}

\end{document}